\newcounter{thm}
\numberwithin{thm}{section}
\newtheorem{theorem}[thm]{Theorem}
\newtheorem{proposition}[thm]{Proposition}
\newtheorem{lemma}[thm]{Lemma}
\newtheorem{corollary}[thm]{Corollary}
\theoremstyle{definition}
\newtheorem{definition}[thm]{Definition}
\newtheorem{remark}[thm]{Remark}
\newtheorem{example}[thm]{Example}
\newtheorem{conjecture}[thm]{Conjecture}
\newtheorem*{mquestion}{Motivating Question}
\newtheorem*{Proposition*}{Proposition}
\newtheorem*{Theorem*}{Theorem}
\newtheorem*{Claim*}{Claim}
\newtheorem*{Corollary*}{Corollary}
\newtheorem*{Conjecture*}{Conjecture}
\newcommand{\spar}
[1]{\medskip\refstepcounter{thm}\noindent\textbf{\arabic{section}.\arabic{thm}.}\label{#1}}
\newcounter{sspar}[thm]
\newcommand{\sspar}
[1]{\medskip\refstepcounter{sspar}\noindent\textbf{\arabic{section}.\arabic{thm}.\arabic{sspar}.}\label{#1}}
\title{A motivic Weil height machine for curves}
\author{L.\ Alexander Betts and Ishai Dan-Cohen}
\thanks{\textbf{Grant acknowledgements:} I.D. was supported by ISF grants 726/17 and 621/21. \textbf{Political disclaimer:} I. Dan-Cohen asks not to be considered responsible for the actions of any government that does not fully embrace the principles of democracy. }
\newcommand{\new}{\newcommand}
\new{\oo}{$\infty$}
\new{\opnm}{\operatorname}
\DeclareMathOperator{\Spec}{Spec}
\DeclareMathOperator{\Aug}{Aug}
\DeclareMathOperator{\cAug}{\mathcal{A}ug}
\DeclareMathOperator{\rk}{rk}
\DeclareMathOperator{\Pic}{Pic}
\newcommand{\lgeom}{{\mathrm{l.g.}}}
\newcommand{\bA}{\mathbb A}
\newcommand{\bG}{\mathbb G}
\newcommand{\bN}{\mathbb N}
\newcommand{\bQ}{\mathbb Q}
\newcommand{\bR}{\mathbb R}
\newcommand{\bV}{\mathbb V}
\newcommand{\bZ}{\mathbb Z}
\newcommand{\rH}{\mathrm H}
\newcommand{\cA}{\mathcal A}
\newcommand{\cC}{\mathcal C}
\newcommand{\cD}{\mathcal D}
\newcommand{\cL}{\mathcal L}
\newcommand{\cM}{\mathcal M}
\newcommand{\cO}{\mathcal O}
\newcommand{\cX}{\mathcal X}
\newcommand{\pr}{\mathrm{pr}}
\newcommand{\kbar}{{\bar k}}
\newcommand{\Obar}{\bar\cO}
\DeclareMathOperator{\GL}{GL}
\DeclareMathOperator{\rad}{rad}
\newcommand{\DM}{\mathbf{DM}}
\newcommand{\DA}{\mathbf{DA}}
\newcommand{\Sm}{\mathbf{Sm}}
\newcommand{\CAlg}{\opnm{\mathbf{CAlg}}}
\newcommand{\ho}{\opnm{ho}}
\newcommand{\op}{\mathrm{op}}
\newcommand{\et}{\mathrm{\acute{e}t}}
\newcommand{\tors}{\mathrm{tors}}
\DeclareMathOperator{\Hom}{Hom}
\DeclareMathOperator{\Iso}{Iso}
\DeclareMathOperator{\Aut}{Aut}
\DeclareMathOperator{\Sec}{Sec}
\DeclareMathOperator{\Homeo}{Homeo}
\DeclareMathOperator{\MCG}{MCG}
\DeclareMathOperator{\loc}{loc}
\DeclareMathOperator{\MHS}{\mathbf{MHS}}
\newcommand{\MRH}{\mathrm{MRH}}
\newcommand{\m}[1]{\mathrm{#1}}
\newcommand{\bb}[1]{\mathbb{#1}}
\newcommand{\cl}[1]{\mathcal{#1}}
\newcommand{\ka}{\kappa}
\newcommand{\si}{\sigma}
\newcommand{\Si}{\Sigma}
\newcommand{\ze}{\zeta}
\newcommand{\ga}{\gamma}
\newcommand{\al}{\alpha}
\newcommand{\be}{\beta}
\newcommand{\om}{\omega}
\newcommand{\Om}{\Omega}
\newcommand{\ep}{\epsilon}
\newcommand{\de}{\delta}
\new{\La}{\Lambda}
\newcommand{\Cc}{\mathcal{C}}
\newcommand{\bC}{\mathbb{C}}
\newcommand{\ZZ}{\bb{Z}}
\newcommand{\CC}{\bb{C}}
\newcommand{\GG}{\mathbb{G}}
\newcommand{\MM}{\bb{M}}
\newcommand{\NN}{\bb{N}}
\newcommand{\Ss}{\cl{S}}
\newcommand{\QQ}{\bb{Q}}
\newcommand{\RR}{\bb{R}}
\newcommand{\PP}{\bb{P}}
\newcommand{\Bb}{\mathcal{B}}
\newcommand{\EE}{\bb{E}}
\newcommand{\Gg}{\mathcal{G}}
\newcommand{\Ff}{\mathcal{F}}
\newcommand{\Hh}{\mathcal{H}}
\renewcommand{\AA}{\bb{A}}
\newcommand{\Yy}{\mathcal{Y}}
\newcommand{\Vv}{\mathcal{V}}
\newcommand{\Oo}{\mathcal{O}}
\newcommand{\Aa}{\mathcal{A}}
\newcommand{\Ee}{\mathcal{E}}
\newcommand{\Dd}{\mathcal{D}}
\newcommand{\Mm}{\mathcal{M}}
\newcommand{\Xx}{\mathcal{X}}
\newcommand{\from}{\leftarrow}
\newcommand{\xto}{\xrightarrow}
\newcommand{\xfrom}{\xleftarrow}
\newcommand{\inj}{\hookrightarrow}
\new{\inv}{^{-1}}
\new{\til}{\widetilde}
\new{\colim}{\opnm{colim}}
\new{\Alg}{\opnm{Alg}}
\new{\Sp}{\opnm{\cl{S}p}}
\new{\LMod}{\opnm{LMod}}
\new{\Sym}{\opnm{Sym}}
\new{\Cpx}{\opnm{Cpx}}
\new{\Bdry}{\opnm{Bdry}}
\new{\PSh}{\opnm{Psh}}
\new{\Ho}{\opnm{Ho}}
\new{\Sus}{\opnm{Sus}}
\new{\Mdga}{\opnm{Mdga}}
\new{\Aaug}{\opnm{\Aa ug}}
\new{\Sh}{\opnm{Sh}}
\new{\Vect}{\opnm{Vect}}
\new{\Ext}{\opnm{Ext}}
\new{\cdga}{\opnm{cdga}}
\new{\Isom}{\opnm{Isom}}
\new{\Mod}{\opnm{Mod}}
\new{\Loc}{\opnm{Loc}}
\new{\Gm}{\GG_m}
\new{\set}[2]{\big\{ #1 \; \big| \; #2 \big\} }
\new{\mar}{\marginpar}
\new{\comment}[1]
{***\mar{see foot}\footnote{Comment: #1}}
\new{\Fun}{\opnm{Fun}}
\new{\Finst}{\m{Fin}_*}
\new{\lgl}{\langle}
\new{\rgl}{\rangle}
\new{\angles}[1]{\lgl #1 \rgl}
\new{\Mul}{\opnm{Mul}}
\new{\qq}{\langle}
\new{\pp}{\rangle}
\new{\Lan}{\opnm{Lan}}
\new{\qandq}{\quad \text{and} \quad}
\new{\Fin}{\m{Fin}}
\new{\Nn}{\mathcal{N}}
\new{\Ll}{\mathcal{L}}
\new{\Rr}{\mathcal{R}}
\newcommand{\one}{\mathbbm{1}}
\new{\onef}{\one^f}
\new{\GrVect}{\opnm{GrVect}}
\new{\GrAlg}{\opnm{GrAlg}}
\new{\un}{\m{un}}
\new{\uHom}{\opnm{\underline{Hom}}}
\new{\Gro}{\opnm{Gro}}
\new{\dR}{\m{dR}}
\new{\Ind}{\opnm{Ind}}
\new{\MHM}{\opnm{MHM}}
\new{\gr}{\opnm{gr}}
\new{\run}{\m{run}}
\new{\ct}{\m{ct}}
\new{\ctr}{\m{ctr}}
\new{\rat}{\opnm{rat}}
\new{\VV}{\mathbb{V}}
\new{\pvs}{\m{pvs}}
\new{\mf}{\m{f}}
\new{\mc}{\m{c}}
\new{\cell}{\m{cell}}
\new{\End}{\opnm{End}}
\new{\an}{\mathrm{an}}
\new{\ca}{\mathrm{ca}}
\newcommand{\MMm}{\mathcal{MM}}
\begin{document}
\maketitle

\begin{abstract}
The rational points of a smooth curve $X$ over a number field $k$ map to the set of augmentations of the associated motivic algebra. An expectation, related to Kim's conjecture, is that for $X$ hyperbolic, the set of augmentations which come locally at each place of $k$ from a point is equal to the set of rational points. Our view is that this should provide a relative of the Grothendieck section conjecture which may be both more accessible, and more directly applicable, than the latter. 

As a first step in this direction, we extend aspects of the ``Weil height machine'' to the set of such augmentations, and use this to prove a Manin--Dem'janenko-style finiteness result for motivic augmentations for particular curves. Along the way, we determine the structure of the cohomological motive of a $\Gm$-bundle over an algebraic variety as a highly structured algebra in the derived \oo-category of mixed motives with rational coefficients.
\end{abstract}

\setcounter{tocdepth}{1}
\tableofcontents

\section{Introduction}

In the 1980's, Alexander Grothendieck proposed a programme to study rational points on curves over number fields by means of their profinite \'etale fundamental groups \cite[pp285--293]{SchnepsGeometricGaloisactions}. This fundamental group is a functor from the category of connected schemes and morphisms thereof to the category of profinite groups and outer (continuous) homomorphisms\footnote{An outer homomorphism $G\to H$ is a homomorphism modulo the natural action of~$H$ on $\Hom(G,H)$ by conjugation. The reason we consider outer homomorphisms here is because we are neglecting the choice of basepoints in fundamental groups for the sake of exposition.}, so whenever~$X$ is a connected variety over a field~$k$, the structure map $X\to\Spec(k)$ induces an outer homomorphism
\begin{equation}\label{eq:structure_map_pi_1}\tag{$\ast$}
	\pi_1^\et(X) \to \pi_1^\et(\Spec(k)) = G_k
\end{equation}
on profinite \'etale fundamental groups, where~$G_k$ is the absolute Galois group of~$k$. A $k$-rational point on~$X$, being a splitting of the structure map $X\to\Spec(k)$, induces by functoriality a splitting of~\eqref{eq:structure_map_pi_1} in the category of profinite groups and outer homomorphisms. So the functoriality of the \'etale fundamental group provides a natural map
\begin{equation}\label{eq:section_map}\tag{$\dagger$}
	X(k) \to \Sec(X/k) \coloneqq \{\text{outer splittings of~\eqref{eq:structure_map_pi_1}}\} \,,
\end{equation}
which one can use to study $k$-rational points. This is the subject of Grothendieck's famous Section Conjecture.

\begin{conjecture}
	If~$k$ is a number field and~$X/k$ is a smooth projective curve of genus~$\geq2$, then~\eqref{eq:section_map} is bijective.
\end{conjecture}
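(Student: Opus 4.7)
The stated conjecture is a famous open problem, so any ``plan'' can at best describe the standard strategies and point to where they stall. I would split~\eqref{eq:section_map} into its injectivity and surjectivity, which are of very different depths.

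For injectivity, I would argue via the Jacobian $J$ of $X$. If $x,y\in X(k)$ induce the same outer section on $\pi_1^\et$, then the abelianised section lives in $\rH^1(G_k, T_\ell J)$ and, by functoriality, factors through the image of $J(k)$ under the Kummer map $J(k)\otimes\bZ_\ell\hookrightarrow \rH^1(G_k,T_\ell J)$. Comparing the contributions of $x$ and $y$ gives $[x]-[y]=0$ in $J(k)\otimes\bZ_\ell$, hence in $J(k)$ after varying $\ell$ and using Mordell--Weil together with finiteness of $J(k)_\tors$. Injectivity on $k$-points of an Abel--Jacobi embedding $X\hookrightarrow J$, which holds for genus $\geq 2$, then yields $x=y$.

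For surjectivity I would pursue the local-to-global strategy alluded to in the abstract. Given an outer section $s$, one restricts to each place $v$ to obtain a local section $s_v$; a local section conjecture, if available, would yield $x_v\in X(k_v)$ matching $s_v$, and a Hasse-principle-style input or Stix's system of pro-\'etale neighbourhoods attached to $s$ would then promote the adelic point $(x_v)_v$ to a global one. The parallel strategy, closer in spirit to the present paper, is the Kim programme: replace profinite sections by motivic or Selmer-style augmentations, compute their local images explicitly, and intersect against local conditions until only $X(k)$ survives.

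The main obstacle is surjectivity. It is completely open: no method currently produces even a single rational point from a purely group-theoretic section in any non-trivial case. Even the behaviour of sections over $p$-adic local fields is subtle and partly conjectural, so one is forced to work with refined objects like Selmer or motivic sections, and the best unconditional results are finiteness statements conditional on further deep conjectures. It is precisely this bottleneck that the paper's motivic Weil-height machinery is designed to attack, by providing the kind of finiteness input that a local-global programme on the motivic side requires.
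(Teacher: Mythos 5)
You have correctly identified that this statement is Grothendieck's Section Conjecture, which the paper does not prove: it is stated purely as motivation, and the authors explicitly describe it as far out of reach, so there is no proof in the paper to compare your proposal against. Your assessment is accurate as a survey rather than a proof — the injectivity half is indeed the standard known argument (abelianise, compare Kummer classes in $\rH^1(G_k,T_\ell J)$, use Mordell--Weil and finiteness of torsion, then injectivity of Abel--Jacobi on points for genus $\geq 2$), while surjectivity is completely open, and your description of the local-to-global and Kim/Selmer-style strategies matches the role the conjecture plays in motivating the paper's motivic augmentation framework.
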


Although the Section Conjecture is thought to be far out of reach with current methods, it nonetheless yields plentiful predictions about the section set $\Sec(X/k)$ which one could try to verify unconditionally. After all, if sections of~\eqref{eq:section_map} are the same as rational points, then anything we can prove or define for rational points should also be provable or definable for sections (cf., \cite{BettsStix,BettsKumpitschLuedtke}). The starting point of this paper is a desire to develop a good theory of heights for sections.

\begin{mquestion}
	If~$X$ is a smooth projective curve of genus~$\geq2$ over a number field~$k$ and~$L$ is a line bundle on~$X$, can one define a \emph{Weil height function}
	\[
	h_L\colon \Sec(X/k) \to \bR \,,
	\]
	extending the Weil height \cite[\S2.3]{bombieri-gubler:heights} and satisfying similar properties? Given such a height function, can one use it to prove finiteness of $\Sec(X/k)$ unconditionally in some cases?
\end{mquestion}


As stated, this goal too seems far beyond the reach of current techniques. Instead, in this paper we will be interested in the analogous question when the profinite \'etale fundamental group is replaced by a different homotopical invariant of a more motivic nature. Fix a field~$k$, and let~$\cD=\DM(k,\bQ)$ denote the $\infty$-category of Voevodsky motives with coefficients in~$\bQ$. The functor\footnote{This is a functor in the sense of $\infty$-categories, when~$\Sm_k$ is viewed as an $\infty$-category via the Cech nerve construction.}
\[
C^*\colon \Sm_k^\op \to \cD
\]
sending a smooth $k$-scheme~$X$ to its cohomological motive~$C^*(X)$ is symmetric monoidal with respect to the usual product on~$\Sm_k$ and the tensor product on~$\cD$, so~$C^*(X)$ carries a canonical commutative algebra structure, induced from the diagonal map~$X\to X\times X$ and the unique map $X\to\Spec(k)$. Consequently, the cohomological motive functor lifts to a functor
\[
C^*\colon \Sm_k^\op \to \cC \coloneqq \CAlg(\cD) \,;
\]
when viewed as a commutative algebra, we refer to~$C^*(X)$ as the \emph{motivic cochain algebra} of~$X$.

For any smooth variety~$X/k$, the structure map $X\to\Spec(k)$ induces by functoriality a map $\bQ(0)\to C^*(X)$. Any $k$-rational point on~$X$ induces, again by functoriality, a splitting of this map, i.e., a map $C^*(X)\to\bQ(0)$ in~$\cC$.

\begin{definition}
	If~$X/k$ is a smooth variety, we define~$\cAug(X/k)$ to be the space of maps~$C^*(X)\to\bQ(0)$ in~$\cC$, and write
	\[
	\Aug(X/k) \coloneqq \pi_0(\cAug(X/k)) = \Hom_{\ho(\cC)}(C^*(X),\bQ(0))
	\]
	for its set of connected components.
\end{definition}

In this notation, the above construction defines a function
\begin{equation}\label{eq:augmentation_map}\tag{$\dagger\dagger$}
	X(k) \to \Aug(X/k) \,,
\end{equation}
which we call the \emph{augmentation map}. Our view is that the motivic augmentation set $\Aug(X/k)$ may be thought of as a rational\footnote{Here, the word \textit{rational} refers to the fact that passage from $X$ to $C^*(X)$ is somewhat analogous to the process of \textit{rationalisation} in homotopy theory.} motivic homotopy analog of the section set~$\Sec(X/k)$. This view is informed, in large part, by the close connection between $\Aug(X/k)$ and the so-called \emph{Selmer variety} of Chabauty-Kim theory; see \cite{dancohen2021rational}\footnote{Unfortunately, the published version of \cite{dancohen2021rational} contains numerous errors of grammar and idiom; the reader should consider referring to the arXiv preprint \cite{DCRat_arXiv} instead.} and \cite{MRMHT} for more on this point of view. By contrast with the section conjecture, it seems likely that when~$X$ is a smooth projective curve of genus~$\geq2$ over a number field, the map \eqref{eq:augmentation_map} is injective but far from surjective; that is, passage from smooth $k$-schemes to motivic cochain algebras leads to an overabundance of points.

%
%

To compensate for this overabundance, we define a notion of \textit{local geometricity}\footnote{The corresponding property for sections played a crucial role in \cite{BettsStix} and \cite{BettsKumpitschLuedtke}, where it was referred to as being \emph{Selmer}.} for motivic augmentations: if~$k$ is a global field, or an algebraic extension of such, then an augmentation in $\Aug(X/k)$ is locally geometric just when its image in $\Aug(X_{k_v}/k_v)$ is the augmentation attached to a $k_v$-rational point of~$X$ for all places~$v$ of~$k$. (See Definition~\ref{def:locally_geometric}.) Notably, compared with \cite{BettsStix}, we require this local geometricity condition even when~$v$ is an infinite place. The motivic augmentation attached to a $k$-rational point is automatically locally geometric, and so the augmentation map~\eqref{eq:augmentation_map} restricts to a map
\begin{equation}\label{eq:lg_augmentation_map}
	\tag{$\dagger\dagger \dagger$}
	X(k) \to \Aug(X/k)^\lgeom \,.
\end{equation}
Our belief is that it is this map which should be bijective.

\begin{conjecture}\label{conj:lg_augmentation_conjecture}
	\eqref{eq:lg_augmentation_map} is bijective whenever~$X$ is a smooth projective curve of genus~$\geq2$ over a finitely generated field~$k$.
\end{conjecture}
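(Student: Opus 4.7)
The plan is to split the conjecture into injectivity and surjectivity and attack them separately, using the Weil height machine developed in this paper as the central tool for the surjectivity direction. For injectivity, the key observation is that motivic augmentations map to more classical invariants via realisation functors from $\cD=\DM(k,\bQ)$: $\ell$-adic realisation produces compatible Galois-equivariant data which, for a hyperbolic curve, should recover the underlying \'etale section. I would therefore reduce injectivity of \eqref{eq:lg_augmentation_map} to injectivity of the \'etale section map $X(k)\to\Sec(X/k)$ for curves of genus $\geq 2$, which is a theorem of Mochizuki--Stix over $p$-adic local fields and propagates to finitely generated fields via a standard local-to-global argument at a single well-chosen place.

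For surjectivity, which is by far the main obstacle, I would mimic the non-abelian Chabauty--Kim strategy in a motivic setting. Given a locally geometric augmentation $\alpha\in\Aug(X/k)^\lgeom$, the Weil height machine provides height functions $h_L\colon\Aug(X/k)\to\bR$ attached to line bundles $L$ on $X$, functorial in morphisms and quasi-linear in $L$. The first step is to leverage local geometricity to bound $h_L(\alpha)$ in terms of purely local data; a Northcott-type argument then cuts $\Aug(X/k)^\lgeom$ down to a finite set, at least after restricting to an ample $L$. The structural result on $\Gm$-bundle motives stated in the abstract should allow one to transport such height inequalities along maps from $X$ to simpler varieties (generalised Jacobians, or $\bP^1$ with marked punctures), thereby producing Manin--Dem'janenko-style quadratic constraints on $\Aug(X/k)^\lgeom$; this is the route by which the paper's finiteness results for particular curves are obtained.

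The real obstacle, and what prevents this strategy from immediately yielding the full conjecture, is the final matching step: after bounding the height, one must show that \emph{every} remaining locally geometric augmentation actually lies in the image of $X(k)$. In the Chabauty--Kim analogue, this is precisely the content of Kim's conjecture, which remains deeply open. A purely motivic version of the argument would need to extract a global obstruction from the commutative algebra structure of $C^*(X)$ whose zero locus cuts out exactly the image of $X(k)$; candidate obstructions live in iterated $\Ext$-groups of mixed motives, whose vanishing on a given augmentation should characterise rationality. Unless one can obtain sufficiently strong constraints from the motivic Galois group--perhaps via a conjectural identification of certain motivic cohomology groups with Selmer groups--closing the gap between locally geometric and genuinely rational augmentations appears to require a new input at least as strong as Kim's conjecture itself, which is why the present paper proves only particular instances of finiteness rather than the full conjecture.
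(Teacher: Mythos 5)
The statement you are asked to prove is Conjecture~\ref{conj:lg_augmentation_conjecture}, which the paper itself leaves open: it is presented as a motivic relative of the Grothendieck Section Conjecture and of Kim's conjecture, and the paper proves only partial results in its direction (the Weil height machine of Theorem~\ref{thm:weil_heights}, the Northcott-type statements \ref{thm:northcott_abelian} and \ref{thm:northcott_finite_subscheme}, and the Manin--Dem'janenko Theorem~\ref{thm:manin-demjanenko}). Your proposal is therefore not comparable to a proof in the paper, and more importantly it is not a proof at all: as you yourself concede in the final paragraph, the ``matching step'' --- showing that every locally geometric augmentation of bounded height actually comes from a $k$-rational point --- requires input at least as strong as Kim's conjecture. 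That step \emph{is} the conjecture; a strategy that bounds the candidate set and then appeals to an unproved identification has not established bijectivity.

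There are also concrete problems with the intermediate reductions. For injectivity, an augmentation of $C^*(X)$ with rational coefficients only carries pro-unipotent (rationalised) information --- the paper's realisations land in torsors under $\bQ_l$-pro-unipotent fundamental groups, not in the set of profinite outer sections --- so $l$-adic realisation cannot ``recover the underlying \'etale section'', and injectivity of the profinite section map does not transfer; indeed the paper only says it ``seems likely'' that~\eqref{eq:augmentation_map} is injective and proves nothing of the sort (note that the fibres of $J(k)\to\Aug(J/k)$ are cosets of $J(k)_\tors$, so even injectivity after composing with Abel--Jacobi fails at the level of the Jacobian). For surjectivity, the height-boundedness you invoke is only proved under the Manin--Dem'janenko hypothesis that some simple isogeny factor $A$ of $J$ occurs with multiplicity exceeding $\rk A(k)$; for a general curve of genus $\geq2$ no such bound on $h_L$ over $\Aug(X/k)^\lgeom$ is available, and even when it is, the paper's Northcott results only give finiteness of the image in $\Aug(J/k)$ (or containment in a finite subscheme locally), never the equality with $X(k)$ that the conjecture demands. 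So the proposal should be regarded as a (partially inaccurate) survey of the paper's actual partial results, not as a proof of the statement.
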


In this vein, one might hope to study $\Aug(X/k)^\lgeom$ using analogs of techniques used in the study of rational points. Specifically, one might hope to be able to define, in some reasonable level of generality, the \emph{height} of a point in~$\Aug(X/k)^\m{l.g.}$ with respect to a line bundle on~$X$. This is exactly what our main result achieves.

Let us now assume that~$k$ is a global field and that~$X$ is a smooth projective curve\footnote{For this paper, a curve is a one-dimensional geometrically integral separated $k$-scheme of finite type.} of genus~$\geq1$. The so-called \textit{Weil height machine} \cite[Theorem~2.3.8]{bombieri-gubler:heights} consists of a group homomorphism
\[
h\colon \Pic(X_\kbar) \to \frac{\{\text{functions $X(\kbar)\to\bR$}\}}{\{\text{bounded functions}\}} \,.
\]


\begin{Theorem*}[See \ref{thm:weil_heights} below.]
	There exists a canonical homomorphism
	\[
	h_\m{motivic}: \Pic(X_\kbar) \to \frac{\{\text{functions $\Aug(X_\kbar/\kbar)^\lgeom\to\bR$}\}}{\{\text{bounded functions}\}} \,,
	\]
	natural in $X$, lifting the Weil height machine~$h$ along the evident map
	\[
	\frac{\{\text{functions $\Aug(X_\kbar/\kbar)^\lgeom\to\bR$}\}}{\{\text{bounded functions}\}} \to \frac{\{\text{functions $X(\kbar)\to\bR$}\}}{\{\text{bounded functions}\}}
	\]
\end{Theorem*}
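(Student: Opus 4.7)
The plan is to mimic the classical construction of the Weil height machine, with $\kbar$-points replaced by locally geometric augmentations throughout. Since $\Pic(X_\kbar)$ is generated as a group by very ample line bundles, it suffices to define $h_\m{motivic}(L)$ for $L$ very ample and to verify additivity $h_\m{motivic}(L\otimes L') = h_\m{motivic}(L) + h_\m{motivic}(L')$ modulo bounded functions; the general case then follows formally by extension to differences. Naturality in $X$ will be automatic from the functoriality of the cochain algebra functor $C^*$, and compatibility with the classical $h$ will follow because a rational point $P \in X(\kbar)$ induces an augmentation $\alpha_P$ whose local points at each place $v$ are just the images of $P$ in $X(\kbar_v)$.

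To construct $h_\m{motivic}(L)$, I would work directly on $X_\kbar$ using the $\Gm$-bundle $L^\times \to X_\kbar$ attached to $L$, exploiting the abstract's structure theorem for the cohomological motive of a $\Gm$-bundle. That theorem describes $C^*(L^\times)$ as a highly structured commutative algebra over $C^*(X_\kbar)$ in $\cD$, with the structure determined by the class $[L]\in\Pic(X_\kbar)\otimes\bQ$ viewed inside motivic cohomology. Pulling back along an augmentation $\alpha\colon C^*(X_\kbar) \to \bQ(0)$ produces a structured algebra over $\bQ(0)$ which encodes ``the class of $L$ at $\alpha$''. When $\alpha$ is locally geometric, at each place $v$ its image $\alpha_v$ is $\alpha_{P_v}$ for a point $P_v\in X(\kbar_v)$, and the extracted local class corresponds to the pulled-back line bundle $P_v^*L$, trivialised up to a scalar in $\kbar_v^\times\otimes\bQ$. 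Setting $h_{L,v}(\alpha)\in\bR$ to be the classical local Weil height computed from this scalar, the function
\[
h_\m{motivic}(L)(\alpha) \coloneqq \sum_v h_{L,v}(\alpha)
\]
is then the desired height; it is well-defined modulo bounded functions, with convergence of the sum following from the almost-everywhere triviality of the local classes (for $L$ corresponding to a divisor $D$, only places in the support of $D$ or of bad reduction contribute non-trivially).

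Additivity then reduces to the classical additivity of local Weil heights, which in turn follows from the tensor compatibility built into the structure theorem applied to $L^\times \times_{X_\kbar} (L')^\times$. The principal obstacle will be making the extraction step above fully precise: identifying, at each place $v$, the abstract output of the $\Gm$-bundle structure theorem with the concrete idele-theoretic invariant in $\kbar_v^\times\otimes\bQ$ that defines the classical local height. This requires a careful comparison of the motivic structure with its realisations at each place, including both the non-archimedean and archimedean ones (the latter because the definition of local geometricity in \S\ref{def:locally_geometric} includes infinite places). A subsidiary difficulty is proving that the extracted local classes are trivial at all but finitely many places, which is the motivic counterpart of the statement that algebraic points are integral away from the support of the divisor. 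Once these realisation and finiteness comparisons are established, everything else should follow by routine adaptation of the classical Weil height machine arguments.
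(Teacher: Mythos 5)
Your overall strategy — define $h_\m{motivic}(L)$ directly on the curve as a sum over places of local terms extracted from the $\Gm$-bundle $L^\times\to X_\kbar$ and the local points provided by local geometricity — has a genuine gap at exactly the two points you flag as "obstacles", and these are not deferrable technicalities: they are where the content of the theorem lies. First, the local term $h_{L,v}(\alpha)$ is not a well-defined real number. What the $\Gm$-bundle computation gives you at a place $v$ is at best an element of a set acted on by $\kbar_v^\times\otimes\bQ$; extracting a "scalar" requires a basepoint (a chosen lift of $\alpha_v$, or a chosen metric/trivialisation of $L$ at $v$), which the augmentation does not supply. Worse, the simply-transitive torsor description of the augmentation set of a $\Gm$-torsor is proved in the paper only over (semi)abelian bases (Corollary~\ref{w24d}), using the discreteness of $\cAug(A/k)$ coming from Theorem~\ref{10OctA}; over a curve base one only has a pseudotorsor (Theorem~\ref{e3}), so even the fibrewise ambiguity is not controlled. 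Second, the convergence claim is unjustified: local geometricity only says that each $\alpha_v$ comes from \emph{some} point $P_v\in X(\kbar_v)$, chosen place by place. Nothing forces these points to be integral away from a divisor representing $L$; they may meet the reduction of its support at infinitely many places, so "only places in the support of $D$ or of bad reduction contribute" is false as stated, and neither convergence of $\sum_v h_{L,v}(\alpha)$ nor its independence (modulo bounded functions) of the choices of $P_v$ and of local metrics is established. For a general ample $L$ there is no mechanism in your argument that produces bounds $B_v$ on the local contributions which vanish for almost all $v$ uniformly in $\alpha$.

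The paper avoids both problems by never defining the height as a sum of local terms on the curve. It sets $h_L(\alpha)=\tfrac1m\hat h_M(f(\alpha))$, where $f\colon X_\kbar\hookrightarrow J_\kbar$ is Abel--Jacobi and $f^*M\simeq L^{\otimes m}$ (Lemma~\ref{lem:presentations_of_line_bundles}); this is meaningful on augmentations because $\Aug(J_\kbar/\kbar)=J(\kbar)\otimes\bQ$ (Theorem~\ref{10OctA}), so the classical N\'eron--Tate height extends for free. The $\Gm$-torsor computation, the canonical metrics (which exist because $M$ lives on an abelian variety), and the local decomposition (Proposition~\ref{prop:local_decomposition}) enter only in proving well-definedness, i.e., Lemma~\ref{lem:trivial_pullback_implies_bounded_height}: when $f^*M$ is \emph{trivial}, the curve lifts into $M^\times$, spreads out integrally over $\cO_{k',S}$, and the canonical local heights are uniformly bounded with bounds equal to zero at almost all places. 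It is precisely the triviality of the pullback (empty divisor on $X$) that makes the "finitely many places contribute" argument legitimate there; this is the geometric trick your direct approach lacks for general $L$. To repair your construction you would in effect have to route through the Jacobian and its canonical heights anyway, at which point you recover the paper's proof.
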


We refer to $h_\m{motivic}$ as the \emph{motivic Weil height machine}. Starting now, we drop the subscript `motivic'. For $L$ a line bundle on~$X_{\kbar}$, we let $h_L\colon\Aug(X/\kbar)^\lgeom\to\bR$ denote any representative of the image of $h$, and we refer to $h_L$ as a \emph{height function associated to $L$}.


%
%

The usefulness of our motivic Weil height machine is best judged through what it enables us to prove about the \emph{a priori} rather mysterious set~$\Aug(X/k)^\lgeom$. To this end, we prove an analog of the Manin--Dem'janenko Theorem \cite[Theorem~1]{Maninptorsion}, giving certain criteria under which we can \emph{almost} prove that $\Aug(X/k)^\lgeom$ is finite.

\begin{Theorem*}[See \ref{thm:manin-demjanenko} below]
	Let $X$ be a smooth projective curve over a number field $k$, and let~$J$ be the Jacobian of~$X$. Assume for simplicity that~$X$ has a $k$-rational point, and embed~$X$ inside~$J$ via the Abel--Jacobi map. Suppose there exists a simple abelian variety $A$ over $k$ whose multiplicity as an isogeny factor of~$J$ is strictly greater than $\rk A(k)$. Then the image of~$\Aug(X/k)^\lgeom$ inside~$\Aug(J/k)$ is finite.
\end{Theorem*}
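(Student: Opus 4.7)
The plan is to transport the classical Dem'janenko--Manin argument to the motivic setting, using the motivic Weil height machine in place of the classical one. I would begin by extracting from the isogeny $J \sim A^m \times B$ a tuple $\pi_1,\ldots,\pi_m\colon J \to A$ of morphisms that are $\bQ$-linearly independent in $\Hom(J,A)\otimes\bQ$. By functoriality of $C^*$, of $\Aug$, and of the local-geometricity condition, each $\pi_i$ induces a map $\pi_{i,*}\colon\Aug(J/k)^\lgeom \to \Aug(A/k)^\lgeom$, and the image $S$ of $\Aug(X/k)^\lgeom$ under the map induced by Abel--Jacobi is already contained in $\Aug(J/k)^\lgeom$. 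Fix an ample symmetric line bundle $L$ on $A$ and set $L_J = \bigotimes_i \pi_i^*L$; by the naturality clause of the motivic Weil height machine,
\[
h_{L_J}(\beta) \;=\; \sum_{i=1}^m h_L(\pi_{i,*}\beta) + O(1) \qquad (\beta \in \Aug(J/k)^\lgeom).
\]
Since $X$ spans $J$ under Abel--Jacobi, each $\pi_i|_X$ is non-constant, so $L_J|_X$ is ample on $X$ and $h_{L_J}|_S$ is comparable modulo bounded functions to any ample Weil height on $X$.

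The heart of the argument is the Dem'janenko rank trick. Classically, one uses Mordell--Weil and Neron--Tate on $A$ to present $A(k)/A(k)_\m{tors}$ as a finitely generated abelian group of rank $r = \rk A(k)$ equipped with a positive-definite quadratic form; a Gram-matrix / Cauchy--Schwarz estimate, combined with the hypothesis $r < m$ and the linear independence of the $\pi_i$, then bounds $\sum_i h_L(\pi_{i,*}\beta)$ uniformly for $\beta\in S$. The motivic version requires analogous inputs for $\Aug(A/k)^\lgeom$: a motivic Mordell--Weil statement identifying it with (a bounded extension of) a finitely generated abelian group of rank $r$, together with a positive-definite quadratic-form structure on $h_L$ coming from the symmetry of $L$. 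Granting these, the classical rank estimate transfers and forces $h_{L_J}$ to be bounded on $S$; a Northcott property for $\Aug(J/k)^\lgeom$ --- or equivalently, finiteness of the fibres of the induced map $S \to \Aug(A^m/k)^\lgeom$ combined with Northcott on $A$ --- then yields finiteness of $S$.

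The main obstacle will be establishing the motivic Mordell--Weil and Northcott statements for locally geometric augmentations of an abelian variety. These should follow from the group structure that the group law of $A$ induces on $\Aug(A/k)$, together with the integrality forced at every place (archimedean places included) by local geometricity. Making this precise will require a detailed analysis of the motivic cochain algebra of $A$ --- exploiting, for instance, that $C^*(A)$ is essentially an exterior algebra on $H^1(A)$ in $\cD$ --- and the archimedean local-geometricity condition will play a nontrivial role in pinning down a lattice rather than merely a $\bQ$-vector space of augmentations. With these inputs in hand, the Dem'janenko rank trick, together with the additivity and naturality of the motivic Weil height machine, should yield the theorem with only routine modifications.
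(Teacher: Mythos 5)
Your overall strategy --- transporting the Dem'janenko--Manin rank trick to augmentations via the motivic height machine and finishing with a Northcott-type statement --- is the same as the paper's, but the two places where you locate the remaining work are exactly where the proposal goes astray. First, no ``motivic Mordell--Weil'' statement for $\Aug(A/k)^\lgeom$ is needed, and the one you envisage (a lattice pinned down by archimedean local geometricity, extracted from a finer analysis of $C^*(A)$) is not available and not used: what the rank trick actually requires is only that $\Aug(A/k)=A(k)\otimes\bQ$ is a $\bQ$-vector space of dimension $\rk A(k)$, which holds for \emph{all} augmentations (Proposition~\ref{10OctA}) and has nothing to do with local geometricity. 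The positive-definiteness enters on the geometric side, via the degree form on $A(X)/A(k)$ (equivalently on the span of your $\pi_i$), combined with the asymptotics $h_{f^*M}(\alpha)=\tfrac{\deg(f^*M)}{\deg(L)}h_L(\alpha)+O(1+|h_L(\alpha)|^{1/2})$ of Lemma~\ref{lem:degree_bound}; these make the Gram determinant of $\bigl(\pi_1(\alpha),\dots,\pi_m(\alpha)\bigr)$ with respect to the N\'eron--Tate pairing grow like $c\,h_L(\alpha)^m$ with $c>0$, while the dimension count $\rk A(k)<m$ forces that determinant to vanish, so $h_L$ is bounded on the image of $\Aug(X/k)^\lgeom$. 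You gesture at this estimate but leave it conditional on the lattice statement, which is a red herring; the estimate itself (with the $O(h^{1/2})$ error terms, not $O(1)$) is never carried out.

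Second, and more seriously, bounded height does not by itself give finiteness: $A(k)\otimes\bQ$ contains infinitely many classes of bounded N\'eron--Tate height (e.g.\ all $\tfrac1n x$), so classical Northcott cannot be invoked directly, and your plan to obtain the needed Northcott statement from ``integrality forced at every place'' plus the structure of the motivic cochain algebra of $A$ does not supply the missing ingredient. The paper's finiteness step (Theorem~\ref{thm:northcott_abelian}) rests on Lemma~\ref{lem:contained_in_lattice}: the image of locally geometric augmentations of the curve in $A(k')\otimes\bQ$ lies in $\tfrac1n\bigl(A(k')/A(k')_\tors\bigr)$ for an $n$ uniform over bounded-degree extensions, and the proof is genuinely arithmetic --- Stoll's uniform bound on the kernel of the localisation map on $\rH^1(G_{k'},A[m])$ (Lemma~\ref{lem:kernel_of_localisation}) together with Raynaud's theorem to control the torsion ambiguity at each place; the archimedean local-geometricity condition plays no role there. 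Without this (or a substitute), your final step from boundedness of $h_{L_J}$ on $S$ to finiteness of the image in $\Aug(J/k)$ does not go through.
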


Examples of curves~$X$ for which the assumption of the theorem holds include the modular curves~$X_0(p^n)$ and~$X_1(p^n)$ for $n\gg0$ \cite[Lemma~2]{Maninptorsion}.
\begin{remark}
	We will see shortly that the fibres of the augmentation map $J(k) \to \Aug(J/k)$ are finite (they are cosets of~$J(k)_\tors$). Thus, the map $\Aug(X/k)^\lgeom\to\Aug(J/k)$ having finite image implies that~$X(k)$ is finite, and in this way our motivic Manin--Dem'janenko Theorem generalises the classical version.
\end{remark}


The construction of our motivic Weil height machine is based on the observation that, while the motivic augmentation set~$\Aug(X/k)$ of a curve~$X$ is rather mysterious, curves possess maps to other varieties whose motivic augmentations sets are much more computable. One such computation is a direct consequence of work of Ancona et al. \cite{AnconaHuberI} and Iwanari \cite{Iwanari}:

\begin{Theorem*}[See \ref{10OctA} below]
	Let~$k$ be a field and~$A/k$ an abelian variety. Then we have
	\[
	\Aug(A/k) = A(k)\otimes\bQ \,.
	\]
\end{Theorem*}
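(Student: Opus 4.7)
The plan is to reduce the computation of $\Aug(A/k)$ to a motivic cohomology computation via a free commutative algebra description of $C^*(A)$, and then identify that motivic cohomology group with $A(k)\otimes\bQ$ through the theory of 1-motives.

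The first and key step appeals directly to the cited work of Ancona et al.\ and Iwanari. For an abelian variety $A/k$, the motivic cochain algebra $C^*(A) \in \CAlg(\cD)$ is equivalent to the free graded-commutative algebra $\Sym(V)$, where $V$ is the motivic $H^1$ of $A$ (the Chow motive $h^1(A)$, placed in the appropriate cohomological degree). This upgrades the classical exterior-algebra decomposition $H^*(A,\bQ) = \wedge^* H^1(A,\bQ)$ to an $E_\infty$-algebra equivalence, and in particular encodes the formality of $C^*(A)$ as a commutative algebra in motives.

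Given this, the computation proceeds by formal manipulations. The free-forgetful adjunction between $\CAlg(\cD)$ and $\cD$ gives an equivalence of mapping spaces $\cAug(A/k) \simeq \opnm{Map}_\cD(V,\bQ(0))$. Passing to $\pi_0$ and dualising, $\Aug(A/k) \cong \Hom_{\ho(\cD)}(\bQ(0), V^\vee[1])$, which is a piece of the motivic cohomology of $k$ with coefficients in the motivic $H_1$ of $A$, namely Deligne's 1-motive $M_1(A) = [0 \to A]$. The final identification then uses Orgogozo's embedding of the derived category of Deligne 1-motives with $\bQ$-coefficients as a full subcategory of $\DM(k,\bQ)$; this reduces the $\Ext^1$ to a computation in the abelian category of 1-motives, where extensions of $[\bZ \to 0]$ by $[0 \to A]$ are precisely the 1-motives $[\bZ \xrightarrow{1\mapsto P} A]$ indexed by $P \in A(k)$, giving $A(k)\otimes\bQ$ after rationalising.

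I expect the main obstacle to lie in the first step: establishing the freeness of $C^*(A)$ as a \emph{commutative algebra} in $\DM(k,\bQ)$, rather than merely as an object of the underlying $\infty$-category. This is precisely the content of Iwanari's work, building on the motivic Chow--Künneth decomposition of Ancona et al.; it encodes both the formality of the motivic cochain algebra of an abelian variety and the correct identification of its multiplicative structure with the Koszul-symmetric algebra on $H^1$. The remaining identifications in the argument are essentially routine given this input.
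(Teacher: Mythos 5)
Your proposal is correct in substance and, in its first two steps, coincides with the paper's argument for Proposition \ref{10OctA}: Iwanari's theorem that $C^*(A)\simeq\Sym M_1(A)^\vee$ is \emph{free} as a commutative algebra, followed by the free--forgetful adjunction and dualisability of $M_1(A)$, reduces everything to the single computation $\Aug(A/k)=\Hom_{\ho\DM(k,\QQ)}(\QQ(0),M_1(A))$. Where you genuinely diverge is the last identification. The paper stays inside the sheaf-theoretic description of $M_1(A)$ from Ancona et al.: $M_1(A)$ is the homotopy-invariant \'etale sheaf $A\otimes\QQ$ placed in degree $0$, and the full faithfulness of the chain (homotopy-invariant sheaves) $\to \DM^{\mathrm{eff}}(k,\QQ)\to\DM(k,\QQ)$ collapses the Hom group to $\Hom_{\Sh_\et(\Sm_k)}(\QQ,A\otimes\QQ)=A(k)\otimes\QQ$; this argument lifts verbatim to the space level (Theorem \ref{10Oct1}), giving discreteness of $\Aaug(A/k)$, and it covers semiabelian varieties with no extra work. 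You instead invoke Orgogozo's embedding of the derived category of Deligne $1$-motives up to isogeny and compute $\Ext^1\bigl([\ZZ\to0],[0\to A]\bigr)\otimes\QQ=A(k)\otimes\QQ$. This is a valid alternative, with two caveats. First, the shift bookkeeping in your intermediate formulas is slightly off: if $V$ is normalised so that $C^*(A)=\Sym V$ (i.e.\ $V$ realises to $H^1(A)[-1]$), the adjunction gives $\Hom_{\ho\Dd}(\QQ(0),V^\vee)$ with no extra $[1]$; your displayed $V^\vee[1]$ tacitly takes $V=h^1(A)$ unshifted. Second, whether the resulting group appears as a $\Hom$ or an $\Ext^1$ of $1$-motives depends on the normalisation of the embedding functor, so one must fix conventions (e.g.\ $[\ZZ\to0]\mapsto\QQ(0)$, $[0\to\Gm]\mapsto\QQ(1)$) before asserting it is the $\Ext^1$ you describe; with the correct normalisation your computation of extensions $[\ZZ\xrightarrow{1\mapsto P}A]$ does give $A(k)\otimes\QQ$. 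What your route buys is a conceptually transparent reason for the appearance of $A(k)$; what the paper's buys is lighter input (no $1$-motive embedding theorem), the space-level refinement, and the semiabelian case, and in both approaches one should still note that the resulting identification is induced by the augmentation map $A(k)\to\Aug(A/k)$, as the proposition asserts.
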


A second such computation is as follows:

\begin{Theorem*}[See \ref{w24d} below]
	Let~$k$ be a field, $A/k$ an abelian variety, and~$M^\times$ a torsor under~$\Gm$ over~$A$. Then $\Aug(M^\times/k)$ is a torsor under~$k^\times\otimes\bQ$ over~$A(k)\otimes\bQ$.
\end{Theorem*}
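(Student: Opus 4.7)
The plan is to combine the preceding theorem $\Aug(A/k) = A(k) \otimes \bQ$, the structural theorem on the motive of a $\Gm$-bundle advertised in the abstract, and Hilbert~90 for $\Gm$.

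First, by functoriality of $\Aug(-/k)$, the projection $\pi\colon M^\times \to A$ induces a map $\pi_*\colon \Aug(M^\times/k) \to \Aug(A/k) = A(k) \otimes \bQ$. Since $C^*$ is symmetric monoidal, $\Aug(-/k)$ sends products to products, so the $\Gm$-action $\mu\colon \Gm \times_k M^\times \to M^\times$ induces an action of the group $\Aug(\Gm/k)$ on $\Aug(M^\times/k)$. A direct computation identifies $\Aug(\Gm/k) = k^\times \otimes \bQ$: writing $C^*(\Gm) \simeq \bQ(0) \oplus \bQ(-1)[-1]$ as a trivial square-zero extension (the multiplication on the weight-one piece vanishes for degree reasons), augmentations correspond to derivations, i.e.\ to elements of $\Hom_\cD(\bQ(-1)[-1], \bQ(0)) = k^\times \otimes \bQ$ via Bloch's identification of the motivic cohomology group $H^{1,1}(k, \bQ)$ with $k^\times \otimes \bQ$. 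Equivariance of $\pi$ for the trivial $\Gm$-action on $A$ forces $\pi_*$ to be equivariant for the trivial $\Aug(\Gm/k)$-action on its target, so the action preserves the fibers of $\pi_*$.

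Next I invoke the structural theorem on the cohomological motive of a $\Gm$-bundle as a commutative algebra in $\cD$, which is the main motivic input of the paper. The consequence I need is that for any augmentation $a\colon C^*(A) \to \bQ(0)$, the base-changed cdga $C^*(M^\times) \otimes^{\mathrm{L}}_{C^*(A)} \bQ(0)$ is equivalent to $C^*(\Gm_k)$. Geometrically this reflects the triviality of the fiber $M^\times_a$ as a $\Gm$-torsor over $\Spec(k)$ by Hilbert~90; motivically it corresponds to the vanishing of $a^* c_1(L)$ in $H^{2,1}_{\opnm{mot}}(k, \bQ) = 0$. By the $(C^*(A) \otimes^{\mathrm{L}}_{-} \bQ(0))$--$(\mathrm{restriction})$ adjunction, the fiber of $\pi_*$ over $a$ is identified with $\Aug(\Gm_k/k) = k^\times \otimes \bQ$, non-canonically as a set, and the induced $\Aug(\Gm/k)$-action matches the standard translation action of $k^\times \otimes \bQ$ on itself; in particular each fiber is a $(k^\times \otimes \bQ)$-torsor, and $\pi_*$ is surjective since every fiber is non-empty.

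The principal obstacle is the structural theorem itself. The underlying module-theoretic decomposition of $C^*(M^\times)$ is a standard Gysin-type statement, but rigidifying this to a fully coherent $E_\infty$-algebra description in $\DM(k, \bQ)$---one stable under base change along arbitrary augmentations of $C^*(A)$---requires a delicate rectification argument. This is the technical core of the motivic half of the paper, and is precisely what allows the formal manipulations above to acquire content.
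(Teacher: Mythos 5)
Your route is viable and, at bottom, uses the same inputs as the paper: Theorem \ref{30428Aa}, the computation for semiabelian varieties, and the vanishing $\Hom_{h\Dd}\big(\QQ(-1)[-2],\QQ(0)\big)=0$ (which is the same vanishing the paper invokes as $K_0^{(1)}(k)=0$ in Corollary \ref{w24d}). The packaging differs: the paper never base-changes $C^*(M^\times)$ along individual augmentations; instead it turns the exact triangle of mapping spectra into a pseudotorsoric module structure (Proposition \ref{ts3}) and transports it through the pullback square \ref{w24b}(*) supplied by Theorem \ref{30428Aa}. Your fiberwise identification $C^*(M^\times)\otimes_{C^*(A)}\QQ(0)\simeq C^*(\Gm)$ does follow from Theorem \ref{30428Aa} together with a choice of null-homotopy of $a\circ c_1$, base change of the relatively free algebra, and the vanishing of higher symmetric powers of $\QQ(-1)[-1]$, so this part can be made rigorous.

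There are, however, two real gaps. First, you identify the fibre of the map of \emph{sets} $\Aug(M^\times/k)\to\Aug(A/k)$ with $\pi_0$ of the homotopy fibre of $\Aaug(M^\times/k)\to\Aaug(A/k)$. These agree, and the induced action is \emph{simply} transitive rather than merely transitive, only because the space $\Aaug(A/k)$ is discrete; otherwise $\pi_1(\Aaug(A/k),a)$ acts on $\pi_0$ of the homotopy fibre and the set-theoretic fibre is the quotient by that action, so faithfulness can fail. The set-level statement $\Aug(A/k)=A(k)\otimes\QQ$ (Proposition \ref{10OctA}) that you cite is not sufficient; you need the space-level discreteness of Theorem \ref{10Oct1}, which is exactly how the paper argues (via paragraph \ref{t23bc}). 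Second, the assertion that your identification of the fibre with $\Aug(\Gm/k)$ intertwines the geometric $\Aug(\Gm/k)$-action with translation is stated but not proved: the equivalence depends on a choice of null-homotopy of $a\circ c_1$ (equivalently, a trivializing augmentation), and this equivariance \emph{is} the torsor claim. It can be repaired, e.g.\ by composing the induced coaction $F_a\to F_a\otimes C^*(\Gm)$ on $F_a=C^*(M^\times)\otimes_{C^*(A)}\QQ(0)$ with a chosen augmentation of $F_a$ to produce a coaction-compatible equivalence $F_a\simeq C^*(\Gm)$; the paper avoids having to do this by proving transitivity and faithfulness on fibres once and for all in its pseudotorsoric formalism (\S\ref{ts4}, \ref{foptlm}, \ref{t23bc}).
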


This computation of the motivic augmentation set $\Aug(M^\times/k)$ relies, in turn, on a computation of the motivic cochain algebra $C^*(M^\times)$; many of the pages below are devoted to this computation. We'll return to this motivic aspect of our work later in the introduction.

Our strategy for defining the motivic Weil height associated to a line bundle $L$ on $X_\kbar$ is then as follows. After embedding $X_\kbar$ inside $J_\kbar$ via the Abel--Jacobi map, some power $L^{\otimes m}$ of the line bundle~$L$ is the restriction of a line bundle~$M$ on~$J_\kbar$. We are going to define the motivic Weil height~$h_L$ to be the composition
\begin{equation}\label{eq:height_composition}
\Aug(X_\kbar/\kbar)^\lgeom \to \Aug(J_\kbar/\kbar) = J(\kbar)\otimes\bQ \xrightarrow{\frac1m\hat h_M} \bR \,,
\end{equation}
where~$\hat h_M$ denotes the usual N\'eron--Tate height function associated to~$M$. (Because the N\'eron--Tate height~$\hat h_M$ is a quadratic function $J(\kbar)\to\bR$, it extends uniquely to a quadratic function $J(\kbar)\otimes\bQ\to\bR$.)

There is a significant difficulty which needs to be overcome here, namely that the line bundle~$M$ which gives rise to~$L$ need not be unique, and so it is not clear that the composition~\eqref{eq:height_composition} gives a well-defined function on~$\Aug(X_\kbar/\kbar)^\lgeom$, even up to bounded functions. Equivalently, one needs to show that if~$M$ is a line bundle on~$J_\kbar$ whose restriction to~$X_\kbar$ is trivial, then the composition~\eqref{eq:height_composition} is a bounded function.

We prove this by combining a geometric trick with our second motivic computation. If~$M^\times\to J_\kbar$ denotes the $\Gm$-torsor corresponding to the line bundle~$M$, then the fact that~$M$ trivialises over~$X_\kbar$ implies that the Abel--Jacobi embedding $X_\kbar\hookrightarrow J_\kbar$ lifts to an embedding $X_\kbar \hookrightarrow M^\times$. Then, using our description of~$\Aug(M^\times/\kbar)$, we show that the composition $\Aug(M^\times/\kbar)\to\Aug(J_\kbar/\kbar)\xrightarrow{\hat h_M}\bR$ decomposes as an integral of local height functions
\[
\lambda_{M,v}\colon \Aug(M^\times_{\kbar_v}/\kbar_v)\to \bR \,,
\]
indexed over the places~$v$ of~$\kbar$, including the infinite places. Using the usual properties of local heights, we find that each composition
\[
\Aug(X_\kbar/\kbar)^\lgeom \to \Aug(M^\times/\kbar) \to \Aug(M^\times_{\kbar_v}/\kbar_v) \xrightarrow{\lambda_{M,v}} \bR
\]
is bounded; integrating over all places~$v$ then shows that the composition~\eqref{eq:height_composition} is bounded and thus that our motivic Weil height is well-defined.


\begin{remark}
	It is more common in the literature to define the local components of the N\'eron--Tate height as \emph{N\'eron functions}, which are functions $\lambda_{D,v}\colon A(\kbar_v)\smallsetminus D(\kbar_v) \to \bR$ where~$D$ is a divisor on~$A$ \cite[Theorem~11.1.1]{lang:fundamentals_of_diophantine_geometry}. The alternative perspective we adopt, of defining local N\'eron--Tate heights as functions on the $\Gm$-torsor associated to~$M$, follows Bombieri--Gubler \cite[\S2.7 \&~\S9.5]{bombieri-gubler:heights} (see also \cite{Zhangadmissiblepairing}). The relationship between the two perspectives is that if~$D$ is a divisor representing the line bundle~$M$, and if~$s$ is a section of~$M$ with divisor~$D$, then $\lambda_{D,v} = \lambda_{M,v}\circ s$.
\end{remark}


With the Weil height machine set up, the proof of the motivic Manin--Dem'janenko Theorem then proceeds in two steps. By following the classical proof \cite{manin:bounded_torsion}, one shows that the motivic Weil height~$h_L$ associated to any ample line bundle~$L$ on~$X$ is bounded on the image of~$\Aug(X/k)^\lgeom \to \Aug(X_\kbar/\kbar)^\lgeom$. To turn this into the claimed finiteness result, we appeal to a Northcott-like property for motivic Weil heights.
\begin{Theorem*}[See \ref{thm:northcott_abelian} below]
	Suppose that~$X$ is a curve of genus~$\geq2$ over a number field~$k$. Let $\Sigma\subseteq\Aug(X_{\kbar}/\kbar)^\lgeom$ be a subset which is bounded in height and degree. Then for any morphism $f\colon X_{\kbar}\to A_{\kbar}$ from $X$ to an abelian variety~$A$ defined over~$\kbar$, the image
	\[
	f(\Sigma) \subseteq \Aug(A_{\kbar}/\kbar) = A(\kbar)/A(\kbar)_\tors
	\]
	is finite.
\end{Theorem*}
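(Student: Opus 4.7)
The plan is to reduce to the classical Northcott theorem on abelian varieties via the Abel--Jacobi embedding. Fix a base point in $X(\kbar)$ and let $\alpha\colon X_\kbar\hookrightarrow J_\kbar$ be the resulting embedding into the Jacobian $J$ of $X$. Any morphism $f\colon X_\kbar\to A_\kbar$ factors, after translation, as $f=g\circ\alpha$ for some morphism $g\colon J_\kbar\to A_\kbar$, and by functoriality of $\Aug$ we have $f(\Sigma)=g_*(\alpha_*(\Sigma))$; since images preserve finiteness, it suffices to treat the case $A=J$, $f=\alpha$.

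I would then use the construction of $h_L$ to transport the boundedness hypothesis onto the Jacobian. Choose an ample symmetric line bundle $M$ on $J$; then $L:=\alpha^*M$ is ample on $X$ (as $\alpha$ is a closed embedding), and by the construction of the motivic Weil height machine via \eqref{eq:height_composition}, $h_L$ agrees with $\hat h_M\circ\alpha_*$ up to a bounded function, where $\hat h_M$ denotes the N\'eron--Tate height extended quadratically to $\Aug(J_\kbar/\kbar) = J(\kbar)\otimes\bQ$. Since $\Sigma$ is bounded in height, $\hat h_M\circ\alpha_*$ is bounded on $\Sigma$.

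To conclude via classical Northcott on $J$, it remains to show that $\alpha_*(\Sigma)$ actually sits inside $J(\kbar)/J(\kbar)_\tors \subseteq J(\kbar)\otimes\bQ$ at uniformly bounded degree. For each $\sigma\in\Sigma$, the bounded-degree hypothesis puts $\sigma$ in $\Aug(X_{k'}/k')^\lgeom$ for some $k'\subseteq\kbar$ with $[k':k]\leq D$; by functoriality its image $\alpha_*(\sigma) \in \Aug(J_{k'}/k') = J(k')\otimes\bQ$ is then locally geometric on $J_{k'}$. I would invoke the identification $\Aug(B/k')^\lgeom = B(k')/B(k')_\tors$ for abelian varieties $B$ over number fields to place $\alpha_*(\sigma)$ in $J(k')/J(k')_\tors$. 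Taking the union over $[k':k]\leq D$ lands $\alpha_*(\Sigma)$ in the set of points in $J(\kbar)/J(\kbar)_\tors$ of bounded degree and bounded N\'eron--Tate height, which is finite by the classical Northcott theorem for abelian varieties.

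The principal obstacle I anticipate is justifying that locally geometric augmentations of an abelian variety over a number field come precisely from its integral points, rather than from rational combinations thereof. Without this, $\alpha_*(\Sigma)$ could \emph{a priori} contain arbitrarily denominator-heavy elements of $J(k')\otimes\bQ$ of bounded N\'eron--Tate height, of which there are infinitely many. If this identification is not already available from the earlier sections, I would prove it by combining local geometricity at each finite place (producing a compatible system of local integral representatives) with a Galois descent argument that leverages the Mordell--Weil finite generation of $J(k')$ to globalise these local representatives.
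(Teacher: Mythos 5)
Your reduction to the Jacobian and the transport of the height bound via $h_L=\hat h_M\circ\alpha_*+O(1)$ both match the paper. The genuine gap is the step you yourself flag: you need $\alpha_*(\Sigma)$ to land in $\bigcup_{[k':k]\le D}J(k')/J(k')_\tors$, and for this you invoke the identification $\Aug(B/k')^\lgeom = B(k')/B(k')_\tors$ for abelian varieties over number fields. This is not available from the earlier sections, and it is not something ``Galois descent plus Mordell--Weil'' will give you. Concretely, a class $\tfrac1m y\in B(k')\otimes\bQ$ with $y\in B(k')$ is locally geometric as soon as $y$ is divisible by $m$ modulo torsion in every $B(k'_v)$ (the archimedean places impose nothing, since $B(k'_v)$ is then divisible); whether this forces divisibility by $m$ in $B(k')$ is governed by the kernel of the localisation map $\rH^1(G_{k'},B[m])\to\prod_{v}\rH^1(G_{v},B[m])$, a Tate--Shafarevich-type group for the finite module $B[m]$ which need not vanish --- indeed the local--global principle for divisibility in abelian varieties is known to fail in general (Dvornicich--Zannier-type examples). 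Finite generation of $B(k')$ does not remove this obstruction, and you would additionally need the statement uniformly over the infinitely many fields $k'$ with $[k':k]\le D$.

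The paper circumvents exactly this: it proves only the weaker containment $f(\Aug(X_{k'}/k')^\lgeom)\subseteq\frac1n\bigl(A(k')/A(k')_\tors\bigr)$ with $n=n(k,A,d)$ uniform in $[k':k]\le d$ (Lemma~\ref{lem:contained_in_lattice}), which still suffices for Northcott since scaling by $\frac1n$ preserves bounded height and degree. Two inputs make this work, neither of which appears in your sketch: (i) because the local points witnessing local geometricity lie on the \emph{curve} $X\subset A$, Raynaud's theorem (Manin--Mumford) bounds the torsion differences $y_0-m_0f(x_v)$ by a single integer $m_1$ independent of the place, so one gets a single Kummer class in $\rH^1(G_{k'},A[m])$ that dies everywhere locally; and (ii) Stoll's result (via Serre's open-image-type theorem, Chebotarev, and uniform boundedness of $\#A(k')_\tors$ for $[k':k]\le d$) shows the localisation kernel is $n$-torsion with $n$ depending only on $(k,A,d)$ (Lemma~\ref{lem:kernel_of_localisation}), whence $f(\alpha)\in\frac1n\bigl(A(k')/A(k')_\tors\bigr)$. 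Your argument as written would need the kernel to vanish, which is false in general; to repair it you should aim for the denominator-bounded statement rather than the exact identification, and you will need both the Manin--Mumford input on the curve and a uniform bound on the localisation kernel.
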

\begin{Theorem*}[See \ref{thm:northcott_finite_subscheme} below]
	Assumptions as above, there exists a finite subscheme~$Z\subset X$ defined over~$k$ with the following property: for every non-archimedean place~$v$ of~$\kbar$ and every point~$x_v\in X(\kbar_v)$ whose associated augmentation in $\Aug(X_{\kbar_v}/\kbar_v)$ is the image of an element of~$\Sigma$, we have
	\[
	x_v\in Z(\kbar_v) \,.
	\]
	In particular, the set of such points~$x_v$ is finite for all non-archimedean place~$v$.
\end{Theorem*}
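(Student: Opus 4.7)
The plan is to reduce the theorem to Theorem~\ref{thm:northcott_abelian} via the Abel--Jacobi embedding, and then apply Raynaud's theorem (the Manin--Mumford conjecture) to cut out the required finite subscheme.

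First, I would pick a $\kbar$-rational point of~$X$ (which exists since $\kbar$ is algebraically closed) to define the Abel--Jacobi embedding $\iota\colon X_\kbar\hookrightarrow J_\kbar$. Applying Theorem~\ref{thm:northcott_abelian} to~$\iota$, the image
\[
S \coloneqq \iota_*(\Sigma) \subseteq \Aug(J_\kbar/\kbar) = J(\kbar)/J(\kbar)_\tors
\]
is finite, so I fix a finite set $\tilde S\subset J(\kbar)$ of representatives. By Raynaud's theorem, each intersection $X(\kbar) \cap (\tilde s + J(\kbar)_\tors)$ (for $\tilde s\in\tilde S$) is finite, because a translate of the genus-$\geq 2$ curve~$X_\kbar$ inside~$J_\kbar$ cannot be a torsion coset of a positive-dimensional abelian subvariety. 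Hence the union
\[
Z_0 \coloneqq \bigcup_{\tilde s\in\tilde S} \bigl(X(\kbar) \cap (\tilde s + J(\kbar)_\tors)\bigr)
\]
is a finite subset of $X(\kbar)$, and I take~$Z\subset X$ to be its scheme-theoretic closure in~$X/k$, i.e.\ the finite closed $k$-subscheme obtained by closing~$Z_0$ under the natural $\mathrm{Gal}(\kbar/k)$-action.

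To verify the claimed property, let~$v$ be a non-archimedean place of~$\kbar$ and let $x_v \in X(\kbar_v)$ be a point whose augmentation agrees with the image of some $\sigma\in\Sigma$ under the base-change map $\Aug(X_\kbar/\kbar) \to \Aug(X_{\kbar_v}/\kbar_v)$. Pushing forward along~$\iota$ and invoking the computation $\Aug(J_{\kbar_v}/\kbar_v)=J(\kbar_v)\otimes\bQ$ from Theorem~\ref{10OctA}, one finds that $\iota(x_v)\in J(\kbar_v)$ is congruent modulo torsion to the image $\tilde s_v\in J(\kbar_v)$ of some representative $\tilde s\in\tilde S$. Since $J[n]$ is \'etale over~$k$ in characteristic zero and $\kbar$ is algebraically closed, every torsion point of~$J(\kbar_v)$ already lies in~$J(\kbar)$; hence $\iota(x_v)$ lies in the image in~$J(\kbar_v)$ of the subset $\tilde s + J(\kbar)_\tors \subset J(\kbar)$. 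Combined with the closedness of~$X$ in~$J$, this forces $\iota(x_v)\in X(\kbar) \cap (\tilde s + J(\kbar)_\tors)\subseteq Z_0$, so that $x_v\in Z(\kbar)\subseteq Z(\kbar_v)$. The ``in particular'' clause follows because the finite scheme~$Z$ has only finitely many $\kbar_v$-points.

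The main input is Raynaud's theorem; once recognised as the right tool, the remainder is a straightforward diagram chase combining functoriality of~$\iota_*$ with the computation of the motivic augmentation set of an abelian variety. Producing~$Z$ over~$k$ rather than merely over~$\kbar$ is handled at no additional cost by Galois-closing the finite set~$Z_0$.
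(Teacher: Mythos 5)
Your proof is correct and takes essentially the same route as the paper's: map $\Sigma$ to the Jacobian, apply Theorem~\ref{thm:northcott_abelian} to get a finite image, use Raynaud (Manin--Mumford) to make the preimage of the corresponding finite union of torsion cosets a finite subset of $X(\kbar)$, Galois-close it to obtain $Z$ over $k$, and verify the local claim using that torsion of $J(\kbar_v)$ already lies in $J(\kbar)$ (the paper phrases this via injectivity of $J(\kbar)\otimes\bQ\to J(\kbar_v)\otimes\bQ$). The only cosmetic difference is your use of an Abel--Jacobi embedding at a chosen $\kbar$-point where the paper uses the morphism induced by a divisor of positive degree.
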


\subsection{Motivic results}
As indicated above, our computation of the augmentation set of an abelian variety $A$ over our base field $k$ follows directly from work of Ancona \ et \ al.  \cite{AnconaHuberI,AnconaHuberII}, upgraded to the \oo-categorical level by Iwanari \cite{Iwanari}. Indeed, Iwanari shows that there is an isomorphism
\[
C^*(A) = \Sym^* M^1(A)
\]
in~$\cC$, where $M^1(A)\in\cD$ is a certain object considered by Ancona \ et \ al. which realises to $H^1(A)[-1]$. 

Our computation of the augmentation set of a $\Gm$-torsor over an abelian variety in Theorem \ref{w24d} follows from a more general result concerning the structure of the motivic cochain algebra of a $\Gm$-torsor~$M^\times$ over a smooth $k$-scheme~$Y$. Let
\[
c^M\colon \QQ(-1)[-2] \to C^*(Y)
\]
be the motivic first Chern class of $M$, and let 
\[
\QQ(-1)[-2] \to C^*(Y) \to \ep(M^\times)
\to \QQ(-1)[-1] 
\]
be the cofiber in $\Dd$. Then $C^*(M^\times)$ fits into a commuting square
\[
\xymatrix{
	\Sym^* C^*(Y)
	\ar[r] \ar[d]
	&
	\Sym^* \ep(M^\times)
	\ar[d]
	\\
	C^*(Y)
	\ar[r]
	&
	C^*(M^\times).
}
\]

\begin{Theorem*}[See \ref{30428Aa} below]
	The square above is a (homotopy) pushout. 
\end{Theorem*}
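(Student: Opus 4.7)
The strategy is to recognize $\Sym^*\ep(M^\times)$ itself as a pushout in $\cC$, collapse the nested pushouts into a single simpler one, construct a canonical comparison map to $C^*(M^\times)$, and finally verify it is an equivalence by reducing to a Gysin-type identification at the level of $\cD$.

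\medskip

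First I would unpack $\Sym^*\ep(M^\times)$ as a pushout in $\cC$. Since $\Sym^*\colon\cD\to\cC$ is a left adjoint and hence preserves colimits, it carries the defining cofiber sequence $\bQ(-1)[-2]\xrightarrow{c^M}C^*(Y)\to\ep(M^\times)$ to a pushout square in $\cC$ exhibiting $\Sym^*\ep(M^\times)$ as $\Sym^* C^*(Y)\otimes_{\Sym^*\bQ(-1)[-2]}\one$, with one leg the augmentation $\Sym^*\bQ(-1)[-2]\to\one$ induced by $\bQ(-1)[-2]\to 0$. By the pasting law for pushouts, the outer pushout $C^*(Y)\otimes_{\Sym^* C^*(Y)}\Sym^*\ep(M^\times)$ in $\cC$ then simplifies to the single pushout
\[
C^*(Y)\otimes_{\Sym^*\bQ(-1)[-2]}\one,
\]
in which the algebra map $\Sym^*\bQ(-1)[-2]\to C^*(Y)$ is classified by $c^M$.

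\medskip

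Next, the pullback $p^*M$ along $p\colon M^\times\to Y$ carries the canonical tautological trivialization, furnishing a canonical null-homotopy of $p^*c^M$. This yields a commuting square in $\cC$ with corners $\Sym^*\bQ(-1)[-2]$, $C^*(Y)$, $\one$, $C^*(M^\times)$, and hence a canonical comparison map
\[
\psi\colon C^*(Y)\otimes_{\Sym^*\bQ(-1)[-2]}\one\longrightarrow C^*(M^\times).
\]
Since the forgetful functor $\cC\to\cD$ is conservative, to conclude it suffices to show $\psi$ is an equivalence on underlying motives. Writing $R=\Sym^*\bQ(-1)[-2]$ with tautological generator $x$, the Koszul presentation gives a cofiber sequence $R(-1)[-2]\xrightarrow{x\cdot}R\to\one$ of $R$-modules; tensoring over $R$ with $C^*(Y)$ identifies the source of $\psi$ with $\mathrm{cofib}\bigl(C^*(Y)(-1)[-2]\xrightarrow{c^M\cup}C^*(Y)\bigr)$. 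On the other hand, the Gysin/localization cofiber sequence in $\cD$ for the zero-section embedding $Y\hookrightarrow M$, together with homotopy invariance $C^*(M)\simeq C^*(Y)$, identifies $C^*(M^\times)$ with the cofiber of exactly the same map. Because $\psi$ restricts to $p^*$ on the central copy of $C^*(Y)$, it induces the natural equivalence between these parallel cofiber descriptions.

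\medskip

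The main obstacle lies in assembling these pieces coherently at the $\infty$-categorical level: one must check that multiplication by $x$ in the Koszul presentation corresponds, after base change along $c^M$, to cup product with the motivic first Chern class; that the Gysin boundary for a smooth codimension-one embedding is indeed given by cup product with the Euler class; and that the null-homotopy of $p^*c^M$ used to construct $\psi$ is the one realizing the Gysin identification. These are well-known naturality properties of the motivic Chern class and the localization sequence in $\cD$, but formulating them coherently in $\cC=\CAlg(\cD)$ will be the delicate part of the argument.
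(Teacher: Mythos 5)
Your route is genuinely different from the paper's, and most of it is sound. You collapse the nested pushout to the single pushout $C^*(Y)\otimes_{R}\one$ with $R=\Sym\bigl(\QQ(-1)[-2]\bigr)$ mapping to $C^*(Y)$ via $c^M$ — a clean observation that makes literal the slogan of ``quotienting out the Chern class coherently'' — and then you compute the underlying object by the Koszul sequence $R(-1)[-2]\xrightarrow{x}R\to\one$ (which is valid here because rationally $\Sym^n(\QQ(-1)[-2])\simeq\QQ(-n)[-2n]$, the even-twist analogue of the paper's Lemma \ref{30304A}), so that the source of your comparison map is $\mathrm{cofib}\bigl(c^M\cup\colon C^*(Y)(-1)[-2]\to C^*(Y)\bigr)$. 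The paper instead base-changes to $C^*(Y)$-modules, identifies the pushout with the relatively free algebra $\Sym'_{C^*(Y)}\bigl(C^*(Y)\to C^*(M^\times)\bigr)$, rewrites it via Lurie's free-algebra formula and the rational Quillen Theorem A as $\colim_n\Sym^n_{C^*(Y)}C^*(M^\times)$, and kills the transition cofibers using $\Sym^n$ of an odd Tate shift. Your approach buys a substantially lighter argument (no $\Fin_{\mathrm{inj}}$-colimits, no weighted-colimit/Quillen A machinery, no relatively free algebras); what the paper's detour buys is that its equivalence check only uses the Gysin triangle to identify the cofiber of $p^*$ up to equivalence (any identification suffices for the vanishing of symmetric powers), with the final equivalence deduced from the adjunction retraction $\theta'\circ\theta\simeq\mathrm{id}$ rather than from matching homotopies inside the Gysin triangle.

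The weak point is your last step. ``$\psi$ restricts to $p^*$ on the central copy of $C^*(Y)$'' does not by itself imply $\psi$ is an equivalence: a map between two cofibers of the same morphism that is merely compatible with the projections from the middle term can fail to be invertible (already for the zero morphism, where a map under $B$ between two copies of $B\oplus A[1]$ may annihilate the $A[1]$ summand; the ambiguity here is a torsor under maps $C^*(Y)(-1)[-1]\to C^*(M^\times)$, i.e. roughly $\cO(M^\times)^\times\otimes\QQ$, which is nonzero). So everything hinges on your item (iii): that the tautological null-homotopy of $p^*c^M$ coming from the canonical trivialisation of the pulled-back torsor is (homotopic to) the exactness datum of the Gysin/localisation triangle, so that $\psi$ is the map furnished by the universal property of the cofiber. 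That statement is plausible and is morally the same coherence the paper absorbs into its ``evident coherently commutative diagram'' in \ref{30428C}, but it is not available off the shelf in the references for the Gysin triangle in the homotopy-coherent form you need; calling it a well-known naturality property overstates the case, and as written your proof would need either a construction-level verification of it (e.g. tracing through the localisation triangle for the zero section $Y\hookrightarrow M$) or a restructuring — for instance producing, as the paper does, a section of the comparison map whose invertibility can be checked using only the abstract identification of the cofiber.
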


In other words, $C^*(M^\times)$ is obtained from $C^*(Y)$ by quotienting out the Chern class $c^M$ in a homotopy coherent manner. In the case~$Y=A$ is an abelian variety, this gives the claimed description of~$\Aug(M^\times/k)$.

The structure of $C^*(M^\times)$ as an object of $\Dd = \DM(k, \QQ)$ may be determined in a straightforward way. It is, however, its identity as a highly structured algebra that is needed for our purposes here. This contains a great deal more information than the former, and is, correspondingly, a good deal harder to obtain. Indeed, the \oo-category $\Cc = \CAlg \Dd$ is sandwiched between $\Dd$ on the one hand and the unstable motivic homotopy category $\Hh(k)$ on the other, and so our determination of $C^*(M^\times)$ as an object of $\Cc$ may be regarded as a step towards the determination of $M^\times$ as an object of $\Hh(k)$; this, however, appears to be unknown. 

In fact, there's another midpoint of sorts between the (relative!) simplicity of $\DM(k, \QQ)$ and the difficulty of $\Hh(k)$, namely, the motivic stable homotopy category $\Ss \Hh(k)$; it too may be seen as a compromise. Thus, $\Ss \Hh(k)$ and $\CAlg \Dd$ may be viewed as alternative (and largely orthogonal) compromises. In the quarter century since its introduction, the former has been studied intensely, while the latter has received little attention. Our success in determining $C^*(M^\times)$, and its close connection to the study of rational points, helps to bolster our view that this alternative  flavor of motivic homotopy deserves to be studied.

Our proof of Theorem \ref{30428Aa} in Section \ref{sec:AlgebraOfGmBundle} is based on the notion of the \emph{relatively free commutative algebra on a pointed object}. For instance, in comparison with the free commutative algebra $\Sym V$ on a given vector space $V$, the relatively free commutative algebra $\Sym'(V, v)$ insists that a pre-chosen vector $v \in V$ serve as unit. This is characterised by an evident mapping property, and may be constructed as a quotient of the ordinary free commutative algebra
\[
\Sym'(V,v) \simeq \Sym(V)/\langle v \rangle,
\]
or, alternatively, as a direct limit
\[
\Sym'(V,v) = \colim_{n} \Sym^n V
\]
where the transition maps are given by multiplication by $v$. It's the equivalence between these two (upgraded to our highly structured motivic setting) that gives us what we want. 

In rough outline, by a simple base-change from $\CAlg \Dd$ to the category
\[
\CAlg \Mod_{C^*(X)}(\Dd) \simeq (\CAlg \Dd)_{C^*(X)/}
\]
of $C^*(X)$-algebras, we find that the relative tensor product occurring on the right hand side of (*) may be identified (as a quotient of a free commutative algebra) with the relatively free algebra
\[
\Sym' \big(
C^*(X) \to C^*(L^\times)
\big)
\]
generated by the pointed object 
\[
C^*(X) \to C^*(L^\times)
\]
of $\Mod_{C^*(X)}(\Dd)$. Lurie's theory of free algebras shows that 
\[
\tag{**}
\Sym' \big(
C^*(X) \to C^*(L^\times)
\big) = \colim C^*(L^\times)^{c^L \otimes}
\]
is given by the colimit of a diagram
\[
C^*(L^*)^{c^L \otimes}: \m{Fin}_\m{inj} \to \Mod_{C^*(X)}(\Dd)
\]
(indexed by finite sets and injective maps) which mixes the symmetric group actions on the tensor powers of $C^*(L^\times)$ with multiplication by the 1st Chern class $c^L$.  

In the preliminary section \ref{sec:relA} we record a certain refinement of Quillen's theorem A (provided by Lior Yanovski and the members of the \textit{Homotopy Theory Seminar} at the Hebrew University of Jerusalem) which may be tailored to \oo-categories that are tensored over the \oo-category $\Hh_\QQ$ of rational homotopy types. This allows us to decompose the colimit (**) as 
\[
\tag{***}
\colim_n \Sym^n_{C^*(X)} C^*(L^\times)
\]
(Proposition \ref{21023F}). Taking symmetric powers over $C(X)$ in the exact triangle
\[
C^*(X)(-1)[-2] \xto{\text{mult. by } c^L}
C^*(X) \to C(L^\times) \to C^*(X)(-1)[-1],
\]
we get exact triangles
\[
\Sym^{n-1}_{C^*(X)} C(L^\times) \to 
\Sym^n_{C^*(X)} C^*(L^\times)
\to
\Sym^n_{C^*(X)} C^*(X)(-1)[-1]
\]
(Proposition \ref{30412A}). Since
\[
\Sym^n_{C^*(X)} C^*(X)(-1)[-1]
= 0 \quad \text{ for }  n \ge 2,
\]
the colimit (***) is given by
\[
\colim \big( C^*(X) \to C^*(L^\times) \xto{=} C^*(L^\times) \xto{=} C^*(L^\times) \xto{=} \cdots
\big)
\simeq C^*(L^\times), 
\]
as hoped. In fact, we discuss relatively free algebras and prove Propositions \ref{21023F} and \ref{30412A} in an abstract setting in the preliminary section \ref{sec:FreeEinftyAlgebras}. 

In view of our computation of the augmentation set of an abelian variety in Theorem \ref{10OctA} and of our computation of the motivic cochain algebra of a $\Gm$-torsor in Theorem \ref{30428Aa}, our result for the augmentation set of a $\Gm$-bundle over an abelian variety in Theorem \ref{w24d} essentially follows from the fact that an exact triangle in the stable homotopy category $\Ss \Hh$ gives rise to a ``pseudotorsoric comodule'' in the \oo-category $\Hh_*$ of pointed homotopy types (see Definition \ref{ts2}). While this must surely be well known, we explain this briefly in the preliminary section \ref{topscrap}, since we were unable to find a precise reference.

\subsection{Further discussion}

The question of giving a motivic interpretation of heights has been considered in several places in the literature. In the first author's thesis \cite{alex:motivic_anabelian}, it was shown that for~$k_v$ a non-archimedean local field, the local N\'eron--Tate height associated to a rigidified line bundle~$M$ on an abelian variety~$A$ factors through a function on the set
\[
\rH^1(Rep(G_{k_v}),\pi_1^{\bQ_l}(M^\times_{\kbar_v};\tilde0))
\]
of torsors under the $\bQ_l$-pro-unipotent \'etale fundamental group $\pi_1^{\bQ_l}(M^\times_{\kbar_v};\tilde0)$ in the category of representations of the Galois group $G_{k_v}$. And in work of Scholl \cite{scholl:height_pairings}, it was shown that for~$k_v$ a non-archimedean local field, the local height pairing between homologically trivial cycles with disjoint support factors through a function on the set
\[
\mathrm{MExt}_{Rep(G_{k_v})}(\bQ_l(0),\rH^{2a-1}(X_{\kbar_v},\bQ_l)(a),\bQ_l(1))
\]
of mixed extensions of $\bQ_l(0)$ by $\rH^{2a-1}(X_{\kbar_v},\bQ_l)(a)$ by $\bQ_l(1)$ (i.e., filtered Galois representations with graded pieces $\bQ_l(0)$, $\rH^{2a-1}(X_{\kbar_v},\bQ_l)(a)$ and $\bQ_l(1)$). Analogous descriptions hold in both cases when~$k_v$ is archimedean, using the category of mixed Hodge structures in place of local Galois representations.

The philosophy of motives predicts that $\pi_1^{\bQ_l}(M^\times_{\kbar_v};\tilde0)$ and $\rH^{2a-1}(X_{\kbar_v},\bQ_l)$ and their Hodge counterparts should be realisations of a motivic fundamental group $\pi_1^{mot}(M^\times;\tilde0)$ and a motive $h^{2a-1}(X)$ living in an abelian category~$\MMm_k$ of mixed motives. Accordingly, one can sum up these local heights across various realisations to obtain a function
\[
\rH^1(\MMm_k,\pi_1^{mot}(M^\times;\tilde0)) \to \bR
\]
in the first case, and a function
\[
\mathrm{MExt}_{\MMm_k}(\bQ(0),h^{2a-1}(X)(a),\bQ(1)) \to \bR
\]
in the second case. Moreover, if one assumes the expected properties of the category~$\MMm_k$ (chiefly that $\Ext^1_{\MMm_k}(\bQ(0),\bQ(1))=k^\times\otimes\bQ$), then these sums of local heights should factor through functions
\[
\rH^1(\MMm_k,\pi_1^{mot}(A)) \to \bR
\]
and
\[
\Ext^1_{\MMm_k}(\bQ(0),h^{2a-1}(X)(a))\times\Ext^1_{\MMm_k}(h^{2a-1}(X)(a),\bQ(1)) \to \bR \,,
\]
respectively. It is these factored maps which would constitute a motivic height function on torsors or a motivic height pairing on extension-classes.

This construction of a global height function or pairing relies on certain properties of the category~$\MMm_k$ which are currently unproven for all proposed constructions of~$\MMm_k$, chiefly the expected identity $\Ext^1_{\MMm_k}(\bQ(0),\bQ(1))=k^\times\otimes\bQ$. It is precisely to overcome these sorts of issues that we work instead in Voevodsky's derived category of motives, where we can construct a Weil height machine unconditionally.

We remark that the corresponding procedure for defining global $p$-adic height pairings was worked out unconditionally by Nekov\' a\v r \cite{nekovar:p-adic_height_pairings}. We also remark that Kato has developed a theory of heights of pure motives, however the kinds of heights which Kato generalises (Faltings heights of abelian varieties) are of rather a different kind to the heights we are considering.

\begin{remark}
	Using the results in \cite{alex:motivic_anabelian}, it follows that all of our main theorems hold also for a weaker version of local geometricity. If~$k_v$ is a non-archimedean local field of mixed characteristic~$(0,p)$ and $X/k_v$ is a smooth connected variety with a basepoint $x\in X(k_v)$, then there is, as explained in \cite{dancohen2021rational}, an \'etale realisation map
	\[
	R^\et_v\colon \Aug(X/k_v) \to \rH^1(Rep(G_{k_v}),\pi_1^{\bQ_l}(X_{\kbar_v};x)) \,,
	\]
	whose composition with the augmentation map $X(k_v) \to \Aug(X/k_v)$ is the pro-unipotent Kummer map $j_v\colon X(k_v) \to \rH^1(Rep(G_{k_v}),\pi_1^{\bQ_l}(X_{\kbar_v};x))$. Similarly, if~$k_v$ is archimedean, there is a corresponding Hodge realisation map
	\[
	R^\rH_v\colon \Aug(X/k_v) \to \rH^1(MHS_\bR,\pi_1^{\bR}(X_{\bC};x)) \,.
	\]
	
	If~$k$ is a number field and~$X/k$ is a smooth connected pointed variety, then we say that a motivic augmentation $\alpha\in\Aug(X/k)$ is \emph{realisationwise locally geometric} just when there is an adelic point $(x_v)_{v\in\Mm(k)}\in X(\bA_k)$ such that $R^\et_v(\alpha)=j_v(x_v)$ for all non-archimedean places~$v$, and $R^\rH_v(\alpha)=j_v^\rH(x_v)$ for all archimedean places~$v$. It is clear that every locally geometric augmentation is realisationwise locally geometric, so the set $\Aug(X/k)^{R{-}\lgeom}$ is a superset of~$\Aug(X/k)^\lgeom$. As we shall explain in section \ref{s:realisationwise_augs}, all of the main results of this paper also hold for realisationwise locally geometric augmentations. 
	
	We find this stronger form of our results particularly appealing because of its closer connection with Chabauty--Kim theory. However, this material comes with a slight caveat. These stronger results require that we rely on a good deal more background, including a foundational aspect that, to our knowledge, is not fully fleshed out in the literature. The proof of Theorem 6.5 of \cite{MRMHT} contains a gap. While the gap is largely filled by the Corrigendum \cite{DCSchlankCorr}, this still relies on a certain symmetric monoidal enhancement of the Koszul duality equivalence of Positselski \cite{Positselski} which, while perhaps regarded as folklore, does not appear to have been constructed explicitly. 
\end{remark}

Our results can also be thought of as describing a new obstruction to rational points on curves. If~$X$ is a smooth projective curve over a number field~$k$ and $(x_v)_{v\in\Mm(k)}\in X(\bA_k)$ is an adelic point, then let us say that~$(x_v)_v$ \emph{survives the motivic rational homotopy obstruction} just when there exists an augmentation~$\alpha\in\Aug(X/k)$ whose image in~$\Aug(X_{k_v}/k_v)$ is the augmentation attached to~$x_v$ for all places~$v$ of~$k$. (The augmentation~$\alpha$ is necessarily locally geometric.) Let us denote the set of all such points by
$X(\bA_k)^{\MRH}$. Although no direct implication in either direction is obvious to us, the following conjecture is similar in spirit to Conjecture~\ref{conj:lg_augmentation_conjecture}.

\begin{conjecture}
	For all smooth projective curves~$X/k$ of genus~$\geq2$, we have $X(\bA_k)^{\MRH}=X(k)$.
\end{conjecture}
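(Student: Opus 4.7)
The plan is to reduce this conjecture to Conjecture~\ref{conj:lg_augmentation_conjecture} together with a local injectivity statement for the motivic augmentation map. The inclusion $X(k) \subseteq X(\bA_k)^{\MRH}$ is tautological: for $x \in X(k)$, the augmentation $\alpha_x \in \Aug(X/k)$ restricts at every place $v$ to $\alpha_{x|_{k_v}}$ by functoriality of the motivic cochain algebra, so the diagonal adelic point attached to~$x$ is certified as MRH-surviving by~$\alpha_x$.

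For the reverse inclusion, suppose $(x_v)_v \in X(\bA_k)^{\MRH}$ is witnessed by some $\alpha \in \Aug(X/k)$. By construction $\alpha$ is locally geometric, so Conjecture~\ref{conj:lg_augmentation_conjecture} furnishes a unique $x \in X(k)$ with $\alpha_x = \alpha$. It then remains to show that $x|_{k_v} = x_v$ for every place $v$. Since both $x|_{k_v}$ and $x_v$ induce the same element of $\Aug(X_{k_v}/k_v)$, this reduces to showing that the local augmentation map $X(k_v) \to \Aug(X_{k_v}/k_v)$ is injective whenever $X$ is a smooth projective curve of genus $\geq 2$ over a local field $k_v$. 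The plan for this local injectivity step is to post-compose with the relevant realisation map, i.e., the \'etale realisation $R^\et_v$ at non-archimedean places and the Hodge realisation $R^\rH_v$ at archimedean places; the composite on $X(k_v)$ is then the pro-unipotent Kummer map, whose injectivity for hyperbolic curves over local fields is known at archimedean places (via $X_\bC$ being a $K(\pi,1)$) and is established at non-archimedean places by classical work on unipotent fundamental groups of hyperbolic curves.

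The principal obstacle is, unsurprisingly, Conjecture~\ref{conj:lg_augmentation_conjecture} itself, which is open outside the limited regime handled by the motivic Manin--Dem'janenko Theorem of this paper. Progress would likely require global height bounds on $\Aug(X/k)^\lgeom$ in the spirit of an enhanced Chabauty--Kim theory, perhaps combined with a motivic descent along finite \'etale covers of~$X$. By comparison, the local injectivity step, while not entirely routine, is substantially more firmly grounded in existing literature and should not be the main bottleneck.
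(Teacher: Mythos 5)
You should be aware that the statement you are addressing is one of the paper's \emph{conjectures}: the paper offers no proof of it, and immediately before stating it the authors explicitly remark that ``no direct implication in either direction is obvious'' between it and Conjecture~\ref{conj:lg_augmentation_conjecture}. So there is no proof in the paper to compare against, and your proposal, as written, is not a proof either --- it is a conditional reduction. The reduction itself is logically sound: the inclusion $X(k)\subseteq X(\bA_k)^{\MRH}$ is indeed immediate from functoriality; a witnessing augmentation $\alpha$ is automatically locally geometric (the paper notes this); and if Conjecture~\ref{conj:lg_augmentation_conjecture} gives $\alpha=\kappa(x)$ with $x\in X(k)$, then $x$ and $x_v$ have the same image in $\Aug(X_{k_v}/k_v)$, so injectivity of the local augmentation map $X(k_v)\to\Aug(X_{k_v}/k_v)$ would force $x_v=x$. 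This is a reasonable observation to record, but it leaves the conjecture exactly as open as before, resting on another open conjecture plus an unproven local statement.

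The more serious problem is that you treat the local injectivity input as essentially known, and your route to it via realisations does not deliver it. At a non-archimedean place $v$ of residue characteristic $p$, the $\ell$-adic unipotent Kummer map with $\ell\neq p$ has \emph{finite} image on $X(k_v)$ (Kim--Tamagawa), hence is nowhere near injective; you would have to take $\ell=p$, and injectivity of the $p$-adic unipotent Kummer map for hyperbolic curves is a genuinely nontrivial matter passing through the $p$-adic unipotent Albanese map and $p$-adic Hodge theory, not a classical fact one can simply cite in passing. At archimedean places the abelian-level information is empty ($\Ext^1$ of $\bR(0)$ by $\rH_1(X,\bR)$ in $\bR$-mixed Hodge structures vanishes), and injectivity of the real-Hodge unipotent period map on points is a pointed-Torelli-type question (Hain, Pulte), where the known finite-truncation results carry a genuine two-to-one ambiguity; the remark that $X_\bC$ is a $K(\pi,1)$ does not by itself yield injectivity of a rational or real unipotent period map. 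Note also that failure of injectivity of a chosen realised Kummer map would not disprove injectivity of $X(k_v)\to\Aug(X_{k_v}/k_v)$, but it does mean your proposed argument for that step collapses. In short, both pillars of the proposal --- Conjecture~\ref{conj:lg_augmentation_conjecture} and local injectivity --- are open, so nothing unconditional about $X(\bA_k)^{\MRH}$ is established.
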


This conjecture is also closely related to the implication 
\[
``X(Z) = X(\ZZ_p)_{\text{motivically global}}\text{''}
\]
of Kim's conjecture \cite{nabsd} observed in \cite{MRMHT}.

We remark that when the conditions of the Manin--Dem'janenko Theorem hold for~$X$, then the image of~$X(\bA_k)^{\MRH}$ inside $J(\bA_k)\otimes\bQ$ is contained in a finite subset of~$J(k)\otimes\bQ$. This implies, by the Manin--Mumford Conjecture (a theorem of Raynaud \cite{raynaud:torsion_on_curves}), that there is a finite subscheme $Z\subset X$ defined over~$k$ such that $X(\bA_k)^{\MRH}\subseteq Z(\bA_k)$. In particular, this means that the intersection of~$X(\bA_k)^{\MRH}$ with the finite descent locus is (ignoring technical issues at infinite places) equal to~$X(k)$ in this case \cite[Theorem~8.2]{stoll:finite_descent}.

Finally, this local-to-global perspective afforded by the motivic rational homotopy obstruction has a reformulation akin to Grothendieck's original anabelian program, in which $\pi_1^\et$ should form a fully faithful functor on the category of so-called \textit{anabelian varieties}. For a number field~$k$, let us consider the $\infty$-category $LtG$ whose objects are tuples $((Y_v)_{v\in\Mm(k)},C^*,(\phi_v)_{v\in\Mm(k)})$ where each~$Y_v$ is a smooth variety over~$k_v$, $C^*\in\CAlg \DM(k,\QQ)$ is a motivic algebra, and each $\phi_v$ is an isomorphism between $C^*_{k_v}$ and $C^*(Y_v)$ in $\CAlg \DM(k_v,\QQ)$, or in a suitable realisation. One might speculate that the evident functor from smooth $k$-varieties to $LtG$ is fully faithful on a subcategory $\Vv$ containing the anabelian varieties. Indeed, this expectation stands to Kim's conjecture roughly as the anabelian program stands to the Grothendieck section conjecture. In fact, since here we do not pass to $\pi_1$, $\Vv$ may well be larger than the category of anabelian varieties.

\subsection*{Structure and outline} Sections \ref{sec:relA},  \ref{topscrap}, and \ref{sec:FreeEinftyAlgebras} are  preliminary; we advise the reader to skip these sections on a first reading and refer back to them as needed. Then in \S\ref{sec:ReviewRatMot} we review the theory of rational motivic algebras as developed by Iwanari and by Schlank and the second author. This allows us to define motivic augmentation spaces in \S\ref{sec:AugmentationSpaces} and to compute them for abelian varieties. Building on the results of \S\ref{sec:AlgebraOfGmBundle}, \S\ref{sec:AugmentationsOfGmBundle} then contains the corresponding computations for $\Gm$-torsors over abelian varieties.

With this theory developed, we are able to set up our motivic Weil height machine in \S\ref{sec:HeightMachine} and prove its basic properties, especially the Northcott property. The proof of the motivic Manin--Dem'janenko Theorem then comes in \S\ref{sec:ManinDemjanenko}.

Finally, in sections \ref{s:realisations}--\ref{s:realisationwise_augs}, we explain how to extend our results from locally geometric augmentations to realisationwise locally geometric augmentations. Section~\ref{s:realisations} discusses realisation functors and how to use them to construct structured fundamental groupoids from objects of~$\Cc$; this section and section~\ref{s:VMHS} relate this construction to more usual Tannakian constructions. Finally, \S\ref{s:realisationwise_augs} explains the few adjustments to the strategy of~\S\ref{sec:HeightMachine} needed in order to define a height function on realisationwise locally geometric augmentations.

\subsection*{Notations and conventions.}
Our main reference for \oo-categories and higher algebra are Lurie's works \cite{LurieTopos, LurieAlg}, whose notation and terminology we follow with two exceptions: we use $\Hh$ for the \oo-category denoted $\Ss$ in loc. cit. and we refer to its objects as \textit{homotopy types}, and we use $\Ss\Hh$ for the stable \oo-category denoted $\Ss p$ in loc. cit. whose objects we refer to variously as \textit{spectra} or \textit{stable homotopy types}.

Our use of (\oo)-category theory is quite tame by modern standards. Thus, we elect to avoid fixing a universe and distinguishing between \textit{large} and \textit{small}. Our view is that correcting the resulting errors caused by this oversight would amount to a routine exercise in adding set theoretic decorations and caveats, while carrying out this exercise within the body of our text would somewhat cloud the exposition.

If $\Cc$ is an \oo-category, we write $h\Cc$ for its homotopy category. If, instead, $\Cc$ is a model category, we write $\ho \Cc$ for its homotopy category.

\subsection*{Acknowledgements}
We wish to thank Giuseppe Ancona, Aravind Asok, Elden Elmanto, Asaf Horev, Bruno Kahn, Katie Mann, Maxime Ramzi, Swann Tubach and Lior Yanovski for helpful conversations, and the members of the \textit{Homotopy Theory Seminar} at the Hebrew University of Jerusalem for providing us with the solution to a problem that arose in conversations with Yanovski (see section \ref{sec:relA}).

\section{Preliminaries: Relative Quillen's theorem A}
\label{sec:relA}

Quillen's ``Theorem A'' \cite[tag 02NX]{Kerodon} provides a criterion for cofinality of index-categories for diagrams in \oo-categories. If we only intend to use our index-categories in \oo-categories that are tensored over a given monoidal \oo-category $\Vv$, then the criterion may be weakened accordingly. The result will be applied in \S\ref{sec:FreeEinftyAlgebras} with $\Vv = \Hh_\QQ$ the \oo-category of rational homotopy types. 

The material of this section was provided by Lior Yanovski and the members of the \textit{Homotopy Theory Seminar} at the Hebrew University of Jerusalem. We restrict attention to presentable \oo-categories, although we expect that with some care, our assumptions may be weakened somewhat. 

\spar{30615A}
Let $\de: I \to J$ be a functor between \oo-categories. Given $j \in J$, we denote the pullback $I \times_J J_{j/}$ by $I_{j/}$.

Let $\Vv$ be a presentably monoidal \oo-category. 
We denote by $\Yy_\Vv$ the $\Vv$-linearised covariant Yoneda functor 
\[
\Yy_\Vv: J \to \Fun(J^\op, \Vv)
\]
\[
\Yy_\Vv(j)(k) = \Hom_J(k,j)\otimes \one_\Vv.
\]
We define the $\Vv$-valued presheaf
\[
W_\pi^\Vv: J^\op \to \Vv
\]
by taking the colimit of the composition
\[
I \xto{\pi} J \xto{\Yy_\Vv} \Fun(J^\op, \Vv).
\]

Let $F: J \to \Cc$ be a functor to an \oo-category presentably left-tensored over $\Vv$ and let $W$ be a functor $J^\op \to \Vv$. The left Kan extension 
\[
F_\Vv: \Fun(J^\op, \Vv) \to \Cc
\]
of $F$ along $\Yy_\Vv$ preserves colimits and is $\Vv$-linear. We define the \emph{weighted colimit of $F$ by $W$} by
\[
\colim^W F = F_\Vv(W).
\]

\begin{theorem}
\label{30615B}
Let $\pi: I \to J$ be a functor between \oo-categories, let $\Vv$ be a presentably monoidal \oo-category, and let $F: J \to \Cc$ be a functor to an \oo-category presentably left-tensored over $\Vv$. Assume the $\Vv$-linearised Yoneda functor $\Yy_\Vv$ is fully faithful. Then 
\[
\tag{*}
\colim(F \circ \pi) = \colim^{W_\pi^\Vv}F.
\]
\end{theorem}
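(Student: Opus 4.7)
The plan is to unfold the definition of the weighted colimit on the right-hand side of (*) and reduce to the universal property of left Kan extensions along fully faithful functors. First I would substitute the formula $W_\pi^\Vv \simeq \colim_{i \in I} \Yy_\Vv(\pi(i))$ into the definition $\colim^{W_\pi^\Vv} F = F_\Vv(W_\pi^\Vv)$; since $F_\Vv$ preserves colimits by assumption, this produces
\[
\colim^{W_\pi^\Vv} F \;=\; F_\Vv\bigl(\colim_{i \in I} \Yy_\Vv(\pi(i))\bigr) \;\simeq\; \colim_{i \in I}(F_\Vv \circ \Yy_\Vv)(\pi(i)).
\]
The entire theorem then reduces to establishing an equivalence $F_\Vv \circ \Yy_\Vv \simeq F$, after which the right-hand colimit becomes $\colim(F \circ \pi)$, as desired.

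For the identification $F_\Vv \circ \Yy_\Vv \simeq F$, I would invoke the $\infty$-categorical analogue of the standard fact that the pointwise left Kan extension of a functor along a fully faithful functor restricts back to the original functor via an equivalence on the unit (compare \cite[Proposition 4.3.2.15]{LurieTopos} for the non-enriched statement). To transport this fact into the $\Vv$-linear setting, one natural route is to represent $F_\Vv$ as a coend $F_\Vv(X) \simeq \int^{j \in J} X(j) \otimes F(j)$ and apply the $\Vv$-enriched co-Yoneda lemma, yielding
\[
F_\Vv(\Yy_\Vv(k)) \;\simeq\; \int^{j} \Hom_J(j,k)\otimes \one_\Vv \otimes F(j) \;\simeq\; F(k)
\]
precisely under the hypothesis that $\Yy_\Vv$ is fully faithful.

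The main obstacle is to know that the $\Vv$-linear left Kan extension $F_\Vv$, rather than merely the underlying ordinary left Kan extension inside $\Cc$, participates in this identity. The cleanest way around this is via the universal property characterising $\Fun(J^\op, \Vv)$ as the free presentably $\Vv$-linear $\infty$-category on $J$: restriction along $\Yy_\Vv$ induces an equivalence $\Fun^{L,\Vv\text{-lin}}(\Fun(J^\op,\Vv), \Cc) \xrightarrow{\simeq} \Fun(J, \Cc)$, so $F_\Vv$ is the essentially unique $\Vv$-linear, colimit-preserving extension of $F$, which forces $F_\Vv \circ \Yy_\Vv \simeq F$. Once this universal property (together with its compatibility with the fully-faithfulness hypothesis on $\Yy_\Vv$) is in place, the calculation assembled in the first paragraph completes the proof of (*).
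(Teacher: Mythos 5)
Your proposal is correct and takes essentially the same route as the paper: you unfold $W_\pi^\Vv \simeq \colim(\Yy_\Vv\circ\pi)$, use that $F_\Vv$ preserves colimits, and reduce everything to the equivalence $F_\Vv\circ\Yy_\Vv\simeq F$, which the paper likewise deduces directly from the fully-faithfulness hypothesis on $\Yy_\Vv$. Your additional justification of that equivalence (co-Yoneda/universal property of the free presentably $\Vv$-linear category) merely elaborates a step the paper treats as standard.
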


\begin{proof}
We let 
\[
F_\Vv: \Fun(J^\op, \Vv) \to \Cc
\]
denote the left Kan extension of $F$ along the $\Vv$-linearised Yoneda functor $\Yy_\Vv$ as above. Our assumption that $\Yy_\Vv$ is fully faithful ensures that
\[
F_\Vv \circ \Yy_\Vv \simeq F.
\] 
Thus, staring from the right hand side of (*), we have
\begin{align*}
\colim^{W^\Vv_\pi}F
&= F_\Vv(W^\Vv_\pi)
\\
&=F_\Vv\colim(\Yy_\Vv \circ \pi)
\\
&= \colim(F_\Vv \circ \Yy_\Vv \circ \pi)
\\
&= \colim(F \circ \pi).\qedhere
\end{align*}

\end{proof}

\spar{30616A}
If $K$ is a simplicial set, we let $K \to K^\m{grp}$ denote fibrant replacement with respect to the Kan model structure (``groupoidification''). Let $\Vv$ be a presentably monoidal \oo-category. We define the \emph{$\Vv$-linearised groupoidification of $K$} by 
\[
K^\m{grp}\otimes \one_\Vv.
\]

If $\Cc$ is an \oo-category (with appropriate colimits), we may reasonably say that $\pi:I \to J$ is \emph{cofinal rel. $\Cc$} if for any functor $F:J \to \Cc$, the natural map
\[
\colim (F \circ \pi) \to \colim F
\]
is an equivalence in $\Cc$. As the next corollary shows, Theorem \ref{30615B} gives rise to a criterion for cofinality relative to any $\Cc$ which is presentably left tensored over $\Vv$ in terms of the $\Vv$-linearised groupoidifications of the overcategory pullbacks $I_{j/}$ ($j \in J$).

\begin{corollary}[Realtive Quillen's Theorem A]
\label{30615C}
In the situation and the notation of theorem \ref{30615B}, assume the unit object $\one_\Vv$ of $\Vv$ is terminal. Suppose for each $j \in J$ the natural map
\[
(I_{j/})^\m{grp} \otimes \one_\Vv \to \one_\Vv
\]
is an equivalence. Then there's an equivalence in $\Cc$
\[
\tag{*}
\colim(F \circ \pi) \simeq \colim F.
\]
\end{corollary}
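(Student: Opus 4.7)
The plan is to derive the equivalence (*) from Theorem \ref{30615B} by identifying both of its sides as weighted colimits and reducing the problem to a pointwise comparison of weights. Applying Theorem \ref{30615B} to $\pi$ gives $\colim(F \circ \pi) \simeq F_\Vv(W^\Vv_\pi)$, while applying it to the identity functor $\mathrm{id}_J: J \to J$ gives $\colim F \simeq F_\Vv(W^\Vv_{\mathrm{id}_J})$. It therefore suffices to show that the natural map of presheaves $W^\Vv_\pi \to W^\Vv_{\mathrm{id}_J}$ induced by $\pi$ is an equivalence in $\Fun(J^\op, \Vv)$; applying the colimit-preserving functor $F_\Vv$ will then conclude.

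To compare the weights pointwise at $j \in J$, I would use two facts: that colimits in $\Fun(J^\op, \Vv)$ are computed levelwise, and that the functor $S \mapsto S \otimes \one_\Vv$ from $\Hh$ to $\Vv$ preserves colimits, being the unique unit-preserving colimit-preserving such functor. These yield
\[
W^\Vv_\pi(j) \simeq \left(\colim_{i \in I} \Hom_J(j, \pi(i))\right) \otimes \one_\Vv \simeq (I_{j/})^{\m{grp}} \otimes \one_\Vv,
\]
where the final equivalence identifies the colimit in spaces of the mapping space functor $i \mapsto \Hom_J(j, \pi(i))$ over $I$ with the classifying space of its unstraightening, which is precisely the pullback $I_{j/} = I \times_J J_{j/}$. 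The same reasoning applied to $\mathrm{id}_J$ gives $W^\Vv_{\mathrm{id}_J}(j) \simeq (J_{j/})^{\m{grp}} \otimes \one_\Vv \simeq \one_\Vv$, since $J_{j/}$ has the initial object $\mathrm{id}_j$ and is therefore weakly contractible. The hypothesis that $\one_\Vv$ is terminal in $\Vv$ then identifies $W^\Vv_{\mathrm{id}_J}$ with the terminal presheaf on $J$, and exhibits the induced map $W^\Vv_\pi \to W^\Vv_{\mathrm{id}_J}$ pointwise as the natural map $(I_{j/})^{\m{grp}} \otimes \one_\Vv \to \one_\Vv$ appearing in the hypothesis.

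By assumption this pointwise map is an equivalence for every $j \in J$, hence $W^\Vv_\pi \simeq W^\Vv_{\mathrm{id}_J}$ in $\Fun(J^\op, \Vv)$, and applying $F_\Vv$ yields the desired equivalence (*). The step I expect to require the most care is verifying the naturality in $j$ of the identification $W^\Vv_\pi(j) \simeq (I_{j/})^{\m{grp}} \otimes \one_\Vv$, which amounts to combining the naturality of the straightening/unstraightening equivalence with the compatibility of the $\Vv$-linearisation functor $S \mapsto S \otimes \one_\Vv$ with the (enriched) Yoneda embedding; everything else is routine.
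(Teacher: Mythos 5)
Your proof is correct and takes essentially the same route as the paper: both identify $W^\Vv_\pi(j)\simeq (I_{j/})^{\m{grp}}\otimes\one_\Vv$ pointwise (evaluation/levelwise colimits, colimit-preservation of $(\cdot)\otimes\one_\Vv$, and the unstraightening identification of $\colim_i\Hom_J(j,\pi(i))$ with $(I_{j/})^{\m{grp}}$), then feed the hypothesis through the colimit-preserving $F_\Vv$. The only cosmetic difference is at the end: you apply Theorem \ref{30615B} a second time to $\mathrm{id}_J$ and use contractibility of $J_{j/}$, whereas the paper identifies $W^\Vv_\pi$ directly with the terminal presheaf $\underline\one_\Vv$ and uses $F_\Vv(\underline\one_\Vv)=\colim F$ — the two amount to the same computation.
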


\begin{proof}
By theorem \ref{30615B}, we have
\[
\colim (F \circ \pi) = \colim^{W^\Vv_\pi} F, 
\]
which, by definition, is given by
$
F_\Vv(W^\Vv_\pi).
$
We claim that the essentially unique map $W^\Vv_\pi \to \underline \one_\Vv$ to the terminal presheaf ($\underline \one_\Vv(x) = \one_\Vv$) is an equivalence. To see this, fix $j \in J$, let $\ep_j$ be the associated \textit{evaluation} functor
\[
\ep_j: \opnm{PSh}(J) \to \Hh
\] 
and let $\ep_j^\Vv$ be its $\Vv$-linearised cousin
\[
\ep_j^\Vv: \Fun(J^\op, \Vv) \to \Vv.
\]
Since both $\ep_j$ and $(\cdot \otimes \one_\Vv)$ preserve colimits, we have
\[
W^\Vv_\pi(j)
= \colim( \ep^\Vv_j \circ \Yy_\Vv \circ  \pi)
= \colim(\ep_j \circ \Yy \circ \pi) \otimes \one_\Vv.
\]
By Corollary 7.4.5.5 (tag 02VF) of \cite{Kerodon}, the colimit of $\ep_j \circ \Yy \circ \pi$ is given by the goupoidification (=cofibrant replacement with respect to the Kan model structure) of the total space of the associated left fibration. In turn, the associated left fibration is given by $I_{j/}$. Thus, our assumption (*) guarantees that $W^\Vv_\pi(j) \simeq \one_\Vv$ and hence that $W^\Vv_\pi \to \underline \one_\Vv$ is an equivalence as claimed. 

Hence, using again that $\underline \one_\Vv$ is terminal in $\Fun(J^\op, \Vv)$, we have equivalences in $\Cc$,
\[
F_\Vv(W^\Vv_\pi)
= F_\Vv(\underline \one_\Vv)
= \colim F. \qedhere
\]
\end{proof}

\begin{example}
\label{30616A}
In the situation and the notation of Theorem \ref{30615B}, let $\Vv = \Hh$ be the \oo-category of homotopy types with its Cartesian symmetric monoidal structure, and assume that for each $j \in J$, the \oo-category $I_{j/}$ is weakly contractible. Since the unit object of $\Hh$ is terminal and the (ordinary) Yoneda embedding of any \oo-category is fully faithful, Corollary \ref{30615C} applies to provide an equivalence in $\Cc$,
\[
\colim F = \colim(F \circ \pi).
\]
This is closely related to Quillen's ``Theorem A''; see, for instance, Section 7.2.3 (tag 02NX) of Kerodon \cite{Kerodon}.
\end{example}

\begin{example}
\label{30616B}
In the situation and the notation of Theorem \ref{30615B}, let $\Vv = \Hh_\QQ$ be the \oo-category of rational homotopy types with its Cartesian symmetric monoidal structure, and assume $J$ is a 1-category. Assume that for each $j\in J$, the homotopy type $(I_{j/})^\m{grp}$ is rationally contractible. 

The unit object of $\Hh_\QQ$ is terminal. Moreover, since the composition 
\[
\m{Set} \to \Hh \to \Hh_\QQ
\]
is fully faithful, so is the $\Hh_\QQ$-linearised Yoneda functor 
\[
\Yy_\QQ: J \to \Fun(J^\op, \Hh_\QQ).
\]
Hence, in this case too, Corollary \ref{30615C} applies to provide an equivalence in $\Cc$ (which is now required to be tensored over $\Hh_\QQ$),
\[
\colim F = \colim(F \circ \pi).
\]
\end{example}

\section{Preliminaries: The torsor associated to a triangle of spectra}
\label{topscrap}

This section collects a number of general results to be used in section \ref{sec:AugmentationsOfGmBundle}. The reader may wish to skip this section on first reading. 

\spar{ts1}
By an \emph{$E_\infty$-space} we mean an object of the $\infty$-category $\CAlg \Hh_*$ of commutative algebras in the $\infty$-category $\Hh_*$ of pointed homotopy types equipped with its Cartesian symmetric monoidal structure. Although the algebras occurring below do have such a structure, we will for the most part forget the commutativity structure and regard them as objects of the $\infty$-category $\Alg \Hh_*$ of algebras. An $E_\infty$-space $A$ is \emph{grouplike} if the induced monoid structure makes $\pi_0(A)$ into a group. We denote the full subcategory spanned by the grouplike $E_\infty$-spaces by $\CAlg^\m{gp} \Hh_*$.

If $E \in \Ss\Hh$ is a stable homotopy type, then $\Om^\infty E$ has the structure of a grouplike $E_\infty$-space. Better: there's a functor
\[
\Om^a\colon \Ss\Hh \to \CAlg^\m{gp} \Hh_*
\] 
lifting $\Om^\infty$ along the forgetful functor, whose restriction to the full subcategory of connective stable homotopy types $\Ss\Hh^{\ge 0}$ is an equivalence of categories.

\spar{}
We explain how the construction of $\Om^a$ follows from Remark 5.2.6.26 of \cite{LurieAlg}. Proposition 2.4.2.5 of loc. cit. describes an equivalence
\[
p\colon \CAlg(\Hh_*) \xto{\sim} 
\opnm{Mon}_\m{Comm}(\Hh_*)
\]
of $\infty$-categories between the $\infty$-category $\CAlg(\Hh_*)$ and the $\infty$-category of commutative monoids in $\Hh_*$ (see Definition 2.4.2.1 of loc. cit.). If $A$ is a commutative algebra in $\Hh_*$, then $p$ induces an isomorphism of monoids
\[
\pi_0(A) \xto{\sim} \pi_0(pA).
\]
Hence $p$ restricts to an equivalence of full subcategories
\[
p\colon \CAlg^\m{gp}(\Hh_*) \xto{\sim} 
\opnm{Mon}^\m{gp}_\m{Comm}(\Hh_*)
\]
of objects such that the induced monoid structure on $\pi_0$ is a group. There's an equivalence of $\infty$-operads $\m{Comm} \simeq \EE_\infty$ which induces an equivalence of $\infty$-categories
\[
\opnm{Mon}^\m{gp}_\m{Comm}(\Hh_*)
\simeq
\opnm{Mon}^\m{gp}_{\EE_\infty}(\Hh_*).
\]
We may thus equivalently construct a functor
\[
\tag{*}
\Ss\Hh \to \opnm{Mon}^\m{gp}_{\EE_\infty}(\Hh_*).
\]
\textit{Dunn additivity} furnishes an equivalence
\[
\opnm{Mon}_{\EE_k}(\Hh_\ast) \simeq
\opnm{Mon}_{\EE_k} \big( \opnm{Mon}_{\EE_0}(\Hh) \big)
\simeq
\opnm{Mon}_{\EE_k}(\Hh); 
\]
see Remark 5.2.6.24 of loc. cit. Thus, Remark 5.2.6.26 of loc. cit. (along with the constructions to which it alludes) furnishes diagrams of \oo-categories
\[
\xymatrix{
\Hh_* \ar[r]^-\Om \ar[d] & \Hh_* \ar[d]
\\
\opnm{Mon}_{\EE_{k+1}}(\Hh_*)
\ar[r]
&
\opnm{Mon}_{\EE_{k}}(\Hh_*)
}
\]
which commute up to homotopy. Taking limits, we obtain the desired functor (*).

\begin{definition}
\label{ts2}
Let $\Cc^{\prod}$ be an $\infty$-category endowed with a Cartesian monoidal structure closed under fibered products; in particular, the tensor unit $\one$ is terminal. Let $A$ be an algebra in $\Cc^{\prod}$ with underlying object $\underline A \in \Cc$. Let $P$ be a left $A$-module and let $X \in \Cc$ be an object. Endow $X$ with the structure of a left $A$-module via the essentially canonical augmentation $A \to \one$ and the essentially unique structure of $X$ as left $\one$-module; we refer to this structure as the \emph{trivial left $A$-module structure on $X$}. Let $P \to X$ be a morphism of left $A$-modules. We say that $P$ is \emph{pseudotorsoric over $X$} if the map
\[
\underline A \times P \to P \times_X P
\]
is an equivalence in $\Cc$.
\end{definition}

\begin{remark}
	The prefix ``pseudo'' here designates that we make no restriction on the map $P \to X$. For instance, if $A$ is a group and $X$ is a set, then a pseudotorsoric left $A$-module over $X$ is a left $G$-set $P$ equipped with a map of sets $f:P \to X$ such that for each $x \in X$, the fiber $f\inv(x)$ is either a $G$-torsor or empty.
\end{remark}

\spar{ts4}
\textbf{Pseudotorsoric left $A$-modules are stable under base-change.}
Pseudotorsoric left $A$-modules are stable under base-change in the following sense. In the situation and the notation of \S\ref{ts2} with $P$ a pseudotorsoric left $A$-module over $X$ in $\Cc^{\prod}$, let $f\colon X' \to X$ be a morphism in $\Cc$. Then after endowing $X'$ with the structure of trivial left $A$-module, $f$ may be upgraded in an essentially unique way to a morphism in the $\infty$-category $\LMod_A(\Cc)$ of left $A$-modules. Let $f'\colon P' \to X'$ denote the pullback of $P \to X$ along $f$ in $\LMod_A(\Cc)$. Since the forgetful functor 
\[
\LMod_A(\Cc) \to \Cc
\]
admits a left adjoint, after neglect of left $A$-module structures, $P' \to X'$ is also the pullback in $\Cc$. Moreover, the morphism
\[
\underline A \times P' \to P'\times_{X'} P'
\]
in $\Cc$ is an equivalence. Indeed, it may be identified with the pullback of the equivalence
\[
\underline A \times P \xto{\sim} P \times_X P
\]
along $f$.

\spar{foptlm}
\textbf{The nonempty fibers are torsors.}
The nonempty fibers of a pseudotorsoric left $A$-module are $A$-torsors in the following sense. In the situation and the notation of paragraph \ref{ts2}, suppose given a map $p\colon \one \to P$, denote by $x$ its composition with the map $g\colon P \to X$, and let $P_x$ denote the pullback of $P \to X$ along $x$. Then the induced orbit map $o(p)\colon  \underline A \to P_x$ is an equivalence. Indeed, by the Yoneda lemma, this reduces to the case $\Cc = \Hh$, which we treat via elementary topology in paragraph \ref{t23a}.

\sspar{t23a}
We continue with the situation and the notation of paragraph \ref{foptlm} with $\Cc^{\prod} = \Hh^{\prod}$ the $\infty$-category of homotopy types with Cartesian monoidal structure. When referring to the underlying object of $A$ we occasionally decorate with an underline for emphasis, but we do not insist on doing so consistently. We tacitly identify Kan complexes with their geometric realisations whenever this seems helpful and not dangerous. Our argument is based on the close relationship between the $\infty$-category $\Hh$ of homotopy types and the model category of topological spaces (and in particular on the relationship between limits in the former and the old constructions of homotopy limits in the latter) which we do not make fully explicit. 

By paragraph \ref{ts4}, we're reduced to the case that $X = *$ is terminal. By assumption, the morphism
\[
\tau\colon \underline A \times P \to P \times P
\]
is a homotopy equivalence. Let
\[
\iota\colon P \to P \times P
\]
be the map
\[
\iota(q) = (p,q)
\]
and let $Q$ be the pullback 
\[
\xymatrix{
Q \ar[r]^-\sim_\upsilon \ar[d]_\ka & P \ar[d]^\iota
\\
A \times P \ar[r]_\tau^\sim & P \times P
}
\]
in $\Hh$. We will show that the orbit map $o(p)\colon \underline A \to P$ factors through a homotopy equivalence $\tilde o(p)\colon \underline A \to Q$. 

Up to essentially canonical homotopy equivalence, $Q$ may be identified with a topological space whose points are in bijection with quintuples $(a,r, q, \al, \be)$ where $a \in A$, $r,q \in P$, and
\[
\al\colon r \to p, \quad \be\colon ar \to q
\]
are paths in $P$, so that $\upsilon$ and $\ka$ correspond to the projections $\ka(a,r,q,\al,\be) = (a,r)$, $\upsilon(a, r, q, \al, \be) = q$, and the homotopy filling in the square is determined by $\al \times \be$. We define 
\[
\tilde o(p)(a) \coloneqq (a, p, ap, id_p, id_{ap})
\]
(where `$id$' refers to a constant path). It's clear that $o(p)$ factors through $\tilde o(p)$. Meanwhile, the projection $\pi$ gives us a map in the opposite direction $A \from Q$ which is left-inverse to $\tilde o(p)$. We construct an explicit homotopy 
\[
h\colon Q \times I \to Q
\]
from $h_0 = id$ to $h_1 = \tilde o(p) \circ \pi$. We use the symbol `$\ast$' for concatenation of paths in the functorial order. Given $t \in I = [0,1]$, we let $\al_t$ be the path $\al(t) \to p$ in $P$ determined by $\al$, and we let $\ga_t$ denote the path
\[
a \al(t) \to (a\al \ast \be\inv)(t)
\]
in $P$ determined by the two paths
\[
ap \overset{a \al}{\from} ar \xto{\be} q.
\]
In terms of these notations, we define
\[
h(a,r,q, \al, \be, t) = 
\big(
a, \al(t), (a \al \ast \be\inv)(t), \al_t, \ga_t
\big).
\]
The topology on $Q$ is such that $h$ is continuous. This completes the verification and establishes the claim made in paragraph \ref{foptlm}.

\begin{proposition}[The torsor associated to a triangle]
\label{ts3}
Let
\[
E \to F \to G
\]
be an exact triangle in the stable $\infty$-category $\Ss\Hh$ of stable homotopy types. Then $\Om^\infty F$ admits a structure of pseudotorsoric left $\Om^a E$-module over $\Om^\infty G$ in the Cartesian-monoidal $\infty$-category $\Hh_*^{\prod}$ of pointed homotopy types.
\end{proposition}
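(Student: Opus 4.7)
The plan is to obtain the left action of $\Om^a E$ on $\Om^\infty F$ by restriction of scalars along the algebra map $\phi \coloneqq \Om^a(E \to F)$. Applying the functor $\Om^a$ of \S\ref{ts1} to the exact triangle $E \to F \to G$, I obtain a sequence of maps $\Om^a E \xto{\phi} \Om^a F \xto{\psi} \Om^a G$ in $\CAlg^\m{gp}\Hh_*$ whose underlying sequence in $\Hh_*$ is a fiber sequence (since the forgetful functor $\CAlg^\m{gp}\Hh_* \to \Hh_*$ creates limits, and $\Om^\infty$ preserves them). In particular the composite $\psi\circ\phi$ is null as a map of $E_\infty$-algebras. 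Viewing $\Om^\infty F$ as a left module over itself in the Cartesian monoidal category $\Hh_*^{\prod}$ and restricting scalars along $\phi$ then endows $\Om^\infty F$ with the structure of a left $\Om^a E$-module. The map $\Om^\infty F \to \Om^\infty G$ is automatically a map of left $\Om^a E$-modules, with $\Om^\infty G$ carrying the trivial action, because the composition $\Om^a E \to \Om^a F \to \Om^a G$ being null means the restricted action on $\Om^\infty G$ factors through the augmentation $\Om^a E \to \one$.

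For the pseudotorsor condition, I need to show that the shear map
\[
\Om^a E \times \Om^\infty F \to \Om^\infty F \times_{\Om^\infty G} \Om^\infty F, \qquad (e,f) \mapsto (\phi(e)\cdot f,\, f)
\]
is an equivalence in $\Hh_*$. Since $\Om^\infty$ preserves limits, the right-hand side is naturally equivalent to $\Om^\infty(F\times_G F)$. In the stable $\infty$-category $\Ss\Hh$, the diagonal $F \to F\times_G F$ is a section of the second projection, so the induced fiber sequence $E \to F\times_G F \to F$ splits canonically. This yields an equivalence $F\times_G F \simeq E \oplus F$ in $\Ss\Hh$ under which the analogous shear map $(e,f)\mapsto ((E\to F)(e)+f,\,f)$ in the stable category corresponds to the canonical identification. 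Applying $\Om^\infty$ (which carries direct sums in $\Ss\Hh$ to Cartesian products in $\Hh_*$, since both compute the biproduct of connective-like data) transports this into the required equivalence.

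The main obstacle is to ensure that these constructions are coherent at the $\infty$-categorical level, rather than on homotopy categories alone. The delicate point is that the shear map comparison involves two a priori different additions on $\Om^\infty F$: the one inherited from the abelian group object structure on $F$ in the stable category $\Ss\Hh$, and the one coming from the $E_\infty$-algebra multiplication on $\Om^a F$. The identification of these two is precisely what the functor $\Om^a$ encodes, since by construction it lifts $\Om^\infty$ through the equivalence between connective spectra and grouplike $E_\infty$-spaces of \S\ref{ts1}. Once this compatibility is in place, the equivalence of shear maps in $\Ss\Hh$ transports directly to the required equivalence in $\Hh_*$, completing the verification.
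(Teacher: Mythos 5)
Your route is genuinely different from the paper's, and in outline it is a viable one: you put the $\Om^a E$-action on $\Om^\infty F$ by restriction of scalars along $\Om^a(E\to F)$, and you verify the pseudotorsor condition by identifying $\Om^\infty F\times_{\Om^\infty G}\Om^\infty F$ with $\Om^\infty(F\times_G F)$ and using the canonical stable splitting $F\times_G F\simeq E\oplus F$ coming from the diagonal section. The soft spot is exactly the one you flag and then dismiss in one line. The map whose invertibility Definition~\ref{ts2} demands is a map \emph{into the homotopy pullback}, so it consists of the action map, the projection, \emph{and} the homotopy over $\Om^\infty G$ furnished by the $\Om^a E$-module structure you placed on $\Om^\infty F\to\Om^\infty G$ in your third step. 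To identify this with $\Om^\infty$ of the split equivalence $E\oplus F\xto{\sim}F\times_G F$ you need two things: (i) that the multiplication of $\Om^a F$ agrees with $\Om^\infty$ of the addition on $F$ -- this much is indeed what the construction of $\Om^a$ provides -- and (ii) that the structure homotopy over $\Om^\infty G$ (built from the nullhomotopy of $\Om^a E\to\Om^a G$ you chose when trivialising the action on the target) matches the nullhomotopy of $E\to F\to G$ implicit in the pullback $F\times_G F$. Point (ii) is not ``precisely what $\Om^a$ encodes''; it is a coherence check of the same flavour as the explicit verifications the paper carries out in \ref{t23a} and \ref{el-start}--\ref{el-end}, and as written your proof asserts it rather than performs it. Since both homotopies ultimately come from the same triangle datum the check should go through, but it is real work, not a formality.

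The paper's proof is organised so that this comparison never arises. It rotates the triangle to $F\to G\to\Si E$, uses that $\Om^\infty$ preserves limits to exhibit $\Om^\infty F$ as the pullback of $\ast\to\Om^\infty\Si E\from\Om^\infty G$ in $\Hh_*$, checks the single computation $\ast\times_{\Om^\infty\Si E}\ast=\Om\Om^\infty\Si E=\Om^\infty E=\underline{\Om^a E}$ to see that $\ast\to\Om^\infty\Si E$ (with the \emph{trivial} $\Om^a E$-structures) is pseudotorsoric, and then invokes stability of pseudotorsoric modules under base-change (\S\ref{ts4}), forming the pullback in $\LMod_{\Om^a E}\Hh_*$. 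In particular the module structure on $\Om^\infty F$ is produced by a limit in the module category, not by restriction along $\Om^a(E\to F)$, so no identification of the $E_\infty$-multiplication on $\Om^a F$ with the stable addition on $F$ is ever needed. What your approach buys in exchange is a more concrete description of the action (translation by $\Om^\infty E$ through the group structure of $\Om^\infty F$), at the cost of the extra coherence bookkeeping above; to make it a complete proof you should either carry out that comparison explicitly or reorganise along the paper's base-change lines.
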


\begin{proof}
Let $E \in \Ss\Hh$ be a stable homotopy type. Endow $*$ and $\Om^\infty \Si E$ with the trivial left $\Om^a E$-module structures. Then the essentially unique map $* \to \Om^\infty \Si E$ in $\LMod_{\Om^a E}\Hh_*$ makes $*$ into a pseudotorsoric left $\Om^a E$-module over $\Om^\infty \Si E$. Indeed, we have isomorphisms of pointed homotopy types
\[
* \times_{\Om^\infty \Si E} * = \Om \Om^\infty \Si E
= \Om^\infty E
=\underline{\Om^a E}
= \underline {\Om^a E} \times *.
\]

Let $E \to F \to G$ be an exact triangle in the $\infty$-category $\Ss\Hh$ of stable homotopy types as in the proposition. We then have an associated exact triangle
\[
F \to G \to \Si E
.
\]
Since $\Om^\infty$ preserves limits, we obtain a pullback square
\[
\xymatrix{
\Om^\infty F \ar[r] \ar[d] & \ast \ar[d] \\
\Om^\infty G \ar[r]  & \Om^\infty \Si E
}
\]
in the $\infty$-category $\Hh_*$ of pointed homotopy types. By the above discussion, $*$ equipped with the structure of trivial left $\Om^a E$-module is pseudotorsoric over $\Om^\infty \Si E$. By \S\ref{ts4}, the pullback in $\LMod_{\Om^a E} \Hh_*$ gives $\Om^\infty F$ the structure of pseudotorsoric $\Om^a E$-module over $\Om^\infty G$. This completes the proof of Proposition \ref{ts3}.
\end{proof}

\spar{t23bc}
\textbf{The connected components of a pseudotorsoric module.}
If $f\colon P\to X$ is a pseudotorsoric left $A$-module over $X$ in the Cartesian-monoidal $\infty$-category $\Hh_*^{\prod}$ of pointed homotopy types, then $\pi_0A$ acts transitively on the fibers of
\[
\pi_0f\colon\pi_0P \to \pi_0 X.
\]
If moreover, all fundamental groups of $X$ are trivial, then the action on each nonempty fiber is faithful. Indeed, by paragraph \ref{foptlm},
\[
A \xto{o(p)} P \to X
\]
(where $o(p)$ denotes the orbit-map of the base-point) forms a fiber sequence, the action of $\pi_0(A)$ on $\pi_0(P)$ induced by the module structure coincides with the one associated with the final terms in the long exact sequence of homotopy groups, and the assertions above are shown for instance in Bousfield--Kan \cite{BousfieldKan}. We nevertheless include a direct explanation using elementary topology in paragraphs \ref{el-start}--\ref{el-end}.

\sspar{el-start}
Encoded, for instance, in the fact that $\Hh_*$ is an $\infty$-category, is the fact that the equivalence
\[
\tau\colon A \times P \to P \times_X P
\]
admits a quasi-inverse given by a map $\upsilon$ and homotopies
\[
\eta\colon \tau \upsilon \xto{\simeq} id
\quad
\zeta\colon id \xto{\simeq} \upsilon \tau.
\]
As above, we tacitly identify objects of $\Hh$ with their geometric realisations. The fibered product $P \times_X P$ may be identified  (up to essentially canonical homotopy equivalence) with a topological space whose points are in bijection with triples $(p,q,\al)$ with $p,q$ points of $P$ and
\[
\al\colon f(p) \to f(q)
\] 
a path in $X$, and whose paths
\[
(p,q,\al) \to (p', q', \al')
\]
are in bijection with triples $(\be, \ga, \theta)$ with 
\[
\be\colon p \to p', 
\quad
\ga\colon q \to q'
\]
paths in $P$ and $\theta$ a homotopy in $X$ filling in the square
\[
\xymatrix{
fp \ar[r]^-{f\be} \ar[d]_-{\al} & fp' \ar[d]^-{\be}
\\
fq \ar[r]_-{f\ga} & fq'.
}
\]
The left-module structure provides a map of topological spaces 
\[
A \times P \to P
\]
\[
(a,p) \mapsto ap.
\]
The triviality of the module structure of $X$ then furnishes a path
\[
\ep_{a,p}\colon f(p) \to f(ap)
\]
in $X$, and $\tau$ may be identified with the map
\[
(a,p) \mapsto (p, ap, \ep_{a,p}). 
\]
Denoting an identity element of $A$ by $1$, the left-module structure of $A$ provides a path $\ze_p\colon p \to 1 \cdot p$ in $P$. The triviality of the module structure of $X$ then provides a path-homotopy
\[
E_p\colon \ep_{1, p} \to f(\ze_p).
\]
in $X$.

\sspar{}
\label{t23bd}
We check the first statement of paragraph \ref{t23bc} (transitivity).
Suppose the classes of $p, q \in P$ lie in the same fiber of $\pi_0f$. Then there's a path
\[
\ka\colon f(p) \to f(q)
\]
in $X$. The data $(p,q,\ka)$ determines a point of $P \times_X P$. Let
\[
(a,r) \coloneqq \upsilon(p,q, \ka).
\]
Then $\eta$ provides a path
\[
\tau(a,r) = (r, ar, \ep_{a,r}) \xto{(\be, \ga, \theta)} 
(p,q,\ka)
\]
in $P\times_X P$, from which we obtain paths
\[
ap \overset{a\be} {\from} ar \xto{\ga} q
\]
in $P$. This shows that $ap$ and $q$ lie in the same component of $P$, and completes the verification.

\sspar{}
\label{el-end}
We check the second statement of paragraph \ref{t23bc} (faithfulness). Suppose given $a \in A$, $p \in P$, and a path
\[
\phi\colon 1 \cdot p \to a \cdot p
\]
in $P$ witnessing the triviality of the action of $a$ on $p$. We consider the following square of paths in $X$:
\[
\xymatrix{
f(p) \ar@{=}[d] \ar[r]^-{\ep_{1,p}} & f(1 \cdot p) \ar[d]^{f(\phi)} 
\\
f(p) \ar[r]_-{\ep_{a,p}} & f(a \cdot p).
}
\]
Since the fundamental groups of $X$ all vanish, there exists a homotopy $\theta$ filling in the square. The data $(id_p, \phi, \theta)$ defines a path
\[
(p, 1 \cdot p, \ep_{1,p}) \to (p, a \cdot p, \ep_{a,p})
\]
in $P \times_X P$. Since $\tau$ is a homotopy equivalence, $(id_p, \phi, \theta)$ lifts to a path
\[
(1, p) \to (a, p)
\]
in $A\times P$. In particular, $a$ lies in the component of the identity of $A$. This completes the verification of the assertion made in paragraph \ref{t23bc}.

\begin{remark}
\label{w24bb}
Let $E \to F \to G$ be an exact triangle in $\Ss\Hh$ and suppose $\pi_{-1}E = 0$. Then the induced map
\[
\pi_0 \Om^\infty F \to \pi_0 \Om^\infty G
\]
is surjective. Indeed, associated to the exact triangle is a long exact sequence of stable homotopy groups
\[
\cdots \to \pi_0 F \to \pi_0 G \to \pi_{-1} E \to \cdots
\]
and for any stable homotopy type $H$ and $i \ge 0$, 
$
\pi_i \Om^\infty H = \pi_i H.
$
\end{remark}

\section{Preliminaries: The free $\EE_\infty$-algebra on an $\EE_0$-algebra}
\label{sec:FreeEinftyAlgebras}

If $\Cc$ is a suitable symmetric monoidal \oo-category, and $\one \to V$ is a pointed object, then the \emph{relatively free cummutative algebra $\Sym'(\one \to V)$ generated by $\one \to V$} (\ref{21023C}) is given, on the one hand, by the relative tensor product
\[
\Sym'(\one \to V) \simeq
\one \otimes_{\Sym \one} \Sym V
\]
(Remark \ref{21023D}), and, on the other hand, by a seemingly quite different colimit (Proposition \ref{21025C}). If $\Cc$ is suitably tensored over rational homotopy types, then the latter may be identified with a colimit of the form
\[
\Sym'(\one \to V) \simeq
\colim_i \Sym^i V
\]
(Proposition \ref{21023F}). If, moreover, $\Cc$ is stable and $\QQ$-linear, then the transition maps behind this last colimit fit into quite simple exact triangles (Proposition \ref{30412A}). 

As explained in the introduction, these results are used in our computation of the motivic cochain algebra of a $\Gm$-bundle in section \ref{sec:AlgebraOfGmBundle}. The reader may wish to skip this section on first reading.

\spar{21023A}
We recall notation and terminology associated with the foundations of the theory of $\infty$-operads as presented in \cite{LurieAlg}. Given $n \in \NN$, we let $\langle n \rangle$ denote the set $\{\ast,1, \dots, n\}$ and we let $\langle n \rangle^ \circ$ denote the subset $\{1, \dots, n\}$. Thus, for example, $\langle 0 \rangle = \{\ast\}$ and $\langle 0 \rangle ^\circ = \emptyset$. We let $\Finst$ denote the category whose objects are the sets $\langle n \rangle$ ($n \in \NN$) and whose morphisms are maps of sets which preserve the distinguished elements $\ast \in \angles{n}$. Recall that a morphism $f: \angles{n} \to \angles{m}$ in $\Finst$ is said to be \emph{inert} if for each $i \in \angles{m}^\circ$, $f\inv(i)$ has exactly one element, and \emph{active} if $f\inv\{*\} = \{*\}$. 

\spar{21023A}
It will be convenient for us to distinguish a third class of morphisms in $\Finst$: we say $f$ is \emph{semiinert} if for each $i \in \angles{m}$, $f\inv\{i\}$ has \textit{at most} one element. Recall from \cite[2.1.1.19]{LurieAlg} that the $\infty$-operad $\EE_0^\otimes$ is defined to be (the nerve of) the 1-category whose objects are the same as those of $\Finst$ and whose morphisms are the semiinert maps, equipped with its natural functor $\EE_0^\otimes \to \Finst$. If $\Cc^\otimes$ is a symmetric monoidal $\infty$-category with unit object $\one$, Remark 2.1.3.10 of \cite{LurieAlg} provides an equivalence of $\infty$-categories 
\[
\Alg_{\EE_0}(\Cc) \simeq \Cc_{\one/}.
\]

\spar{21023C}
We denote by $\EE_\infty^\otimes$ (the nerve of) the 1-category $\Finst$ in its role as an $\infty$-operad via the identity functor $\Finst \to \Finst$. We fix for the remainder of this section a symmetric monoidal $\infty$-category
\[
q: \Cc^\otimes \to \Finst
\]
in which $\otimes$ respects small colimits separately in each variable. By Corollary 3.1.3.5 of loc. cit., the forgetful functor
\[
\CAlg(\Cc) \simeq
\Alg_{\EE_\infty}(\Cc)
\to 
\Alg_{\EE_0}(\Cc)
\simeq
\Cc_{1/}
\]
admits a left adjoint; let us denote it by
\[
\tag{*}
\Sym': \CAlg(\Cc) \from \Cc_{1/}.
\]
We refer to $\Sym'$ applied to an object $\one \to V$ as the \emph{relatively free commutative algebra generated by $\one \to V$}. The same corollary applied with the trivial $\infty$-operad in place of $\EE_0^\otimes$ provides a left adjoint $\Sym$ to the forgetful functor
\[
\CAlg \Cc \to \Cc;
\]
this is the usual free symmetric algebra functor. 

\begin{remark}
	\label{21023D}
	Given $(\one\xto{x}V) \in \Cc_{\one/}$, the commutative algebra $\Sym'(\one\xto{x}V)$ fits into a pushout square in $\CAlg \Cc$:
	\[
	\xymatrix{
		\Sym \one
		\ar[d]_\de \ar[r]
		&
		\Sym V
		\ar[d]
		\\
		\one
		\ar[r]
		&
		\Sym'(x)
	}
	\]
	in which $\de$ is the counit of the adjunction. Indeed, the pushout and $\Sym' V$ share the same $\infty$-categorical mapping property. 
\end{remark}

\spar{21023E}
Fix $V \in \Cc$ and $n \in \NN$. We recall the precise definition of the symmetric power $\Sym^n V \in \Cc$ (a special case of Construction 3.1.3.9 of loc. cit.). Recall that $\m{Triv}^\otimes \subset \Finst$ is the subcategory with same objects and inert morphisms. Example 2.1.3.5 of loc. cit. provides a $\m{Triv}$-algebra
\[
\overline V: \m{Triv}^\otimes \to \Cc^\otimes
\]
which sends 
\[
\angles{1} \mapsto V.
\]
Let $\Si_n$ denote the symmetric group on $n$ letters. We have a canonical inclusion of 1-categories
\[
\m{Triv}^\otimes 
\times_{\Finst}
(\Finst)_{/\angles{1}}
\supset
B \Si_n
\]
which identifies $B\Si_n$ with the full subcategory spanned by the active morphism $\angles{n} \to \angles{1}$. (The fibered product on the left refers to fibered product in the 1-category of simplicial sets applied to the nerves of the surrounding 1-categories.) Let $h_0$ be the composite functor
\[
B\Si_n \subset
\m{Triv}^\otimes 
\times_{\Finst}
(\Finst)_{/\angles{1}}
\to 
\Finst.
\]
In concrete terms, $h_0$ sends the unique object of $B\Si_n$ to $\angles{n}$, and sends an automorphism of the set $\{1, \dots, n\}$ to the associated automorphism of the pointed set $\angles{n}$. Let $h_1$ denote the constant functor
\[
B\Si_n \to \Finst
\]
with value $\angles{1}$. Let $h$ be the natural transformation $h_0 \to h_1$ which sends the unique object of $B\Si_n$ to the active morphism $\angles{n}\to \angles{1}$. The composite $V_{\oplus n} =$
\[
B\Si_n \subset
\m{Triv}^\otimes 
\times_{\Finst}
(\Finst)_{/\angles{1}}
\to \m{Triv}^\otimes
\xto{\overline V}
\Cc^\otimes
\]
provides a lifting of $h_0$ along $q: \Cc^\otimes \to \Finst$. Since $q$ is a coCartesian fibration, the natural transformation $h$ lifts to a $q$-coCartesian natural transformation
\[
\overline h: V^\oplus_n \to V^\otimes_n
\]
for some functor 
\[
V^\otimes_n : B\Si_n \to \Cc = 
\Cc^\otimes_{\angles{1}}.
\]
The colimit
\[
\Sym^n V := \colim V^\otimes_n 
\]
exists by Proposition 3.1.3.13 of loc.\ cit.

\begin{proposition}
	\label{21023F}
	Let $\Cc$ be a presentably symmetric monoidal $\infty$-category. Fix $(\one \xto{x} V) \in \Cc_{\one/}$. Assume $\Cc$ is presentably tensored over the symmetric monoidal \oo-category $\Hh_\QQ$ of rational homotopy types. Then the underlying object of $\Sym'(\one \xto{x} V)$ in $\Cc$ is given by the colimit of the diagram
	\[
	\tag{*}
	\one \xto{f_0} V
	\xto{f_1} \Sym^2 V
	\xto{f_2} \cdots
	\]
	in which $f_i$ is given by the composition
	\[
	\Sym^i V \simeq (\Sym^i V)
	\otimes \one
	\xto{id \otimes x}
	(\Sym^i V) \otimes V \to \Sym^{i+1} V.
	\]
	Moreover, the unit of the adjunction is the evident map.
\end{proposition}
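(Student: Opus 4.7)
The plan is to combine Proposition \ref{21025C}, which describes the underlying object of $\Sym'(\one \xto{x} V)$ as a colimit over the category $\Fin_\m{inj}$ of finite sets and injective maps, with the rational refinement of Quillen's Theorem A recorded in Example \ref{30616B}. Writing $n$ for the $n$-element set, Proposition \ref{21025C} should yield
\[
\Sym'(\one \xto{x} V) \simeq \colim_{\Fin_\m{inj}} G \,,
\]
where $G\colon \Fin_\m{inj} \to \Cc$ sends $n \mapsto V^{\otimes n}$ and acts on morphisms by permuting factors (for bijections) and by inserting $x\colon \one \to V$ at the new positions (for proper injections).

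I would then filter $\Fin_\m{inj}$ by the full subcategories $\Fin_\m{inj}^{\le n}$ on objects of cardinality $\le n$, and reduce to showing that for each $n$, the inclusion $\iota_n\colon B\Sigma_n \hookrightarrow \Fin_\m{inj}^{\le n}$ of the top layer satisfies the hypothesis of Example \ref{30616B}. Granting this,
\[
\colim_{\Fin_\m{inj}^{\le n}} G \simeq \colim_{B\Sigma_n} (G \circ \iota_n) \simeq (V^{\otimes n})_{h\Sigma_n} = \Sym^n V \,,
\]
and taking the colimit over $n$ produces the desired sequential colimit.

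To verify the hypothesis of Example \ref{30616B}, fix $m \le n$ and unpack $(\iota_n)_{m/} = B\Sigma_n \times_{\Fin_\m{inj}^{\le n}} (\Fin_\m{inj}^{\le n})_{m/}$: its objects are injections $m \hookrightarrow n$, and its morphisms are the elements of $\Sigma_n$ carrying one injection to another. This $1$-category is equivalent to the action groupoid of the transitive $\Sigma_n$-action on injections $m \hookrightarrow n$, whose stabiliser is $\Sigma_{n-m}$; hence $((\iota_n)_{m/})^\m{grp} \simeq B\Sigma_{n-m}$, and since $\Sigma_{n-m}$ is finite, $B\Sigma_{n-m} \otimes \QQ$ is contractible in $\Hh_\QQ$, as required.

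Finally, the transition map $\colim_{\Fin_\m{inj}^{\le n}} G \to \colim_{\Fin_\m{inj}^{\le n+1}} G$, after passing to the cofinal subcategories $B\Sigma_n \subset \Fin_\m{inj}^{\le n}$ and $B\Sigma_{n+1} \subset \Fin_\m{inj}^{\le n+1}$, is induced by the standard injection $n \hookrightarrow n+1$, which $G$ carries to $V^{\otimes n} \xto{\m{id} \otimes x} V^{\otimes n+1}$; passing to coinvariants recovers precisely $f_n$. The unit $\one \to \Sym'(x)$ is then identified with the structural map $\one = \Sym^0 V \to \colim_n \Sym^n V$ at $n = 0$. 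The main obstacle will be extracting a sufficiently explicit description of the indexing category and diagram from Proposition \ref{21025C} to match the concrete $G$ above; once this is done, the cofinality computation is direct and uses the rational hypothesis on $\Cc$ essentially, since $B\Sigma_{n-m}$ is not integrally contractible.
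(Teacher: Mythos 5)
Your proposal is correct and follows essentially the same route as the paper's proof: identify $\Sym'(\one\xto{x}V)$ with $\colim_{\Fin_\m{inj}}V^{x\otimes}$ via Proposition \ref{21025C}, filter $\Fin_\m{inj}$ by cardinality (the paper phrases this as a left Kan extension along $\Fin_\m{inj}\to\vec\NN$), apply the rational Quillen's Theorem A of Example \ref{30616B} to the inclusion $B\Si_n\subset\Fin_\m{inj}^{\le n}$ using exactly the computation $(B\Si_n)_{\langle k\rangle^\circ/}\simeq B\Si_{n-k}$ of paragraph \ref{30522B}, and then identify the transition maps with $\m{id}\otimes x$ as in paragraph \ref{21023Ha}. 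The only differences are matters of bookkeeping (the paper's explicit natural transformation $\eta_s$ for the transition maps), which you correctly flag as the remaining work.
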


We note that the Dold-Thom theorem suggests that a similar result may hold in the category $\Hh$ of homotopy types (despite not being tensored over $\Hh_\QQ$). However, we are unaware of a reference for this fact. The proof spans paragraphs \ref{21023G}--\ref{21023Ha}. 

\spar{21023G}
We continue with our fixed symmetric monoidal $\infty$-category in which $\otimes$ respects colimits separately in each variable (while dispensing for now with the additional assumption imposed in proposition \ref{21023F}). Fix $(\one \xto{x} V) \in \Cc_{\one/}$, and let $\widetilde \Fin_{inj}$ denote the category of finite sets and injections. The colimit of \ref{21023F}(*) is a colimit of colimits. It will be easier for us to work with the colimit of a single diagram 
\[
V^{x \otimes}:
\widetilde \Fin_{inj} \to \Cc,
\quad
S \mapsto V^{\otimes S},
\]
which will later be shown to be equivalent to the iterated colimit above. Its construction (which occupies paragraphs \ref{21023I}--\ref{21025B}) is a minor variation on paragraph \ref{21023E}. Actually, it will be convenient for our purposes to restrict attention to the equivalent full subcategory $ \Fin_{inj} \subset \widetilde \Fin_{inj}$ spanned by the sets $\angles{n}^\circ = \{1, \dots, n\}$ ($n\in \NN$). 

\spar{21023I}
A morphism $\angles{m} \to \angles{n}$ in $\Finst$ is active if and only if it commutes with the active maps to $\angles{1}$. A morphism $\angles{m} \to \angles{n}$ in $\Finst$ is both active and semiinert if and only if it restricts to an injective map $\angles{m}^\circ \to \angles{n}^\circ$. It follows that the full subcategory of 
$\EE^\otimes_0 \times_{\Finst} (\Finst)_{/\angles{1}}$ spanned by the active maps $\angles{n}\to \angles{1}$ ($n \in \NN$) may be identified with the category $\Fin_{inj}$.

\spar{21025A}
Remark 2.1.3.10 of loc. cit. provides us with an $\EE_0^\otimes$-algebra
\[
\overline x: \EE_0^\otimes \to \Cc^\otimes
\]
associated to $x: \one \to V$. One may retrieve $x$ from $\overline x$ as follows. In $\EE_0^\otimes$, as in $\Finst$, there's a unique map $x_0: \angles{0} \to \angles{1}$. The choice of unit object $\one \in \Cc$ is determined by the choice of an object of 
$\Cc^\otimes_{\angles{0}}$; we may choose $x_0\angles{0}$. Then the morphism $\overline x (x_0)$ in $\Cc^\otimes$ factors uniquely through a morphism
\[
\overline x (x_0)': \one \to \overline x\angles{1}
\]
and we have $V = \overline x\angles{1}$ and $x = \overline x (x_0)'$.

\spar{21025B}
We have natural maps forming a commuting diagram in the 1-category of simplicial sets
\[
\xymatrix{
	& 
	\EE_0^\otimes
	\ar[r]^{\overline x}
	&
	\Cc^\otimes
	\ar[d]^q
	\\
	\Fin_{inj}
	\ar[r]_-{i}
	&
	\EE_0^\otimes \times_{\Finst}(\Finst)_{/\angles{1}}
	\ar[u] \ar[r]_-{g} \ar[ur]^-{f}
	&
	\Finst
	\\
	&
	\big(\angles{n} \to \angles{1}\big)
	\ar@{|->}[r]
	&
	\angles{n}.
}
\]
Let $V^{x \oplus} = f \circ i$, let $h_0 = g \circ i$, let $h_1$ be the constant functor $\Fin_{inj} \to \Finst$ with value $h_1\angles{n} = \angles{1}$, and let $h$ be the natural transformation $h_0 \to h_1$ which sends $\angles{n} \in \Fin_{inj}$ to the unique active map $\angles{n} \to \angles{1}$ in $\Finst$. Since $q$ is a coCartesian fibration, the natural transformation $h$ has an essentially unique lift $\overline h$ along $q$ to a natural transformation with source $V^{x \oplus}$; we define $V^{x \otimes}$ to be the target of $\overline h$.

\begin{proposition}
	\label{21025C}
	Let $\Cc$ be a symmetric monoidal category
	in which $\otimes$ respects colimits separately in each variable, let 
	\[
	x^\otimes: \EE^\otimes_0 \to \Cc^\otimes
	\]
	be an $\EE_0^\otimes$-algebra in $\Cc$ with underlying pointed object 
	$(\one \xto{x} V) \in \Cc_{/\one}$, and let
	\[
	A^\otimes = \Sym'(\one \xto{x} V):
	\EE_\infty^\otimes \to \Cc
	\]
	be the relatively free commutative algebra in $\Cc$ generated by $x$ (\ref{21023C}). The underlying object of $\Sym'(\one \xto{x} V)$ in $\Cc$ is given by
	\[
	\Sym'(\one \xto{x} V) = \colim V^{x \otimes}.
	\]
\end{proposition}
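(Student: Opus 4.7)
My plan is to apply Lurie's general formula for operadic left Kan extensions \cite{LurieAlg}. By Corollary 3.1.3.5 of loc.~cit., the left adjoint $\Sym'$ to the forgetful functor $\CAlg(\Cc) \to \Alg_{\EE_0}(\Cc) \simeq \Cc_{\one/}$ is realised as operadic left Kan extension along the inclusion of $\infty$-operads $\EE_0^\otimes \hookrightarrow \EE_\infty^\otimes$. The underlying object at a given colour of $\EE_\infty^\otimes$ is then computed pointwise, by Construction 3.1.3.1 and Proposition 3.1.3.3 of loc.~cit., as a certain operadic colimit indexed over an appropriate active overcategory.

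The task is then to unwind this formula at the colour $\angles{1}$. The relevant indexing category is the fibre product
\[
(\EE_0^\otimes)^{\mathrm{act}} \times_{(\Finst)^{\mathrm{act}}} (\Finst)^{\mathrm{act}}_{/\angles{1}}
\]
of active morphisms in $\EE_0^\otimes$ equipped with an active map to $\angles{1}$ in $\Finst$. By \ref{21023I}, the active morphisms in $\EE_0^\otimes$ are precisely the active-and-semiinert morphisms in $\Finst$, i.e.\ those whose restriction to non-basepoint elements is injective. Since for each $n$ there is a unique active morphism $\angles{n}\to \angles{1}$ in $\Finst$, the fibre product collapses to the category $\Fin_{inj}$ introduced in \ref{21023G}. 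The functor on this overcategory whose colimit computes the value of $\Sym'(\overline x)$ at $\angles{1}$ is, by definition, obtained from $\overline x$ by $q$-coCartesian transport along the unique active map $\angles{n}\to \angles{1}$. This is precisely the recipe used to construct $V^{x\otimes}$ in \ref{21025B}, where the $q$-coCartesian lift of the active-morphism natural transformation $h$ was used to produce $V^{x\otimes}: \Fin_{inj} \to \Cc$. Hence the two functors agree, and the operadic colimit reduces to an ordinary colimit in the fibre $\Cc^\otimes_{\angles{1}} = \Cc$, giving $\Sym'(\one \xto{x} V) = \colim V^{x\otimes}$.

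The main obstacle is the bookkeeping needed to align Lurie's framework of operadic left Kan extensions and active overcategories with the more explicit construction of $V^{x\otimes}$ in \ref{21025B}. In particular, one must verify that the indexing category for the pointwise formula really is $\Fin_{inj}$ on the nose (which follows from \ref{21023I} once one is careful about how the fibre-product above is organised), and that Lurie's operadic coCartesian transport agrees, up to canonical equivalence, with the coCartesian lift $\overline h$ used to define $V^{x\otimes}$. Neither step is conceptually difficult, but both demand patience with the formalism of $\infty$-operads. Once these identifications are made, the passage from the operadic colimit to the ordinary colimit in $\Cc$ is automatic, since all coCartesian transports land in the fibre over $\angles{1}$.
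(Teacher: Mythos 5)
Your proposal is correct and follows essentially the same route as the paper: invoke Corollary 3.1.3.5 of \cite{LurieAlg} to realise $\Sym'$ as an operadic left Kan extension, identify the active overcategory with $\Fin_{inj}$ via \ref{21023I}, and note that the coCartesian transport along the active maps to $\angles{1}$ reproduces the construction of $V^{x\otimes}$ from \ref{21025B}. The only point you label as ``automatic''---passing from the operadic colimit to an ordinary colimit in the fibre $\Cc$---is exactly where the paper cites Propositions 3.1.1.15 and 3.1.1.16 of \cite{LurieAlg}, using the hypothesis that $\otimes$ preserves colimits in each variable, so your argument matches the paper's in both structure and substance.
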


\begin{proof}
	Let $\theta$ denote neglect of structure along the inclusion functor $\EE_0^\otimes \to \EE_\infty^\otimes$ and let
	\[
	f:x^\otimes \to\theta(A^\otimes)
	\]
	be the counit of the adjunction. Let $h_0^\rhd$ be the functor $(\Fin_{inj})^\rhd \to \Finst$ associated to the composition
	\[
	\Fin_{inj} \xto{i} 
	\EE^\otimes_0 \times_{\Finst}(\Finst)_{/\angles 1}
	\to
	(\Finst)_{/\angles 1}. 
	\]
	Thus, $h_0^\rhd$ extends $h_0$ by sending the cone  point $c \in (\Finst)^\rhd$ to $\angles{1}$ and by sending a morphism $\angles n^\circ \to c$ to the active map $\angles n \to \angles 1$. The construction of \cite[Definition 3.1.3.1]{LurieAlg} associates to $f$ a diagram
	\[ 
	V^{x\oplus}_\rhd:
	(\Fin_{inj})^\rhd \simeq
	\big(
	\EE^\otimes_0 \times_{\Finst} (\Fin_*^{act})_{/\angles{1}}
	\big)^\rhd
	\to
	\Cc^\otimes
	\]
	such that
	\[
	V^{x\oplus}_\rhd|_{\Fin_{inj}} = V^{x\oplus},
	\quad
	V^{x\oplus}_\rhd(c)=A,
	\quad \mbox{and} \quad
	q \circ V^{x\oplus}_\rhd = h_0^\rhd.
	\]
	According to \cite[Corollary 3.1.3.5]{LurieAlg}, $V^{x\oplus}_\rhd$ is a colimit diagram. 
	
	Let $h_1^\rhd$ be the constant functor
	\[
	(\Fin_{inj})^\rhd \to \Finst
	\]
	with value $\angles1$, and let $h^\rhd$ be the natural transformation $h_0^\rhd \to h_1^\rhd$ which associates to each object $x$ of $(\Finst)^\rhd$ the unique active map $h_0^\rhd(x) \to \angles 1$. Since $q: \Cc^\otimes \to \Finst$ is a coCartesian fibration, the natural transformation $h^\rhd$ has an essentially unique lift $\overline h^\rhd$ along $q$ to a natural transformation with source $V^{x\oplus}_\rhd$; we define
	\[
	V^{x\otimes}_\rhd:
	(\Fin_{inj})^\rhd
	\to
	\Cc
	\]
	to be the target of $\overline h^\rhd$. Combining Propositions 3.1.1.15 and 3.1.1.16 of \cite{LurieAlg}, we find that $V^{x\otimes}_\rhd$ is a colimit diagram, which completes the proof of the proposition.
\end{proof}

To complete the proof of proposition \ref{21023F}, we'll need a few preliminaries. We begin with a lemma. 

\begin{lemma}
	\label{40510F}
	Let $\Dd$ be a stable \oo-category and let $A \in \Dd$ be an object. Then $\Dd_{A/}$ is generated by split injections (i.e. objects of the form $i: A \to A \oplus C$ with $i$ the coprojection) under sifted colimits. 
\end{lemma}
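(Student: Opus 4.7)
My plan is to identify split injections as the essential image of the left adjoint to the forgetful functor $U \colon \Dd_{A/} \to \Dd$, to verify that this adjunction is monadic, and then to conclude via the monadic bar resolution.

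First, I would set up the free/forgetful adjunction. The functor $U$ admits a left adjoint $F$ sending $C \in \Dd$ to the coprojection $\iota_A \colon A \to A \oplus C$: indeed, by the universal property of the biproduct in the stable $\infty$-category $\Dd$, morphisms $F(C) \to (A \xto{f} X)$ in $\Dd_{A/}$ correspond naturally to morphisms $C \to X$ in $\Dd$. By construction the essential image of $F$ coincides with the full subcategory of split injections, so it suffices to show that every object of $\Dd_{A/}$ is a sifted colimit of objects in the image of $F$.

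Next, I would verify monadicity of $F \dashv U$ via Lurie's $\infty$-categorical Barr--Beck--Lurie theorem. The functor $U$ is conservative, since an arrow of $\Dd_{A/}$ is an equivalence if and only if its underlying arrow of $\Dd$ is. Moreover, $U$ preserves geometric realizations: the forgetful functor out of any undercategory preserves colimits indexed by weakly contractible simplicial sets (such as $\Delta^\op$), a fact which reduces to a direct mapping-space calculation. Hence $U$ is monadic, with monad $T(C) = A \oplus C$.

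The lemma will then follow from the standard consequence of monadicity that the category of algebras is generated under sifted colimits by free algebras (see, e.g., \cite[\S4.7.3]{LurieAlg}); concretely, every $(A \xto{f} X) \in \Dd_{A/}$ is recovered as the geometric realization of the monadic bar resolution $\m{Bar}_\bullet(F, T, X)$, whose $n$-th term $F(A^{\oplus n} \oplus X)$ is a split injection by construction.

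I expect the main obstacle to be the monadicity check---in particular, verifying that $U$ preserves the sifted colimits required by Barr--Beck--Lurie. A minor caveat is that one needs $\Dd$ itself to have the sifted colimits in question; this is harmless under the mild cocompleteness hypotheses in force in the rest of this section.
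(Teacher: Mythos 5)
Your proof is correct, but it takes a genuinely different route from the paper's. The paper argues by hand: it writes an arbitrary object of $\Dd_{A/}$ as a pushout of a span of split injections (coming from the cofiber presentation of $A\to B$ as the cofiber of $(0\to A)\to(A\to A\oplus B)$), massages that span into a diagram in $\Dd_{A/}$ via a cofinality-and-Fubini argument, and then invokes the Bousfield--Kan formula to convert this non-sifted pushout into a geometric realization of finite coproducts of split injections, observing that coproducts of split injections in $\Dd_{A/}$ are again split injections. You instead recognize the split injections as exactly the free objects of the adjunction $F\dashv U$, check the Barr--Beck--Lurie hypotheses, and conclude via the monadic bar resolution; like the paper, you in fact prove the stronger statement that every object is a single geometric realization of split injections. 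What your approach buys is a canonical, functorial resolution and no Bousfield--Kan or explicit diagram surgery, at the cost of importing the monadicity machinery of \cite[\S4.7]{LurieAlg}; what the paper's buys is elementarity, using only finite colimits and one standard colimit-decomposition formula. One remark: your closing caveat about cocompleteness of $\Dd$ is unnecessary. The bar resolution $\m{Bar}_\bullet(F,T,X)$ is $U$-split, split simplicial objects admit geometric realizations in any \oo-category (and every functor preserves them), and $U$ creates colimits indexed by weakly contractible diagrams, so the realization exists in $\Dd_{A/}$ for an arbitrary stable $\Dd$ — matching the generality of the lemma as stated; this also slightly simplifies the monadicity check, since only $U$-split realizations are needed in Barr--Beck--Lurie rather than all geometric realizations.
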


The proof proceeds in several steps (paragraphs \ref{40510A}--\ref{40510D}).

\spar{40510A}
An arbitrary object $A \to B$ of the arrow category
\[
\vec \Dd := \Fun(\Delta^1, \Dd)
\]
may be written as the cofiber of a morphism
\[
(0 \to A) \to (A \to A \oplus B). 
\]
Thus, if
\[
\Si = \Delta^1 \coprod_{\{0\}} \Delta^1
\]
is the span-shaped category, then $A \to B$ is a colimit in $\vec{\Dd}$ of a $\Si$-indexed diagram of split injections.  

\spar{40510B}
Any $\Si$-indexed diagram $A_\bullet \to B_\bullet$  in $\vec{\Dd}$ which admits a colimit
\[
\tag{*}
\colim_\Si (A_\bullet \to B_\bullet) = (A \to B)
\]
may be modified to a diagram in $\Dd_{A/}$ with colimit $A \to B$ as follows. Regarding $A$ and $B$ as constant diagrams in $\Dd$, a colimit diagram witnessing (*) gives rise to a commuting square (solid arrow diagram)
\[
\tag{**}
\xymatrix{
	A_\bullet \ar[r] \ar[d]
	& B_\bullet \ar@/^3ex/[dd] \ar@{.>}[d]
	\\
	A \ar@{=}[d] \ar@{.>}[r] & B'_\bullet \ar@{.>}[d]
	\\
	A \ar[r] & B
}
\]
in $\Fun(\Si, \Dd)$. Forming the pushout as indicated, we claim that the bottom square defines a colimit diagram in $\Dd_{A/}$. Recall that a colimit in an undercategory is computed as the colimit of the associated cone-shaped diagram in the underlying category. In the case at and, we may compute the colimit of the $\Si$-indexed diagram $B'_\bullet$ in $\Dd_{A/}$ as a colimit of the associated $\Si^\triangleleft$-indexed diagram in $\Dd$. By Quillen's theorem A \cite[Section 7.2.3, tag 02NX]{Kerodon}, $\Si$ is cofinal in $\Si^\triangleleft$. So it's enough to show that we have the colimit
\[
\tag{***}
\colim B_\bullet' = B
\]
in $\Dd$. We denote the objects of $\Si$ like so:
\[
1 \from 0 \to 2.
\]
With this notation, the upper left portion of (**) gives rise to a commuting diagram (upper left portion below)
\[
\xymatrix{
	A & A_1 \ar[l] \ar[r] & B_1 & B'_1
	\\
	A \ar[u] \ar[d] & A_0 \ar[l] \ar[u] \ar[r] \ar[d]
	& B_0 \ar[u] \ar[d] & B'_0 \ar[u] \ar[d]
	\\
	A & A_2 \ar[l] \ar[r] & B_2 & B'_2
	\\
	A & A \ar[l]^-= \ar[r] & B  &B.
}
\]
We may take colimits along rows and then compute the colimit of the resulting diagram, or, by the Fubini theorem, we may equally reverse the order. As indicated in the diagram, this establishes (***). 

\spar{40510C}
In the setting of paragraph \ref{40510B}, if 
\[
A_i \to B_i
\]
is a split injection, then so is $A \to B'_i$. Hence, applying the construction of \ref{40510B} to the cofiber diagram of paragraph \ref{40510A}, we find that an arbitrary object $A \to B$ of $\Dd_{A/}$ may be written as a pushout of a span-shaped diagram whose vertices are split injections $A \to A \oplus C_i$. 

\spar{40510D}
The Bousfield-Kan formula (see e.g. Corollary 12.3 of Shah \cite{shah2023parametrized}) rewrites any colimit as a geometric realisation of a simplicial diagram whose vertices are coproducts among the terms of the original diagram. If $A \to A \oplus C_1$, $A \to A \oplus C_2$ are split injections, then the coproduct in $\Dd_{A/}$ is given by the pushout in $\Dd$, which is the split injection
\[
A \to A \oplus C_1 \oplus C_2.
\]
Thus, we conclude the proof of lemma \ref{40510F} by applying the Bousfield-Kan formula to the diagram obtained in paragraph \ref{40510C}.

\medskip

We now complete the proof of proposition \ref{21023F}.

\spar{30522B}
Let $\Fin_\m{inj}^{\le n}$ be the full subcategory of $\Fin_\m{inj}$ spanned by sets of cardinality $\le n$. Then the evident inclusion $B\Si_n \subset \Fin_\m{inj}^{\le n}$ obeys the hypothesis imposed on $\delta$ in Example \ref{30616B}. Indeed, for $\langle k\rangle^\circ = \{1, \dots, k\} \in \Fin_\m{inj}^{\le n}$, there's an evident equivalence of 1-categories 
\[
B\Si_{n-k} \xto{\sim} (B\Si_n)_{\langle k \rangle ^\circ/}. 
\]
A categorical equivalence is in particular a weak equivalence in the Kan model structure on the 1-category of simplicial sets. The classifying space of a finite group is rationally contractible.

\spar{21023H}
Fix $(\one \xto{x} V) \in \Cc_{\one/}$, and assume $\Cc$ is presentably tensored over the \oo-category $\Hh_\QQ$ of rational spaces with its Cartesian symmetric monoidal structure. In paragraph \ref{21025B}, we associated to $x$ a diagram
\[
V^{x \otimes} : \m{Fin}_\m{inj} \to \Cc.
\]
Proposition \ref{21025C} provides an equivalence in $\Cc$,
\[
\Sym' (\one \xto{x} V) = \colim V^{x \otimes}.
\]
Let $\vec\NN$ denote the (1-)category associated to the totally ordered set $\NN$; that is, the objects are the natural numbers, and there's a unique morphism $n \to m$ if and only if $n \le m$. Let $r$ denote the evident functor
\[
\Fin_{inj} \to \vec\NN.
\]
Denote the restriction of $V^{x \otimes}$ to $\m{Fin}_\m{inj}^{\le n}$ by $V^{x \otimes}_{\le n}$. By Theorem 0300 of Kerodon \cite{Kerodon}, there exists a left Kan extension 
\[
K: \vec{\NN} \to \Cc
\]
of $V^{x \otimes}$ along $r$. The left Kan extension $K$ comes with an equivalence in $\Cc$
\[
\colim V^{x \otimes} = \colim K
\]
on the one hand, and equivalences in $\Cc$ 
\[
K(n) = \colim V^{x \otimes}_{\le n}
\]
on the other. By $(\text{Quillen's theorem A})_\QQ$ (Example \ref{30616B}) applied as in paragraph \ref{30522B}, for each $n \in \NN$, the inclusion $B\Si_n \subset \m{Fin}_\m{inj}^{\le n}$ induces an equivalence
\[
\Sym^n V \xto{\sim} \colim V^{x \otimes}_{\le n}.
\]

\spar{21023Ha}
It remains to analyze the induced transition maps
\[
\Sym^n V \to \Sym^{n+1}V.
\]
This is pure abstract nonsense based on fundamental techniques for working with colimits. Let
\[
\iota_s: \langle n \rangle^\circ \to 
\langle n+1 \rangle^\circ
\]
be the order-preserving injection
\[
\{1, \dots, n\} \simeq \{1, \dots, \hat s, \dots, n+1\}.
\]
We can associate to $\iota_s$ a diagram of 1-categories and functors
\[
\xymatrix
@C=2ex
@R=2ex
{
	\{\langle n \rangle ^\circ \}
	\ar@{}[r]|\subset^a
	\ar[d]
	&
	B\Si_n
	\ar@{}[r]|\subset^b
	\ar[d]_{\tau_s}
	&
	\m{Fin}_\m{inj}^{\le n}
	\ar@{}[d]|\cap^c
	\\
	\{\langle n+1 \rangle ^\circ \}
	\ar@{}[r]|\subset_d
	&
	B\Si_{n+1}
	\ar@{}[r]|\subset_e
	&
	\m{Fin}_\m{inj}^{\le n+1}
	\ar@{}[r]|\subset_f
	&
	\m{Fin}_\m{inj}
}
\]
in which the left square commutes on the nose, while the right square is filled in by a canonical (noninvertible) natural transformation
\[
\eta_s: fcb \to fe \tau_s.
\]
Taking colimits in $\Cc$, we find that the left square, together with the natural transformation $\eta_s$, establishes an equivalence between the map
\[
(\tau_s)_*:\Sym^n V \to \Sym^{n+1}V
\]
induced by $\tau_s$, and the map induced by 
\begin{align*}
	V^{\otimes n} 
	\simeq 
	V^{\otimes \{1, \dots, s-1\}}
	&
	\otimes \one \otimes 
	V^{\otimes \{s+1, \dots, n+1\}}
	\\
	&
	\xto{\m{Id} \otimes x \otimes \m{Id}}
	V^{\otimes \{1, \dots, s-1\}}
	\otimes V \otimes 
	V^{\otimes \{s+1, \dots, n+1\}}
	\simeq V^{\otimes n+1}.
\end{align*}
Meanwhile, the right square induces a coherently commuting square of colimits in $\Cc$
\[
\xymatrix{
	\Sym^n V 
	\ar[d]_{(\tau_s)_*}
	\ar[r]^\sim
	& 
	\colim (V|_{\m{Fin}_\m{inj}^{\le n}})
	\ar[d]^{c_*}
	\\
	\Sym^{n+1}
	\ar[r]^\sim
	&
	\colim (V|_{\m{Fin}_\m{inj}^{\le n +1}})
}
\]
in which $c_*$ is the map of colimits induced by the left Kan extension. The case $s=1$ (applied to each $n \in \NN$) establishes an equivalence between the diagram constructed here and the diagram indicated in the proposition. This completes the proof of Proposition \ref{21023F}.

\medskip

When $\Cc$ is stable and $\QQ$-linear, we may further analyze the colimit \ref{21023F}(*) using the following proposition.

\begin{proposition}
	\label{30412A}
	Let $\Dd$ be a stable symmetric monoidal $\QQ$-linear $\infty$-category with unit object $\one$ and suppose given an exact triangle 
	\[
	\one \xto{x} V \xto{q} Q.
	\]
	Then for each $n \ge 0$ there's an associated exact triangle
	\[
	\Sym^n V \xto{\mu_n(x)}
	\Sym^{n+1} V \xto{q_n(x)}
	\Sym^{n+1} Q.
	\]
\end{proposition}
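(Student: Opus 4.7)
The plan is to realize the desired exact triangle as the degree-$(n+1)$ component of a single cofiber sequence $\Sym V \xrightarrow{\cdot x} \Sym V \to \Sym Q$ of $\Sym V$-modules, obtained by change-of-base from an analogous cofiber sequence over $\Sym \one$. The constructions and grading arguments are most naturally performed inside the stable symmetric monoidal $\infty$-category $\Fun(\NN, \Dd)$ of graded objects under Day convolution, in which the symmetric algebra $\Sym V$ naturally decomposes as $\bigoplus_n \Sym^n V$ and multiplication by $x \in \Sym^1 V$ is a graded map of degree $+1$.

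The first step is to identify $\Sym Q$ with the relative tensor product $\Sym V \otimes_{\Sym \one} \one$. The cofiber $Q$ may be expressed as the pushout $V \sqcup_\one 0$ in $\Dd$. Since $\Sym \colon \Dd \to \CAlg(\Dd)$ is left adjoint to the forgetful functor, it preserves colimits, and $\Sym 0 \simeq \one$ (the initial commutative algebra). Hence $\Sym Q$ is the pushout $\Sym V \sqcup_{\Sym \one} \one$ in $\CAlg(\Dd)$, whose underlying $\Dd$-object is the claimed relative tensor product.

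The second step is to identify $\one$, equipped with its augmentation $\Sym \one$-module structure, as the cofiber of multiplication by the universal generator $x_0 \colon \one \to \Sym \one$ in $\Sym \one$-modules. The $\QQ$-linearity of $\Dd$ forces $\Sym^n \one \simeq \one$ for every $n$ (the rational homology of $B\Sigma_n$ is concentrated in degree zero), so $\Sym \one \simeq \bigoplus_{n \ge 0} \one$ as a graded algebra, with multiplication by $x_0$ acting as the identity between adjacent degrees. A direct graded-piece computation then shows that $\Sym \one \xrightarrow{\cdot x_0} \Sym \one \to \one$ is a cofiber sequence of $\Sym \one$-modules. Tensoring this sequence over $\Sym \one$ with $\Sym V$, viewed as a $\Sym \one$-algebra via $\Sym x$, yields the cofiber sequence $\Sym V \xrightarrow{\cdot x} \Sym V \to \Sym Q$ in $\Sym V$-modules, where $x \in \Sym^1 V = V$ is the image of $x_0$; extracting its degree-$(n+1)$ component produces the desired exact triangle $\Sym^n V \xrightarrow{\mu_n(x)} \Sym^{n+1} V \xrightarrow{q_n(x)} \Sym^{n+1} Q$.

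The main technical obstacle is the careful handling of gradings at the $\infty$-categorical level --- in particular, making precise that $\Sym$ factors through graded commutative algebras, that all maps above are graded of the stated degrees, and that evaluation at each degree is an exact functor $\Fun(\NN, \Dd) \to \Dd$. Once this graded framework is in place, the argument above reduces to a direct verification performed one graded piece at a time.
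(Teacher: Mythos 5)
Your proposal is correct in substance, but it takes a genuinely different route from the paper. The paper's proof (paragraphs \ref{30412B}--\ref{30412D}) fixes $n$, constructs a comparison map $\de\colon\m{cof}(\mu_n(x))\to\Sym^{n+1}Q$ naturally in the pointed object $x\colon\one\to V$, and then reduces, via Lemma \ref{40510F} (the undercategory $\Dd_{\one/}$ is generated by split injections under sifted colimits), to the case of a split injection $\one\to\one\oplus C$, where the claim follows from the binomial decomposition of $\Sym^{n+1}(\one\oplus C)$ together with $\Sym^i\one\simeq\one$. You instead treat all $n$ at once: you identify $\Sym Q\simeq\Sym V\otimes_{\Sym\one}\one$ (using that $\Sym$, being a left adjoint, preserves pushouts and that pushouts of commutative algebras are relative tensor products), establish the Koszul-type cofiber sequence $\Sym\one\xto{\cdot x_0}\Sym\one\to\one$ of $\Sym\one$-modules via $\Sym^n\one\simeq\one$, base-change along $\Sym x$, and read off weight components in $\Fun(\NN,\Dd)$ with Day convolution. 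Both arguments invoke $\QQ$-linearity at exactly the same point, namely $\Sym^n\one\simeq\one$; your argument is arguably more conceptual and produces the entire tower of triangles from a single cofiber sequence, but the cost is precisely the graded machinery you acknowledge: you must (i) refine $\Sym$ so that $\Sym$ of an object in weight $1$ has $\Sym^nV$ in weight $n$ and evaluation at each weight is exact, (ii) treat multiplication by $x$ honestly as a weight-$(+1)$ map, i.e.\ a graded map out of the weight-shifted module, since $x\colon\one\to V$ itself is not a graded map, and (iii) verify that the cofiber of $\cdot x_0$ agrees with the augmentation module $\one$ as a $\Sym\one$-module and not merely as an underlying graded object --- in the graded setting this does follow, since a module concentrated in weight $0$ over the non-negatively graded algebra $\Sym\one$ is canonically restricted along the projection to weight $0$, which is the augmentation, but it is a step that must be spelled out, as must the identification of the resulting maps with $\mu_n(x)$ and $\Sym^{n+1}(q)$. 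The paper's route avoids all grading bookkeeping, at the price of the functoriality apparatus (the zero-sequence category) and the generation Lemma \ref{40510F}.
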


The proof spans paragraphs \ref{30412B}--\ref{30412D}.

\spar{30412B}
We fix $n$ throughout the discussion and the proof. There are functors 
\[
\mu_n, q_n:\Dd_{/\one} \rightrightarrows \Fun(\Delta^1, \Dd)
\]
such that for each object $x:\one \to V $ of $\Dd_{/\one}$, $\mu_n(x)$ is a composition 
\[
\Sym^n V 
\xto{x \otimes id} V \otimes \Sym^{n} V
\to \Sym^{n+1}V
\]
and $q_n(x) = \Sym^{n+1}(q)$. This defines the morphisms alluded to in the proposition. Moreover, it's not hard to see that the composition $q_n(x) \circ \mu_n(x)$ is equivalent to the zero morphism.

\spar{30412C}
Consider the functor $\ka: \Fun(\Delta^2, \Dd) \to \Fun(\Delta^1, \Dd)$ induced by the face map $\Delta^1 \to \Delta^2$, $\{0,1\} \simeq \{0,2\} \subset \{0,1,2\}$. There's a natural functor 
\[
z:\Dd \times \Dd \to \Fun(\Delta^1, \Dd)
\]
such that for each $(A,C) \in \Dd \times \Dd$, $z(A,C)$ is a zero morphism $A \to C$. We define the $\infty$-category $ZS(\Dd)$ of \emph{zero sequences in $\Dd$} by the pullback
\[
ZS(\Dd) := \Fun(\Delta^2, \Dd)
\times_{\Fun(\Delta^1, \Dd)}
(\Dd \times \Dd).
\]
Evidently, the functors $\mu_n, q_n$ together give rise to a functor
\[
(\mu_n, q_n):\Dd_{/\one} \to ZS(\Dd).
\]
On the other hand, there's a natural functor 
\[
\ep:ZS(\Dd) \to \Fun(\Delta^1, \Dd)
\]
which sends a zero-squence
\[
A \xto{\al} B \xto{\be} C
\]
to the associated morphism
\[
\m{cof}(\al) \to C.
\]
Composing $(\mu_n,q_n)$ with $\ep$, we obtain a natural transformation
\[
\de:\m{cof}(\Sym^n V \to \Sym^{n+1}V)
\to \Sym^{n+1} Q
\]
of functors 
\[
\Dd_{\one/} \rightrightarrows \Dd.
\]
We wish show that $\de$ is an equivalence. 

\spar{30412D}
Since the tensor product, by assumption, preserves colimits separately in each variable, and since $\Dd$ is $\QQ$-linear, both the source and target of $\de$ preserve sifted colimits. Since $\Dd_{\one/}$ is generated by split injections under sifted colimits (Lemma \ref{40510F}), it suffices to check on split injections. Using again our assumption that the tensor product preserves colimits separately in each variable (and hence distributes over direct sums), we find that $\de$ is an equivalence as hoped. This completes the proof of Proposition \ref{30412A}.

\section{Review of rational motivic algebras}
\label{sec:ReviewRatMot}

The foundations of rational motivic algebras (or ``motivic dga's'') were studied in \cite{dancohen2021rational, Iwanari}. The model-categorical foundations of \cite{dancohen2021rational} will allow us to rely on both model categorical and infinity categorical techniques, while the careful \oo-categorical treatment of \cite{Iwanari} will allow us to import many foundational constructions and expected compatibilities. In the interest of readability, we will be rather less careful about such matters and we strongly recommend the reader consult the above articles for background. 

\spar{27J1}
Let $k$ be a field of characteristic zero. Let $\Sm_k$ denote the category of smooth schemes over $k$, let $\Cpx \PSh(\Sm_k, \QQ)$ denote the category of complexes\footnote{Here and below, complexes are not assumed to be bounded.} of presheaves of $\QQ$-vector spaces. If $X$ is a smooth scheme over $k$, we commit the usual abuse of notation by denoting the associated presheaf again by $X$. Continuing with this abuse, we let $X\otimes \QQ$ denote the presheaf whose value on a smooth $k$-scheme $Y$ is the $\QQ$-vector space of formal linear combinations of elements of $\Hom_k(Y,X)$.

 We let
 \[
 \MM^\m{eff}(k,\QQ)
 =
 \big(
 \Cpx \PSh (\Sm_k,\QQ),
 (\AA^1, \et)\mbox{-local}
 \big)
 \]
 denote the category of complexes of presheaves, endowed with the ($\AA^1$, \'et)\emph{-local model structure}. We recall briefly what this means, referring to \cite[\S3]{AyoubEtale} for details. We start with the projective model structure, in which the weak equivalences are defined to be the quasi-isomorphisms and the fibrations are defined to be the degree-wise surjections. We left-localise with respect to the class of morphisms which induce isomorphisms of cohomology presheaves after sheafification with respect to the \'etale topology. We then left-localise with respect to the class of morphisms 
\[
\AA^1_X\otimes \QQ[n] 
\to
X \otimes \QQ[n] 
\]
for $X \in \Sm_k$ and $n\in \ZZ$.
 
We set 
\[
T\coloneqq 
\frac{\PP^1_k \otimes \QQ[0] }
 {\infty \otimes \QQ[0]}.
\]
We let $\MM(k, \QQ)$ denote the category of symmetric $T$-spectra in $\MM^\m{eff}(k,\QQ)$ as in Hovey \cite[\S7]{HoveySpectra}, or, equivalently but more simply, the category of commutative $T$-spectra of Ayoub \cite[\S4]{AyoubComodules}, endowed with the \emph{stable} $(\AA^1,\et)$-local model structure, as in Ayoub \cite[\S3]{AyoubEtale}. There's a natural left Quillen functor 
\[
\opnm{Sus}^0_T: \MM^\m{eff}(k,\QQ) \to \MM(k,\QQ),
\]
and we set
\[
\QQ(0) := \opnm{Sus}^0_T(\QQ[0]).
\]
This makes $\MM(k,\QQ)$ into a symmetric monoidal model category with unit object $\QQ(0)$ and tensor product given by the natural (levelwise) tensor product. The \emph{triangulated category of \'etale motives over $k$} is given by
\[
h\DM(k,\QQ) := \ho \MM(k,\QQ).
\footnote{In the literature on motives, the category we've constructed is more often denoted $\DA(k, \QQ)$ or $\DA^\et(k, \QQ)$, while the notation $\DM(k, \QQ)$ is reserved for the more central and more well known version with correspondences and the Nisnevich topology. However, with rational coefficients there is a well documented equivalence $h\DM(k, \QQ) \simeq h\DA(k, \QQ)$ coming from a Quillen equivalence of model categories.}
\]
The \emph{homological motives functor}
\[
C_* =
C_*(-,\QQ)
: \Sm_k \to \MM(k,\QQ) \to h\DM(k,\QQ)
\]
is given on an object $X\in \Sm_k$ by
\[
C_*(X,\QQ) = 
\Sus^0_T(X\otimes \QQ[0]).
\]
The \emph{Tate object} is given by $\QQ(1):= \Sus^0_T(T[-2])$.

\spar{27J2}
We define the \emph{rational motivic model category} $\Mdga = \Mdga(k, \QQ)$ to be the category of commutative monoids in $\MM(k,\QQ)$. We define a morphism in $\Mdga$  to be a \emph{weak equivalence} if the associated morphism in $\MM(k,\QQ)$ is a weak equivalence, and a \emph{cofibration} if the associated morphism in $\MM(k,\QQ)$ is a cofibration. According to Propositions 1.6 and 1.12 of \cite{dancohen2021rational}, this makes $\Mdga(k, \QQ)$ into a cofibrantly generated left-proper model category. 

We fix once and for all a fibrant replacement
\[
\one \to \one^f
\]
of the unit object in $\Mdga$. The homological motives functor factors through a functor 
\[
\operatorname{C_*} = 
\operatorname{C_* }( \;\cdot\; , \QQ): \Sm_k \to \opnm{CcoAlg}(\MM(k, \QQ))
\]
to the category of \emph{commutative coalgebras} in $\MM(k,\QQ)$; this functor is given by
\[
C_*(X) := \Sus^0_T(X \otimes \QQ[0])
\]
with counit
\[
C_*(X) \to C_*\big(\Spec(k)\big) = \QQ(0)
\]
induced by the structure map of $X$, and comultiplication
\[
C_*(X) \to C_*(X) \otimes C_*(X)
\]
induced by the diagonal of $X$. As noted in the proof of Theorem 1.14 of \cite{dancohen2021rational}, the formula 
\[
C^*(X, \QQ):=
  \underline \Hom_{\MM(k,\QQ)}(C_*(X, \QQ),\onef)
\] 
defines a functor 
\[
C^* = C^* ( \;\cdot\; , \QQ) : \Sm^\op_k \to \opnm{CcoAlg} \MM(k,\QQ).
\]

\spar{27J21}
Let $\Dd^\otimes = \DM(k,\QQ)^\otimes$ denote the stable symmetric monoidal $\infty$-category associated to $\MM(k, \QQ)$. We do not distinguish notationally between the symmetric monoidal category $\Dd^\otimes$ and its underlying $\infty$-category $\Dd = \Dd^\otimes_{\langle 1 \rangle}$ when we see no danger of confusion. The work of Hinich \cite{HinichRectification} provides an equivalence between the $\infty$-category associated to the model category $\Mdga(k,\QQ)$ and $\Cc = \CAlg \Dd$, and we define the \emph{rational motivic algebra functor} to be the resulting functor
\[
\Sm^\op_k \to \Cc = \CAlg \DM(k, \QQ).
\]
For the most part, we denote the motivic  algebra functor again by $C^*$. When the need to distinguish between the motivic algebra functor and the \emph{cohomological motives functor} 
\[
\Sm^\op_k \to \Dd = \DM(k, \QQ),
\] 
we will denote the latter by $\underline C^*$.

\spar{27J3}
The motivic algebra functor may be retrieved directly from the cohomological motives functor without the need to specify a cofibrant replacement of the unit via standard general infinity-categorical constructions. A symmetric monoidal functor
\[
\Ff^\otimes: \Aa^\otimes \to \Bb^\otimes
\]
between symmetric monoidal $\infty$-categories upgrades naturally to a functor 
\[
\Ff^a: \CAlg \Aa \to \CAlg \Bb. 
\]

To apply this to motives, we endow $\Sm_k^\op$ with its coCartesian structure (in which tensor product is given by the ordinary product of schemes), so that the symmetric monoidal functor~$C^*\colon(\Sm_k^\op)^{\coprod}\to\cD^\otimes$ upgrades to a functor
\[
\Sm_k^\op=\CAlg((\Sm_k^\op)^{\coprod}) \to \CAlg(\cD^\otimes) = \cC \,
\]
(see Corollary 2.4.3.10 of \cite{LurieAlg}). This upgraded functor is automatically coCartesian-symmetric monoidal, so we have equivalences of rational motivic algebras
\[
C^*(X\times Y) \simeq C^*(X) \otimes C^*(Y).
\]

\section{Augmentation spaces, computation for abelian varieties}
\label{sec:AugmentationSpaces}

In this section, as above, we work over an arbitrary field $k$, though much of the material may be generalised to a positive dimensional base with little change. 

\spar{27J4}
For $X \in \Sm_k$, we define the \emph{rational motivic augmentation space of $X$ over $k$} by 
\[
\Aaug(X/k) := \Hom_{\Cc}(C^*(X), \one)
\]
and we define the \emph{rational motivic augmentation set} by
\[
\Aug(X/k) = \pi_0 \Aaug(X/k) = \Hom_{h\Cc}(C^*(X), \one) \,.
\]
Composing with the presheaf 
\[
\Hom_\Cc(?, \one):(\Cc^{\coprod})^\op \to \Hh^{\prod}
\]
to the category of homotopy types with Cartesian monoidal structure, we find that the assignment 
\[
X \mapsto \Aaug(X/k)
\]
 belongs to a monoidal functor
\[
\Aaug(?/k): \Sm_k^{\prod} \to \Hh^{\prod}.
\]

As a direct consequence of the constructions we have augmentation maps
\[
\kappa\colon X(k) \to \Aaug(X/k)
\]
which form part of a natural transformation 
\[
?(k) \to \Aaug(?/k)
\quad
\mbox{of functors}
\quad
\Sm_k \to \Hh. 
\]

\begin{remark}
	If~$k'/k$ is an extension of fields, then there is a base-change functor $\DM(k,\bQ) \to \DM(k',\bQ)$ taking $C^*(X)$ to $C^*(X_{k'})$ for every $X\in\Sm_k$. This functor induces a map $\cAug(X/k)\to\cAug(X_{k'}/k')$ compatible with augmentation maps.
\end{remark}

\spar{27J4a}
The fact that $X\mapsto \Aug(X/k)$ preserves products implies that there is an induced (commutative) group structure on $\Aug(A/k)$ for any (commutative) group scheme $A$, and the augmentation map
\[
A(k) \to \Aug(A/k)
\]
is a group homomorphism. In fact, by work of Ancona et.\ al.\ \cite{AnconaHuberI, AnconaHuberII}, lifted to the $\infty$-categorical level by Iwanari \cite{Iwanari}, the rational motivic augmentations of a semiabelian variety are easy to understand: as we show in Propositions \ref{10OctA} and \ref{10Oct1}, as a set $\Aug(A/k) = \pi_0 \Aaug(A/k) \simeq A(k) \otimes \QQ$ and, moreover, the space $\Aaug(A/k)$ is discrete.

\begin{proposition}
\label{10OctA}
Let $A$ be a semiabelian variety over a field $k$. Then the map
\[
A(k) \to \Aug(A/k)
\]
induces an isomorphism $A(k) \otimes \QQ \simeq \Aug(A/k)$. 
\end{proposition}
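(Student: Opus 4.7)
The plan is to reduce the claim to standard motivic inputs via Iwanari's equivalence $C^*(A) \simeq \Sym^* M^1(A)$ in $\cC$, where $M^1(A) \in \cD$ realises to $H^1(A)[-1]$. Combining this decomposition with the free-forgetful adjunction $\Sym^*\colon \cD \rightleftarrows \cC$, one obtains a natural equivalence
\[
\Aaug(A/k) = \mathrm{Map}_\cC(\Sym^* M^1(A), \one) \simeq \mathrm{Map}_\cD(M^1(A), \one),
\]
so the task reduces to identifying $\Hom_{h\cD}(M^1(A), \bQ(0))$ with $A(k) \otimes \bQ$, compatibly with the augmentation map $\kappa$ of \S\ref{27J4}.

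For the identification itself, I would appeal to the results of Ancona-Huber-Pepin Lehalleur, which provide a natural isomorphism $\Hom_{h\cD}(\bQ(0), M_1(A)) \simeq A(k) \otimes \bQ$ between maps from the unit into the homological ``degree-1 part'' of $A$ and the rational points of $A$. Dualising then yields the required identification. To handle the semiabelian case, I would reduce using the canonical extension $0 \to T \to A \to B \to 0$: the abelian quotient $B$ is covered by the Ancona-Huber-Iwanari theorem directly, and the toric part reduces to the case $A = \Gm$. For $\Gm$, the cohomology is concentrated in degrees $0$ and $1$ with $M^1(\Gm) \simeq \bQ(-1)[-1]$ (an ``odd'' object, so higher symmetric powers vanish in characteristic zero), so
\[
\Aug(\Gm/k) = \Hom_{h\cD}(\bQ(-1)[-1], \bQ(0)) = H^1_{\mathrm{mot}}(k, \bQ(1)) = k^\times \otimes \bQ = \Gm(k) \otimes \bQ,
\]
as desired.

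To complete the proof, I would check that the composition $A(k) \to \Aug(A/k) \xrightarrow{\sim} A(k) \otimes \bQ$ is the canonical map $a \mapsto a \otimes 1$. Both sides are natural group homomorphisms in $A$, and this compatibility is effectively built into the Ancona-Huber construction of $M_1(A)$: a $k$-point $a\colon \Spec k \to A$ pulls back $C^*(A) = \Sym^* M^1(A)$ to $\bQ(0)$, giving an augmentation whose restriction to $M^1(A)$ matches the image of $a$ under the Ancona-Huber identification. Since $\Aug(A/k) = \Hom_{h\cD}(M^1(A), \bQ(0))$ is a $\bQ$-vector space (as $\pi_0$ of a mapping space in the $\bQ$-linear $\infty$-category $\cC$), the group homomorphism $A(k) \to \Aug(A/k)$ factors uniquely through $A(k) \otimes \bQ$, and the factored map is the desired isomorphism.

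The principal obstacle is really just assembling the cited inputs correctly: Iwanari's $\infty$-categorical equivalence $C^*(A) \simeq \Sym^* M^1(A)$ in $\cC$ is a nontrivial refinement of the derived-category statement and must be available (and extended to the semiabelian setting, either via the argument sketched above or by citing an appropriate generalisation), while the Ancona-Huber identification of maps into $M_1(A)$ with $A(k) \otimes \bQ$ is the other nontrivial input. Once these are in hand, the argument is a formal unwinding of the free-forgetful adjunction together with a naturality check.
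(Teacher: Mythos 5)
Your proposal follows the paper's own proof in all essentials: Iwanari's freeness $C^*(A)\simeq\Sym M_1(A)^\vee$ plus the free--forgetful adjunction reduces $\Aug(A/k)$ to $\Hom_{h\cD}(M_1(A)^\vee,\QQ(0))$, which after dualising is identified with $A(k)\otimes\QQ$ exactly as in \S\ref{27Jps}--\ref{27Jpe}; the only cosmetic difference is that the paper carries out this last identification by hand, using that $M_1(A)=A\otimes\QQ$ is a homotopy-invariant \'etale sheaf and that $\Sh_\et(\Sm_k)$ embeds fully faithfully (on such objects) into $\ho\DM^\m{eff}(k,\QQ)\subset\ho\DM(k,\QQ)$, rather than quoting it as a packaged Ancona--Huber-type statement. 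One caution: your fallback d\'evissage along $0\to T\to A\to B\to 0$ is both unnecessary (the inputs, as cited in the paper, apply directly to semiabelian $A$) and insufficient for the purpose you assign it, since a nontrivial extension $A$ is a $T$-torsor over $B$ rather than a product, so neither the algebra-level freeness of $C^*(A)$ nor $\Aug(A/k)$ itself can be read off from the toric and abelian pieces without an additional argument (e.g.\ of the kind given in Theorem~\ref{30428Aa} and Corollary~\ref{w24d} for $\Gm$-torsors).
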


The proof spans paragraphs \ref{27Jps}--\ref{27Jpe}.

\spar{27Jps}
We let $\Sh_\et(?)$ denote the category of \'etale sheaves of $\QQ$-\textit{vector spaces}, and we let $\Cpx(?)$ denote the category of complexes. We endow $\Cpx(\Sh_\et(\Sm_k))$ with the \'etale-local projective model structure. 

Let $G/k$ be a semiabelian variety. By \cite{AnconaHuberI, AnconaHuberII}, $M_1(G)$ may be defined simply as the \'etale sheaf 
\[
\Sm_k^\m{op} \to \Vect \QQ
\]
given by
$
M_1(G) = G \otimes \QQ,
$
regarded as a complex concentrated in degree $0$. As shown in loc.\ cit., $M_1(G)$ is homotopy invariant (i.e., $\AA^1$-local), hence belongs to the essential image of the fully faithful right adjoint
\[
\ho\Cpx(\Sh_\et(\Sm_k)) \inj \ho\DM^\m{eff}(k,\QQ)
\]
In turn, according to \cite{VoevTriCat}, $ \ho\DM^\m{eff}(k,\QQ)$ embeds as a full subcategory of $\ho\DM(k,\QQ)$. Finally, we note that $\Sh_\et(\Sm_k)$ embeds in the derived category $\ho\Cpx(\Sh_\et(\Sm_k))$ as the full subcategory consisting of objects concentrated in degree 0.

\spar{27Jpe}
By Iwanari \cite[Theorem 6.16]{Iwanari}, $C^*(A) = \Sym M_1(A)^\lor$ is a free commutative algebra in $\Dd$. Therefore
\begin{align*}
\Aaug(A/k) &= \Hom_\Dd \big(M_1(A)^\lor, \QQ(0) \big).
\end{align*}
Hence
\begin{align*}
\Aug(A/k) &= \Hom_{\ho\Dd}  \big(M_1(A)^\lor, \QQ(0) \big)
\\
&= \Hom_{\ho\Dd} \big(\QQ(0), M_1(A) \big)
\\
&= \Hom_{\Sh_\et(\Sm_k)}  \big( \QQ_{\Sm_k}, M_1(A) \big)
\\
&= A(k) \otimes \QQ. 
\end{align*}
This completes the proof of Proposition \ref{10OctA}.

\begin{theorem}
\label{10Oct1}
Let $A$ be a semiabelian variety over a field $k$. Then the map of spaces
\[
A(k) \to \Aaug(A/k)
\]
induces an isomorphism of (a fortiori discrete) homotopy types
\[
A(k) \otimes \QQ \simeq \Aaug(A/k).
\]
\end{theorem}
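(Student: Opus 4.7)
The strategy is to upgrade the $\pi_0$-level identification of Proposition~\ref{10OctA} to an identification at the level of entire homotopy types. Since Proposition~\ref{10OctA} already gives $\pi_0 \Aaug(A/k) = A(k) \otimes \QQ$, it suffices to show that $\pi_n \Aaug(A/k) = 0$ for every $n \ge 1$; from this the space $\Aaug(A/k)$ is discrete and the natural map from the discrete space $A(k) \otimes \QQ$ is automatically a weak equivalence.

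First, combining Iwanari's equivalence $C^*(A) \simeq \Sym M_1(A)^\lor$ in $\Cc$ with the free--forgetful adjunction gives an equivalence of spaces
\[
\Aaug(A/k) \;=\; \Hom_\Cc\bigl(C^*(A), \one\bigr) \;\simeq\; \Hom_\Dd\bigl(M_1(A)^\lor, \one\bigr).
\]
Because $\Dd$ is a stable $\infty$-category, the right-hand side is the underlying infinite loop space of a mapping spectrum, so for every $n \ge 0$ we have
\[
\pi_n \Aaug(A/k) \;=\; \Hom_{\ho\Dd}\!\bigl(M_1(A)^\lor,\, \one[-n]\bigr).
\]

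For $n > 0$ I would then repeat, shifted by $-n$, the chain of identifications used in the proof of Proposition~\ref{10OctA}: strong dualizability of $M_1(A)$ rewrites this as $\Hom_{\ho\Dd}(\one,\, M_1(A)[-n])$, and the fully faithful embeddings
\[
\Sh_\et(\Sm_k, \QQ) \hookrightarrow \ho\Cpx(\Sh_\et(\Sm_k, \QQ)) \hookrightarrow \ho\DM^{\m{eff}}(k, \QQ) \hookrightarrow \ho\DM(k, \QQ)
\]
identify it with $\Ext^{-n}_{\Sh_\et(\Sm_k,\QQ)}(\QQ,\, A \otimes \QQ)$, an Ext group in strictly negative cohomological degree in an abelian category, which vanishes.

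I do not expect a genuine obstacle, since every ingredient is already in place in Proposition~\ref{10OctA}. The only minor point worth flagging is that stability of $\Dd$ is what guarantees that the higher homotopy groups of $\Aaug(A/k)$ are independent of basepoint (i.e.\ really are computed at every motivic augmentation, not merely at the one attached to the identity of $A$); this is automatic because mapping spaces in a stable $\infty$-category are infinite loop spaces, so the computation above rules out nontrivial higher homotopy anywhere in $\Aaug(A/k)$.
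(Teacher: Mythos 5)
Your proposal is correct and follows essentially the same route as the paper: both rest on Iwanari's equivalence $C^*(A)\simeq\Sym M_1(A)^\lor$, the free--forgetful adjunction, duality, and the full faithfulness of the embeddings $\Sh_\et(\Sm_k)\hookrightarrow \ho\Cpx(\Sh_\et(\Sm_k))\hookrightarrow \ho\DM^\m{eff}(k,\QQ)\hookrightarrow \ho\DM(k,\QQ)$. The only difference is packaging: you check $\pi_n$ vanishes for $n\ge 1$ via shifted homs and negative Ext groups, while the paper upgrades the same hom-set identifications to equivalences of mapping spaces (its Lemma in \S\ref{11OctA} proves that space-level statement by exactly your $\pi_i \simeq \Hom_{\ho}(X[i],Y)$ computation), so the two arguments coincide in substance.
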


The proof (which amounts essentially to noting that the calculation of Proposition \ref{10OctA} lifts to the $\infty$-categorical level without change) spans paragraphs \ref{10Oct2}--\ref{10Oct3}.

\spar{10Oct2}
We recall elementary facts about localisation of $\infty$-categories. Let $\Cc$ be an $\infty$-category, $L:\Cc \to \Dd$ a localisation functor, and $\rho: \Cc \from \Dd$ a right adjoint of $L$, which, by assumption, is fully faithful.

\begin{Claim*}
The counit
$
L \circ \rho \to \m{Id}_\Dd
$
of the adjunction is an equivalence.
\end{Claim*}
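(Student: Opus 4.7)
The plan is to apply the Yoneda lemma for $\infty$-categories: it suffices to show that for every $d \in \Dd$, precomposition with $\epsilon_d \colon L\rho d \to d$ induces an equivalence of mapping spaces
\[
\epsilon_d^* \colon \Map_\Dd(d, d') \xrightarrow{\sim} \Map_\Dd(L\rho d, d')
\]
naturally in $d' \in \Dd$.

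First I would assemble, for fixed $d, d' \in \Dd$, a chain of equivalences
\[
\Map_\Dd(d, d') \xrightarrow{\rho} \Map_\Cc(\rho d, \rho d') \xleftarrow{\sim} \Map_\Dd(L\rho d, d'),
\]
where the left-hand map is an equivalence because $\rho$ is fully faithful (by hypothesis), and the right-hand map is the adjunction equivalence applied to the object $\rho d \in \Cc$ and $d' \in \Dd$.

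Next I would identify the resulting composite $\Map_\Dd(d,d') \to \Map_\Dd(L\rho d, d')$ with $\epsilon_d^*$. Unwinding the adjunction, the equivalence $\Map_\Cc(\rho d, \rho d') \simeq \Map_\Dd(L\rho d, d')$ sends $g$ to $\epsilon_{d'} \circ Lg$; substituting $g = \rho h$ gives $\epsilon_{d'} \circ L\rho h$, which by naturality of the counit equals $h \circ \epsilon_d$. Hence the composite is indeed precomposition with $\epsilon_d$, and is in particular an equivalence. Yoneda then gives that $\epsilon_d$ is an equivalence in $\Dd$ for every $d$, so the counit is a natural equivalence.

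The argument is entirely formal, a standard characterisation of reflective localisations, and there is no substantive obstacle; the only point requiring a little care is the identification of the composite equivalence with $\epsilon_d^*$, which is a routine triangle-identity computation.
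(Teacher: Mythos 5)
Your proof is correct and follows essentially the same route as the paper: both arguments combine the adjunction equivalence $\Hom_\Dd(L\rho d,d')\simeq\Hom_\Cc(\rho d,\rho d')$ with full faithfulness of $\rho$ to identify $\Hom_\Dd(L\rho d,-)$ with $\Hom_\Dd(d,-)$, and conclude by the Yoneda lemma. Your extra care in checking (via naturality of the counit) that the composite equivalence really is precomposition with $\epsilon_d$ is a welcome detail that the paper's proof leaves implicit.
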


\begin{proof}
After replacing $\Dd$ by the essential image of $\rho$, we may assume that $\rho$ is the inclusion of a full subcategory $\Dd \subset \Cc$. Fix $X \in \Dd$. Then $L\rho X \to X$ induces an equivalence 
\[
\Hom_\Dd( L\rho X, \cdot) \to \Hom_\Dd(X, \cdot)
\] 
of functors
\[
\Dd^\m{op} \to \Hh,
\]
since for any $Y \in \Dd$ we have isomorphisms of homotopy types
\[
\Hom_\Dd(L\rho X, Y) = \Hom_\Cc(\rho X, \rho Y)
= \Hom_\Dd(X,Y);
\]
the claim follows by the Yoneda Lemma. 
\end{proof}

\begin{Corollary*}
Suppose given $X, Y \in \Cc$, and assume $Y$ belongs to the essential image of $\rho$. Then we have an isomorphism of homotopy types
\[
\Hom_\Dd(LX, LY) = \Hom_\Cc(X,Y).
\]
\end{Corollary*}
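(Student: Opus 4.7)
The plan is to use the preceding Claim together with the adjunction $L \dashv \rho$. By assumption $Y$ lies in the essential image of the fully faithful right adjoint $\rho$, so we may choose $Y' \in \Dd$ together with an equivalence $Y \simeq \rho Y'$ in $\Cc$. Applying $L$ and invoking the Claim, we obtain equivalences $LY \simeq L\rho Y' \simeq Y'$ in $\Dd$.

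Next, I would apply the adjunction isomorphism $\Hom_\Dd(LX, Z) \simeq \Hom_\Cc(X, \rho Z)$ in the case $Z = Y'$. Combining with the previous step, this gives
\[
\Hom_\Dd(LX, LY) \simeq \Hom_\Dd(LX, Y') \simeq \Hom_\Cc(X, \rho Y') \simeq \Hom_\Cc(X, Y),
\]
which is the required identification of homotopy types.

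There is essentially no obstacle here; the entire content of the corollary is that the already-established fact $L\rho \simeq \m{Id}_\Dd$ lets one convert the adjunction equivalence, which a priori only identifies $\Hom_\Dd(LX, LY)$ with $\Hom_\Cc(X, \rho LY)$, into the cleaner statement above whenever $Y$ itself (rather than only $\rho L Y$) is in the image of $\rho$. The only minor care needed is to note that all these equivalences are natural and that the chosen equivalence $Y \simeq \rho Y'$ does not affect the resulting homotopy type up to canonical equivalence, which follows from the fact that $\rho$ is fully faithful.
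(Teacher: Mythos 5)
Your proposal is correct and uses exactly the same ingredients as the paper: the equivalence $Y\simeq\rho Y'$, the Claim that $L\rho\simeq\m{Id}_\Dd$, and the adjunction $L\dashv\rho$, merely applied in a slightly different order (you simplify $LY$ to $Y'$ first, whereas the paper applies the adjunction first and then cancels $\rho L\rho Y'\simeq\rho Y'$). This is essentially the paper's proof.
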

\begin{proof}
By assumption we have an equivalence $Y \simeq \rho Y'$ for some $Y' \in \Dd$. So by the claim,
\begin{align*}
\Hom_\Dd(LX, LY)
&= \Hom_\Cc(X, \rho L \rho Y')
\\
&= \Hom_\Cc(X, \rho Y') 
\\
&= \Hom_\Cc(X,Y). \qedhere
\end{align*}
\end{proof}

\spar{11OctA}
Let $f:\Cc \to \Dd$ be a functor of pointed $\infty$-categories such that the induced functor of homotopy categories
\[
hf\colon h\Cc \to h\Dd
\]
is fully faithful. Assume that $\Cc$ admits suspensions, and that $f$ preserves suspensions. Then $f$ is fully faithful.

\begin{proof}
Given $X,Y \in \Cc$, 
\begin{align*}
\pi_i \Hom_\Cc(X,Y) 
&\simeq
\Hom_{\ho\Cc}(X[i], Y)
\\
&\xto{\simeq}
\Hom_{\ho\Dd}(fX[i], fY)
\\
&\simeq
\pi_i \Hom_\Dd(fX, fY)
\end{align*}
for all $i \ge 0$. Hence
\[
\Hom_\Cc(X, Y) \to \Hom_\Dd(fX, fY)
\]
is a weak homotopy equivalence. 
\end{proof}

\spar{zehamburger}
If $\Aa$ is an abelian category, we denote by $\Dd(\Aa)$ the unbounded derived $\infty$-category of $\Aa$; we recall that the functor $A \mapsto A[0]$ lifts to a fully faithful functor $\Aa \to \Dd(\Aa)$. We denote the localisation functor
\[
\Dd(\Sh_\et(\Sm_k))
\to
\DM^\m{eff}(k,\bQ)
\]
by $L_{\AA^1}$ and the stabilisation functor 
\[
\DM^\m{eff}(k,\bQ) \to \DM(k,\bQ)
\]
by $S_{\QQ(1)}$.

\begin{Claim*}
Let $\Ff, \Gg \in \Sh_\et(\Sm_k)$ be homotopy invariant sheaves. Then we have an isomorphism of (a fortiori discrete) homotopy types
\[
\Hom_{\Sh_\et(\Sm_k)}(\Ff, \Gg) 
=
\Hom_{\DM(k)}
\big(
S_{\QQ(1)}L_{\AA^1} \Ff[0],
S_{\QQ(1)}L_{\AA^1} \Gg[0]
\big).
\]
\end{Claim*}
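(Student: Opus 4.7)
The plan is to decompose the comparison into three fully faithful steps, chaining the right side back to the left via the three functors
\[
\Sh_\et(\Sm_k) \hookrightarrow \Dd(\Sh_\et(\Sm_k)) \xrightarrow{L_{\AA^1}} \DM^\m{eff}(k,\QQ) \xrightarrow{S_{\QQ(1)}} \DM(k,\QQ),
\]
and to verify that each of these preserves the $\Hom$-spaces of the objects in question.

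First, the degree-zero embedding $\Aa \hookrightarrow \Dd(\Aa)$ for $\Aa = \Sh_\et(\Sm_k)$ is fully faithful, and the mapping space $\Hom_{\Dd(\Aa)}(\Ff[0],\Gg[0])$ is discrete (since $\Ext^{-i}_\Aa(\Ff,\Gg)=0$ for $i>0$) and recovers $\Hom_{\Sh_\et(\Sm_k)}(\Ff,\Gg)$. This handles the first arrow.

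Next, I would apply the Corollary in paragraph~\ref{10Oct2} to the localisation adjunction $L_{\AA^1}\dashv \rho$. As recalled in paragraph~\ref{27Jps}, the right adjoint $\rho$ is fully faithful with essential image the homotopy invariant (i.e.\ $\AA^1$-local) objects. Since $\Gg$ is homotopy invariant by hypothesis, $\Gg[0]$ lies in the essential image of $\rho$, and the Corollary then yields
\[
\Hom_{\Dd(\Sh_\et(\Sm_k))}(\Ff[0],\Gg[0]) \;\simeq\; \Hom_{\DM^\m{eff}(k,\QQ)}(L_{\AA^1}\Ff[0], L_{\AA^1}\Gg[0]).
\]

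Finally, for the third arrow, I would invoke paragraph~\ref{11OctA}. The stabilisation functor $S_{\QQ(1)}\colon \DM^\m{eff}(k,\QQ)\to \DM(k,\QQ)$ is a left adjoint between stable $\infty$-categories, hence pointed and colimit-preserving; in particular it preserves suspensions. The induced functor on homotopy categories $h\DM^\m{eff}(k,\QQ)\to h\DM(k,\QQ)$ is fully faithful by the result of Voevodsky cited in paragraph~\ref{27Jps}. Therefore the hypotheses of~\ref{11OctA} are satisfied and $S_{\QQ(1)}$ is fully faithful on mapping spaces. Concatenating the three identifications gives the asserted equivalence, and discreteness is inherited from Step~1. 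The only mild subtlety to double-check is the clean applicability of paragraph~\ref{10Oct2}'s Corollary (that is, that the $\AA^1$-localisation is presented by a Bousfield-type reflective localisation whose local objects are precisely the homotopy invariant ones), but this is built into the construction of $\DM^\m{eff}(k,\QQ)$ recalled in paragraph~\ref{27J1}, so no further work is required.
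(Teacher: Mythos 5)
Your proposal is correct and takes essentially the same route as the paper's own proof: both chain the same three identifications, namely the fully faithful degree-zero embedding $\Sh_\et(\Sm_k)\hookrightarrow \Dd(\Sh_\et(\Sm_k))$, the localisation Corollary of \S\ref{10Oct2} (applicable because the homotopy invariant $\Gg[0]$ lies in the essential image of the right adjoint, as in \S\ref{27Jps}), and full faithfulness of $S_{\QQ(1)}$ obtained from \S\ref{11OctA} together with Voevodsky's full faithfulness of $hS_{\QQ(1)}$. The only difference is cosmetic: you traverse the chain from left to right, while the paper computes from right to left.
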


\begin{proof}
According to \S\ref{11OctA} combined with the full faithfulness of $hS_{\QQ(1)}$, the functor $S_{\QQ(1)}$ is fully faithful. Hence we have an isomorphism of homotopy types
\begin{align*}
\Hom_{\DM(k)}
\big(
S_{\QQ(1)}L_{\AA^1} \Ff[0],
S_{\QQ(1)}L_{\AA^1} \Gg[0]
\big)
&=
\Hom_{\DM^\m{eff}(k)}
\big(
L_{\AA^1} \Ff[0],
L_{\AA^1} \Gg[0]
\big)
\\
&= \Hom_{D(\Sh_\et(\Sm_k))}
(\Ff[0], \Gg[0])
\intertext
{according to \S\ref{10Oct2},}
\\
&=\Hom_{\Sh_\et(\Sm_k)}
(\Ff, \Gg). 
\qedhere
\end{align*}
\end{proof}

\spar{10Oct3}
Consequently, in the situation and the notation of Proposition \ref{10Oct1}, we have isomorphisms of homotopy types
\begin{align*}
\Aaug(A/k) &= \Hom_{\Dd}  \big(M_1(A)^\lor, \QQ(0) \big)
\\
&= \Hom_{\Dd} \big(\QQ(0), M_1(A) \big)
\\
&= \Hom_{\Sh_\et(\Sm_k)}  \big( \QQ_{\Sm_k}, M_1(A) \big)
\\
&= A(k) \otimes \QQ. 
\end{align*}
This completes the proof of Proposition \ref{10Oct1}.

\begin{remark}
\label{10Oct4}
Assume $\Dd = \DM(k,\bQ)$ admits a motivic t-structure with heart $\MMm_k$ with expected properties. Then the vanishing of the homotopy groups $\pi_i$ for $i \ge 1$ of $\Aaug(A/k)$ at the origin follows in a different way, as we now explain. We assume that $\Dd = \Dd\MMm_k$, that for $X \to \Spec(k)$ smooth proper of dimension $d$, $h_i(X)$ is pure of weight $-i$, that for such $X$
\[
h_i(X) \simeq h^{2d-i}(X)(d),
\]
and that for $A \to \Spec(k)$ an abelian variety of dimension $d$,
\[
M_1(A) = h_1(A)[1].
\]
Then we have isomorphisms of groups
\begin{align*}
\pi_i \Aaug(A)
&= \pi_i \Hom_\Dd \big( M_1(A)^\lor , \QQ(0) \big)
\\
&= \Hom_{h\Dd} \big(
M_1(A)^\lor[i], \QQ(0)
\big)
\\
&=  \Hom_{h\Dd} \big(
h_{2d-1}(A)(-d)[i-1], \QQ(0)
\big)
\\
&=  \Ext_{\Mm\Mm(k)}^{1-i} \big(
h_{2d-1}(A), \QQ(d)
\big).
\end{align*} 
For $i=1$ this is $\Hom_{\MMm_k}$ between pure objects of different weights, hence trivial. The higher homotopy groups correspond to negative $\Ext$ groups. 
\end{remark}

\section{The rational motivic algebra of a $\Gm$-torsor}
\label{sec:AlgebraOfGmBundle}

\spar{30428A}
Let $k$ be a field. If $X$ is a smooth $k$-scheme, then we have the associated motivic algebra $C^*(X) \in \Cc = \CAlg \DM(k, \QQ)$ by paragraph \ref{27J21}; although we will not need this for our present applications, $C^*(X) \in \Cc$ may be defined more generally for $X \to \Spec k$ finite type and separated \cite[Remark 3.3]{Iwanari}.

Let $g\colon P = L^\times \to X$ be the $\Gm$-torsor associated to a line bundle $L$, let
\[
\tag{*}
c_1\colon \QQ(-1)[-2] \to C^*(X)
\]
be the 1st Chern class of $L$, a morphism in $\Dd = \DM(k,\QQ)$, and let
\[
\mu\colon C^*(X) \otimes C^*(X) \to C^*(X)
\]
denote the multiplication map. The Gysin triangle \cite[Theorem~15.15]{MazzaVoevodskyWeibel} provides a triangle in $\Dd$ 
\[
C^*(X)(-1)[-2] 
\xto{C_1} C^*(X) \to C^*(P),
\]
in which $C_1$ is the composition
\[
C^*(X) \otimes \QQ(1)[-2]
\xto{id_{C^*(X)} \otimes c_1}
C^*(X) \otimes C^*(X)
\xto{\mu}
C^*(X).
\]
This describes $C^*(P)$ as an object of $\Dd$. Our goal here is to describe its structure as a commutative algebra. Our main result is the following.

\begin{theorem}
\label{30428Aa}
Let $e\colon C^*(X) \to \ep(L^\times)$ be the cofiber of the first Chern class $c_1$ of $L$ (\ref{30428A}*). Then $C^*(L^\times)$ is given by the fibered coproduct in $\Cc = \CAlg \Dd$:
\[
\tag{*}
C^*(L^\times) =  C^*(X) 
\otimes_{\Sym C^*(X)}
\Sym \ep(L^\times).
\]
\end{theorem}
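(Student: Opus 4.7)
The plan, modelled on the sketch in the introduction, is to repackage the right-hand side of $(*)$ as a sequential colimit of relative symmetric powers and then collapse that colimit via a vanishing for higher symmetric powers of an odd-shift invertible. The first step is to pass to the slice $\CAlg \Mod_{C^*(X)}(\Dd) \simeq (\CAlg \Dd)_{C^*(X)/}$ and view the pushout of $(*)$ as an object under $C^*(X)$ through its canonical structure map. I claim that so viewed, it is the relatively free commutative $C^*(X)$-algebra $\Sym'_{C^*(X)}(g^*)$ on the pointed $C^*(X)$-module $g^*\colon C^*(X)\to C^*(L^\times)$. By Remark~\ref{21023D} the latter admits the presentation $C^*(X)\otimes_{\Sym_{C^*(X)} C^*(X)}\Sym_{C^*(X)} C^*(L^\times)$ as a pushout in $\CAlg \Mod_{C^*(X)}(\Dd)$. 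The identification then amounts to matching the two universal properties through the free--forgetful adjunction: morphisms out of the right-hand side of $(*)$ to a $C^*(X)$-algebra $A$ classify null-homotopies of $\QQ(-1)[-2]\xto{c_1} C^*(X)\to A$ in $\Dd$ (via the cofiber sequence defining $\ep(L^\times)$), which, through the base-change adjunction, correspond to null-homotopies of $C^*(X)(-1)[-2]\xto{C_1} A$ in $\Mod_{C^*(X)}(\Dd)$---precisely what $\Sym'_{C^*(X)}(g^*)$ classifies via the Gysin cofiber sequence.

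With that identification in hand, $\Mod_{C^*(X)}(\Dd)$ is $\QQ$-linear and stable, hence tensored over $\Hh_\QQ$ in the sense of Example~\ref{30616B}, so Proposition~\ref{21023F} applies and presents $\Sym'_{C^*(X)}(g^*)$ as the colimit of the sequential diagram
\begin{equation*}
C^*(X) \xto{g^*} C^*(L^\times) \xto{\mu_1(g^*)} \Sym^2_{C^*(X)} C^*(L^\times) \xto{\mu_2(g^*)} \cdots,
\end{equation*}
with transition maps given by multiplication by $g^*$ in the relative symmetric algebra. The Gysin triangle, shifted one step, supplies an exact triangle $C^*(X)\xto{g^*} C^*(L^\times)\to C^*(X)(-1)[-1]$ in $\Mod_{C^*(X)}(\Dd)$; feeding this into Proposition~\ref{30412A} yields, for each $n\ge 0$, an exact triangle
\begin{equation*}
\Sym^n_{C^*(X)} C^*(L^\times) \xto{\mu_n(g^*)} \Sym^{n+1}_{C^*(X)} C^*(L^\times) \to \Sym^{n+1}_{C^*(X)}\bigl(C^*(X)(-1)[-1]\bigr).
\end{equation*}
The object $C^*(X)(-1)[-1]$ is the odd-degree shift of a $\otimes$-invertible $C^*(X)$-module, and in a $\QQ$-linear stable symmetric monoidal $\infty$-category its $m$-th symmetric power vanishes for $m\ge 2$ (the Koszul sign makes the $\Si_m$-action the sign representation, whose rational coinvariants vanish). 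Hence $\mu_n(g^*)$ is an equivalence for every $n\ge 1$, the diagram stabilises from its second term onward, and the colimit collapses to $C^*(L^\times)$.

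The hard part is Step~1. Both candidate descriptions of the right-hand side of $(*)$ are universal with respect to null-homotopies of the image of the Chern class in a test algebra, and the equivalence between them should fall out of careful bookkeeping with the base-change adjunction and the compatibility between the cofiber sequences $\QQ(-1)[-2]\to C^*(X)\to\ep(L^\times)$ in $\Dd$ and $C^*(X)(-1)[-2]\to C^*(X)\to C^*(L^\times)$ in $\Mod_{C^*(X)}(\Dd)$. Once that comparison is settled, the remainder of the argument---invoking Propositions~\ref{21023F} and \ref{30412A} and the symmetric-power vanishing---is essentially formal.
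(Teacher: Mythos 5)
Your outline reproduces the paper's own proof. Step 1 is the paper's base-change comparison (paragraphs \ref{30428B}--\ref{30428C}): the pushout is identified, via the adjunction bookkeeping you describe, with the relatively free $C^*(X)$-algebra on $C^*(X)\to C^*(L^\times)$ (Remark \ref{21023D}), and the collapse of the colimit is exactly the paper's use of Propositions \ref{21023F} and \ref{30412A} together with the vanishing of higher symmetric powers of $\QQ(-1)[-1]$ (Lemmas \ref{30303A} and \ref{30304A}), followed by the observation that the module-level equivalence upgrades to an algebra-level one.

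One step is justified too glibly, and as you state it the general claim is false: it is not true in an arbitrary $\QQ$-linear stable symmetric monoidal $\infty$-category that the $m$-th symmetric power of an odd shift of a $\otimes$-invertible object vanishes for $m\ge 2$. The Koszul sign produces the sign representation of $\Si_m$ only if the braiding on the square of the un-shifted invertible object is trivial; for a general invertible this fails --- for instance $\one[2]$ is the odd shift of the invertible object $\one[1]$, yet $\Sym^m(\one[2])\simeq\one[2m]\neq 0$ rationally. What you actually need is that the Tate twist is ``even'', i.e.\ that the symmetry on $\QQ(-1)^{\otimes 2}$ is the identity in $\DM(k,\QQ)$; combined with base change of symmetric powers to $\Mod_{C^*(X)}(\Dd)$ (Lemma \ref{30303A}) this yields the sign action and hence the vanishing. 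This is precisely the content of Lemma \ref{30304A}, which is a genuine fact about $\DM(k,\QQ)$ rather than a formality, so it must be invoked (or proved) rather than absorbed into a parenthesis. A second, smaller point: your collapse shows that the unit map $C^*(L^\times)\to \Sym'_{C^*(X)}(g^*)$ is an equivalence of $C^*(X)$-modules, whereas the theorem asserts an equivalence of commutative algebras; to finish one should use the algebra map in the other direction (the paper's $\theta'$, induced by the canonical null-homotopy of $c_1$ pulled back to $L^\times$), note that its composite with the unit is the identity, and conclude by conservativity of the forgetful functor that it is an equivalence of algebras.
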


\begin{remark}
\label{30428Ab}
Any trivialisation of $P$ provides a path $c_1 \to 0$ to the zero morphism in 
\[
\Hom_{\Dd}(\QQ(-1)[-2], C^*(X)) \,.
\]
In particular, the canonical trivialisation of the $\Gm$-torsor $P \times_X P \to P$ gives rise to a triangle (solid arrow diagram below)
\[
\xymatrix{
\QQ(-1)[-1] \ar[r]^-{c_1} \ar[dr]_-0 & C^*(X) 
\ar[d]^{C^*(g)} \ar@{.>}[r]^-e 
& \ep(P) \ar@{.>}[dl]
\\
& C^*(P)
}
\]
hence to a triangle as shown to the right. Consequently, the pushout in $\Cc = \CAlg \Dd$ (solid arrow diagram below)
\[
\tag{*}
\xymatrix{
\Sym C^*(X) \ar[r] \ar[d] & \Sym \ep(P) \ar[d] 
\\
C^*(X) \ar[r] & C'(P) \ar@{.>}[r]^{\theta'} & C^*(P).
}
\]
maps to $C^*(P)$ as shown. Our proof of Theorem \ref{30428Aa} however, revolves around a map $\theta$ which will eventually turn out to be quasi-inverse to $\theta'$.
\end{remark}

The proof spans paragraphs \ref{30428B}--\ref{30428qed}.

\spar{30428B}
We put ourselves in the general setting of a presentably symmetric monoidal $\infty$-category\footnote{Or, more generally, a symmetric monoidal category which admits realisations of simplicial objects and such that $\otimes$ preserves realisations separately in each variable.}, for which purpose we temporarily free the variable $\Dd$. Consider an object $E \in \Dd$, a commutative algebra $A \in \Cc\coloneqq \CAlg \Dd$, and a morphism $e\colon A \to E$ in $\Dd$ (this general $e$ will later be specialised back to the particular $e$ of Remark \ref{30428Ab}). Let $E'$ be given by the pushout in $\Dd_A\coloneqq\opnm{Mod}_A(\Dd)$ (lower square)
\[
\xymatrix{
A \ar[r]^-e \ar[d]_{pr_2} 
\ar@/_4ex/@<-.5ex>[dd]_-{id}
& E \ar[d] \\
A \otimes A \ar[r]^-{id_A \otimes e} \ar[d]_\mu & A \otimes E \ar[d] \\
A \ar[r] & E'.
}
\]
We let $\Sym_A$ denote the free commutative algebra functor over $A$
\[
\Dd_A \to \Cc_A\coloneqq \CAlg \Dd_A \simeq \Cc_{A/}.
\]
Then there's a natural equivalence in $\Cc_A$:
\[
\tag{*}
A \otimes_{\Sym A} \Sym E
\simeq
A \otimes_{\Sym_A A} \Sym_A E'.
\]
This is based on tried and true techniques of higher algebra (see especially \cite[\S4.5.2]{LurieAlg}), so we allow ourselves to argue somewhat informally.

We denote the left hand side of (*) by $\Xx$ and the right hand side by $\Xx'$. For fixed $B \in \Cc_A$, we have a sequence of equivalences in the $\infty$-category $\Hh$ of homotopy types
\begin{align*}
\Hom_{\Cc_A} (\Xx', B) 
&= \Hom_{\Dd_A}(E', B)
\times_{\Hom_{\Dd_A}(A,B)} \{\ast\}
\\
&= \Hom_{\Dd_A}(A \otimes E,B)
\times_{\Hom_{\Dd_A}(A \otimes A,B)} \{\ast\}
\\
&= \Hom_\Dd(E,B)
\times_{\Hom_{\Dd}(A,B)} \{\ast\}
\\
&= \Hom_\Cc (\Sym E, B)
\times_{\Hom_{\Cc}(\Sym A,B)} \{\ast\}
\\
&= \Hom_\Cc (\Xx, B)
\times_{\Hom_{\Cc}(A,B)} \{\ast\}
\\
&= 
\Hom_{\Cc_A}(\Xx, B)
\end{align*}
each of which may be promoted to an equivalence of functors.

\spar{30428C}
We return to the situation and the notation of paragraph \ref{30428A}.
There's an evident coherently commutative diagram in $\Mod_{C^*(X)}(\Dd)$:
\[
\tag{$\dagger$}
\xymatrix{
C^*(X) \ar[d] \ar[r] \ar@/_9ex/@{=}[dd]
&
C^*(P) \ar[d] \ar@/^9ex/@{=}[dd]
\\
\Sym_{C^*(X)} C^*(X) \ar[r] \ar[d]
&
\Sym_{C^*(X)} C^*(P) \ar[d]
\\
C^*(X) \ar[r]
& 
C^*(P).
}
\]
Let
\[
C'(P) = C^*(X) 
\otimes_{\Sym C^*(X)}
\Sym \ep(P)
\]
denote the right hand side of the equivalence \ref{30428Aa}(*) to be established. Combining diagram ($\dagger$) with paragraphs \ref{30428B} and \ref{30428A}, we find that $C'(P)$ fits into a  coherently commutative diagram in $\Dd$
\[
\tag{*}
\xymatrix{
\QQ(-1)[-2] \ar[r]^{c_1} \ar[d]
&
C^*(X) \ar[r] \ar[d]
&
\ep(P) \ar[d]
\\
C^*(X)(-1)[-2] \ar[r] \ar[r] \ar@{=}[d]
&
C^*(X) \otimes C^*(X) \ar[r] \ar[d]
&
C^*(X) \otimes \ep(P) \ar[d]
\\
C^*(X)(-1)[-2] \ar[r]^-{C_1}
&
C^*(X) \ar[r] \ar[d]
&
C^*(P) \ar[d] \ar@/^7pt/@{=}[ddr]
\\
&
\Sym_{C^*(X)} C^*(X) \ar[r] \ar[d]
&
\Sym_{C^*(X)} C^*(P) \ar[d]
\\
&
C^*(X) \ar[r]
&
C'(P) \ar[r]_-{\theta'}
&
C^*(P)
}
\]
in which the first three lines are exact triangles, the second square on the right is cocartesian in $\Mod_{C^*(X)}(\Dd)$, and the fourth square on the right is cocartesian in $\Cc_{C^*(X)/}$.\footnote{Thus, loosely speaking, Proposition \ref{30428Aa} says that a $C^*(X)$-linear map from $C^*(P)$ to a $C^*(X)$-algebra whose restriction to $C^*(X)$ is multiplicative, is itself multiplicative in a unique way.}
Included in this diagram is a map of $C^*(X)$-modules
\[
\theta\colon C^*(P) \to C'(P).
\]

\begin{lemma}
\label{30303A}
Suppose $\Dd$ is a presentably symmetric monoidal $\infty$-category, and let $A \in \Cc = \CAlg(\Dd)$ be a commutative algebra and $E \in \Dd$ an object. For $F \in \Dd_A =  \Mod_A(\Cc)$ let $\Sym^i_A(F)$ be the $i$th symmetric power of $F$ in $\Dd_A$. Then for $E \in \Dd$ and $i \ge 0$ there's an equivalence
\[
A \otimes \Sym^i E \simeq \Sym_A^i (A \otimes E)
\]
in $\Dd_A$.
\end{lemma}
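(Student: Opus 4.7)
The plan is to realise the base-change functor
\[
F\colon \Dd \to \Dd_A, \qquad F(E) = A\otimes E,
\]
as a symmetric monoidal functor from $(\Dd,\otimes)$ to $(\Dd_A,\otimes_A)$, and then observe that symmetric monoidal left adjoints commute with the formation of symmetric powers. The symmetric monoidal structure on $F$ is standard: it follows from the canonical equivalences $(A\otimes E)\otimes_A(A\otimes E') \simeq A\otimes(E\otimes E')$ together with their higher coherences, which are built into the relative tensor product construction of Section~4.5 of \cite{LurieAlg}. Equivalently, $F$ may be exhibited as the symmetric monoidal left adjoint to the forgetful functor $\Dd_A \to \Dd$.

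With the symmetric monoidality of $F$ in hand, the lemma reduces to unwinding the definition of $\Sym^i$ given in paragraph~\ref{21023E}. That construction writes $\Sym^i E$ as a colimit $\colim_{B\Si_i} V^{\otimes}_i$ of a diagram whose value on the unique object of $B\Si_i$ is $E^{\otimes i}$ with the natural $\Si_i$-action; since this construction is functorial in the ambient presentably symmetric monoidal $\infty$-category, $F$ carries this diagram to the analogous diagram $(A\otimes E)^{\otimes_A \bullet}_\bullet$ for $A\otimes E$ in $\Dd_A$. Being a left adjoint, $F$ also preserves colimits, so we obtain
\[
F(\Sym^i E) \simeq \colim_{B\Si_i} F(E^{\otimes i}) \simeq \colim_{B\Si_i}(A\otimes E)^{\otimes_A i} \simeq \Sym^i_A(A\otimes E),
\]
which is the desired equivalence in $\Dd_A$.

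The main technical point, which I would expect to cite rather than reprove, is the assertion that $F$ viewed as a symmetric monoidal functor identifies $F(E^{\otimes i})$ with $(A\otimes E)^{\otimes_A i}$ not merely as objects of $\Dd_A$ but as $\Si_i$-equivariant objects, i.e.\ as $B\Si_i$-indexed diagrams. This is automatic once $F$ is produced as a morphism of $\infty$-operads from $\Dd^\otimes$ to $\Dd_A^{\otimes_A}$, and it is the place where care is needed not to lose track of the symmetric group actions that underlie the symmetric power construction of paragraph~\ref{21023E}.
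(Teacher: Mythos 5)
Your argument is correct and takes essentially the same route as the paper: the paper's proof is precisely the computation $\Sym_A^i(A\otimes E)=\colim_{B\Si_i}(A\otimes E)^{\otimes_A i}\simeq\colim_{B\Si_i}\big(A\otimes E^{\otimes i}\big)\simeq A\otimes\colim_{B\Si_i}E^{\otimes i}=A\otimes\Sym^i E$, i.e.\ symmetric monoidality of the base-change functor plus the fact that it preserves colimits. The $\Si_i$-equivariance of the identification $(A\otimes E)^{\otimes_A i}\simeq A\otimes E^{\otimes i}$ that you carefully flag is exactly the point the paper waves through with ``causes no trouble,'' so your extra precision is a refinement of, not a departure from, the paper's argument.
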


\begin{proof}
The expected calculation in $\Dd_A$:
\begin{align*}
\Sym_A^i (A \otimes E)
&= \colim_{B\Si_i} (A \otimes E)^{\otimes_A i}
\\
&= \colim_{B\Si_i} 
\big( A \otimes  E^{\otimes i} \big)
\\
&=  A \otimes \colim_{B\Si_i}   E^{\otimes i} 
= A \otimes \Sym^i E,
\end{align*}
causes no trouble.
\end{proof}

\begin{lemma}
\label{30304A}
We have $\Sym^n (\QQ(i)[j]) = 0$ for $j$ odd and $n \ge 2$.
\end{lemma}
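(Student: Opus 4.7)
By the construction of \S\ref{21023E}, $\Sym^n V$ is the colimit of the diagram $V_n^\otimes\colon B\Si_n \to \Dd$, that is, the homotopy coinvariants $(V^{\otimes n})_{h\Si_n}$ of the $\Si_n$-action on $V^{\otimes n}$ induced by the symmetry constraint of $\Dd^\otimes$. For $V = \QQ(i)[j]$ the underlying object is $V^{\otimes n} \simeq \QQ(in)[jn]$, so the plan is to identify the $\Si_n$-action through the sign character and then use $\QQ$-linearity to kill the coinvariants.

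First I would compute the effect of a transposition on $V \otimes V$. The Tate twist $\QQ(i)$ is concentrated in cohomological degree zero, so its swap contributes no sign; the entire contribution comes from the swap on $\QQ[j] \otimes \QQ[j]$, which by the Koszul sign rule (encoded in the symmetric monoidal structure of any stable $\infty$-category) equals $(-1)^{j^2} = (-1)^j$ times the identity of $\QQ[2j]$. When $j$ is odd this is $-\mathrm{id}$. Since the adjacent transpositions generate $\Si_n$, the full $\Si_n$-action on $V^{\otimes n}$ factors through the sign character $\mathrm{sgn}\colon \Si_n \to \{\pm 1\}$.

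To finish, I would show that $W_{h\Si_n} = 0$ for any $W \in \Dd$ carrying a $\Si_n$-action through $\mathrm{sgn}$ with $n \ge 2$. Because $\Dd$ is stable, presentable and $\QQ$-linear and $\Si_n$ is finite, the norm map furnishes an equivalence $W_{h\Si_n} \simeq W^{h\Si_n}$, and both sides are computed as the image of the averaging endomorphism $\tfrac{1}{n!}\sum_{\sigma \in \Si_n}\sigma$ of $W$. Since $\sigma$ acts as $\mathrm{sgn}(\sigma)\cdot \mathrm{id}$, this endomorphism equals $\bigl(\tfrac{1}{n!}\sum_\sigma \mathrm{sgn}(\sigma)\bigr)\cdot \mathrm{id}$; the scalar sum vanishes for $n \ge 2$ as even and odd permutations cancel in pairs. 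Hence $\Sym^n V = 0$.

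The main technical point is the justification of the Koszul sign rigorously at the $\infty$-categorical (rather than merely triangulated) level; this is however standard, being encoded in the structure of the Picard $\infty$-groupoid of invertible objects of any stable symmetric monoidal $\infty$-category.
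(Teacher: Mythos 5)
Your strategy is genuinely different from the paper's. You work directly in $\Dd=\DM(k,\QQ)$: identify $\Sym^n$ with the $B\Si_n$-colimit of \S\ref{21023E}, argue that the $\Si_n$-action on $(\QQ(i)[j])^{\otimes n}$ factors through the sign character, and kill the coinvariants with the averaging idempotent $\frac{1}{n!}\sum_{\sigma}\sigma$, which acts as $\frac{1}{n!}\sum_\sigma\mathrm{sgn}(\sigma)=0$ for $n\ge2$. The paper instead reduces to the subcategory of mixed Tate motives and (via Beilinson--Soul\'e vanishing) to the derived category of representations of a proalgebraic group mapping to $\Gm$, under which $\QQ(1)$ becomes an honest rank-one representation concentrated in degree $0$, where the sign computation is classical; the passage to the homotopy category with rational coefficients (Iwanari's Lemma 5.19) plays the role of your averaging step. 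Your closing steps (adjacent transpositions generate $\Si_n$; idempotent splitting computes the $B\Si_n$-colimit in a $\QQ$-linear stable idempotent-complete category) are fine.

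The gap is at the decisive sign: your justification that the swap on $\QQ(i)\otimes\QQ(i)$ is the identity. Concentration in cohomological degree zero does not force an invertible object to have trivial symmetry, and the Picard-$\infty$-groupoid formalism only tells you that the swap is a $2$-torsion unit of $\pi_0\End(\one)$, i.e.\ $\pm1$; it does not determine which sign occurs. For instance, the odd line in (complexes of) super $\QQ$-vector spaces is invertible, sits in degree zero, and has swap $-1$; and in the motivic stable homotopy category $\Ss\Hh(k)$ the swap on $\Gm^{\wedge2}$ is $\epsilon=-\langle-1\rangle$, which is not $-1$ in general, so even rationally the ``evenness'' of the Tate twist is special to $\DM(k,\QQ)$ (the plus part) and is exactly the nontrivial input of the lemma: if the swap on $\QQ(1)^{\otimes2}$ were $-1$, the statement would instead hold for $j$ even and fail for $j$ odd. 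This evenness is true but needs a genuinely motivic argument --- e.g.\ that the geometric swap of $\PP^1\times\PP^1$ acts trivially on the top K\"unneth summand $\QQ(2)[4]$ of its motive, or the paper's reduction to representations of a proalgebraic group --- and supplying it is precisely where the paper's proof does its work. Once that input is in place, the rest of your argument goes through.
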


\begin{proof}
This is well known. In lieu of a sensible first proof of this fact, let us connect it with other, perhaps even better known facts. 

The symmetric power may equivalently be computed in the subcategory $\m{DMT}(k) \subset \Dd$ of mixed Tate motives (with no compactness conditions). Regardless of our assumptions on $k$, the calculation evidently reduces to the case $k = \ZZ$ where we have Beilinson--Soul\'e vanishing, hence an equivalence of stable symmetric monoidal $\infty$-categories
\[
\tag{*}
\m{DMT}(k) \simeq \Dd(G)
\]
with the derived $\infty$-category of representations of a proalgebraic group $G$. Moreover, $G$ comes equipped with a map $G \to \Gm$, and the equivalence (*) sends $\QQ(1)$ to a representation of $\Gm$ of rank $1$ concentrated in degree $0$. Finally, since we're working with rational coefficients, the symmetric power may equivalently be computed in the homotopy category $h \Dd(\Gm)$ (see, for instance, Lemma 5.19 of Iwanari \cite{Iwanari}) where the result is evident. 
\end{proof}

\spar{30428qed}
We're ready to complete the proof of Theorem \ref{30428Aa}. We write $\Sym^i_A$ for symmetric powers in $\Dd_A = \Mod_A(\Dd)$ and 
\[
\Sym'_A\colon 
(\Dd_A)_{\one/}=
\big(\Mod_A \Dd \big)_{A/}
 \to \Cc_A = (\CAlg \Cc)_{A/}
\]
for the relatively free commutative $A$-algebra functor (\ref{21023C} applied to $\Dd_A$). By Remark \ref{21023D}, the bottom pushout square on the right of diagram \ref{30428C}(*) identifies $C'(P)$ with the relatively free commutative algebra 
\[
\tag{*}
C'(P) \simeq \Sym'_{C^*(X)}
\big(
C^*(X) \to C^*(P)
\big).
\]
Proposition \ref{21023F} provides a further equivalence in $\Dd_A$ 
\[
(*) = \colim_{i} \Sym^i_{C^*(X)} C^*(P).
\]
By Proposition \ref{30412A}, the cofiber of the associated map
\[
\tag{**}
\Sym^i_{C^*(X)} C^*(P) \to \Sym^{i+1}_{C^*(X)} C^*(P)
\]
is equivalent to $\Sym^{i+1}_{C^*(X)} 
\big(C^*(X)(-1)[-1] \big)$. By Lemma \ref{30303A}, the latter is equivalent to 
\[
C^*(X) \otimes 
\Sym^{i+1} \big(\QQ(-1)[-1] \big). 
\]
By Lemma \ref{30304A}, the latter
\[
= 0 
\quad
\text{for }
i \ge 1.
\]
Consequently, the canonical map
\[
C^*(P)
\to 
\colim_{i} \Sym^i_{C^*(X)} C^*(P)
\]
is an equivalence. Thus, we've shown that the map of $C^*(X)$-modules
\[
\theta\colon 
C^*(P) \to C'(P)
\]
is an equivalence. Hence, the map of commutative $C^*(X)$-algebras $\theta'\colon C'(P) \to C^*(P)$ becomes an equivalence in the $\infty$-category $\Mod_{C^*(X)}\Dd$ of $C^*(X)$-modules. Hence $\theta$ is already an equivalence in the $\infty$-category of commutative algebras as claimed. This completes the proof of Theorem \ref{30428Aa}.

\section{The motivic augmentation space of a $\Gm$-bundle}
\label{sec:AugmentationsOfGmBundle}

We continue to work over a field $k$. As above, we set $\Dd = \DM(k,\bQ)$ and $\Cc = \CAlg \Dd$. The following proposition applies at the same level of generality as Theorem \ref{30428Aa}: if $X \to \Spec k$ is not smooth, then we refer to \cite[Remark 3.3]{Iwanari} for the motivic algebra $C^*(X) \in \Cc$. Our definition of the associated augmentation space applies without change: 
\[
\Aaug(X/k) = \Hom_\Cc 
\big( C^*(X), \one \big), 
\]
an object of the \oo-category $\Hh$ of homotopy types.

\begin{theorem}
\label{e3}
Let $X \to \Spec(k)$ be finite type and separated and let $L^\times \to Y$ be a $\Gm$-bundle. Then the motivic augmentation space $\Aaug(L^\times/k)$ has the structure of a pseudotorsoric left $k^\times \otimes \QQ$-module (Definition \ref{ts2}) over $\Aaug(X/k)$.
\end{theorem}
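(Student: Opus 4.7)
The plan is to combine Theorem~\ref{30428Aa} with Proposition~\ref{ts3} and the pullback stability of pseudotorsoric modules recorded in~\S\ref{ts4}.

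First, apply $\Aaug(-/k) = \Hom_\Cc(-,\one)$ to the pushout square from Theorem~\ref{30428Aa} identifying
\[
C^*(L^\times) \simeq C^*(X) \otimes_{\Sym C^*(X)} \Sym \ep(L^\times).
\]
Because $\Hom_\Cc(-,\one)$ converts pushouts in $\Cc$ into pullbacks in $\Hh$, and because the free--forgetful adjunction gives $\Hom_\Cc(\Sym M, \one) \simeq \Hom_\Dd(M, \one)$ for every $M \in \Dd$, we obtain an equivalence
\[
\Aaug(L^\times/k) \simeq \Aaug(X/k) \times_{\Hom_\Dd(C^*(X), \one)} \Hom_\Dd(\ep(L^\times), \one),
\]
in which the map $\Aaug(X/k) \to \Hom_\Dd(C^*(X), \one)$ is induced by the counit $\Sym C^*(X) \to C^*(X)$ of the free--forgetful adjunction.

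Next, rotate the Chern-class cofiber triangle of~\S\ref{30428A} to the exact triangle $C^*(X) \xto{e} \ep(L^\times) \to \QQ(-1)[-1]$ in $\Dd$ and apply the spectrum-valued mapping functor $\Hom_\Dd(-, \one)\colon \Dd^\op \to \Ss\Hh$. Since $\Dd$ is stable, this produces an exact triangle of spectra
\[
\Hom_\Dd(\QQ(-1)[-1], \one) \to \Hom_\Dd(\ep(L^\times), \one) \to \Hom_\Dd(C^*(X), \one),
\]
to which Proposition~\ref{ts3} applies. It endows $\Om^\infty \Hom_\Dd(\ep(L^\times), \one)$ with the structure of a pseudotorsoric left $\Om^a \Hom_\Dd(\QQ(-1)[-1], \one)$-module over $\Om^\infty \Hom_\Dd(C^*(X), \one)$.

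Identify the acting spectrum by computing the stable homotopy groups
\[
\pi_n \Hom_\Dd(\QQ(-1)[-1], \one) \simeq H^{n+1}(\Spec k, \QQ(1)),
\]
which vanish except when $n=0$, where they equal $k^\times \otimes \QQ$; indeed, motivic cohomology of a field in weight one is concentrated in degree one and is there given by Milnor $K_1$ with rational coefficients. Thus the acting spectrum is the rational Eilenberg--MacLane spectrum $H(k^\times \otimes \QQ)$, and $\Om^a$ recovers $k^\times \otimes \QQ$ as a discrete grouplike $E_\infty$-space. Finally, the forgetful map $\Aaug(X/k) \to \Hom_\Dd(C^*(X), \one)$ is a morphism into the base of the pseudotorsoric structure just constructed, so by the pullback stability of pseudotorsoric modules (\S\ref{ts4}) its pullback inherits a pseudotorsoric left $k^\times \otimes \QQ$-module structure over $\Aaug(X/k)$; by the first paragraph this pullback is $\Aaug(L^\times/k)$. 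The main technical obstacle is the motivic-cohomology vanishing used to compute $\pi_*\Hom_\Dd(\QQ(-1)[-1], \one)$; matching the resulting abstract action with the geometric action of $\Gm$ on $L^\times$ would require a further naturality argument, but is not required for the bare existence statement in the theorem.
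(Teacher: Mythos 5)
Your argument is correct and follows essentially the same route as the paper: it extracts the pullback square for $\Aaug(L^\times/k)$ from Theorem~\ref{30428Aa}, applies $\uHom_\Dd(-,\one)$ to the Gysin/Chern-class triangle and invokes Proposition~\ref{ts3}, and then transfers the pseudotorsoric structure along the map $\Aaug(X/k)\to\Om^\infty\uHom_\Dd(C^*(X),\one)$ via the base-change stability of \S\ref{ts4}. The only deviations are cosmetic: you identify the acting $E_\infty$-space by computing weight-one rational motivic cohomology of $k$ directly rather than citing Proposition~\ref{10Oct1} applied to $\Gm$ (both are fine), and your indexing formula should read $\pi_n\uHom_\Dd(\QQ(-1)[-1],\one)\simeq H^{1-n}(\Spec k,\QQ(1))$ rather than $H^{n+1}$, which does not affect the conclusion that the spectrum is concentrated in degree zero with value $k^\times\otimes\QQ$.
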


The proof/construction spans paragraphs \ref{w24a} - \ref{w24b}.

\spar{w24a}
If $\Ee$ is a stable $\infty$-category, we denote by $\uHom_\Ee$ the associated functor
\[
\Ee\times \Ee^\m{op} \to \Ss \Hh
\]
to the $\infty$-category $\Ss \Hh$ of stable homotopy types. We recall that there's an equivalence
\[
\Om^\infty \circ \uHom_\Ee \sim \Hom_\Ee.
\]
The exact triangle
\[
\QQ(-1)[-2] \xto{c_1}
C^*(X) \to \ep(L^\times) \to 
\QQ(1)[-1]
\]
in $\Dd$ gives rise to an exact triangle 
\[
\uHom_\Dd \big( \QQ(0), \QQ(1)[1] \big) 
\to
\uHom_\Dd \big( \ep(L^\times), \QQ(0) \big)
\to
\uHom_\Dd \big( C^*(X), \QQ(0) \big)
\]
in $\Ss \Hh$. Hence by Proposition \ref{ts3}, 
$
\Hom_\Dd \big( \ep(L^\times), \QQ(0) \big)
$
has the structure of a pseudotorsoric left 
$
\Hom_\Dd \big( \QQ(0), \QQ(1)[1] \big)
$-module 
over
\[
\Hom_\Dd \big( C^*(X), \QQ(0) \big).
\]
By Proposition \ref{10Oct1}, we have an equivalence of $E_\infty$-spaces (\ref{ts1})
\[
\Hom_\Dd \big( \QQ(0), \QQ(1)[1] \big) \simeq
k^\times \otimes \QQ \,.
\]

\spar{w24b}
By Theorem \ref{30428Aa}, the augmentation space sits in a pullback square
\[
\tag{*}
\xymatrix{
\Aaug(L^\times/k) \ar[d] \ar[r]^g & \Aaug(X/k) \ar[d]
\\
\Hom_\Dd \big( \ep(L^\times), \QQ(0) \big) \ar[r]_f
&
\Hom_\Dd \big( C^*(X), \QQ(0) \big)
}
\]
in $\Dd$. We complete the construction indicated in Proposition \ref{e3} by applying the construction of paragraph \ref{ts4}. This concludes the proof of Proposition \ref{e3}.

Now let us specialise to the case that the base is an abelian variety. In this case $\Aug(M^\times/k)$ is a $(k^\times\otimes\QQ)$-torsor over~$A(k)\otimes\bQ$:

\begin{corollary}
\label{w24d}
Let $A$ be an abelian variety over the field $k$ and let $M^\times \to A$ be a $\Gm$-torsor. Then the map
\[
\Aug(M^\times/k) \to \Aug(A/k) = A(k)\otimes\QQ
\]
is surjective, and the action of $\Aug(\Gm/k)=k^\times\otimes\QQ$ is simply transitive on each fiber.
\end{corollary}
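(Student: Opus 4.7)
The corollary is a distillation of the pseudotorsoric statement of Theorem~\ref{e3} in the case where the base is an abelian variety, combined with the discreteness of $\Aaug(A/k)$ from Proposition~\ref{10Oct1}. I would proceed in three steps.

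First, I apply Theorem~\ref{e3} with $X=A$ to equip $\Aaug(M^\times/k)$ with the structure of a pseudotorsoric left $(k^\times\otimes\QQ)$-module over $\Aaug(A/k)$. By Proposition~\ref{10Oct1}, $\Aaug(A/k)=A(k)\otimes\QQ$ is discrete; this fact will be essential twice below, as it ensures in particular that all its fundamental groups vanish.

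Next, for surjectivity of $\Aug(M^\times/k)\to A(k)\otimes\QQ$, I would use the Cartesian square~\ref{w24b}(*) to identify the fiber of this map over any point $x\in\Aaug(A/k)$ with the fiber of the map
\[
\Hom_\Dd\bigl(\ep(M^\times),\QQ(0)\bigr)\to\Hom_\Dd\bigl(C^*(A),\QQ(0)\bigr)
\]
over the image of $x$. This latter map is the infinite-loop-space version of the second leg of the exact triangle in $\Ss\Hh$ of paragraph~\ref{w24a}, whose initial term is $E\coloneqq\uHom_\Dd(\QQ(0),\QQ(1)[1])$. To apply Remark~\ref{w24bb}, I verify that $\pi_{-1}E=\Hom_{h\Dd}(\QQ(0),\QQ(1))=H^0(\Spec k,\QQ(1))=0$, which is a classical vanishing of motivic cohomology in weight one over a field (it is simply $H^{-1}(\Spec k,\Gm)\otimes\QQ=0$). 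Remark~\ref{w24bb} then yields surjectivity on $\pi_0$ of the bottom map, so every fiber of $\Aaug(M^\times/k)\to\Aaug(A/k)$ is non-empty.

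Finally, for simple transitivity of the action on each fiber, I would invoke paragraph~\ref{t23bc}: since $\Aaug(A/k)$ is discrete, its fundamental groups all vanish, so the $(k^\times\otimes\QQ)$-action on each non-empty fiber is both transitive (first assertion of \ref{t23bc}) and faithful (second assertion). As $k^\times\otimes\QQ$ is abelian, transitive and faithful together imply simply transitive, which completes the argument. I expect the main subtlety to be the surjectivity step, whose essence lies in the Cartesian-square identification of fibers and the vanishing $H^0(\Spec k,\QQ(1))=0$.
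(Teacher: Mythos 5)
Your proposal follows essentially the same route as the paper: pseudotorsoricity from Theorem~\ref{e3}, surjectivity via the pullback square \ref{w24b}(*) together with Remark~\ref{w24bb}, and simple transitivity from paragraph~\ref{t23bc} using the discreteness of $\Aaug(A/k)$ from Proposition~\ref{10Oct1} (the paper leaves implicit the observation, which you spell out, that transitive plus faithful implies simply transitive for an abelian group). One slip to correct: with $E=\uHom_\Dd\bigl(\QQ(0),\QQ(1)[1]\bigr)$ one has $\pi_{-1}E=\pi_0\bigl(E[1]\bigr)=\Hom_{h\Dd}\bigl(\QQ(0),\QQ(1)[2]\bigr)$, not $\Hom_{h\Dd}\bigl(\QQ(0),\QQ(1)\bigr)$ (the latter is $\pi_1E$). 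So the vanishing you need is that of $H^2_{\mathrm{mot}}(\Spec k,\QQ(1))=K_0^{(1)}(k)$ (the rationalised Brauer/Picard contribution over a field), not of $H^0(\Spec k,\QQ(1))$; since this group also vanishes, your argument goes through unchanged once the identification is fixed.
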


\begin{proof}
We have
\[
\pi_{-1} \uHom 
_\Dd \big( \QQ(0), \QQ(1)[1] \big)
=
\pi_0\Hom_\Dd \big( \QQ(0), \QQ(1)[2] \big)
= K_0^{(1)}(k) = 0.
\]
Hence (as noted in paragraph \ref{w24bb}), the image of the map $f$ of diagram \ref{w24b}(*) intersects every connected component of its target. Consequently the same holds for the map $g$ of the same diagram, i.e.
\[
\pi_0g\colon \Aug(M^\times/k) \to \Aug(A/k) = A(k) \otimes \QQ
\]
is surjective.

By Proposition \ref{10Oct1}, $\Aaug(A/k) = A(k)\otimes\QQ$ is discrete. In particular all of its fundamental groups vanish. Hence by paragraph \ref{t23bc}, the $\Aaug(\Gm) = k^\times \otimes \QQ$-action is transitive and faithful on fibers. 
\end{proof}

The description of $\Aug(M^\times/k)$ in Proposition~\ref{w24d} is compatible with natural operations on torsors: pullbacks and pushout-products.

\begin{lemma}\label{lem:pullback_of_torsors}
	Let~$f\colon A \to B$ be a morphism of abelian varieties over~$k$, and let~$M^\times$ be a $\Gm$-torsor on~$B$. Let~$f^*M^\times$ denote the pulled-back torsor on~$A$. Then $\Aug(f^*M^\times/k)$ is the pullback of~$\Aug(M/k)$ along $f_*\colon\Aug(A/k)\to\Aug(B/k)$.
\end{lemma}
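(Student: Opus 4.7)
The plan is to establish, at the level of motivic cochain algebras, a pushout description
\[
C^*(f^*M^\times) \simeq C^*(A) \otimes_{C^*(B)} C^*(M^\times)
\]
in $\Cc = \CAlg \Dd$, and then apply $\Hom_\Cc(-,\one)$. To prove this, I would begin with the naturality of the first Chern class: the morphism $c_1(f^*M)\colon \QQ(-1)[-2] \to C^*(A)$ is the composite $f^* \circ c_1(M)$. This yields a commuting square of cofiber sequences whose left column is the identity on $\QQ(-1)[-2]$; by stability of $\Dd$, the right square is therefore cocartesian, giving
\[
\ep(f^*M^\times) \simeq C^*(A) \sqcup_{C^*(B)} \ep(M^\times)
\]
in $\Dd$. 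Since $\Sym\colon \Dd \to \Cc$ is left adjoint to the forgetful functor it preserves pushouts, and because pushouts in $\Cc$ are relative tensor products we obtain $\Sym \ep(f^*M^\times) \simeq \Sym C^*(A) \otimes_{\Sym C^*(B)} \Sym \ep(M^\times)$. Substituting into the formula of Theorem \ref{30428Aa} applied to $f^*M^\times$ and simplifying (using $C^*(A) \otimes_{\Sym C^*(A)} \Sym C^*(A) \simeq C^*(A)$) yields
\[
C^*(f^*M^\times) \simeq C^*(A) \otimes_{\Sym C^*(B)} \Sym \ep(M^\times),
\]
and the same manipulation on the right-hand side of the desired equivalence, using Theorem \ref{30428Aa} again to expand $C^*(M^\times)$, yields the same expression.

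Next, applying $\Hom_\Cc(-,\one)$ converts the pushout in $\Cc$ into a Cartesian square in $\Hh$:
\[
\Aaug(f^*M^\times/k) \simeq \Aaug(A/k) \times_{\Aaug(B/k)} \Aaug(M^\times/k).
\]
By Theorem \ref{10Oct1}, both $\Aaug(A/k)$ and $\Aaug(B/k)$ are discrete. By Theorem \ref{e3} together with the description of nonempty fibers of a pseudotorsoric module in paragraph \ref{foptlm}, each nonempty fiber of $\Aaug(M^\times/k) \to \Aaug(A/k)$ is equivalent to $\Aaug(\Gm/k) = k^\times \otimes \QQ$ and is hence discrete; consequently $\Aaug(M^\times/k)$ itself is discrete. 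The Cartesian square above therefore lies entirely in discrete homotopy types, so passing to $\pi_0$ recovers the claimed identification of sets, compatible with the structural map to $\Aug(A/k)$ by naturality.

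The main obstacle is the pushout description for $C^*(f^*M^\times)$, which rests on combining the Gysin-triangle formula of Theorem \ref{30428Aa} with the naturality of Chern classes and the standard adjunction properties of $\Sym$; the subsequent passage through augmentation spaces is then essentially formal.
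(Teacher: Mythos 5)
Your proposal is correct, but it takes a genuinely different and substantially heavier route than the paper. The paper's proof is soft: since $\Aug(?/k)$ preserves products it preserves group actions, so the map $\tilde f_*\colon\Aug(f^*M^\times/k)\to\Aug(M^\times/k)$ is $\Aug(\Gm/k)$-equivariant over $f_*$, and by Corollary~\ref{w24d} both vertical maps are surjective with simply transitive $k^\times\otimes\QQ$-action on fibers; an equivariant comparison map between two such objects over $\Aug(A/k)$ is then automatically a bijection onto the pullback. You instead prove a stronger, algebra-level statement: naturality of $c_1$ plus stability of $\Dd$ identifies $\ep(f^*M^\times)$ with the pushout $C^*(A)\sqcup_{C^*(B)}\ep(M^\times)$, and Theorem~\ref{30428Aa} together with base change for relative tensor products gives $C^*(f^*M^\times)\simeq C^*(A)\otimes_{C^*(B)}C^*(M^\times)$ in $\Cc$; applying $\Hom_\Cc(-,\one)$ and invoking discreteness (Theorem~\ref{10Oct1} for $A$, $B$ and $\Gm$, and \ref{foptlm} for the fibers --- note these are fibers over $\Aaug(B/k)$, not $\Aaug(A/k)$ as you wrote) lets you pass to $\pi_0$. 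What this buys is an Eilenberg--Moore-type equivalence of augmentation \emph{spaces}, which is more than the lemma asks and potentially of independent interest, at the cost of extra coherence bookkeeping (choosing the map of cofiber sequences witnessing $c_1(f^*M)\simeq f^*\circ c_1(M)$, and checking the identifications are compatible as objects under $C^*(A)$). Two minor remarks: only discreteness of the base $\Aaug(B/k)$ is actually needed to identify $\pi_0$ of the homotopy pullback with the fiber product of $\pi_0$'s, so your argument that $\Aaug(M^\times/k)$ itself is discrete, while correct, is more than required; and if one wants the identification as a pullback of $k^\times\otimes\QQ$-torsors (which the paper's equivariance argument yields for free), you should add that your comparison map is $\Aug(\Gm/k)$-equivariant, which again follows from functoriality of $\Aug(?/k)$ applied to the $\Gm$-equivariant map $\tilde f$.
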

\begin{proof}
	The pullback projection map $\tilde f\colon f^*M^\times \to M^\times$ fits into a commuting square
	\begin{center}
	\begin{tikzcd}
		f^*M^\times \arrow[r,"\tilde f"]\arrow[d,two heads] & M^\times \arrow[d,two heads] \\
		A \arrow[r,"f"] & B
	\end{tikzcd}
	\end{center}
	in which the top arrow is $\Gm$-equivariant. Since the functor $\Aug(?/k)$ preserves products, it also preserves group actions; hence in the square
	\begin{center}
	\begin{tikzcd}
		\Aug(f^*M^\times/k) \arrow[r,"\tilde f_*"]\arrow[d,two heads] & \Aug(M^\times/k) \arrow[d,two heads] \\
		\Aug(A/k) \arrow[r,"f_*"] & \Aug(B/k)
	\end{tikzcd}
	\end{center}
	the topmost arrow is $\Aug(\Gm/k)$-equivariant. Thus the map~$\tilde f_*$ realises~$\Aug(f^*M^\times/k)$ as the pullback of~$\Aug(M^\times/k)$.
\end{proof}

\begin{lemma}\label{lem:pushout_product_of_torsors}
	Let~$A$ be an abelian variety over~$k$, and let~$M_1^\times$ and~$M_2^\times$ be $\Gm$-torsors over~$A$. Let~$M_1^\times\times_A^{\Gm}M_2^\times$ denote their pushout-product (i.e., the operation corresponding to tensor product of the corresponding line bundles). Then
	\[
	\Aug(M_1^\times\underset{A}{\overset{\Gm}{\times}}M_2^\times/k) = \Aug(M_1^\times/k)\underset{\Aug(A/k)}{\overset{\Aug(\Gm/k)}{\times}}\Aug(M_2^\times/k) \,,
	\]
	and the augmentation map
	\[
	M_1^\times(k)\times_{A(k)}^{k^\times}M_2^\times(k) \to \Aug(M_1^\times\times_A^{\Gm}M_2^\times/k)
	\]
	is the pushout-product of the augmentation maps $M_i^\times(k) \to \Aug(M_i^\times/k)$ for $i=1,2$.
\end{lemma}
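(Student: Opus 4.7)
The plan is to use Theorem~\ref{30428Aa} together with the pullback square of paragraph~\ref{w24b} to give an explicit description of $\Aaug(M^\times/k)$ in terms of the motivic first Chern class, and then exploit the additivity $c_1(L_1 \otimes L_2) = c_1(L_1) + c_1(L_2)$ as morphisms $\QQ(-1)[-2] \to C^*(A)$ in $\Dd$.

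First, for a $\Gm$-torsor $M^\times$ on $A$ with first Chern class $c$, I would identify $\Aaug(M^\times/k)$ with the space of pairs $(\alpha, h)$, where $\alpha \in \Aaug(A/k)$ and $h$ is a nullhomotopy of $\alpha \circ c$ in the mapping space $\Hom_\Dd(\QQ(-1)[-2], \QQ(0))$. This follows by unfolding the pullback square \ref{w24b}(*) together with the fiber sequence attached to the cofiber triangle defining $\ep(M^\times)$.

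Next I would construct the comparison map
\[
\Aaug(M_1^\times/k) \times_{\Aaug(A/k)} \Aaug(M_2^\times/k) \to \Aaug(M_1^\times \times_A^\Gm M_2^\times/k)
\]
by sending $((\alpha, h_1),(\alpha, h_2))$ to $(\alpha, h_1 + h_2)$, where the sum uses the natural group structure on the mapping space of the stable $\QQ$-linear $\infty$-category $\Dd$, together with the identity $c_1(L_1) + c_1(L_2) = c_1(L_1 \otimes L_2)$. This map is equivariant for the action of $(k^\times \otimes \QQ)^2$ that separately reparametrises $h_1$ and $h_2$, where the target carries the induced action via multiplication $\mu\colon (k^\times \otimes \QQ)^2 \to k^\times \otimes \QQ$. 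It therefore factors through the pushout-product
\[
\Aaug(M_1^\times/k) \times^{\Aaug(\Gm/k)}_{\Aaug(A/k)} \Aaug(M_2^\times/k) \to \Aaug(M_1^\times \times_A^\Gm M_2^\times/k).
\]
Both sides are $(k^\times \otimes \QQ)$-torsors over $\Aaug(A/k)$, the left by construction and the right by Corollary~\ref{w24d}, so this equivariant map is an isomorphism of torsors, which proves the first assertion.

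The second assertion, compatibility with the augmentation map, then reduces to bookkeeping: given $k$-points $x_i \in M_i^\times(k)$ lying over a common $a \in A(k)$, each $x_i$ produces a canonical nullhomotopy $h_i$ of $\kappa(a) \circ c_i$ via functoriality of the augmentation map, and one checks that $h_1 + h_2$ coincides with the nullhomotopy attached to the induced class of $(x_1, x_2)$ in $(M_1^\times \times_A^\Gm M_2^\times)(k)$. The main obstacle is establishing the key additivity $c_1(L_1) + c_1(L_2) = c_1(L_1 \otimes L_2)$ specifically at the level of the motivic Chern class $\QQ(-1)[-2] \to C^*(A)$ appearing in the Gysin triangle of paragraph~\ref{30428A}; while this is standard for Chern classes in motivic cohomology, one must confirm compatibility with the particular formulation used here, for example by comparing with the pullback of the tautological Chern class on $\mathrm{B}\Gm$.
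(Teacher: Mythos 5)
Your proposal takes a genuinely different route from the paper. The paper never unpacks the augmentation space in terms of Chern classes: it applies the product-preserving functor $\Aug(?/k)$ to the $\Gm^2$-torsor $M_1^\times\times_AM_2^\times$, observes that the two projections are $\Gm^2$-equivariant, so the induced map to $\Aug(M_1^\times/k)\times_{\Aug(A/k)}\Aug(M_2^\times/k)$ is an equivariant map of $\Aug(\Gm^2/k)$-torsors over $\Aug(A/k)$ and hence a bijection, and then pushes along the $\Gm^2$-equivariant (via multiplication) geometric morphism $M_1^\times\times_AM_2^\times\to M_1^\times\times_A^{\Gm}M_2^\times$; because the comparison isomorphism is induced by actual morphisms of varieties, compatibility with the augmentation maps of rational points follows at once from naturality of $\kappa$. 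Your description of $\Aaug(M^\times/k)$ as pairs $(\alpha,h)$, with $h$ a nullhomotopy of $\alpha\circ c_1$, is correct (it follows from the square \ref{w24b}(*), the defining cofiber triangle of $\ep(M^\times)$, and discreteness of $\Aaug(A/k)$, Theorem \ref{10Oct1}), the additivity $c_1(M_1)+c_1(M_2)=c_1(M_1\otimes M_2)$ in $\Hom_{h\Dd}(\QQ(-1)[-2],C^*(A))$ is indeed standard, and your closing step (an equivariant map of torsors over the same base is a bijection) is the same formal endgame as in the paper. What your approach buys is an explicit formula for the isomorphism; what the paper's buys is that no Chern-class bookkeeping is needed at all and the point-compatibility is free.

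The one genuine loose end is the second assertion, which you dismiss as bookkeeping. Your map $((\alpha,h_1),(\alpha,h_2))\mapsto(\alpha,h_1+h_2)$ presupposes a chosen homotopy identifying $c_1(M_1)+c_1(M_2)$ with $c_1(M_1\otimes M_2)$; changing this choice translates the induced map on $\pi_0$ by an element of $\Hom_{h\Dd}(\QQ(-1)[-1],\QQ(0))=k^\times\otimes\QQ$. This ambiguity is harmless for the torsor-isomorphism claim (a translation is an automorphism of the torsor), but it is exactly what the compatibility-with-$\kappa$ statement has to resolve, and since the augmentation maps $M_i^\times(k)\to\Aug(M_i^\times/k)$ are in general far from surjective, checking compatibility on rational points neither comes for free nor follows from abstract nonsense once the map has been fixed abstractly. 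So you must either choose the additivity homotopy geometrically and verify that the nullhomotopies attached to $k$-points add under it (this is where the real work hides), or, as the paper does, define the comparison map directly from the morphism $M_1^\times\times_AM_2^\times\to M_1^\times\times_A^{\Gm}M_2^\times$, after which the second assertion is immediate.
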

\begin{proof}
	Consider the fiber product $M_1^\times\times_AM_2^\times$, which is a $\Gm^2$-torsor on~$A$. For $i=1,2$, the $i$th projection $p_i\colon M_1^\times\times_AM_2^\times \to M_i^\times$ is $\Gm^2$-equivariant, where the $\Gm^2$-action on~$M_i^\times$ is via the $i$th projection $p_i\colon\Gm^2\to\Gm$. So the induced map $p_{i*}\colon\Aug(M_1^\times\times_AM_2^\times/k)\to\Aug(M_i^\times/k)$ on motivic augmentations is $\Aug(\Gm^2/k)$-equivariant. Thus, the induced map
	\[
	\Aug(M_1^\times\underset{A}{\times}M_2^\times/k) \to \Aug(M_1^\times\underset{A}{\times}M_2^\times/k)
	\]
	is $\Aug(\Gm^2/k)$-equivariant, hence an isomorphism since both sides are torsors.
	
	The pushout-product $M_1^\times\times_A^{\Gm}M_2^\times$ is the pushout of $M_1^\times\times_AM_2^\times$ along the multiplication map $\mu\colon\Gm^2\to\Gm$, so there is a morphism $M_1^\times\times_AM_2^\times \to M_1^\times\times_A^{\Gm}M_2^\times$ over~$A$ which is $\Gm^2$-equivariant (via~$\mu$). Hence there is a map
	\[
	\Aug(M_1^\times/k)\underset{\Aug(A/k)}{\times}\Aug(M_2^\times/k) = \Aug(M_1^\times\underset{A}{\times}M_2^\times/k) \to \Aug(M_1^\times\underset{A}{\overset{\Gm}{\times}}M_2^\times/k)
	\]
	over~$\Aug(A/k)$ which is $\Aug(\Gm^2/k)$-equivariant via the multiplication map $\mu\colon\Aug(\Gm/k)^2\to\Aug(\Gm/k)$. This map induces the desired isomorphism
	\[
	\Aug(M_1^\times/k)\underset{\Aug(A/k)}{\overset{\Aug(\Gm/k)}{\times}}\Aug(M_2^\times/k) \xrightarrow\sim \Aug(M_1^\times\underset{A}{\overset{\Gm}{\times}}M_2^\times/k)
	\]
	The final statement about compatibility with augmentation maps is then easy to check.
\end{proof}
\section{Motivic N\'eron--Tate and Weil heights}
\label{sec:HeightMachine}

With the above formalism properly set up, we can now construct a theory of heights on rational motivic augmentation spaces. We do this in two settings: firstly we construct N\'eron--Tate height functions on $\Aug(A/k)$ for any abelian variety $A/k$, and then we use this theory to construct Weil height functions on $\Aug(X/k)^\lgeom$ for any smooth projective curve $X/k$ of genus~$\geq1$. Note that our constructions here are done in the opposite of the usual order: we define our Weil heights in terms of N\'eron--Tate heights instead of the other way around.

\subsection{N\'eron--Tate heights on abelian varieties}
\subsubsection{Global N\'eron--Tate heights}

Suppose that~$k$ is a global field, that~$A/\kbar$ is an abelian variety and~$M$ is a line bundle on~$A$. There is then an associated N\'eron--Tate height function
\[
\hat h_M\colon A(\kbar) \to \bR \,,
\]
which is a quadratic function, i.e., a sum of a quadratic form, a linear form and a constant on~$A(\kbar)$ (the constant is~$0$ in this case). Since we have
\[
\Aug(A/\kbar) = A(\kbar)\otimes\bQ = A(\kbar)/A(\kbar)_\tors
\]
by Proposition~\ref{10OctA}, it follows that~$\hat h_M$ factors uniquely through a quadratic function
\[
\hat h_M\colon \Aug(A/\kbar) \to \bR \,,
\]
which we call the \emph{motivic N\'eron--Tate height} associated to~$M$. This motivic height automatically inherits the corresponding properties of the classical N\'eron--Tate height.

\begin{proposition}\label{prop:neron-tate_heights}\leavevmode
	\begin{enumerate}
		\item For any line bundle~$M$, the composition of the motivic N\'eron--Tate height $\hat h_M\colon\Aug(A/\kbar)\to\bR$ with the augmentation map $A(\kbar)\to\Aug(A/\kbar)$ is the usual N\'eron--Tate height.
		\item For any line bundle~$M$, the motivic N\'eron--Tate height $\hat h_M$ is a quadratic function and satisfies~$\hat h_M(0)=0$. If~$M$ is symmetric (resp.\ antisymmetric) then~$\hat h_M$ is a quadratic form (resp.\ linear). If~$M$ is ample and symmetric then $\hat h_M$ is a positive-definite\footnote{A quadratic form $q\colon V\to\bR$ on a $\bQ$-vector space~$V$ is called positive-definite just when its unique extension to a real quadratic form $q_\bR\colon V\otimes\bR \to \bR$ is positive-definite, i.e., $q(v)>0$ for all $v\in(V\otimes\bR)\smallsetminus\{0\}$. This is a stronger condition than just requiring that $q(v)>0$ for $v\in V\setminus\{0\}$.} quadratic form.
		\item If~$M_1$ and~$M_2$ are line bundles on~$A$, then $\hat h_{M_1\otimes M_2} = \hat h_{M_1} + \hat h_{M_2}$.
		\item If $f\colon B \to A$ is a morphism of abelian varieties and $M$ is a line bundle on~$A$, then $\hat h_{f^*M}(\alpha)=\hat h_M(f(\alpha))$ for all $\alpha\in\Aug(A/\kbar)$.
		\item If~$M_0$ and~$M$ are line bundles on~$A$ with~$M$ symmetric and ample and~$M_0\in\Pic^0(A)$, then there is a constant~$C>0$ such that
		\[
		|\hat h_{M_0}(\alpha)| \leq C\cdot\hat h_M(\alpha)^{1/2}
		\]
		for all~$\alpha\in\Aug(A/\kbar)$.
	\end{enumerate}
	\begin{proof}
		These are all well-known facts about classical N\'eron--Tate heights, except perhaps the last one, which is \cite[Theorem~B.5.9]{hindry-silverman:diophantine_geometry} over number fields, and \cite[Corollary~9.3.8]{bombieri-gubler:heights} in general.
	\end{proof}
\end{proposition}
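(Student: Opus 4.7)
The plan is to deduce each item from the classical Néron--Tate theory on $A(\kbar)$ using the identification $\Aug(A/\kbar) = A(\kbar) \otimes \bQ$ from Proposition \ref{10OctA}. Since $\kbar$ is algebraically closed, $A(\kbar)$ is divisible, so the quotient $A(\kbar)/A(\kbar)_\tors$ is a $\bQ$-vector space canonically isomorphic to $A(\kbar) \otimes \bQ$. The classical Néron--Tate height $\hat h_M \colon A(\kbar) \to \bR$ is a quadratic function vanishing on torsion --- as $\hat h_M(nP) = n^2 \hat h_{M_s}(P) + n \hat h_{M_a}(P)$ for any decomposition of $M$ into symmetric and antisymmetric parts, see e.g.\ \cite[\S B.5]{hindry-silverman:diophantine_geometry} --- hence factors uniquely through a function on $A(\kbar) \otimes \bQ$. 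This factorisation is by construction the motivic Néron--Tate height, which gives (1) immediately.

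For (2), (3), and (4), the associated symmetric bilinear form and linear part of $\hat h_M$ extend uniquely $\bQ$-linearly from $A(\kbar)$ to $A(\kbar) \otimes \bQ$, making the motivic $\hat h_M$ into a quadratic function on $\Aug(A/\kbar)$; the statements about symmetric versus antisymmetric parts, the additivity in $M$, and functoriality under morphisms of abelian varieties all follow at once from the analogous classical identities, which hold by \cite[\S B.5]{hindry-silverman:diophantine_geometry} or \cite[\S 9.3]{bombieri-gubler:heights}. The only delicate point is positive-definiteness in (2) when $M$ is symmetric and ample: given $v \in (A(\kbar) \otimes \bQ) \otimes \bR \smallsetminus \{0\}$, I would write $v = \sum_{i=1}^n r_i(x_i \otimes 1)$ for finitely many $r_i \in \bR$ and $x_i \in A(\kbar)$, choose a finite extension $k' \subset \kbar$ of $k$ over which all the $x_i$ are defined, and observe that $v$ lies in the finite-dimensional subspace $A(k') \otimes \bR$ (finite-dimensional by Mordell--Weil). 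Classical positive-definiteness of the Néron--Tate pairing on $A(k') \otimes \bR$ then gives $\hat h_M(v) > 0$, and this handles the infinite-dimensional case.

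For (5), the classical bound \cite[Theorem~B.5.9]{hindry-silverman:diophantine_geometry} (or \cite[Corollary~9.3.8]{bombieri-gubler:heights}) provides a constant $C > 0$ with $|\hat h_{M_0}(x)| \leq C \hat h_M(x)^{1/2}$ for all $x \in A(\kbar)$. Given $\alpha \in \Aug(A/\kbar) = A(\kbar) \otimes \bQ$, write $\alpha = n^{-1} x$ with $x \in A(\kbar)$ and $n \in \bZ_{>0}$. Since $M_0 \in \Pic^0(A)$ is antisymmetric, $\hat h_{M_0}$ is linear on $A(\kbar) \otimes \bQ$, so $\hat h_{M_0}(\alpha) = n^{-1} \hat h_{M_0}(x)$; similarly $\hat h_M(\alpha) = n^{-2} \hat h_M(x)$ since $M$ is symmetric. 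The inequality then follows with the same constant $C$:
\[
|\hat h_{M_0}(\alpha)| = n^{-1}|\hat h_{M_0}(x)| \leq n^{-1} C \hat h_M(x)^{1/2} = C \hat h_M(\alpha)^{1/2} \,.
\]

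The main obstacle is ensuring positive-definiteness on the infinite-dimensional space $A(\kbar) \otimes \bR$; this is the only item that cannot be argued purely formally from the finite-level classical statements, and the finite-subextension reduction sketched above handles it cleanly.
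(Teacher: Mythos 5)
Your proposal is correct and takes essentially the same route as the paper, which simply records these as consequences of the classical N\'eron--Tate theory (citing Hindry--Silverman and Bombieri--Gubler for item (5)) transported through the identification $\Aug(A/\kbar)=A(\kbar)\otimes\bQ$ of Proposition~\ref{10OctA}. Your extra care with positive-definiteness over $\bR$ (reducing to $A(k')\otimes\bR$ via Mordell--Weil, after also descending $A$ and $M$ to a finite extension $k'$) and with the scaling argument in (5) just fills in routine details the paper leaves to the cited references.
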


\subsubsection{Local N\'eron--Tate heights}

Next, we develop the corresponding theory for local N\'eron--Tate heights. We take the perspective on local heights developed in \cite[\S2.7]{bombieri-gubler:heights}. For this, suppose that~$k_v$ is a field equipped with an absolute value~$|\cdot|_v\colon k_v \to \bR$ (e.g., the completion of an algebraic extension of a global field~$k$ at a place~$v$). Let~$X/k_v$ be a smooth algebraic variety and~$L$ a line bundle on~$X$. Recall that a \emph{$v$-adic metric} $||\cdot||_{L,v}$ on~$L$ is a choice of $v$-adic metric on the $\kbar_v$-vector space $M_x$ for each~$x\in Y(\kbar_v)$ \cite[Definition~2.7.1]{bombieri-gubler:heights}. Here, being a $v$-adic metric means that $||\cdot||_{L,v}$ is a vector space norm on $L_x$ satisfying
\[
||\kappa\tilde x||_{L,v} = |\kappa|_v\cdot||\tilde x||_{L,v}
\]
for all~$\kappa\in\kbar_v$ and $\tilde x\in L_x$. Tensor products and duals of $v$-adically metrised line bundles inherit $v$-adic metrics in a canonical way, and the unit vector bundle~$L=\cO_X$ admits a canonical ``trivial'' metric via the identification $L_x=\kbar_v$ for every~$x\in X(\kbar_v)$. We often view a $v$-adic metric on~$L$ as a function
\[
||\cdot||_{L,v}\colon L(\kbar_v) \to \bR
\]
where~$L(\kbar_v)$ denotes the $\kbar_v$-points of the total space of~$L$. The metrics we consider will all satisfy the following boundedness condition.

\begin{definition}[{\cite[Definitions~2.6.2, 2.6.8 \&~2.7.1]{bombieri-gubler:heights}}]
	Let~$Y/k_v$ be an algebraic variety. A subset~$E\subseteq Y(\kbar_v)$ is called \emph{bounded} just when it satisfies either of the following equivalent conditions:
	\begin{itemize}
		\item there exists a finite covering~$(U_i)_{i\in I}$ of~$Y$ by open affines, closed embeddings $U_i\hookrightarrow\bA^{n_i}_{\kbar_v}$ and a decomposition~$E=\bigcup_{i\in I}E_i$ such that $E_i\subseteq U_i(\kbar_v)\subseteq \kbar_v^{n_i}$ is bounded in the $v$-adic norm for all~$i\in I$;
		\item for every finite covering~$(U_i)_{i\in I}$ of~$Y$ by open affines and closed embeddings $U_i\hookrightarrow\bA^{n_i}_{\kbar_v}$, there exists a decomposition $E=\bigcup_{i\in I}E_i$ into subsets~$E_i\subseteq U_i(\kbar_v) \subseteq \kbar_v^{n_i}$ which are bounded in the $v$-adic norm for all~$i\in I$.
	\end{itemize}
	
	A function~$f\colon Y(\kbar_v) \to \bR$ is called \emph{locally bounded} just when~$f(E)\subseteq\bR$ is bounded for all bounded~$E\subseteq Y(\kbar_v)$.
	
	If~$L$ is a line bundle on a smooth variety~$X/k_v$, then we write~$L^\times$ for the total space of~$L$ with the zero section excised. We say that a metric~$||\cdot||_{L,v}$ on~$L$ is \emph{locally bounded} just when the function
	\[
	-\log||\cdot||_{L,v} \colon L^\times(\kbar_v) \to \bR
	\]
	is locally bounded in the above sense.
\end{definition}

\begin{remark}\label{rmk:boundedness_rmks}\leavevmode
	\begin{itemize}
		\item If~$Y/\kbar_v$ is proper, then all subsets of~$Y(\kbar_v)$ are bounded \cite[Proposition~2.6.6]{bombieri-gubler:heights}.
		\item If~$f\colon Y\to Y'$ is a morphism of $\kbar_v$-varieties, then the image of any bounded subset of~$Y(\kbar_v)$ is bounded in~$Y'(\kbar_v)$.
	\end{itemize}
\end{remark}

\begin{example}[cf.\ {\cite[Example~2.7.20]{bombieri-gubler:heights}}]\label{ex:model_metrics}
	Suppose that the field $k_v$ is non-archimedean, with ring of integers~$\cO_v$. Let $\cX/\cO_v$ be a flat proper normal scheme with smooth generic fibre~$X/k_v$. Let~$\cL$ be a line bundle on~$\cX$ with generic fibre~$L$. Then there is a unique metric~$||\cdot||_{\cL,v}$ on~$L$, which called the \emph{model metric}, characterised by the fact that $||\tilde x||_{\cL,v}=1$ for~$\tilde x\in \cL^\times(\Obar_v)\subseteq L^\times(\kbar_v)$. This metric is locally bounded.
\end{example}

In the case that~$X=A$ is an abelian variety, there is a canonical choice of locally bounded $v$-adic metric on any line bundle~$M/A$, up to scaling by a constant \cite[Theorem~9.5.4]{bombieri-gubler:heights}. To make the theory completely canonical, we define a \emph{rigidification} of~$M$ to be a choice of non-zero point $\tilde 0\in M_0$ in the fibre of~$M$ over~$0\in A(\kbar_v)$, see \cite[\S9.5.6]{bombieri-gubler:heights}, and normalise the metric so that~$||\tilde0||_{M,v}=1$. There is an evident notion of the tensor product of two rigidified line bundles, and the trivial line bundle $M=\cO_A$ admits a canonical rigidification by the unit $\tilde0=1\in M_0=\kbar_v$. Moreover, given a morphism $f\colon A\to B$ of abelian varieties and a rigidified line bundle~$M$ on~$B$, there is a canonical rigidification of the pullback~$f^*M$.

\begin{theorem}[{\cite[Theorem~9.5.7]{bombieri-gubler:heights}}]\label{thm:local_metrics}
	For every rigidified line bundle~$M=(M,\tilde 0)$ on an abelian variety~$A/k_v$, there is a canonical $v$-adic metric
	\[
	||\cdot||_{M,v}\colon M(\kbar_v) \to \bR \,,
	\]
	satisfying the following properties:
	\begin{enumerate}[label = (\alph*), ref = (\alph*), font = \upshape]
		\item\label{condn:metric_well-defined} $||\cdot||_{M,v}$ only depends on~$M$ up to isomorphism of rigidified line bundles.
		\item\label{condn:locally_bounded} $||\cdot||_{M,v}$ is locally bounded.
		\item\label{condn:metric_tensors} For any two rigidified line bundles $M_1$, $M_2$ we have
		\[
		||\cdot||_{M_1\otimes M_2,v} = ||\cdot||_{M_1,v}\otimes||\cdot||_{M_2,v} \,.
		\]
		\item\label{condn:metric_trivial} For~$M=\cO_A$ with its canonical rigidification, the metric $||\cdot||_{\cO_A,v}$ is the trivial metric.
		\item\label{condn:metric_pullback} For any morphism $f\colon A\to B$ of abelian varieties over~$k_v$ and any rigidified line bundle~$M$ on~$B$ we have
		\[
		||\cdot||_{f^*M,v} = f^*||\cdot||_{M,v} \,.
		\]
	\end{enumerate}
	Moreover, these canonical metrics are uniquely characterised by conditions \ref{condn:metric_well-defined}--\ref{condn:metric_tensors} and condition~\ref{condn:metric_pullback} for~$f=[2]$ the doubling map on~$A$ (see \cite[Remark~9.5.8]{bombieri-gubler:heights}).
\end{theorem}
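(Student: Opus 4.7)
The plan is to carry out the classical Tate limit construction. By the theorem of the cube, any rigidified line bundle~$M$ on~$A$ decomposes essentially uniquely as $M \cong M_+ \otimes M_-$ with $M_+$ symmetric (i.e., $[-1]^*M_+ \cong M_+$ as rigidified line bundles) and $M_- \in \Pic^0(A)$, and one has canonical isomorphisms of rigidified line bundles $[2]^*M_\pm \xto{\sim} M_\pm^{\otimes c_\pm}$ with $c_+ = 4$ and $c_- = 2$. I would first treat the symmetric and antisymmetric cases separately, and then set $||\cdot||_{M,v} \coloneqq ||\cdot||_{M_+,v} \otimes ||\cdot||_{M_-,v}$; that the tensor product property~(c) is then forced will come from the uniqueness of the construction below.

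For either of the two cases, choose some locally bounded metric $||\cdot||^{(0)}$ on $M$ normalised so that $||\tilde 0||^{(0)}=1$ (in the non-archimedean case, a model metric as in Example~\ref{ex:model_metrics} suitably rescaled; in the archimedean case, any smooth Hermitian metric), and inductively define $||\cdot||^{(n+1)}$ by pulling back $(||\cdot||^{(n)})^{\otimes c}$ along the isomorphism $[2]^*M \cong M^{\otimes c}$ and extracting the $c$-th root. The analytic heart of the construction is to show that $-\log||\cdot||^{(n)}$ converges uniformly on bounded subsets of $M^\times(\kbar_v)$. This reduces, via the elementary remark that any two locally bounded metrics on $M$ differ by a function that is bounded on each bounded set, to showing that the operator $f \mapsto c^{-1}\, f \circ [2]$ is a contraction by factor $c^{-1}$ on the space of such bounded-difference functions on $A(\kbar_v)$. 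The limit $||\cdot||_{M,v}$ is then locally bounded by construction and exactly satisfies the functional equation $[2]^*||\cdot||_{M,v} = ||\cdot||_{M,v}^{\otimes c}$.

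The remaining properties all fall out from uniqueness of the Tate limit. Property~(a) is the independence of the limit from the starting metric. Property~(b) is that the uniform limit of locally bounded metrics (on each bounded set) is locally bounded. For properties~(c) and~(e), both $||\cdot||_{M_1,v} \otimes ||\cdot||_{M_2,v}$ and $f^*||\cdot||_{M,v}$ are locally bounded rigidified metrics satisfying the Tate functional equation for $[2]$ on $M_1 \otimes M_2$ and $f^*M$ respectively, and so coincide with the canonical metrics by uniqueness. Property~(d) is immediate since the trivial metric on $\cO_A$ is already a fixed point of the doubling construction. The uniqueness clause at the end then requires no additional work: any family satisfying~(a)--(c) and~(e) for $f=[2]$ verifies the functional equation $[2]^*||\cdot||_{M,v} = ||\cdot||_{M,v}^{\otimes c_M}$ and is locally bounded, hence equals the Tate limit.

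The main obstacle I anticipate is the careful bookkeeping of rigidifications: the isomorphisms $[2]^*M \cong M^{\otimes c_M}$ arising from the theorem of the cube are canonical only up to a scalar, and one must pin down a choice compatible with the rigidifications (and with the decomposition into symmetric and antisymmetric parts) so that the iterative construction genuinely preserves the normalisation $||\tilde 0||^{(n)}=1$. Once this is settled, the contraction estimate is routine in both the archimedean setting (uniform convergence on compact sets) and the non-archimedean setting (where one additionally uses the ultrametric inequality on affine charts to check that local boundedness descends through the limit).
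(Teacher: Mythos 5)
The paper does not actually prove this statement: it is imported wholesale from Bombieri--Gubler \cite[Theorem~9.5.7 and Remark~9.5.8]{bombieri-gubler:heights}, and your Tate-limit construction is essentially the argument of that cited source, so in substance you are reproducing the intended proof and the outline is correct. One inaccuracy to repair: a rigidified line bundle $M$ on $A/k_v$ need not decompose over $k_v$ as $M_+\otimes M_-$ with $M_+$ symmetric and $M_-\in\Pic^0(A)$ --- producing such a decomposition requires extracting a square root in $\Pic^0(A)$, so in general it exists only after a field extension and is unique only up to $2$-torsion; the standard fix is to run your construction on $M^{\otimes 2}\cong\bigl(M\otimes[-1]^*M\bigr)\otimes\bigl(M\otimes([-1]^*M)^{-1}\bigr)$, which decomposes canonically into a symmetric and a $\Pic^0$ part, and then define $\|\cdot\|_{M,v}$ as the square root of the resulting metric (metrics form a uniquely divisible group, so nothing is lost, and the tensor and pullback properties still follow from uniqueness since homomorphisms commute with $[2]$ and preserve the decomposition). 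With that adjustment, and noting that the final uniqueness clause really does use local boundedness (b) --- the functional equation $[2]^*\|\cdot\|=\|\cdot\|^{\otimes c}$ alone admits unbounded perturbations, and boundedness of differences on the proper variety $A$ is what makes the $c^{-1}$-contraction argument bite --- the verification of (a)--(e) goes through as you describe.
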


This canonical metric can be described explicitly in the case of good reduction. Suppose that~$\cA/\cO_v$ is an abelian scheme with generic fiber~$A/k_v$. An elementary calculation with divisors shows that any rigidified line bundle~$M$ on~$A$ extends essentially uniquely to a rigidified line bundle~$\cM$ on~$\cA$, meaning a line bundle~$\cM$ endowed with a point~$\tilde 0\in\cM(\cO_v)$ lying over~$0\in\cA(\cO_v)$ and disjoint from the zero-section of~$\cM$.

\begin{lemma}\label{lem:good_local_metrics}
	With notation as above, the canonical metric $||\cdot||_{M,v}$ is the model metric of Example~\ref{ex:model_metrics} coming from~$\cM$.
	\begin{proof}
			It is easy to check that the assignment $M\mapsto\cM$ from rigidified line bundles on~$A$ to rigidified line bundles on~$\cA$ is compatible with tensor products and pullbacks along the doubling map~$[2]\colon\cA\to\cA$. So the model metrics $||\cdot||_{\cM,v}$ are compatible with tensor products and pullback along~$[2]$, so we are done by the unicity clause of Theorem~\ref{thm:local_metrics}.
		\end{proof}
\end{lemma}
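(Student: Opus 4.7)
The plan is to invoke the uniqueness clause of Theorem~\ref{thm:local_metrics}, which asserts that the canonical metrics on rigidified line bundles on $A$ are characterized by conditions \ref{condn:metric_well-defined}--\ref{condn:metric_tensors} together with \ref{condn:metric_pullback} for the doubling map $f=[2]$. Thus it suffices to verify that the model metrics $||\cdot||_{\cM,v}$, as $M$ ranges over rigidified line bundles on $A$ and $\cM$ over their canonical extensions to $\cA$, satisfy these four conditions.

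Conditions \ref{condn:metric_well-defined} and \ref{condn:locally_bounded} are essentially for free: \ref{condn:metric_well-defined} follows because the essential uniqueness of the extension $M\mapsto\cM$ (asserted in the paragraph preceding the statement) implies that the model metric $||\cdot||_{\cM,v}$ depends only on $M$ up to isomorphism of rigidified line bundles, and \ref{condn:locally_bounded} is part of Example~\ref{ex:model_metrics}. For condition \ref{condn:metric_tensors}, I would observe that if $\cM_1,\cM_2$ are the canonical extensions of $M_1,M_2$ with their rigidifications, then $\cM_1\otimes\cM_2$ is a rigidified extension of $M_1\otimes M_2$; by essential uniqueness this \emph{is} the canonical extension, and multiplicativity of model metrics on the corresponding integral sections is immediate from the fact that the tensor product of unit-norm integral sections is a unit-norm integral section of the tensor product.

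For condition \ref{condn:metric_pullback} specialised to $f=[2]$, the doubling map on~$A$ extends uniquely to a morphism $[2]\colon\cA\to\cA$ of abelian schemes, and therefore $[2]^*\cM$ is a rigidified line bundle on $\cA$ whose generic fibre is $[2]^*M$ with its canonical rigidification. By essential uniqueness again, $[2]^*\cM$ coincides with the canonical extension of $[2]^*M$, and the model metric is manifestly compatible with pullback along $\cO_v$-morphisms (integral sections pull back to integral sections). This verifies all four hypotheses of the uniqueness clause and so completes the proof.

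The main point requiring attention is the correct book-keeping of rigidifications in conditions \ref{condn:metric_tensors} and \ref{condn:metric_pullback}: one must check that the canonical rigidifications of $M_1\otimes M_2$ and of $[2]^*M$ match, under the essentially unique extension, with those coming from the tensor product $\cM_1\otimes\cM_2$ and the pullback $[2]^*\cM$. Once this is checked (and it is essentially tautological given the definitions of these operations on rigidified bundles), the rest of the argument is purely formal.
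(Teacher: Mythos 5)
Your proposal is correct and follows essentially the same route as the paper: check that the extension $M\mapsto\cM$ respects tensor products and pullback along $[2]$, hence the model metrics satisfy the relevant conditions, and conclude by the unicity clause of Theorem~\ref{thm:local_metrics}. The extra book-keeping you spell out (conditions \ref{condn:metric_well-defined}, \ref{condn:locally_bounded}, and the matching of rigidifications) is exactly what the paper leaves implicit as "easy to check."
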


\begin{remark}
	If~$A$ is not the generic fiber of an abelian scheme, it need not be the case that the canonical metric $||\cdot||_{M,v}$ is the model metric for any model of~$M$.
\end{remark}

\begin{definition}
	If~$M=(M,\tilde0)$ is a rigidified line bundle on an abelian variety~$A/k_v$, we define the \emph{local N\'eron--Tate height} with respect to~$M$ to be the locally bounded function
	\[
	\lambda_{M,v} = -\log||\cdot||_{M,v} \colon M^\times(\kbar_v) \to \bR \,.
	\]
\end{definition}

We reinterpret the local N\'eron--Tate height as a function on a motivic augmentation space using the following lemma.

\begin{lemma}
	Let~$M=(M,\tilde0)$ be a rigidified line bundle on an abelian variety~$A/k_v$. Then the local N\'eron--Tate height with respect to~$M$ factors uniquely through a function
	\[
	\lambda_{M,v}\colon \Aug(M^\times_{\kbar_v}/\kbar_v) \to \bR \,.
	\]
\end{lemma}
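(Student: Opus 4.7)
My plan has two parts: first establish uniqueness of the factorization, then existence.

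For uniqueness, I will show that the augmentation map $\pi\colon M^\times(\kbar_v) \to \Aug(M^\times_{\kbar_v}/\kbar_v)$ is surjective. By Corollary~\ref{w24d}, the target is a $(\kbar_v^\times \otimes \bQ)$-torsor over $A(\kbar_v) \otimes \bQ$. Since $\kbar_v$ is algebraically closed, both $A(\kbar_v)$ and $\kbar_v^\times$ are divisible abelian groups, hence surject onto their rationalisations. Any $\alpha$ in the target can therefore be lifted in stages: first lift its image in $\Aug(A/\kbar_v)$ to some $x \in A(\kbar_v)$, pick any $\tilde x \in M^\times_x(\kbar_v)$, and scale by an appropriate $\kappa \in \kbar_v^\times$ to land on $\alpha$. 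Any factorization of $\lambda_{M,v}$ through $\pi$ is therefore unique.

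For existence, I must verify that $\lambda_{M,v}$ is constant on fibers of $\pi$. The fiberwise case, where $\tilde x_1, \tilde x_2 \in M^\times_x(\kbar_v)$ lie over a common base point $x$, is immediate: writing $\tilde x_2 = \kappa \tilde x_1$ for some $\kappa \in \kbar_v^\times$, the $\kbar_v^\times$-equivariance of $\pi$ forces $\kappa$ to lie in the kernel of $\kbar_v^\times \to \kbar_v^\times \otimes \bQ$, that is, to be a root of unity. Since $|\kappa|_v = 1$ for any root of unity, the scaling property $\|\kappa \tilde x_1\|_{M,v} = |\kappa|_v \cdot \|\tilde x_1\|_{M,v}$ of the metric yields $\lambda_{M,v}(\tilde x_1) = \lambda_{M,v}(\tilde x_2)$.

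The general case, where $x_i := \pi(\tilde x_i)$ differ by a torsion element $\tau = x_1 - x_2 \in A(\kbar_v)_\tors$ of some order $N$, is the main obstacle. The plan is to reduce to the fiberwise case by producing a point $\tilde x_2' \in M^\times_{x_1}(\kbar_v)$ with $\pi(\tilde x_2') = \pi(\tilde x_2)$ and $\lambda_{M,v}(\tilde x_2') = \lambda_{M,v}(\tilde x_2)$. This transfer will exploit the compatibility of the canonical metric with the multiplication-by-$N$ isogeny (Theorem~\ref{thm:local_metrics}\ref{condn:metric_pullback}), with tensor products (Theorem~\ref{thm:local_metrics}\ref{condn:metric_tensors}), and the theorem-of-the-cube decomposition $[N]^*M \cong M^{\otimes N(N+1)/2} \otimes [-1]^*M^{\otimes N(N-1)/2}$ as rigidified line bundles. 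The key technical step will be verifying that the $\kbar_v^\times$-correction factor introduced in this transfer is always a root of unity; this amounts to showing that the canonical metric on any torsion element of $\Pic^0(A)$ is trivial, which is the local analog of the fact that the global N\'eron--Tate height vanishes on torsion.
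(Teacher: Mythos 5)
Your uniqueness step (surjectivity of the augmentation map via divisibility of $A(\kbar_v)$ and $\kbar_v^\times$) and your equal-fiber case (the scalar relating two lifts of the same point must be a root of unity, so the heights agree) are both correct and agree with the paper. The gap is in the general case, which is the whole content of the lemma, and your plan for it does not yet constitute a proof. Producing a point $\tilde x_2'\in M^\times_{x_1}(\kbar_v)$ with $\pi(\tilde x_2')=\pi(\tilde x_2)$ is easy (divisibility plus Corollary~\ref{w24d}); the real content is the second requirement $\lambda_{M,v}(\tilde x_2')=\lambda_{M,v}(\tilde x_2)$, and for two points with equal augmentations lying over $x_2$ and $x_1$ this is precisely the statement being proved -- so unless $\tilde x_2'$ is constructed explicitly enough that its height can be computed, the reduction is circular. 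The key claim you invoke, that ``the canonical metric on any torsion element of $\Pic^0(A)$ is trivial, the local analog of the vanishing of the global N\'eron--Tate height on torsion,'' is not correct as stated: local N\'eron--Tate heights do not vanish at torsion points (only their integral over all places does), and for a torsion rigidified bundle $M_0$ with $M_0^{\otimes m}\cong\cO_A$ what is true is only that the $m$-th tensor power of its canonical metric is isometric to the trivially metrized $\cO_A$; the resulting $\lambda_{M_0,v}$ is determined but certainly not constant, since it still varies by $-\log|\kappa|_v$ along each fiber. Moreover, any transfer between the fibers of $M^\times$ over $x_2$ and $x_1=x_2+\tau$ implicitly involves the translation $t_\tau$, whereas Theorem~\ref{thm:local_metrics} only gives pullback compatibility of canonical metrics along homomorphisms of abelian varieties (rigidifications do not even pull back along translations), so an additional argument is needed there. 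You also never explain how augmentations behave under the tensor operations you use; that requires Lemma~\ref{lem:pushout_product_of_torsors}.

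The paper's proof sidesteps all of this by pushing forward rather than transferring sideways: with $n(x_1-x_2)=0$ it sets $M_n=M^{\otimes(n+1)}\otimes[-1]^*M^{\otimes(1-n)}$, so that $[n]^*M_n\cong M^{\otimes 2n^2}$ as rigidified bundles by the theorem of the cube, lifts $[n]$ to a morphism $f\colon (M^{\otimes 2n^2})^\times\to M_n^\times$, and sends $\tilde x_1^{\otimes 2n^2}$ and $\tilde x_2^{\otimes 2n^2}$ into the single fiber of $M_n^\times$ over $nx_1=nx_2$. Lemma~\ref{lem:pushout_product_of_torsors} shows these two images have equal augmentations, Corollary~\ref{w24d} (simple transitivity of $\kbar_v^\times\otimes\bQ$ on fibers) makes them differ by a root of unity, and the tensor-power and homomorphism-pullback compatibilities of the canonical metrics transport the resulting height equality back to $\lambda_{M,v}(\tilde x_1)=\lambda_{M,v}(\tilde x_2)$. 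If you want to rescue your variant, you must construct $\tilde x_2'$ explicitly (say via a rigidified trivialisation of the relevant torsion bundle after a suitable tensor power) and prove, from that explicit description, that the correction scalar is a root of unity; as written, that step -- the heart of the lemma -- is missing.
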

\begin{proof}
	By Proposition~\ref{w24d}, we know that $\Aug(M^\times_{\kbar_v}/\kbar_v)$ is a torsor under~$\kbar_v^\times\otimes\bQ$ over~$A(\kbar_v)\otimes\bQ$, so the augmentation map
	\[
	M^\times(\kbar_v) \to \Aug(M^\times_{\kbar_v}/\kbar_v)
	\]
	is surjective. Thus, to prove the result, it suffices to show that if~$\tilde x_1,\tilde x_2\in M^\times(\kbar_v)$ have the same image under the augmentation map, then we have $\lambda_{M,v}(\tilde x_1)=\lambda_{M,v}(\tilde x_2)$.
	
	For this, let~$x_1$ and~$x_2$ denote the images of~$\tilde x_1$ and~$\tilde x_2$ in~$A(\kbar_v)$. Since~$x_1$ and~$x_2$ have the same image in~$\Aug(A_{\kbar_v}/\kbar_v)=A(\kbar_v)\otimes\bQ$, we deduce that~$x_1-x_2\in A(\kbar_v)_\tors$. Choose a positive integer $n\in\bN$ such that $n\cdot(x_1-x_2)=0$, and let~$f\colon A\to A$ denote the multiplication-by-$n$ map. If $M_n$ denotes the rigifidied line bundle $M^{\otimes(n+1)}\otimes[-1]^*M^{\otimes(1-n)}$ on~$A$, then $f^*M=M^{\otimes2n^2}$ by the theorem of the cube. So the morphism $f\colon A\to A$ lifts to a morphism $f\colon (M^{\otimes2n^2})^\times \to M_n^\times$.
	
	Now~$\tilde x_1^{\otimes2n^2}$ and~$\tilde x_2^{\otimes2n^2}$ have the same image in $\Aug((M^{\otimes2n^2})^\times_{\kbar_v}/\kbar_v)$ by Lemma~\ref{lem:pushout_product_of_torsors}. So $f(\tilde x_1^{\otimes2n^2})$ and $f(\tilde x_2^{\otimes2n^2})$ have the same image in $\Aug(M_{n,\kbar_v}^\times/\kbar_v)$, and also have the same image in~$A(\kbar_v)$, namely $nx_1=nx_2$. From Proposition~\ref{w24d}, we deduce that
	\[
	f(\tilde x_1^{\otimes 2n^2}) = \zeta\cdot f(\tilde x_2^{\otimes 2n^2})
	\]
	for some root of unity~$\zeta\in\mu_\infty(\kbar_v)$. It then follows from the properties of local N\'eron--Tate heights that
	\begin{align*}
	\lambda_{M,v}(\tilde x_1) &= \frac1{2n^2}\lambda_{M^{\otimes2n^2},v}(\tilde x_1^{\otimes2n^2}) = \frac1{2n^2}\lambda_{M_n,v}(f(\tilde x_1^{\otimes2n^2})) \\
	 &= \frac1{2n^2}\lambda_{M_n,v}(f(\tilde x_2^{\otimes2n^2})) = \frac1{2n^2}\lambda_{M^{\otimes2n^2},v}(\tilde x_2^{\otimes2n^2}) = \lambda_{M,v}(\tilde x_2) \,,
	\end{align*}
	which is what we wanted to show.
\end{proof}

\subsubsection{Decomposition into local heights}

Let us now return to the setting that~$k$ is a global field, and explain how the global N\'eron--Tate height decomposes into local N\'eron--Tate heights. We let~$\Mm(k)$ denote the set of places of~$k$, and assume that the absolute value~$|\cdot|_v$ on each completion~$k_v$ is normalised so that the product formula
\[
\prod_{v\in\Mm(k)}|\kappa|_v = 1
\]
holds for all~$\kappa\in k^\times$. (In particular, $|\kappa|_v=1$ for all but finitely many~$v$.) Let~$\Mm(\kbar)$ denote the set of places of~$\kbar$. We have
\[
\Mm(\kbar) = \varprojlim_{k'}\Mm(k') \,,
\]
where the inverse limit is taken over finite extensions~$k'$ of~$k$ inside~$\kbar$. There is a natural measure on~$\Mm(\kbar)$, where for every place~$v'\in\Mm(k')$ lying over a place~$v\in\Mm(k)$, the set of places of~$\kbar$ extending~$v'$ has measure $\frac{[k'_{v'}:k_v]}{[k':k]}$. With respect to this measure, the elements of~$\kbar^\times$ satisfy the product formula
\[
-\int_{v\in\Mm(\kbar)}\log|\kappa|_v = 0 \,,
\]
where we normalise absolute values on~$\kbar$ so that they extend one of the given absolute values on~$k$ \cite[Proposition~1.4.2]{bombieri-gubler:heights}.

Now suppose that~$A/\kbar$ is an abelian variety and~$M=(M,\tilde0)$ is a rigidified line bundle on~$A$. Any point~$x\in A(\kbar)$ lifts to a point~$\tilde x\in M^\times(\kbar)$, and we have
\[
\hat h_M(x) = \int_{v\in\Mm(\kbar)}\lambda_{M,v}(\tilde x)
\]
by \cite[Corollary~9.5.14]{bombieri-gubler:heights}. This local decomposition of the N\'eron--Tate height extends automatically to augmentations.

\begin{proposition}\label{prop:local_decomposition}
	Let~$k$ be a global field, $A$ an abelian variety over~$\kbar$ and~$M=(M,\tilde0)$ a rigidified line bundle on~$A$. Then any augmentation~$\alpha\in\Aug(A/\kbar)$ lifts to an augmentation~$\tilde\alpha\in\Aug(M^\times/\kbar)$, and we have
	\[
	\hat h_M(\alpha) = \int_{v\in\Mm(\kbar)}\lambda_{M,v}(\tilde\alpha) \,.
	\]
\end{proposition}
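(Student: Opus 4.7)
For \textbf{existence of a lift}, the map $\Aug(M^\times/\kbar)\to\Aug(A/\kbar)$ is surjective by Corollary~\ref{w24d}. For \textbf{well-definedness of the integral}, any two lifts of $\alpha$ differ by some $\kappa\in\Aug(\Gm/\kbar)=\kbar^\times\otimes\bQ$, and the metric transformation law
\[
\lambda_{M,v}(\kappa\cdot\tilde\alpha) = \lambda_{M,v}(\tilde\alpha) - \log|\kappa|_v
\]
(extended $\bQ$-linearly from Theorem~\ref{thm:local_metrics}) combined with the product formula $-\int_v\log|\kappa|_v=0$ (also extended $\bQ$-linearly to $\kbar^\times\otimes\bQ$) ensures that the integral depends only on $\alpha$. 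Set $H(\alpha):=\int_v\lambda_{M,v}(\tilde\alpha)$; the remaining task is to show $H=\hat h_M$.

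To prove this, reduce to the classical local decomposition~\cite[Corollary~9.5.14]{bombieri-gubler:heights} via the theorem of the cube. Write $\alpha=\tfrac{1}{n}x$ with $x\in A(\kbar)$ and $n\geq 1$, and set $a=\tfrac{n^2+n}{2}$, $b=\tfrac{n^2-n}{2}$, so that the theorem of the cube gives an isomorphism $[n]^*M\cong M^{\otimes a}\otimes([-1]^*M)^{\otimes b}$ of rigidified line bundles. Lift $x$ to $\tilde x\in M^\times(\kbar)$; by Lemma~\ref{lem:pullback_of_torsors}, $\tilde x$ corresponds to a unique element $\tilde x^\flat\in\Aug(([n]^*M)^\times/\kbar)$ over $\alpha$. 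On the other hand, fix any lift $\tilde{-\alpha}\in\Aug(M^\times/\kbar)$ of $-\alpha$, and transport via Lemma~\ref{lem:pullback_of_torsors} to an element $\bar\alpha\in\Aug(([-1]^*M)^\times/\kbar)$ over $\alpha$; Lemma~\ref{lem:pushout_product_of_torsors} then produces a second element $\tilde\alpha^{\boxtimes a}\boxtimes\bar\alpha^{\boxtimes b}\in\Aug(([n]^*M)^\times/\kbar)$ over $\alpha$. The two lifts differ by some $\kappa\in\kbar^\times\otimes\bQ$, and pullback compatibility combined with tensor additivity of local heights (Theorem~\ref{thm:local_metrics}\ref{condn:metric_pullback},~\ref{condn:metric_tensors}) yields
\[
\lambda_{M,v}(\tilde x) = \lambda_{[n]^*M,v}(\tilde x^\flat) = -\log|\kappa|_v + a\lambda_{M,v}(\tilde\alpha) + b\lambda_{M,v}(\tilde{-\alpha}).
\]
Integrating over $v\in\Mm(\kbar)$ and invoking the classical local decomposition together with the product formula gives $\hat h_M(n\alpha)=aH(\alpha)+bH(-\alpha)$; running the same argument with $\alpha$ replaced by $-\alpha$ gives $\hat h_M(-n\alpha)=bH(\alpha)+aH(-\alpha)$.

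Since $a+b=n^2$ and $a-b=n$, summing and differencing these two identities yields $n^2(H(\alpha)+H(-\alpha))=\hat h_M(n\alpha)+\hat h_M(-n\alpha)$ and $n(H(\alpha)-H(-\alpha))=\hat h_M(n\alpha)-\hat h_M(-n\alpha)$. Because $\hat h_M$ is quadratic on $\Aug(A/\kbar)$ with $\hat h_M(0)=0$ (Proposition~\ref{prop:neron-tate_heights}(2)), the right-hand sides simplify to $n^2(\hat h_M(\alpha)+\hat h_M(-\alpha))$ and $n(\hat h_M(\alpha)-\hat h_M(-\alpha))$ respectively; cancelling the leading factors and adding the two relations gives $H(\alpha)=\hat h_M(\alpha)$, as required. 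The main subtlety lies in organising the $(\kbar^\times\otimes\bQ)$-action on augmentation torsors and verifying that the scaling law for $\lambda_{M,v}$ interacts correctly with the pushout product of Lemma~\ref{lem:pushout_product_of_torsors} and with $\bQ$-linear extension from $\kbar^\times$.
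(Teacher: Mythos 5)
Your proof is correct, but it takes a much longer road than the paper's. The paper's entire argument is the observation that, because $\Aug(M^\times/\kbar)$ is a $(\kbar^\times\otimes\bQ)$-torsor over $A(\kbar)\otimes\bQ$ (Corollary~\ref{w24d}) and both $\kbar^\times$ and $A(\kbar)$ are divisible, the augmentation map $M^\times(\kbar)\to\Aug(M^\times/\kbar)$ is surjective: every lift $\tilde\alpha$ is the augmentation attached to an honest point $\tilde x\in M^\times(\kbar)$, so the displayed identity is literally the classical decomposition \cite[Corollary~9.5.14]{bombieri-gubler:heights} applied to $\tilde x$ (and independence of the lift, if one wants it, follows from the product formula exactly as in your well-definedness step). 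You instead avoid point-attachment and reprove the identity from the theorem of the cube, transporting lifts through the pullback and pushout-product compatibilities (Lemmas~\ref{lem:pullback_of_torsors} and~\ref{lem:pushout_product_of_torsors}) and then separating the quadratic and linear parts of $\hat h_M$ via the coefficients $a=\tfrac{n^2+n}{2}$, $b=\tfrac{n^2-n}{2}$; the algebra is right, and the compatibilities you invoke do extend from points to augmentations, since each $\lambda_{M,v}$ on augmentations is characterised by factoring the classical local height through the surjective local augmentation map. What your route buys is that the formula is established for an arbitrary lift without ever invoking global point-attachment; what it costs is length, plus one small unaddressed point: to define $H(\alpha)=\int_{v}\lambda_{M,v}(\tilde\alpha)$ you need integrability of $v\mapsto\lambda_{M,v}(\tilde\alpha)$, which is immediate once one knows $\tilde\alpha$ comes from a point (the paper's one-line observation) but otherwise requires a word — e.g.\ run your cube identity for both $\alpha$ and $-\alpha$ with the same lifts and solve for $\lambda_{M,v}(\tilde\alpha)$ and $\lambda_{M,v}(\widetilde{-\alpha})$ from the two integrable combinations, using $a\neq b$.
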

\begin{proof}
	Proposition~\ref{w24d} implies that~$\alpha$ lifts. It also implies that~$\tilde\alpha$ is the augmentation attached to a point~$\tilde x\in M^\times(\kbar)$, and the result follows.
\end{proof}

\subsection{Weil heights on curves}

Now we wish to pull back the theory of motivic N\'eron--Tate heights to curves. We continue to work over a global field~$k$, and introduce the notion of a \emph{locally geometric} augmentation.

\begin{definition}\label{def:locally_geometric}
	Let~$k$ be a field which is an algebraic extension of a global field, and let~$X/k$ be a smooth variety. For each place~$v$ of~$k$, there is a natural map
	\[
	\Aug(X/k) \to \Aug(X_{k_v}/k_v) \,.
	\]
	We say that an augmentation $\alpha\in\Aug(X/k)$ is \emph{locally geometric} just when its image in~$\Aug(X_{k_v}/k_v)$ is the augmentation associated to a $k_v$-rational point $x_v\in X(k_v)$ for all places~$v$ of~$k$. We write~$\Aug(X/k)^\lgeom$ for the set of locally geometric augmentations.
\end{definition}

\begin{theorem}[Rational motivic Weil height machine]\label{thm:weil_heights}
	Let~$k$ be a global field and $X/\kbar$ a smooth projective curve of genus~$\geq1$. Then there is a function
	\[
	h\colon \Pic(X) \to \frac{\{\text{functions $\Aug(X/\kbar)^\lgeom\to\bR$}\}}{\{\text{bounded functions}\}}
	\]
	satisfying the following properties:
	\begin{enumerate}
		\item for line bundles $L_1,L_2\in \Pic(X)$ we have $h_{L_1\otimes L_2} = h_{L_1}+h_{L_2}+O(1)$;
		\item for a morphism $f\colon X\to Y$ of smooth projective curves of genus~$\geq1$ and a line bundle~$L$ on~$Y$ we have $h_{f^*L}(\alpha)=h_L(f(\alpha))+O(1)$; and
		\item for a morphism $f\colon X\to A$ from~$X$ to an abelian variety and a line bundle~$M$ on~$A$ we have $h_{f^*M}(\alpha)=\hat h_M(f(\alpha))+O(1)$.
	\end{enumerate}
	Here, $O(1)$ is shorthand for a bounded function on $\Aug(X/\kbar)^\lgeom$.
\end{theorem}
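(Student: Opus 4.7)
The plan is to reduce $h$ to the motivic Néron–Tate height on the Jacobian. Fix a basepoint $x_0 \in X(\kbar)$ and let $\iota \colon X \hookrightarrow J$ be the associated Abel–Jacobi embedding. Since $\iota^* \otimes \bQ \colon \Pic(J) \otimes \bQ \to \Pic(X) \otimes \bQ$ is surjective, for every $L \in \Pic(X)$ some power $L^{\otimes m}$ ($m \geq 1$) takes the form $\iota^* M$ for an $M \in \Pic(J)$. Define
\[
	h_L \coloneqq \tfrac{1}{m}\, \hat h_M \circ \iota_* \colon \Aug(X/\kbar)^\lgeom \to \bR ,
\]
with $\iota_* \colon \Aug(X/\kbar)^\lgeom \to \Aug(J/\kbar) = J(\kbar) \otimes \bQ$ the induced map and $\hat h_M$ the motivic Néron--Tate height of Proposition~\ref{prop:neron-tate_heights}.

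The heart of the proof is to show $h_L$ is well-defined up to bounded functions. By additivity this reduces to proving that if $M \in \Pic(J)$ satisfies $\iota^* M \cong \mathcal{O}_X$, then $\hat h_M \circ \iota_*$ is bounded on $\Aug(X/\kbar)^\lgeom$. Any such trivialisation of $\iota^* M$ lifts $\iota$ to a closed embedding $\tilde\iota \colon X \hookrightarrow M^\times$ into the associated $\Gm$-torsor, which by functoriality and Corollary~\ref{w24d} induces a canonical lift $\tilde\iota_* \colon \Aug(X/\kbar)^\lgeom \to \Aug(M^\times/\kbar)$ of $\iota_*$. Proposition~\ref{prop:local_decomposition} then gives
\[
	\hat h_M(\iota_*(\alpha)) = \int_{v \in \Mm(\kbar)} \lambda_{M,v}(\tilde\iota_*(\alpha)) ,
\]
and for each place $v$, local geometricity identifies $\lambda_{M,v}(\tilde\iota_*(\alpha))$ with $\lambda_{M,v}(\tilde\iota(x_v))$ for some local point $x_v \in X(\kbar_v)$. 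To conclude, descend all data to a finite subextension $k'/k$ of $\kbar$ over which $X$, $J$, $M$ and $\tilde\iota$ are defined. At any place $v$ of $\kbar$ lying over a good reduction place of $k'$, a smooth proper model of everything exists, $\tilde\iota(x_v)$ extends to an integral point of the model $\mathcal{M}^\times$, and Lemma~\ref{lem:good_local_metrics} forces $\lambda_{M,v}(\tilde\iota(x_v)) = 0$. At the remaining finitely many bad places, properness of $X$ makes $\tilde\iota(X(\kbar_v))$ a bounded subset of $M^\times_{\kbar_v}(\kbar_v)$ on which $\lambda_{M,v}$ is bounded, yielding a uniform bound independent of $\alpha$. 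Integrating a uniformly bounded integrand supported on a set of finite measure produces a bounded function of $\alpha$, establishing well-definedness.

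The three stated properties then follow from the corresponding properties of $\hat h_M$. Property (1) is additivity of $\hat h$ in its line bundle. For (3), any $f \colon X \to A$ factors as $f = t_a \circ g_0 \circ \iota$ with $g_0 \colon J \to A$ a homomorphism and $t_a$ translation by $a = f(x_0)$, so $f^* M = \iota^* (g_0^* t_a^* M)$; using Proposition~\ref{prop:neron-tate_heights}(4) and the elementary quadratic identity $\hat h_{t_a^* M}(y) = \hat h_M(y + a) - \hat h_M(a)$, one obtains $h_{f^*M}(\alpha) = \hat h_M(f_*(\alpha)) - \hat h_M(a)$, differing from $\hat h_M \circ f_*$ by a constant, hence by $O(1)$. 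Property (2) follows from (3) applied to the composition of $f$ with the Abel--Jacobi embedding of $Y$. The main obstacle throughout is the well-definedness argument, which must delicately balance the vanishing of local heights at almost all places against uniform local boundedness at the finitely many bad ones.
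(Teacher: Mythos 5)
Your proposal is correct and follows essentially the same route as the paper: define $h_L$ by pulling back the motivic N\'eron--Tate height along a map to an abelian variety, and reduce well-definedness to the key fact that $\hat h_M\circ f_*$ is bounded on locally geometric augmentations when $f^*M$ is trivial, proved exactly as in the paper via the lift to the $\Gm$-torsor, the local decomposition of Proposition~\ref{prop:local_decomposition}, spreading out, vanishing of local heights at good places (Lemma~\ref{lem:good_local_metrics}), and local boundedness at the finite-measure set of remaining places. The only (harmless) organizational difference is that you fix the Jacobian throughout—so independence of presentation follows from additivity within $\Pic(J)$ rather than the paper's product trick $A_1\times A_2$, at the cost of an extra translation/factorisation argument for property (3), which in the paper is immediate since arbitrary quadruples $(A,f,M,m)$ are allowed in the definition.
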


Before we start, and to motivate the construction, we remark that the first and third conditions above determine the Weil height~$h_L$ uniquely. This is thanks to the following lemma.

\begin{lemma}\label{lem:presentations_of_line_bundles}
	Let~$X/\kbar$ be a smooth projective curve of genus~$\geq1$. Then for every line bundle~$L$ on~$X$ there exists an abelian variety~$A/\kbar$, a morphism $f\colon X\to A$, a line bundle~$M$ on~$A$ and a positive integer~$m$ such that $f^*M$ is isomorphic to $L^{\otimes m}$.
	\begin{proof}
		Let~$A=J$ be the Jacobian of~$A$, and~$f\colon X \to J$ the Abel--Jacobi embedding with respect to some basepoint $x_0\in X(\kbar)$. The pullback map
		\[
		f^*\colon \Pic(J) \to \Pic(X)
		\]
		is an isomorphism on~$\Pic^0$, and pulls back any ample line bundle on~$J$ to an ample line bundle on~$X$. So~$f^*$ has finite cokernel and we are done.
	\end{proof}
\end{lemma}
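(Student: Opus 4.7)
My plan is to take $A = J$ to be the Jacobian of $X$ and $f = f_{x_0}\colon X \to J$ to be the Abel--Jacobi embedding with respect to some chosen basepoint $x_0 \in X(\kbar)$. The key structural input that I would invoke is the classical fact that $f^*\colon \Pic(J) \to \Pic(X)$ restricts to an isomorphism $\Pic^0(J) \xrightarrow{\sim} \Pic^0(X)$; this is essentially the self-duality of the Jacobian as an abelian variety, and can be found e.g.\ in \cite[Chapter~A.8]{hindry-silverman:diophantine_geometry}. Once this is in hand, the entire remaining argument is a bookkeeping exercise with the degree filtration.

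Using the degree exact sequence $0 \to \Pic^0(X) \to \Pic(X) \xrightarrow{\deg} \bZ \to 0$, the cokernel of $f^*$ is controlled by the cokernel of the composition $\Pic(J) \to \Pic(X) \xrightarrow{\deg} \bZ$. Because $g \geq 1$, the Jacobian $J$ is a non-trivial abelian variety and admits an ample line bundle $M_0$. Since $f$ is a closed immersion for $g \geq 2$ and an isomorphism for $g = 1$, it is in particular non-constant, so $f^*M_0$ is the restriction of an ample line bundle to the irreducible curve $f(X) \subset J$ and therefore has strictly positive degree $d_0 > 0$. Hence the image of $\deg \circ f^*$ contains $d_0\bZ$ and is a subgroup of finite index in $\bZ$.

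To conclude, fix any $L \in \Pic(X)$ of degree $d$. Then $\deg(L^{\otimes d_0}) = d\,d_0 \in d_0\bZ$ lies in the image of $\deg \circ f^*$, so I can choose $M' \in \Pic(J)$ with $\deg(f^*M') = d\,d_0$. The line bundle $L^{\otimes d_0} \otimes (f^*M')^{\otimes -1}$ then lies in $\Pic^0(X)$ and therefore, by the key fact invoked above, equals $f^*N$ for a unique $N \in \Pic^0(J)$. Setting $m = d_0$ and $M = M' \otimes N$ gives $L^{\otimes m} \cong f^*M$, as required. The only genuinely non-formal ingredient is the isomorphism $\Pic^0(J) \xrightarrow{\sim} \Pic^0(X)$, which is classical; beyond that I foresee no real obstacle, and note that the argument breaks down for $g = 0$ precisely because then $J$ is trivial (no ample bundle to pull back to positive degree), which is consistent with the hypothesis in the lemma.
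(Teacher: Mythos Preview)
Your proof is correct and follows essentially the same approach as the paper's: both take $A=J$ with the Abel--Jacobi map, invoke the isomorphism $f^*\colon\Pic^0(J)\xrightarrow{\sim}\Pic^0(X)$, and use that an ample bundle on $J$ pulls back to positive degree to conclude that $f^*$ has finite cokernel. You simply spell out the finite-cokernel step more explicitly than the paper does.
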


This implies that there is only one possibility for the Weil height associated to~$L$, namely
\[
h_L(\alpha) = \frac1m\hat h_M(f(\alpha)) + O(1)
\]
for any $(A,f,M,m)$ as in Lemma~\ref{lem:presentations_of_line_bundles}. There is something to be proved here, namely that this height function is independent of the choice of~$(A,f,M,m)$ up to bounded functions.

\begin{lemma}\label{lem:trivial_pullback_implies_bounded_height}
	Let~$f\colon X \to A$ be a morphism from a smooth projective curve~$X/\kbar$ of genus~$\geq1$ to an abelian variety~$A/\kbar$. Suppose that~$M$ is a line bundle on~$A$ such that $f^*M\simeq\cO_X$ is the trivial line bundle on~$X$. Then the function
	\[
	\alpha \mapsto \hat h_M(f(\alpha))
	\]
	is bounded on $\Aug(X/\kbar)^\lgeom$.
	\begin{proof}
		Let~$M^\times = M\smallsetminus 0$ be the complement of the zero section in the total space of~$M$ as usual, and choose a rigidification~$\tilde0$ of~$M$. The assumption that $f^*M\simeq\cO_X$ is trivial implies that $f$ lifts to a morphism $\tilde f\colon X\to M^\times$. We may then spread out~$\tilde f$: there exists a finite extension~$k'/k$ inside~$\kbar$ and a finite set~$S$ of places of~$k'$ such that the curve~$X$ is the geometric generic fiber of a smooth proper curve~$\cX$ over~$\cO_{k',S}$, the abelian variety~$A$ is the geometric generic fiber of an abelian scheme~$\cA$ over~$\cO_{k',S}$, the rigidified line bundle~$M$ is the geometric generic fiber of a rigidified line bundle~$\cM$ on~$\cA$, and the morphism $\tilde f\colon X \to M^\times$ is the geometric generic fiber of a morphism $\tilde f\colon \cX \to \cM^\times$ over~$\cO_{k',S}$.
		
		For any place~$v$ of~$\kbar$, we know that $\tilde f(X(\kbar_v))$ is a bounded subset of $M^\times(\kbar_v)$ by Remark~\ref{rmk:boundedness_rmks}. Since the local N\'eron--Tate height is the negative logarithm of a locally bounded metric, this implies that
		\[
		B_v \coloneqq \sup_{x\in X(\kbar_v)}|\lambda_{M,v}(\tilde f(x))| < \infty \,.
		\]
		Moreover, if~$v$ and~$v'$ are two places of~$\kbar$ lying over the same place~$v_0$ of~$k'$, then the completions $\kbar_v$ and $\kbar_{v'}$ are isomorphic as valued field extensions of~$k'_{v_0}$, so we have~$B_v=B_{v'}$. And if~$v_0\notin S$, then $\tilde f(X(\kbar_v)) = \tilde f(\cX(\Obar_v)) \subseteq \cM^\times(\Obar_v)$, so we have~$B_v=0$ by Lemma~\ref{lem:good_local_metrics}. Hence we have
		\[
		\int_{v\in\Mm(\kbar)}B_v < \infty \,.
		\]
		
		Now suppose that~$\alpha\in\Aug(X_\kbar/\kbar)^\lgeom$ is a locally geometric augmentation, whose image in~$\Aug(X_{\kbar_v}/\kbar_v)$ is the augmentation associated to a point~$x_v\in X(\kbar_v)$ for all places~$v\in\Mm(\kbar)$. We have $|\lambda_{M,v}(\tilde f(\alpha))| = |\lambda_{M,v}(\tilde f(x_v))| \leq B_v$ for all places~$v$, and so
		\[
		|\hat h_M(f(\alpha))| = \Bigl|\int_{v\in\Mm(\kbar)}\lambda_{M,v}(\tilde f(\alpha))\Bigr| \leq \int_{v\in\Mm(\kbar)}B_v < \infty
		\]
		by Proposition~\ref{prop:local_decomposition}. So~$\alpha\mapsto\hat h_M(f(\alpha))$ is a bounded function, as claimed.
	\end{proof}
\end{lemma}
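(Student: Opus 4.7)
The plan is to combine three ingredients: the lifting of $f$ to the $\Gm$-torsor, the local decomposition of Proposition \ref{prop:local_decomposition}, and a spreading-out argument to handle almost all places uniformly. First, the assumption $f^*M \simeq \cO_X$ means $f\colon X \to A$ lifts to a morphism $\tilde f\colon X \to M^\times$, since a section of a $\Gm$-torsor is the same data as a trivialisation of the associated line bundle.

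Given a locally geometric augmentation $\alpha \in \Aug(X/\kbar)^\lgeom$, I would pick local representatives $x_v \in X(\kbar_v)$ at each place $v$ of $\kbar$. Then $\tilde f(x_v) \in M^\times(\kbar_v)$ provides a local representative of the lifted augmentation $\tilde f(\alpha) \in \Aug(M^\times/\kbar)$, and Proposition \ref{prop:local_decomposition} gives
\[
\hat h_M(f(\alpha)) = \int_{v \in \Mm(\kbar)} \lambda_{M,v}(\tilde f(x_v)).
\]
At each place $v$, properness of $X$ makes $X(\kbar_v)$ a bounded subset of itself, so its image $\tilde f(X(\kbar_v))$ is bounded in $M^\times(\kbar_v)$ by Remark \ref{rmk:boundedness_rmks}; local boundedness of $\lambda_{M,v} = -\log \|\cdot\|_{M,v}$ then yields $B_v \coloneqq \sup_{x \in X(\kbar_v)} |\lambda_{M,v}(\tilde f(x))| < \infty$, and $|\hat h_M(f(\alpha))| \leq \int_v B_v$ uniformly in $\alpha$.

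The hard part will be showing $\int_v B_v < \infty$, which requires $B_v = 0$ for almost every place. For this I would spread out the data: there is a finite extension $k'/k$ and a finite set $S$ of places of $k'$ such that $X$, $A$, $M$, and $\tilde f$ all extend to a smooth proper curve $\cX$ over $\cO_{k',S}$, an abelian scheme $\cA$ over $\cO_{k',S}$, a rigidified line bundle $\cM$ on $\cA$, and a morphism $\tilde f\colon \cX \to \cM^\times$. For any place $v$ of $\kbar$ restricting to $v_0 \notin S$, the valuative criterion applied to the proper scheme $\cX$ gives $X(\kbar_v) = \cX(\Obar_v)$, so $\tilde f(X(\kbar_v)) \subseteq \cM^\times(\Obar_v)$. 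By Lemma \ref{lem:good_local_metrics}, the canonical metric $\|\cdot\|_{M,v}$ coincides with the model metric of $\cM$, which is identically $1$ on $\cM^\times(\Obar_v)$, so $B_v = 0$ at such $v$. Since $B_v$ depends only on the restriction of $v$ to $k'$, the function $v \mapsto B_v$ on $\Mm(\kbar)$ is supported on a set of finite total measure and is bounded there, hence integrable, which completes the argument.
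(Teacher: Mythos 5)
Your proposal is correct and follows essentially the same route as the paper's own proof: lift $f$ to $\tilde f\colon X\to M^\times$, bound each local term $\lambda_{M,v}\circ\tilde f$ using properness of $X$ and local boundedness of the canonical metric, kill almost all places by spreading out to integral models and invoking Lemma~\ref{lem:good_local_metrics}, and conclude via the local decomposition of Proposition~\ref{prop:local_decomposition} applied to the local representatives $x_v$ of a locally geometric augmentation. The only point treated slightly more tersely than in the paper is why $B_v$ depends only on the place of $k'$ below $v$ (the paper notes that the completions $\kbar_v$ and $\kbar_{v'}$ are isomorphic as valued extensions of $k'_{v_0}$), but this is a minor detail.
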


\begin{proof}[Proof of Theorem~\ref{thm:weil_heights}]
	For a line bundle~$L$ on~$X$, we choose a quadruple $(A,f,M,m)$ as in Lemma~\ref{lem:presentations_of_line_bundles} and define the Weil height function $h_L\colon\Aug(X_\kbar/\kbar)^\lgeom\to\bR$ by
	\[
	h_L(\alpha) \colonequals \frac1m\hat h_M(f(\alpha)) \,.
	\]
	We claim that the function $h_L$ is independent of the choice of $(A,f,M,m)$ up to bounded functions. For this, suppose that $(A_1,f_1,M_1,m_1)$ and $(A_2,f_2,M_2,m_2)$ are two quadruples such that $f_i^*M_i\simeq L^{\otimes m_i}$ for~$i=1,2$. Consider the abelian variety $A_3=A_1\times A_2$ and the map $f_3=(f_1,f_2)\colon X\to A_3$. Let~$\pr_i\colon A_3\to A_i$ denote the $i$th projection for~$i=1,2$ and define a line bundle~$M_3$ on~$A$ by
	\[
	M_3 \colonequals \pr_1^*M_1^{\otimes m_2}\otimes \pr_2^*M_2^{\otimes(-m_1)} \,.
	\]
	It follows from Proposition~\ref{prop:neron-tate_heights} that we have
	\begin{align*}
		\frac1{m_1}\hat h_{M_1}(f_1(\alpha)) - \frac1{m_2}\hat h_{M_2}(f_2(\alpha)) &= \frac1{m_1}\hat h_{\pr_1^*M_1}(f_3(\alpha)) - \frac1{m_2}\hat h_{\pr_2^*M_2}(f_3(\alpha)) \\
		&= \frac1{m_1m_2}\hat h_{M_3}(f_3(\alpha))
	\end{align*}
	for all~$\alpha\in \Aug(X/\kbar)^\lgeom$. But by construction $f_3^*M_3\simeq \cO_X$ is trivial, and so by Lemma~\ref{lem:trivial_pullback_implies_bounded_height} the final term above is bounded. Hence the function~$h_L$ defined above is independent of the choice of quadruple~$(A,f,M,m)$ up to bounded functions.
	
	The claimed properties of the heights $h_L$ are then easy to verify.
\end{proof}

We note the following asymptotics for these motivic Weil heights. In the statement of this lemma and beyond, we adopt the usual convention that $O(\phi(\alpha))$ is a shorthand for some unspecified function $g\colon\Aug(X/\kbar)^\lgeom\to\bR$ such that there exists a constant~$C>0$ with $|g(\alpha)|\leq C\cdot\phi(\alpha)$ for all~$\alpha\in\Aug(X/\kbar)^\lgeom$.

\begin{lemma}\label{lem:degree_bound}
	Let~$k$ be a global field, $X$ a smooth projective curve of genus~$\geq1$ over~$\kbar$, and let~$L$ and~$L_0$ be line bundles on~$X$ with~$L$ ample. Then
	\[
	h_{L_0}(\alpha) = \frac{\deg(L_0)}{\deg(L)}h_L(\alpha)+O(1+|h_L(\alpha)|^{1/2})
	\]
	for~$\alpha\in\Aug(X/\kbar)^\lgeom$.
	\begin{proof}
		It suffices to prove the result upon replacing~$L_0$ by $L_0^{\otimes a}\otimes L^{\otimes b}$ for any integers~$a,b$ with $a\neq0$. In particular we are free to assume that~$\deg(L_0)=0$.
		
		Let~$J$ denote the Jacobian of~$X$, and let~$f\colon X\to J$ be the $\kbar$-morphism induced by a divisor of positive degree on~$X$. Replacing~$L$ and~$L_0$ by further tensor powers if necessary, we may assume that~$L_0=f^*M_0$ and~$L=f^*(M_1\otimes M_2)$, for line bundles~$M_0$, $M_1$ and~$M_2$ on~$J$ with~$M_0,M_1\in\Pic^0(J)$ and~$M_2$ symmetric ample. Thus we have $h_{L_0}(\alpha)=\hat h_{M_0}(f(\alpha))+O(1)$ and $h_L(\alpha)=\hat h_{M_1}(f(\alpha))+\hat h_{M_2}(f(\alpha))+O(1)$, so the result follows from Proposition~\ref{prop:neron-tate_heights}.
	\end{proof}
\end{lemma}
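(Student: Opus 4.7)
The plan is to transfer the claim, via the Abel--Jacobi embedding and property~(3) of Theorem~\ref{thm:weil_heights}, into the statement that on the Jacobian an ample symmetric motivic N\'eron--Tate height dominates the square of any class of degree zero, which is exactly the content of Proposition~\ref{prop:neron-tate_heights}(5). A first preparatory step exploits the bilinearity of the Weil height machine (property~(1)): if I can establish the desired asymptotic for some nontrivial linear combination $L_0^{\otimes a}\otimes L^{\otimes b}$ in place of $L_0$, then subtracting off $b\cdot h_L$ and dividing by $a$ recovers it for $L_0$. In particular I may arrange $\deg(L_0)=0$, in which case the target simplifies to the bound $h_{L_0}(\alpha)=O(1+|h_L(\alpha)|^{1/2})$.

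Next I would pick a $\kbar$-morphism $f\colon X\to J$ to the Jacobian associated to a divisor of positive degree; the pullback $f^*\colon \Pic(J)\to\Pic(X)$ induces an isomorphism on $\Pic^0$ and hits a finite-index subgroup of $\Pic(X)$. After passing to a further tensor power (again absorbed by property~(1)), I can write $L_0=f^*M_0$ with $M_0\in\Pic^0(J)$ and $L=f^*(M_1\otimes M_2)$ with $M_1\in\Pic^0(J)$ and $M_2$ a symmetric ample line bundle on $J$. By property~(3) of Theorem~\ref{thm:weil_heights} I then have
\[
h_{L_0}(\alpha)=\hat h_{M_0}(f(\alpha))+O(1),\qquad h_L(\alpha)=\hat h_{M_1}(f(\alpha))+\hat h_{M_2}(f(\alpha))+O(1),
\]
so everything reduces to an estimate on motivic N\'eron--Tate heights at the point $f(\alpha)\in\Aug(J/\kbar)$.

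For this estimate, Proposition~\ref{prop:neron-tate_heights}(5) applied with $M_2$ and $M_0$ (respectively $M_2$ and $M_1$) gives $|\hat h_{M_0}(f(\alpha))|,\,|\hat h_{M_1}(f(\alpha))|\leq C\cdot\hat h_{M_2}(f(\alpha))^{1/2}$. The main technical point, which I expect to be the least automatic part of the argument, is then converting the bound from $\hat h_{M_2}(f(\alpha))^{1/2}$ to $|h_L(\alpha)|^{1/2}$: since $\hat h_{M_2}\geq 0$ is positive-definite and the linear term $\hat h_{M_1}(f(\alpha))$ is $O(\hat h_{M_2}(f(\alpha))^{1/2})$, one obtains $h_L(\alpha)\geq \tfrac12 \hat h_{M_2}(f(\alpha))$ once $\hat h_{M_2}(f(\alpha))$ exceeds a fixed constant, and consequently $\hat h_{M_2}(f(\alpha))^{1/2}=O(1+|h_L(\alpha)|^{1/2})$ on all of $\Aug(X/\kbar)^\lgeom$ (the bounded regime being trivial). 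Combining these bounds yields $h_{L_0}(\alpha)=O(1+|h_L(\alpha)|^{1/2})$, completing the reduced case and hence the lemma. The rest of the argument is pure formal manipulation of the properties established earlier.
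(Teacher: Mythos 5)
Your proposal is correct and follows essentially the same route as the paper: reduce via bilinearity to the case $\deg(L_0)=0$, pull back from the Jacobian so that $L_0=f^*M_0$ and $L=f^*(M_1\otimes M_2)$ with $M_0,M_1\in\Pic^0(J)$ and $M_2$ symmetric ample, apply property~(3) of the height machine, and conclude with Proposition~\ref{prop:neron-tate_heights}(5). The only difference is that you spell out the final comparison $\hat h_{M_2}(f(\alpha))^{1/2}=O(1+|h_L(\alpha)|^{1/2})$, which the paper leaves implicit; that step is carried out correctly.
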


\subsection{Northcott's Theorem for motivic heights}

One key property of Weil heights in the classical theory is Northcott's Theorem, which says that a set of points on a quasi-projective variety~$X$ of bounded height and bounded degree is finite. At this stage, we do not know whether the heights we construct in Theorem~\ref{thm:weil_heights} satisfy the corresponding property, however, we can prove something which at least gets close to the expected finiteness.

We now restrict ourselves to the case that~$X$ is defined over a number field~$k$. Let us say that a subset $\Sigma\subseteq\Aug(X_{\kbar}/\kbar)^\lgeom$ is \emph{bounded in height} (with respect to an ample line bundle~$L$ on $X_{\kbar}$) just when there is some $B\in\bR$ such that $h_L(\alpha)\leq B$ for all $\alpha\in\Sigma$. Note that the property of being bounded in height is independent of the choice of height function~$h_L$ associated to~$L$, and is also independent of the choice of ample line bundle~$L$ by Lemma~\ref{lem:degree_bound}. We say that~$\Sigma$ is \emph{bounded in degree} just when there is some~$d\in\bN$ such that every~$\alpha\in\Sigma$ is the image of an element of~$\Aug(X_{k'}/k')^\lgeom$ for an extension~$k'/k$ of degree~$\leq d$.

\begin{theorem}[Motivic Northcott Property]\label{thm:northcott_abelian}
	Suppose that~$X$ is a curve of genus~$\geq2$ over a number field~$k$. Let $\Sigma\subseteq\Aug(X_{\kbar}/\kbar)^\lgeom$ be a subset which is bounded in height and degree. Then for any morphism $f\colon X_{\kbar}\to A$ from $X$ to an abelian variety~$A$ defined over~$\kbar$, the image
	\[
	f(\Sigma) \subseteq \Aug(A/\kbar) = A(\kbar)/A(\kbar)_\tors
	\]
	is finite.
\end{theorem}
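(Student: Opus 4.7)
The plan is to leverage the motivic Weil height boundedness on $\Sigma$ to bound the classical N\'eron--Tate height on $f(\Sigma)$, and then invoke the classical Northcott theorem for abelian varieties.

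First, I would descend to a finite level. Choose a finite extension $k_0/k$ over which $A$ and $f$ (and a chosen symmetric ample line bundle $M$ on $A$) are defined. The bounded-degree hypothesis says each $\alpha \in \Sigma$ is the image of some $\alpha' \in \Aug(X_{k'}/k')^\lgeom$ with $[k':k] \leq d$; enlarging $k'$ to contain $k_0$, we may replace $d$ by $d' \coloneqq d\cdot[k_0:k]$ and assume $f$ is defined over $k'$. Then $f(\alpha)$ lies in the image of $\Aug(A_{k'}/k') = A(k')\otimes\bQ = A(k')/A(k')_\tors$ inside $A(\kbar)/A(\kbar)_\tors$, and hence admits a lift to a point $x \in A(\kbar)$ of degree $[k(x):k] \leq d'$.

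Second, I would show that $\hat h_M\circ f$ is bounded on $\Sigma$. By property~(3) of the motivic Weil height machine (Theorem~\ref{thm:weil_heights}) we have $h_{f^*M}(\alpha) = \hat h_M(f(\alpha)) + O(1)$, while by Lemma~\ref{lem:degree_bound} applied to any ample line bundle $L$ witnessing the bounded-height hypothesis,
$$h_{f^*M}(\alpha) = \frac{\deg(f^*M)}{\deg(L)}\,h_L(\alpha) + O\!\left(1 + |h_L(\alpha)|^{1/2}\right).$$
Since $h_L$ is bounded above on $\Sigma$ by assumption, and bounded below on all of $\Aug(X_\kbar/\kbar)^\lgeom$ because a suitable tensor power of $L$ is pulled back from a symmetric ample line bundle on the Jacobian (whose N\'eron--Tate height is non-negative by Proposition~\ref{prop:neron-tate_heights}), combining the two displays shows that $\hat h_M\circ f$ is bounded above on $\Sigma$, say by a constant $B$. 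Lifting each element of $f(\Sigma)$ to $x \in A(\kbar)$ as above, we obtain $\hat h_M(x) \leq B$, using that $\hat h_M$ on $\Aug(A/\kbar)$ restricts to the classical N\'eron--Tate height on any representative.

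Finally, I would apply classical Northcott for abelian varieties: the set of $x \in A(\kbar)$ with $[k(x):k] \leq d'$ and $\hat h_M(x) \leq B$ is finite, so its image in $A(\kbar)/A(\kbar)_\tors$ is a finite set containing $f(\Sigma)$. The only nontrivial input is the second step, where the motivic Weil height machine allows us to transfer boundedness from the \emph{a priori} mysterious motivic height $h_L$ on $\Aug(X_\kbar/\kbar)^\lgeom$ to the classical N\'eron--Tate height on $A(\kbar)$; with the machine in hand, the rest of the argument is a direct reduction to the classical Northcott theorem.
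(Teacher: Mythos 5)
There is a genuine gap at the heart of your reduction to classical Northcott, in the step where you claim that $f(\alpha)$ "admits a lift to a point $x\in A(\kbar)$ of degree $[k(x):k]\leq d'$". The identification you invoke, $\Aug(A_{k'}/k')=A(k')\otimes\bQ=A(k')/A(k')_\tors$, is false for a number field $k'$: by Mordell--Weil, $A(k')/A(k')_\tors$ is only a full-rank \emph{lattice} inside the $\bQ$-vector space $A(k')\otimes\bQ$ (the equality $A(\kbar)\otimes\bQ=A(\kbar)/A(\kbar)_\tors$ used in the statement of the theorem holds only because $A(\kbar)$ is divisible). So all you know about $f(\alpha)$ is that it has the form $\tfrac1m y$ with $y\in A(k')$ and $m$ \emph{a priori} unbounded over $\Sigma$; any representative $x\in A(\kbar)$ is then an $m$-division point of $y$, whose degree over $k$ grows with $m$. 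Bounded degree of the augmentation therefore does not give bounded degree of a representative point, and classical Northcott cannot be applied: the set of elements of $A(\kbar)\otimes\bQ$ of bounded $\hat h_M$ coming from some $A(k')\otimes\bQ$ with $[k':k]\leq d$ is infinite in general (already the points $\tfrac1m y$ for a single non-torsion $y\in A(k)$ and all $m$ have heights tending to $0$).

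This is exactly the difficulty the paper's proof is organised around. Lemma~\ref{lem:contained_in_lattice} shows that local geometricity forces a \emph{uniform} bound on the denominator: there is $n=n(k,A,d)$ with $f(\Aug(X_{k'}/k')^\lgeom)\subseteq\frac1n\bigl(A(k')/A(k')_\tors\bigr)$ for all $k'$ of degree $\leq d$. Its proof uses the adelic point attached to a locally geometric augmentation, Raynaud's theorem (Manin--Mumford) to bound uniformly the torsion discrepancy $y_0-m_0f(x_v)$ at all places, and Stoll's bound on the kernel of the localisation map $\rH^1(G_{k'},A[m])\to\prod_v\rH^1(G_v,A[m])$ (Lemma~\ref{lem:kernel_of_localisation}). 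Only after replacing the Abel--Jacobi map $j$ by $g=nj$ does the image $g(\Sigma)$ land in $\bigcup_{[k':k]\leq d}A(k')/A(k')_\tors$, at which point bounded height plus classical Northcott gives finiteness. Your height-boundedness step (transferring boundedness of $h_L$ on $\Sigma$ to boundedness of $\hat h_M\circ f$ via Theorem~\ref{thm:weil_heights} and Lemma~\ref{lem:degree_bound}) is correct and agrees with the paper, but the lifting step needs to use local geometricity through an argument of the above kind; as written, your proof does not close this gap.
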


The idea in the proof is the following. The image~$f(\Sigma)\subseteq\Aug(A/\kbar)$ is certainly bounded in height with respect to the N\'eron--Tate height~$\hat h_M$ for any ample line bundle~$M$ on~$A$. So, if we could show that~$f(\Sigma)$ were contained in
\[
\bigcup_{[k':k]\leq d}A(k')/A(k')_\tors \,,
\]
then the result would reduce to the usual Northcott property. It turns out that $f(\Sigma)$ is not obviously contained in this union, but we do have a containment ``up to scaling'' as follows.

\begin{lemma}\label{lem:contained_in_lattice}
	Let~$A/k$ be an abelian variety over a number field~$k$, and let~$d$ be a positive integer. Then there is a positive integer~$n = n(k,A,d)$ with the following property. For every finite extension~$k'/k$ of degree~$\leq d$ and every non-constant morphism $f\colon X\to A_{k'}$ from a smooth projective curve~$X/k'$ to~$A_{k'}$ whose image is not an elliptic curve, we have
	\[
	f(\Aug(X_{k'}/k')^\lgeom) \subseteq \frac1n(A(k')/A(k')_\tors)
	\]
	as subsets of $\Aug(A_{k'}/k')=A(k')\otimes\bQ$.
\end{lemma}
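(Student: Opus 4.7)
The plan is to factor the morphism $f$ through an Abel--Jacobi map defined over $k'$ and then use the universal property of the Jacobian to reduce to a denominator bound in the Mordell--Weil group $J_X(k')\otimes\bQ$.

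First I would observe that, because abelian varieties contain no rational curves and because the image $Y = f(X) \subseteq A_{k'}$ is not an elliptic curve, the normalisation of $Y$ has genus $g \geq 2$: a smooth image of genus $\leq 1$ would either force a constant map (genus~$0$) or be an elliptic curve (genus~$1$), and if $Y$ were singular with elliptic normalisation $\tilde Y$ then the induced map $\tilde Y \to A_{k'}$ would have as image a translate of an elliptic subvariety, equal to $Y$, contradicting the hypothesis. Since $f$ factors through the normalisation of $Y$ (as $X$ is smooth), we may replace $X$ with this normalisation and assume $X$ itself has genus $g \geq 2$. The canonical divisor class $K_X \in \Pic^{2g-2}(X)$ is then a $k'$-rational class, so the formula $\psi(x) = (2g-2)[x] - K_X$ defines a $k'$-morphism $\psi\colon X \to J_X$. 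Since $\psi(X)$ generates $J_X$ as an algebraic group, the morphism $(2g-2)f\colon X \to A_{k'}$ factors uniquely as $(2g-2)f = F \circ \psi + c$ for a $k'$-morphism of abelian varieties $F\colon J_X \to A_{k'}$ and a constant $c \in A(k')$; uniqueness combined with Galois-equivariance of the equation forces $F$ and $c$ to be defined over $k'$.

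Applying the motivic augmentation functor to this equation, and using Proposition~\ref{10OctA} to identify $\Aug(J_X/k') = J_X(k')\otimes\bQ$ and $\Aug(A_{k'}/k') = A(k')\otimes\bQ$, I get
\[
(2g-2) f_*(\alpha) = F_*(\psi_*(\alpha)) + c
\]
in $A(k')\otimes\bQ$. Since $F_*$ is a $\bQ$-linear map taking the lattice $J_X(k')/J_X(k')_\tors$ into $A(k')/A(k')_\tors$, the problem reduces to bounding the denominator of $\psi_*(\alpha) \in J_X(k')\otimes\bQ$ by a quantity depending only on $k$, $A$, and $d$, independent of $X$.

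For this final step I would exploit local geometricity: at each place $v$ of $k'$, there is a point $x_v \in X(k'_v)$ with $\psi(x_v) \in J_X(k'_v)$ representing $\psi_*(\alpha)_v$, so $\psi_*(\alpha)$ is everywhere locally integral modulo torsion. The global denominator of $\psi_*(\alpha)$ then corresponds to an element of the corresponding Selmer group of $J_X$ over $k'$. The hardest part will be bounding this denominator uniformly as $X$ varies; a plausible strategy is to first reduce to the case where $F\colon J_X \twoheadrightarrow A_{k'}$ is surjective (by replacing $A$ with the abelian subvariety of $A_{k'}$ generated by $f(X)$ and absorbing the bounded index into $n$), and then use Poincar\'e complete reducibility to construct a section of $F$ up to a controlled isogeny, which lets one transfer denominator bounds from $A(k')\otimes\bQ$ back to $J_X(k')\otimes\bQ$.
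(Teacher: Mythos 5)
Your opening geometric reduction (passing to the normalisation of the image and factoring a multiple of $f$ through the Jacobian) is fine as far as it goes, but the proof has a genuine gap precisely at the point you flag as ``the hardest part'', and the route you sketch for it does not work. The whole content of the lemma is that $n$ depends only on $(k,A,d)$, not on $X$; once you transfer the problem to bounding the denominator of $\psi_*(\alpha)$ in $J_X(k')\otimes\QQ$, every tool available for that bound carries constants depending on $J_X$. Concretely, the kernel-of-localisation bound used in the paper (Lemma~\ref{lem:kernel_of_localisation}, via Stoll's lemma) involves Serre's constant for the Galois image of $J_X$ and the order of $J_X(k')_\tors$, neither of which is uniformly bounded as $X$ (hence $J_X$, of unbounded dimension) varies over curves defined over fields of degree $\leq d$ --- such uniformity is wide open. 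Similarly, the ``section of $F$ up to a controlled isogeny'' from Poincar\'e reducibility has isogeny degree depending on $J_X$ and $F$, not on $(k,A,d)$; and the transfer is in the wrong direction anyway, since there is no denominator bound on $A(k')\otimes\QQ$ to transfer --- that is what is being proved. Two further leaks: since $X$ need not have a $k'$-point you used the canonical class, so your identity only controls $(2g-2)f_*(\alpha)$, and dividing by $2g-2$ reintroduces a factor unbounded in $X$; and ``locally integral modulo torsion'' does not by itself produce a single class in a fixed $\rH^1(G_{k'},J_X[m])$ that is locally trivial at every place, because the local torsion discrepancies a priori have order varying with $v$ --- taming them requires a Manin--Mumford input that your sketch omits.

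The paper's proof never passes to $J_X$ and works entirely on $A$, which is what makes uniformity possible. Given $\alpha$, choose $m_0$ and $y_0\in A(k')$ with $m_0f(\alpha)=y_0$; local geometricity supplies points $x_v\in X(k'_v)$ with $y_0-m_0f(x_v)\in A(k'_v)_\tors$ for every place $v$; Raynaud's theorem (Manin--Mumford), applied to the image of $y_0-m_0f\colon X\to A$ (this is where the hypothesis on the image of $f$ enters), shows these torsion points all have order dividing a single $m_1$, so setting $m=m_1m_0$ and $y=m_1y_0$ the Kummer class of $y$ in $\rH^1(G_{k'},A[m])$ is locally trivial at every place. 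Lemma~\ref{lem:kernel_of_localisation} (Stoll's lemma plus Serre plus Chebotarev) then says this kernel is killed by an $n$ depending only on $(k,A,d)$ --- crucially, uniformly in $m$ and in $k'$ of degree $\leq d$, which is what absorbs the $X$- and $\alpha$-dependence of $m$ --- giving $ny=mz$ for some $z\in A(k')$ and hence $f(\alpha)=\tfrac1nz\in\tfrac1n\bigl(A(k')/A(k')_\tors\bigr)$. To salvage your Jacobian route you would need an analogue of Lemma~\ref{lem:kernel_of_localisation} uniform over all Jacobians of curves over fields of degree $\leq d$, which is not available.
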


The value of~$n$ we need is given by the following auxiliary lemma.

\begin{lemma}\label{lem:kernel_of_localisation}
	For any abelian variety~$A/k$ over a number field~$k$ and every positive integer~$d$ there is a positive integer~$n = n(k,A,d)$ with the following property. For any positive integer~$m$ and finite extension~$k'/k$ of degree~$\leq d$, the kernel of the localisation map
	\begin{equation}\label{eq:restriction_map}\tag{$\ast$}
		\rH^1(G_{k'},A[m])\to\prod_{v\in\Mm(k')}\rH^1(G_v,A[m])
	\end{equation}
	is $n$-torsion.
\end{lemma}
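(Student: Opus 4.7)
The plan is to bound the localisation kernel by inflation--restriction along the tower $k'\subseteq k'(A[m])\subseteq\kbar$, after decomposing into $\ell$-primary components. Since $A[m]=\bigoplus_\ell A[\ell^{v_\ell(m)}]$ and cohomology is additive on direct sums, the kernel decomposes as a direct sum of $\ell$-primary parts, so I need to produce, for each prime $\ell$, an integer $n_\ell$ annihilating the localisation kernel of $\rH^1(G_{k'},A[\ell^e])$ uniformly in $e\geq 1$ and in $k'/k$ of degree $\leq d$, with $\sup_\ell n_\ell<\infty$.

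For fixed $\ell$, let $L_e=k'(A[\ell^e])$. The inflation--restriction sequence for $G_{k'}\twoheadrightarrow \operatorname{Gal}(L_e/k')$ shows that a class in the localisation kernel restricts to a continuous homomorphism $G_{L_e}\to A[\ell^e]$ (since $A[\ell^e]$ has trivial $G_{L_e}$-action) that vanishes at every place of $L_e$; by class field theory (a continuous character of the id\`ele class group trivial on every local factor is itself trivial) this restriction is zero. Hence the localisation kernel embeds into $\rH^1(\operatorname{Gal}(L_e/k'),A[\ell^e])$, and passing to the direct limit embeds the colimit into the continuous cohomology $\rH^1_{\mathrm{cts}}(H,A[\ell^\infty])$ with $H=\operatorname{Gal}(k'(A[\ell^\infty])/k')$. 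A uniform exponent bound on the original kernels then follows from the vanishing of the divisible part of $\rH^1_{\mathrm{cts}}(H,A[\ell^\infty])$, which via the short exact sequence $0\to T_\ell A\to V_\ell A\to A[\ell^\infty]\to 0$ reduces to showing $\rH^1_{\mathrm{cts}}(H,V_\ell A)=0$.

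For $\ell$ large relative to $(k,A,d)$, I would invoke the open-image theorems of Serre and Faltings to conclude that $H$ is open in $\operatorname{GL}(T_\ell A)$, hence a compact $\ell$-adic Lie group whose Lie algebra $\mathfrak{h}$ contains the Mumford--Tate Lie subalgebra; in the generic case where this is semisimple, Whitehead's lemma gives $\rH^1(\mathfrak{h},V_\ell A)=0$, and Lazard's comparison theorem between continuous cohomology of uniform pro-$\ell$ groups and Lie-algebra cohomology then yields $\rH^1_{\mathrm{cts}}(H,V_\ell A)=0$. For the finitely many remaining primes, including those where the Mumford--Tate Lie algebra fails to be semisimple (as for abelian varieties with complex multiplication), I would extract the uniform-in-$e$ bound either by a direct $\ell$-adic Lie-theoretic analysis of $\rH^1_{\mathrm{cts}}(H,A[\ell^\infty])$ using the congruence filtration, or by Pontryagin-dualising via Poitou--Tate and bounding the corresponding $\ell$-adic $\rH^2$ side. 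Uniformity over $k'/k$ of bounded degree follows because $G_{k'}$ has index $\leq d$ in $G_k$, leaving the Lie algebra $\mathfrak{h}$ unchanged and allowing the exceptional set of small primes to be controlled in terms of $(k,A,d)$ alone. The hardest part will be handling the exceptional small primes where Whitehead's lemma does not apply, and where one needs more delicate control over the torsion in $\rH^1_{\mathrm{cts}}(H,T_\ell A)$ to obtain a bound independent of $e$.
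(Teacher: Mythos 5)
Your first reduction coincides with the paper's: inflation--restriction along $k'\subseteq k'(A[m])$, together with the observation that a homomorphism $G_{k'(A[m])}\to A[m]$ which is everywhere locally trivial must vanish (Chebotarev, or your class field theory variant), places the localisation kernel inside $\rH^1(\mathrm{Gal}(k'(A[m])/k'),A[m])$. The entire content of the lemma is therefore the claim that this last group is annihilated by an integer depending only on $(k,A,d)$, uniformly in $m$ and in $k'$ of degree $\leq d$, and it is exactly at this step that your argument has a genuine gap.

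The route you propose for that step does not work as stated. The image of $G_{k'}$ acting on $T_\ell A$ is \emph{not} open in $\GL(T_\ell A)$ in general: for abelian varieties with extra endomorphisms, and in particular in the CM case, it fails to be open for \emph{every} prime $\ell$, so the ``non-semisimple'' situation is a property of $A$ itself and cannot be relegated to finitely many exceptional primes; even when $\End(A_{\kbar})=\bZ$ the image is at best open in $\mathrm{GSp}(T_\ell A)$, whose Lie algebra is reductive with a central $\mathbb{G}_m$, so Whitehead's lemma is not the relevant tool. What actually kills the cohomology is the presence of homotheties in the image (Bogomolov/Serre): a central scalar $\lambda$ with $\lambda-1$ acting invertibly on $A[m]$ annihilates $\rH^1$, and Serre's uniform result --- that the adelic image of $G_k$ contains the scalars $\hat\bZ^{\times c}$ for some $c=c(k,A)$ --- turns this into a single annihilator valid for all $m$ simultaneously. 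This is precisely the mechanism of Stoll's Lemma~3.1, which the paper invokes to get the explicit bound $4\rad(c)^2c^2\cdot\#A(k)_\tors$, and uniformity over $k'$ of degree $\leq d$ then follows from $c(k',A)\leq d\,c(k,A)$ together with the uniform boundedness of $\#A(k')_\tors$. In your sketch, by contrast, the vanishing of $\rH^1_{\mathrm{cts}}(H,V_\ell A)$ only yields finiteness of $\rH^1_{\mathrm{cts}}(H,A[\ell^\infty])$ one prime at a time (and comparing with the finite-level groups introduces further error terms controlled by $A(k')[\ell^\infty]$); it gives neither vanishing for almost all $\ell$, nor an exponent bound independent of $e$ at the remaining primes, nor uniformity in $k'$ --- all of which you defer to an unspecified ``direct Lie-theoretic analysis'' or Poitou--Tate argument. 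Since that is where all the difficulty of the lemma resides, the proof as proposed does not close; replacing the open-image/Whitehead step by the homothety argument (i.e.\ by Stoll's lemma, as in the paper) repairs it.
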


\begin{proof}
	We first deal with the case$~d=1$. By a result of Stoll \cite[Lemma~3.1]{stoll:finite_descent}, there is a positive integer~$n = n(k,A,1)$ such that the continuous cohomology group
	\[
	\rH^1(G_{k_m|k},A[m])
	\]
	is $n$-torsion for all~$m$, where~$k_m=k(A[m])$ is the field generated by the $m$-torsion points of~$A$. We have a commuting diagram
	\small{
	\begin{center}
	\begin{tikzcd}[column sep = small]
		0 \arrow[r] & \rH^1(G_{k_m|k},A[m]) \arrow[r] & \rH^1(G_k,A[m]) \arrow[r]\arrow[d] & \rH^1(G_{k_m},A[m]) \arrow[d] \\
		& & \prod\limits_{v\in\Mm(k)}\rH^1(G_v,A[m]) \arrow[r] & \prod\limits_{w\in\Mm(k_m)}\rH^1(G_{k_m,w},A[m]) \,,
	\end{tikzcd}
	\end{center}
	}
	\noindent in which the top row is exact. The right-hand vertical arrow is injective by the Chebotarev Density Theorem, see for instance the proof of \cite[Proposition~2.2]{saidi:injectivity_localisation}. So the kernel of the localisation map is contained in $\rH^1(G_{k_m|k},A[m])$, and perforce is $n$-torsion.
	
	Before we proceed to the general case, we observe that the proof of \cite[Lemma~3.1]{stoll:finite_descent} gives an explicit value for the constant~$n=n(k,A)$ above. A result of Serre \cite[p.~60]{serre:oeuvres4} implies that there exists a positive integer~$c=c(k,A)$ such that the image of the representation $G_k\to\GL_{2g}(\hat\bZ)$ attached to~$A$ meets the scalars~$\hat\bZ^\times$ in a subgroup containing $\hat\bZ^{\times c}$. We may assume that~$c$ is even, and write~$\rad(c)$ for the product of the distinct prime divisors of~$c$. Then one can take
	\[
	n(k,A) = 4\rad(c)^2c^2\cdot\#A(k)_\tors \,.
	\]
	
	Thus for a general finite extension~$k'/k$, we know that the kernel of~\eqref{eq:restriction_map} is~$n(k',A)$-torsion for
	\[
	n(k',A) = 4\rad(c(k',A))^2c(k',A)^2\cdot\#A(k')_\tors \,,
	\]
	the constant~$c(k',A)$ being defined in the same way as~$c$ above. We want to show that~$n(k',A)$ can be bounded in terms of the degree of~$k'/k$ only. Indeed, if~$[k':k]\leq d$, then we have $c(k',A)\leq d c(k,A)$ by definition. And~$\#A(k')_\tors$ is also bounded uniformly in~$k'$ (this can be proved, for example, by bounding the order of the prime-to-$p$ part of~$A(k')_\tors$ by injecting it into the mod-$v$ points of~$A$ for a $p$-adic place~$v$ of good reduction for~$A$). So~$n(k',A)$ is bounded in terms of $[k':k]$ only, and we are done.
\end{proof}

\begin{proof}[Proof of Lemma~\ref{lem:contained_in_lattice}]
	Let~$n$ be as in Lemma~\ref{lem:kernel_of_localisation}, and let~$k'$, $X/k'$ and~$f$ be as in the statement of Lemma~\ref{lem:contained_in_lattice}.
	
	Suppose that~$\alpha\in\Aug(X_{k'}/k')^\lgeom$. Since~$f(\alpha)\in\Aug(A_{k'}/k')=A(k')\otimes\bQ$, we may pick a positive integer~$m_0$ and a $k'$-rational point~$y_0\in A(k')$ such that $m_0f(\alpha)=y_0$ in $A(k')\otimes\bQ$. Let~$(x_v)_v\in X(\bA_{k'})$ be an adelic point of~$X$ corresponding to~$\alpha$. Then the images of~$m_0f(x_v)$ and~$y_0$ in $\Aug(A_{k'_v}/k'_v)$ are equal for all places~$v\in\Mm(k')$. So we see that $y_0-m_0f(x_v)\in A(k'_v)_\tors$ for all~$v$. But by \cite[Th\'eor\`eme~1]{raynaud:torsion_on_curves}, the intersection of $A(\bar k)_\tors$ with the image of the map $y_0-m_0f\colon X\to A$ is finite. So there is a positive integer~$m_1$, independent of~$v$, such that $y_0-m_0f(x_v)\in A(k'_v)[m_1]$ for all places~$v\in\Mm(k')$.
	
	Now let~$y\colonequals m_1y_0$ and $m\colonequals m_1m_0$, so that $y=mf(x_v)$ for all finite places~$v$. This says that the Kummer class $[\xi_y]\in\rH^1(G_{k'},A[m])$ associated to~$y$ lies in the kernel of the localisation map
	\[
	\rH^1(G_{k'},A[m]) \to \prod_{v\in\Mm(k')}\rH^1(G_v,A[m]) \,.
	\]
	So by Lemma~\ref{lem:kernel_of_localisation}, we find that $n[\xi_y]$ is the trivial cohomology class in~$\rH^1(G_{k'},A[m])$, whence there exists some~$z\in A(k')$ with $ny=mz$. This implies that $f(\alpha)=\frac1my=\frac1nz\in\frac1n(A(k')/A(k')_\tors)$, as claimed.
\end{proof}

\begin{proof}[Proof of Theorem~\ref{thm:northcott_abelian}]
	Enlarging the field~$k$ if necessary, we may assume that~$X(k)$ is non-empty and write~$j\colon X\hookrightarrow J$ for the Abel--Jacobi map. We may, by assumption, choose a positive integer~$d$ such that every element of~$\Sigma$ is the image of an element of $\Aug(X_{k'}/k')^\lgeom$ for a finite extension $k'/k$ of degree~$\leq d$; let~$n=n(k,J,d)$ be as in Lemma~\ref{lem:contained_in_lattice} for this value of~$d$.
	
	Now since any morphism from~$X_\kbar$ to an abelian variety factors through the Abel--Jacobi map, it suffices to prove the first part for~$(A,f)=(J,g)$ where~$g=nj$. We may take the line bundle~$L=g^*M$ to be the pullback of an ample line bundle~$M$ on~$J$, and take the height function~$h_L$ to be
	\[
	h_L(\alpha) = \hat h_M(g(\alpha)) = \hat h_M(nj(\alpha)) \,.
	\]
	Now by Lemma~\ref{lem:contained_in_lattice} we see that~$g(\Sigma)$ is contained in
	\[
	\bigcup_{[k':k]\leq d}A(k')/A(k')_\tors \subset A(\kbar)/A(\kbar)_\tors
	\]
	and it is bounded in height with respect to~$\hat h_M$ by definition. So the classical Northcott Theorem implies that~$g(\Sigma)$ is finite, as desired.
\end{proof}

\begin{theorem}[Motivic Northcott Property, version II]\label{thm:northcott_finite_subscheme}
	Suppose that~$X$ is a curve of genus~$\geq2$ over a number field~$k$. Let $\Sigma\subseteq\Aug(X_{\kbar}/\kbar)^\lgeom$ be a subset which is bounded in height and degree. Then there exists a finite subscheme~$Z\subset X$ defined over~$k$ with the following property: for every non-archimedean place~$v$ of~$\kbar$ and every point~$x_v\in X(\kbar_v)$ whose associated augmentation in $\Aug(X_{\kbar_v}/\kbar_v)$ is the image of an element of~$\Sigma$, we have
	\[
	x_v\in Z(\kbar_v) \,.
	\]
	In particular, the set of such points~$x_v$ is finite for all non-archimedean place~$v$.
\end{theorem}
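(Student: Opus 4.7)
The plan is to reduce, via the Abel--Jacobi embedding, to a statement about points on the Jacobian, and then combine the first motivic Northcott theorem (Theorem~\ref{thm:northcott_abelian}) with Raynaud's theorem on torsion points on curves \cite{raynaud:torsion_on_curves}. Pick a basepoint $x_0 \in X(\kbar)$ and let $j\colon X_\kbar \hookrightarrow J_\kbar$ denote the resulting Abel--Jacobi closed embedding. By Theorem~\ref{thm:northcott_abelian}, the image $j(\Sigma)$ is a finite subset of $\Aug(J_\kbar/\kbar) = J(\kbar)/J(\kbar)_\tors$; lift it to finitely many representatives $y_1,\dots,y_N \in J(\kbar)$.

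Next, fix a non-archimedean place $v$ of $\kbar$ and a point $x_v \in X(\kbar_v)$ whose associated augmentation is the image of some $\alpha\in\Sigma$. The image of $\alpha$ in $\Aug(J_{\kbar_v}/\kbar_v) = J(\kbar_v)/J(\kbar_v)_\tors$ equals, on one hand, the class of $j(x_v)$, and on the other hand the class of $y_i$ for some $i$ (determined by the class of $j(\alpha)$ in $j(\Sigma)$); hence $j(x_v) - y_i \in J(\kbar_v)_\tors$. But every torsion point of $J$ over $\kbar_v$ is algebraic over $k$ and therefore already lies in $J(\kbar)$, so $j(x_v) \in J(\kbar)$. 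Since $j$ is a closed immersion, hence a monomorphism on $\kbar_v$-points, the subset $X(\kbar) \subset X(\kbar_v)$ is exactly the preimage of $J(\kbar) \subset J(\kbar_v)$, and we conclude that $x_v \in X(\kbar)$.

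It follows that every such $x_v$ lies in the set
\[
Z_0 \colonequals \bigcup_{i=1}^N \{ x \in X(\kbar) : j(x) - y_i \in J(\kbar)_\tors \},
\]
and each constituent is finite by Raynaud's theorem \cite{raynaud:torsion_on_curves}, since $X$ has genus $\geq 2$ and the translated embedding $x\mapsto j(x)-y_i$ is still a closed embedding of $X$ into $J$ whose image is not a translate of an abelian subvariety. For the final descent, take $Z \subset X$ to be the reduced finite closed subscheme whose $\kbar$-points form the $G_k$-saturation of $Z_0$; this is automatically defined over $k$ and still contains $Z_0$, hence all the $x_v$ in question. The technical crux of the whole argument is the descent of $x_v$ from $X(\kbar_v)$ down to $X(\kbar)$ in the previous paragraph, which is effected by combining algebraicity of torsion on $J$ with the closed-immersion property of $j$; once this is in hand, the remainder is a direct combination of Theorem~\ref{thm:northcott_abelian} with the classical Manin--Mumford theorem.
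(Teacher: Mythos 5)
Your proof is correct and follows essentially the same route as the paper's: apply Theorem~\ref{thm:northcott_abelian} to get finiteness of the image in $\Aug(J_\kbar/\kbar)$, observe that any relevant $x_v$ maps into a torsion coset of a $\kbar$-rational point (so that algebraicity of torsion forces $x_v\in X(\kbar)$), and then invoke Raynaud/Manin--Mumford to cut out a finite Galois-stable subscheme $Z$. The only cosmetic differences are that the paper works with a map $X\to J$ induced by a divisor of positive degree rather than a pointed Abel--Jacobi embedding, and that you spell out more explicitly the descent of $x_v$ from a $\kbar_v$-point to a $\kbar$-point, which the paper leaves implicit.
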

\begin{proof}
	Let~$J$ be the Jacobian of~$X$ and let~$f\colon X\to J$ be the morphism induced by a divisor of positive degree on~$X$. Note that~$f$ need not be an embedding, but its image in~$J$ is not contained in any translate of a proper abelian subvariety. By our first Northcott Theorem~\ref{thm:northcott_abelian}, the image $f(\Sigma)\subseteq\Aug(J_{\kbar}/\kbar)=J(\kbar)\otimes\bQ$ is finite; the preimage~$W$ of~$f(\Sigma)$ in~$J(\kbar)$ is then a finite disjoint union of cosets of~$J(\kbar)_\tors$. By Manin--Mumford, the inverse image~$f^{-1}(W)$ is a finite subset of $X(\kbar)$. Let~$Z\subset X$ be the reduced finite subscheme such that~$Z(\kbar)$ is the union of the Galois orbits of elements of~$f^{-1}(W)$; we will prove the theorem for this~$Z$.
	
	Fix a non-archimedean place~$v$ and consider the commuting diagram
	\begin{center}
	\begin{tikzcd}
		 & \Aug(X_{\kbar}/\kbar)^\lgeom \arrow[r,"f_*"]\arrow[d] & \Aug(J_{\kbar}/\kbar) \arrow[d] \\
		X(\kbar_v) \arrow[r] & \Aug(X_{\kbar_v}/\kbar_v) \arrow[r,"f_*"] & \Aug(J_{\kbar_v}/\kbar_v) \,.
	\end{tikzcd}
	\end{center}
	The right-hand vertical map is injective, because it is the tensoring with~$\bQ$ of the injection $J(\kbar)\hookrightarrow J(\kbar_v)$. Thus, if~$x_v\in X(\kbar_v)$ has the property that its associated augmentation is the image of an element of~$\Sigma$, then the image of~$x_v$ in~$\Aug(J_{\kbar_v}/\kbar_v)$ lies in~$f(\Sigma)$ and so~$f(x_v)\in W$. So~$x_v\in Z(\kbar_v)$ as desired.
\end{proof}

\section{Motivic Manin--Dem'janenko}
\label{sec:ManinDemjanenko}

With all of the preceding theory set up, it is now straightforward to prove our motivic Manin--Dem'janenko theorem, exactly paralleling the proof of the classical statement \cite{manin:bounded_torsion}.

\begin{theorem}\label{thm:manin-demjanenko}
	Let~$X$ be a smooth projective curve over a number field~$k$. Suppose that there exists an abelian variety~$A/k$ such that
	\[
	\rk A(X) > 2\rk A(k) \,.
	\]
	Then for any ample line bundle~$L$ on $X$, the image of~$\Aug(X/k)^\lgeom$ in~$\Aug(X_\kbar/\kbar)^\lgeom$ is bounded in height. So, by Northcott, for any morphism $f\colon X_\kbar \to B$ with~$B$ an abelian variety, the image $f(\Aug(X/k)^\lgeom)\subseteq\Aug(B/\kbar)=B(\kbar)\otimes\bQ$ is finite.
\end{theorem}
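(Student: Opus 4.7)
The plan is to adapt Manin's original proof of the classical Manin--Dem'janenko theorem \cite{manin:bounded_torsion} to the motivic setting, with our height machine (Theorem~\ref{thm:weil_heights}) in place of the classical Weil heights. The second conclusion follows from the first via Theorem~\ref{thm:northcott_abelian}: the image of $\Aug(X/k)^\lgeom$ in $\Aug(X_\kbar/\kbar)^\lgeom$ is trivially bounded in degree (with $d=1$), so Northcott turns the claimed height boundedness into the finiteness of $f(\Aug(X/k)^\lgeom)$. Hence the substance of the theorem is the height bound.

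I unwind the hypothesis as follows. The decomposition
\[
\Hom_k(X, A) \otimes \bQ \;\simeq\; (A(k) \otimes \bQ) \oplus (\Hom_k(J, A) \otimes \bQ),
\]
coming from any $k$-rational morphism $j \colon X \to J$ of positive degree (such a $j$ exists on every curve of genus $\geq 1$ because $X$ carries a $k$-rational divisor class of positive degree, e.g.\ a multiple of the canonical class), turns $\rk A(X) > 2\rk A(k)$ into $N \coloneqq \rk \Hom_k(J, A) > r \coloneqq \rk A(k)$. Choose $k$-morphisms $\phi_1, \ldots, \phi_N \colon J \to A$ whose classes form a $\bQ$-basis of $\Hom_k(J, A) \otimes \bQ$, and set $f_i \coloneqq \phi_i \circ j$ and $F \coloneqq (f_1, \ldots, f_N) \colon X \to A^N$. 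Fix a symmetric ample line bundle $M$ on $A$. Since each $\phi_i$ is nonzero and $j(X)$ generates $J$ as an abelian variety, each $f_i$ is non-constant, so $L' \coloneqq \bigotimes_i f_i^{\ast}M$ has positive degree and is ample on the curve $X$. Combining Theorem~\ref{thm:weil_heights}(1) and (3) with Lemma~\ref{lem:degree_bound} then gives, for every $\alpha \in \Aug(X/k)^\lgeom$,
\[
\sum_{i=1}^N \hat h_M(f_i(\alpha)) \;=\; h_{L'}(\alpha) + O(1) \;=\; c \, h_L(\alpha) + O\bigl(1 + h_L(\alpha)^{1/2}\bigr)
\]
with $c = \deg L' / \deg L > 0$, so it suffices to bound $\sum_i \hat h_M(f_i(\alpha))$ uniformly in $\alpha$.

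Producing this upper bound is the crux of the proof and is what I expect to be the main obstacle. By Proposition~\ref{10OctA}, $F(\alpha) = (f_1(\alpha), \ldots, f_N(\alpha))$ lies in the rational subspace $(A(k) \otimes \bQ)^N$ of the real inner-product space $V^N$, where $V = A(k) \otimes \bR$ carries the N\'eron--Tate form, and $\|F(\alpha)\|^2 = \sum_i \hat h_M(f_i(\alpha))$. The classical Manin--Dem'janenko argument exploits the strict rank deficiency $N > r$ together with the $\bQ$-linear independence of the $\phi_i$: supposing for contradiction that $\|F(\alpha_n)\| \to \infty$ along some sequence, one normalises to unit vectors in the compact projectivisation $\PP(V^N)$, extracts a limiting direction, and produces a contradiction by comparing the resulting quadratic-form data with Mumford's gap inequality for $j(X) \subset J$. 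All ingredients---N\'eron--Tate inner products, Mumford's inequality, and the rank-theoretic linear algebra---are quadratic in nature and transfer verbatim from classical $k$-points to motivic augmentations via the identification $\Aug(A/k) = A(k) \otimes \bQ$ of Proposition~\ref{10OctA}. Local geometricity is preserved by the $k$-morphism $F$ (augmentations attached to local rational points are sent to the same), so the classical argument applies to every $\alpha \in \Aug(X/k)^\lgeom$; the resulting height bound combined with Theorem~\ref{thm:northcott_abelian} yields the full conclusion.
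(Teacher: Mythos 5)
Your reduction is sound and matches the paper's strategy: the second conclusion does follow from the first via Theorem~\ref{thm:northcott_abelian}, and your use of Theorem~\ref{thm:weil_heights}(1),(3) together with Lemma~\ref{lem:degree_bound} to convert the hypothesis into the statement that $N$ morphisms $f_1,\dots,f_N\colon X\to A$, independent modulo constants with $N>r=\rk A(k)$, have $\sum_i\hat h_M(f_i(\alpha))$ comparable to $h_L(\alpha)$ is exactly the right set-up. The gap is in what you call the crux. The step you sketch --- normalising $F(\alpha_n)$ to unit vectors in $\PP(V^N)$, extracting a limiting direction, and ``comparing with Mumford's gap inequality for $j(X)\subset J$'' --- is not the classical Manin--Dem'janenko argument and would not close the proof. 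Mumford's gap inequality concerns pairs of \emph{distinct} algebraic points on the curve inside its Jacobian and is proved via divisor geometry on $X\times X$; it plays no role in Dem'janenko--Manin, and your claim that it ``transfers verbatim'' to augmentations is unjustified: the paper's height machine is only defined for curves and (via N\'eron--Tate) abelian varieties, and there is no meaningful notion of ``distinct points'' attached to a sequence of augmentations $\alpha_n$, so the inequality cannot even be formulated, let alone invoked, at this level of generality.

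What is actually needed --- and what the paper does --- is the elementary Gram-matrix argument, for which you already have all the inputs. Applying Theorem~\ref{thm:weil_heights}(3) and Lemma~\ref{lem:degree_bound} not only to $f_i^*M$ but also to $(f_i+f_j)^*M$ (note $f_i(\alpha)+f_j(\alpha)=(f_i+f_j)(\alpha)$ since $\Aug$ preserves products, hence group structures) gives the off-diagonal asymptotics
\[
\hat h_M\bigl(f_i(\alpha)+f_j(\alpha)\bigr)-\hat h_M(f_i(\alpha))-\hat h_M(f_j(\alpha))
=\frac{P_{ij}}{\deg(L)}\,h_L(\alpha)+O\bigl(1+|h_L(\alpha)|^{1/2}\bigr),
\]
where $P_{ij}=\deg((f_i+f_j)^*M)-\deg(f_i^*M)-\deg(f_j^*M)$ is the pairing matrix of the positive-definite degree form on $A(X)/A(k)$, so $\det(P)>0$. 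Hence the determinant of the N\'eron--Tate Gram matrix of $f_1(\alpha),\dots,f_N(\alpha)$ is $\frac{\det(P)}{\deg(L)^N}h_L(\alpha)^N+O(1+|h_L(\alpha)|^{N-1/2})$, which is positive once $h_L(\alpha)\gg0$, forcing $f_1(\alpha),\dots,f_N(\alpha)$ to be $\bQ$-linearly independent. But for $\alpha$ coming from $\Aug(X/k)^\lgeom$ these $N$ elements lie in $\Aug(A/k)=A(k)\otimes\bQ$ (Proposition~\ref{10OctA}), a space of dimension $r<N$, so they are always dependent; therefore $h_L$ is bounded on the image of $\Aug(X/k)^\lgeom$, and Northcott finishes as you said. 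Replacing your compactness-plus-Mumford sketch by this determinant (or, equivalently, a lower bound for the quadratic form on coefficient vectors) is what is missing; no extraction of limiting directions and no gap principle are required.
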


\begin{remark}
	In Theorem~\ref{thm:manin-demjanenko}, $A(X)$ denotes the set of $k$-morphisms $X\to A$, which is a finitely generated abelian group. This is slightly different from the notation in~\cite{manin:bounded_torsion}, where~$A(X)$ is used to denote the group of $k$-morphisms $X\to A$ taking a fixed $k$-rational point on~$X$ to~$0$.
\end{remark}

\begin{proof}[Proof of Theorem~\ref{thm:manin-demjanenko}]
	Fix an ample line bundle~$L$ on~$X$. The key claim is the following. Suppose that $f_1,\dots,f_r\in A(X)$ are $\bZ$-linearly independent modulo $A(k)$. Then for $\alpha\in\Aug(X_\kbar/\kbar)^\lgeom$ with $h_L(\alpha)\gg0$ the points $f_1(\alpha),\dots,f_r(\alpha)$ are $\bQ$-linearly independent in~$\Aug(A_\kbar/\kbar)=A(\kbar)\otimes\bQ$. To see this, let us fix a symmetric ample line bundle~$M$ on~$A$. The function $A(X)/A(k)\to\bQ$ given by $f\mapsto\deg(f^*M)$ is then a positive-definite quadratic form on $A(X)/A(k)$. The pairing matrix~$P$ of the associated bilinear pairing on the span of~$f_1,\dots,f_r$ has entries
	\[
	P_{ij} = \deg((f_i+f_j)^*M) - \deg(f_i^*M) - \deg(f_j^*M) \,,
	\]
	and positive-definiteness implies that $\det(P)>0$.
	
	For~$\alpha\in\Aug(X_\kbar/\kbar)^\lgeom$, the pairing matrix~$P(\alpha)$ of the bilinear height pairing associated to~$\hat h_M$ on $f_1(\alpha),\dots,f_r(\alpha)$ has entries
	\begin{align*}
		P(\alpha)_{ij} &= \hat h_M(f_i(\alpha)+f_j(\alpha)) - \hat h_M(f_i(\alpha)) - \hat h_M(f_j(\alpha)) \\ &= \frac{P_{ij}}{\deg(L)}h_L(\alpha) + O(1+|h_L(\alpha)|^{1/2})
	\end{align*}
	by Lemma~\ref{lem:degree_bound} (applied to $(f_i+f_j)^*M$, $f_i^*M$ and $f_j^*M$). In particular, we have
	\[
	\det(P(\alpha)) = \frac{\det(P)}{\deg(L)^r}\cdot h_L(\alpha)^r + O(1+|h_L(\alpha)|^{r-1/2})
	\]
	by Lemma~\ref{lem:degree_bound}. Since~$\det(P)>0$, we thus have $\det(P(\alpha))>0$ for~$h_L(\alpha)\gg0$. This implies that $f_1(\alpha),\dots,f_r(\alpha)$ are $\bQ$-linearly independent, proving the claim.
	\smallskip
	
	Now to finish the proof of Theorem~\ref{thm:manin-demjanenko}, suppose that $\rk(A(X)) > 2\rk A(k)$, and choose $f_1,\dots,f_r\in A(X)$ forming a basis of $A(X)/A(k)$. Since $r>\rk A(k)=\dim_\bQ\Aug(A/k)$, we know that $f_1(\alpha),\dots,f_r(\alpha)$ are $\bQ$-linearly dependent in $\Aug(A/k)\subset\Aug(A_\kbar/\kbar)$ for all~$\alpha\in\Aug(X/k)^\lgeom$. But the above claim showed that $f_1(\alpha),\dots,f_r(\alpha)$ are $\bQ$-linearly independent if $h_L(\alpha)\gg0$. The only possibility is that $h_L$ is bounded on the image of $\Aug(X/k)^\lgeom$ in $\Aug(X_\kbar/\kbar)^\lgeom$, as desired.
\end{proof}
\section{Realisations and groupoids}\label{s:realisations}

In these final sections, we explain a modification of the definition of locally geometric augmentations which is closer in spirit to Deligne's conceptualisation of the motivic fundamental group as a system of related realisations, as well as to Kim's Selmer varieties, whose points face ``Selmer conditions'' locally in realisations. This modified definition, which we call \emph{realisationwise locally geometric} augmentations, is weaker than the definition we gave in \ref{def:locally_geometric}, and accordingly the set of realisationwise locally geometric augmentations is \emph{a priori} larger than $\Aug(X/k)^\lgeom$. Nonetheless, as we shall explain, all of the main results of this paper are true for realisationwise locally geometric augmentations as well.

\spar{30524A}
\textbf{Algebraic realisation functors.}
Suppose that~$K$ is a field containing~$\bQ$ and that~$T$ is a $K$-linear Tannakian category. Let~$\Dd_T= \Ind \Dd^b(T)$ be the ind completion of the bounded derived $\infty$-category of~$T$. If
\[
C^*_T\colon \Sm_k^\op\to\Dd_T
\]
is a symmetric monoidal functor obeying certain formal properties which we do not list here, then~$C^*_T$ factors uniquely through a symmetric monoidal functor
\[
R_T\colon \Dd \to \Dd_T \,,
\]
which we call the \emph{realisation functor} corresponding to~$C^*_T$. By symmetric monoidality, the functor $C^*_T$ induces a functor
\[
\Sm_k^\op \to \Cc_T \coloneqq \CAlg(\Dd_T)
\]
which we also denote by~$C^*_T$. It factors through the functor
\[
R_T^a\colon \Cc \to \Cc_T
\]
induced by~$R_T$. There's a symmetric monoidal t-exact fully faithful functor 
\[
\Dd_T = \Ind \Dd^b(T) \to \Dd(\Ind T)
\]
to the unbounded derived category, which we tacitly use below to pass to the latter as needed.

We will be interested in just two different realisation functors. The first is the $l$-adic \'etale realisation functor, for~$l$ a prime number invertible in the base field~$k$. Here, we let~$K=\bQ_l$ and let~$T=T_l$ be the $\QQ_l$-Tannakian category of constructible $l$-adic \'etale sheaves on $\Spec(k)$ (equivalently, continuous finite dimensional $l$-adic $G_k$-modules). Let $\Dd_l = \Ind \Dd^b(T_l)$. By Bhatt--Scholze \cite{ScholzeProet} there's a symmetric monoidal functor 
\[
C^*_\et( \cdot , \QQ_l) \colon
\Sm_k^\op \to \Dd_l \,;
\]
the induced functor $R_l\colon \Dd \to \Dd_l$ is called the \emph{$l$-adic \'etale realisation}.

The second realisation functor we need is the $\bR$-linear Hodge realisation. Let $k = \CC$, let~$K=\bR$, and let $T_\infty$ be the $\bR$-Tannakian category of graded polarizable $\bR$-mixed Hodge structures. Let $\Dd_\infty = \Ind \Dd^b(T_\infty)$. By Drew \cite{drew2018motivic} there's a symmetric monoidal functor
\[
C^*_H( \cdot, \RR)\colon \Sm_\bC^\op \to \Dd(T_\infty) \subset \Dd_\infty \,;
\]
the induced functor $R_\infty\coloneqq \Dd \to \Dd_\infty$ is called the \emph{Hodge realisation}. More generally, if~$k$ is a field with an embedding~$\si\colon k\hookrightarrow\bC$, then we define the \emph{Hodge realisation relative to~$\si$} to be the composition
\[
R_{\infty,\si}\colon \Dd \to \Dd(\Spec(\bC)) \xrightarrow{R_\infty} \Dd_\infty \,.
\]

\spar{QF11} \textbf{Augmentations in a realisation.} Let us return to the general setting of a realisation functor $R_T\colon \Dd \to \Dd_T$ induced by a suitable functor $C^*_T\colon \Sm_k^\op \to \Dd_T$ with~$T$ a $K$-linear Tannakian category. For a smooth variety~$X$, we define the \emph{$T$-augmentation space of $X$ over $k$} by 
\[
\Aaug_T(X/k) := \Hom_{\Cc_T}(C_T^*(X), \one)
\]
and we define the \emph{$T$-augmentation set} by
\[
\Aug_T(X/k) = \pi_0 \Aaug_T(X/k) \,.
\]

The $T$-augmentation space $\cAug_T(X/k)$ is the image of~$X$ under the composition
\[
\Sm_k \xrightarrow{C_T^*} \Cc_T^\op \xrightarrow{\Hom_{\Cc_T}(?,\one)} \Hh \,,
\]
where~$\Hh$ is the \oo-category of homotopy types. Each of these functors is symmetric monoidal for the Cartesian symmetric monoidal structure, and so we find that the assignment 
\[
X \mapsto \Aaug_T(X/k)
\]
belongs to a symmetric monoidal functor
\[
\Aaug_T(?/k)\colon \Sm_k^{\prod} \to \Hh^{\prod} \,.
\]
Applying this functoriality to the one-point variety (which is the unit of the Cartesian monoidal structure on~$\Sm_k$), we have augmentation maps
\[
\ka_T\colon X(k) \to \Aaug_T(X/k)
\]
which form part of a natural transformation 
\[
?(k) \to \Aaug_T(?/k)
\quad
\mbox{of functors}
\quad
\Sm_k \to \Hh. 
\]

Because the functor $C^*_T$ factors through the realisation~$R_T$, it follows that the function
\[
\Aug(X/k) = \pi_0\Hom_{\Cc}(C^*(X),\one) \to \Aug_T(X/k) = \pi_0\Hom_{\Cc_T}(C^*_T(X),\one)
\]
induced by~$R_T^a$ fits into a commuting triangle
\begin{center}
\begin{tikzcd}
	X(k) \arrow[r,"\kappa"]\arrow[rd,"\kappa_T"'] & \Aug(X/k) \arrow[d,"R_T^a"] \\
	 & \Aug_T(X/k) \,.
\end{tikzcd}
\end{center}

When $R=R_l$ is the $l$-adic \'etale realisation, we write $\Aaug^\et_{\QQ_l}(X)$, and when $R=R_\infty$ is the real Hodge realisation, we write $\Aaug_\RR^H(X)$, and similarly for the augmentation \textit{sets}.

\spar{QF12} \textbf{From augmentations to groupoids.}
Let us now assume that our functor $C^*_T\colon \Sm_k^\op \to \Dd_T$ has the following property: for any geometrically connected $X\in\Sm_k$, the algebra $C^*_T(X)\in\Cc_T$ is \emph{cohomologically connected}, meaning that $H^i(C^*_T(X))=0$ for~$i<0$ and $H^0(C^*_T(X))=\one$. This assumption is satisfied for both $l$-adic \'etale cohomology and $\bR$-linear Hodge cohomology. We are going to show that, under this assumption, one can attach to any geometrically connected~$X$ a prounipotent groupoid in the Tannakian category~$T$ (see \cite{Deligne89} for the definition). Similar constructions may be found for instance in \cite[\S2]{dancohen2021rational}.

\spar{300609A}
	We begin by considering the case when~$T$ is the category of vector spaces over the field~$K$. Fix a field $K$ of characteristic $0$. Let $\cdga(K) = \CAlg \Cpx(K)$ denote the category of (unbounded) commutative differential graded algebras. We define a morphism $f: A\to B$ in $\cdga(K)$ to be an \emph{equivalence} if the underlying morphism of complexes is a quasi-isomorphism. We define $f$ to be a \emph{fibration} if $f^i: A^i \to B^i$ is surjective for every $i \in \ZZ$. These definitions determine a structure of a model category on $\cdga(K)$; see e.g. Proposition 2.5 of Olsson \cite{OlssonBar}.
	
	Let $\cdga^{\ge 0}(K)$ denote the category of non-negative cdga's ($A^i = 0$ for $i <0$). By Proposition 5.3 of loc. cit., the same definitions as above determine a structure of model category. We write $\opnm{ho}$ for the homotopy category of a model category and $h$ for the homotopy category of an \oo-category.

\spar{QF14}
	We denote the (unbounded) derived \oo-category of complexes of $K$-\textit{vector spaces} by $\Dd(K)$. By \cite{HinichRectification} we have an equivalence between the \oo-category associated to the category $\cdga(K)$ and $\CAlg \Dd(K)$; in particular, we have an equivalence of homotopy categories
	\[
	\opnm{ho} \cdga(K) 
	\simeq h \CAlg \Dd(K).
	\]

\begin{proposition}
	\label{300609B}
	Suppose $A \in \CAlg \Dd(K)$ is cohomologically connected. We denote the set of augmentations of $A$ by
	\[
	\Aug(A) := 
	\pi_0 \Hom_{\CAlg \Dd(K)}(A, \one).
	\]
	Suppose $A$ admits augmentations 
	$
	A \rightrightarrows \one
	$
	(in particular, $A \neq 0$) and let 
	\[
	B = \one \otimes_A \one
	\]
	be the associated relative tensor product. Then $\Aug(A) = \{\ast\}$ is a singleton. Moreover, we have
	\[
	H^i(B) =0
	\text{ for }
	i<0
	\quad \text{and} \quad
	H^0(B) \neq 0.
	\]
\end{proposition}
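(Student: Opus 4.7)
The proof will proceed in three steps, treating the singleton claim, the vanishing of $H^{<0}(B)$, and the non-vanishing of $H^0(B)$ in sequence.

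For the singleton assertion I would use a Postnikov truncation argument. Since $K$ is a field, tensor product is exact and the standard $t$-structure on $\Dd(K)$ is compatible with the symmetric monoidal structure; the truncation $\tau^{\le 0}$, being right adjoint to the inclusion of cohomologically non-positive complexes, is therefore lax symmetric monoidal and preserves $E_\infty$-algebras (alternatively, one may pass to a cdga model of $A$ via \cite{HinichRectification} and argue concretely with a minimal Sullivan-type model). Because $A$ is cohomologically connected, $\tau^{\le 0} A$ is concentrated in cohomological degree $0$ with $H^0 = K$, hence equivalent to $\one$ as a commutative algebra. Since $\one$ itself lies in $\Dd(K)^{\le 0}$, every augmentation $A \to \one$ factors uniquely through the truncation $A \to \tau^{\le 0} A \simeq \one$, so that
\[
\Hom_{\CAlg \Dd(K)}(A, \one) \simeq \Hom_{\CAlg \Dd(K)}(\tau^{\le 0} A, \one) \simeq \Hom_{\CAlg \Dd(K)}(\one, \one) \simeq \ast,
\]
the last equivalence being because $\one$ is the initial commutative algebra. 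In particular $\Aug(A) = \pi_0$ of this mapping space is a singleton.

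For the vanishing $H^i(B) = 0$ with $i < 0$ I would fix the essentially unique augmentation $\epsilon \colon A \to \one$ and form the augmentation ideal $I \coloneqq \mathrm{fib}(\epsilon) \in \Dd(K)$. The long exact cohomology sequence for $I \to A \to \one$, combined with the cohomological connectivity of $A$, shows that $I$ is concentrated in cohomological degrees $\ge 1$. The bar simplicial object computing $B = \one \otimes_A \one$ has $n$-simplices $A^{\otimes n}$, and its geometric realisation in $\Dd(K)$ carries the bar (skeletal) filtration, whose $n$th associated graded is, up to the appropriate simplicial shift, the tensor power $I^{\otimes n}$. Because $I$ lies in cohomological degrees $\ge 1$, the shift conventions place this associated graded in cohomological degrees $\ge 0$, and the spectral sequence of the filtration then yields $H^i(B) = 0$ for $i < 0$.

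For the non-vanishing $H^0(B) \ne 0$ I would use a retraction argument. The augmentation $\epsilon$ makes $\one$ into a commutative $A$-algebra, so the codiagonal $\mu \colon \one \otimes_A \one \to \one$ is a well-defined morphism in $\CAlg \Dd(K)$; composing $\mu$ with either unit $\one \to B$ yields the identity of $\one$. Hence $\one$ is a retract of $B$ in $\Dd(K)$, so $K = H^0(\one)$ is a direct summand of $H^0(B)$, which is therefore non-zero. The most delicate point will be the careful bookkeeping of cohomological shift conventions in the second step when identifying the associated graded of the bar filtration; this is, however, the standard input behind the classical fact that the bar construction preserves cohomological connectivity of augmented algebras.
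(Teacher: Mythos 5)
Your second and third steps are fine in outline, but your first step has a genuine gap, and it is the step the other two lean on. For a cohomologically connected $A$ (cohomology concentrated in degrees $\ge 0$), the truncation $\tau^{\le 0}$ is a \emph{co}localisation: it is right adjoint to the inclusion of $\Dd(K)^{\le 0}$, so its canonical (lax monoidal, algebra-preserving) map runs $\tau^{\le 0}A \to A$, not $A \to \tau^{\le 0}A$. There is no natural algebra map from $A$ to $\tau^{\le 0}A \simeq \one$ through which augmentations could factor; if there were, every cohomologically connected algebra would carry a canonical augmentation, and your chain of equivalences would prove that the whole mapping space $\Hom_{\CAlg \Dd(K)}(A,\one)$ is contractible. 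That conclusion is false: for $A = C^*(X,\QQ)$ with $X$ simply connected of finite type this mapping space is the rationalisation of $X$ (connected but usually not contractible, e.g. $X = S^2$), which is exactly why the proposition claims only that $\pi_0$ is a singleton. What the truncation adjunction actually yields is restriction along $\tau^{\le 0}A \to A$, a map $\Hom_{\CAlg}(A,\one) \to \Hom_{\CAlg}(\tau^{\le 0}A,\one) \simeq \ast$, which says nothing about uniqueness of augmentations on $A$. The repair is the route you mention only parenthetically, and it is the paper's actual argument: rectify $A$ to a connected cofibrant cdga (Hinich), run the minimal/Sullivan model construction so all generators sit in degrees $\ge 1$, and observe that any cdga map to $K$ must annihilate the generators for degree reasons, so there is exactly one augmentation and $\Aug(A) = \{\ast\}$.

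The remaining two steps are correct and only mildly different from the paper. Your bar-filtration argument for $H^i(B)=0$, $i<0$, is the standard proof of the fact the paper imports from Olsson (Corollary 7.6): with the unique augmentation $\epsilon$, the fibre $I$ of $\epsilon$ lies in degrees $\ge 1$, the associated graded pieces $\Sigma^n I^{\otimes n}$ lie in degrees $\ge 0$, and the (exhaustive, skeletal) filtration gives coconnectivity of $B$. Your retraction argument for $H^0(B) \ne 0$ is essentially the paper's: the counit of the Hopf structure on $B$ used there is exactly your codiagonal $\one \otimes_A \one \to \one$. Note, however, that to build the codiagonal when $B$ is formed from two a priori different augmentations $A \rightrightarrows \one$ you must first know they are homotopic, i.e. you need the singleton statement; so the logical order of your steps matters, and with step one unrepaired the later steps are not yet grounded.
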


\begin{proof}
	By the equivalence $\opnm{ho} \cdga(K) \simeq h \CAlg \Dd(K)$, we may lift $A$ to a cohomologically connected cdga, which we denote again by $A$. Applying the proof of the existence of minimal models, e.g. Theorem 12.1 of \cite{felix2012rational}, we may assume $A$ is a cofibrant object in $\cdga^{\ge 0}(K)$ and connected.\footnote{Traditional accounts of the construction of minimal models, such as this one, restrict attention to $\cdga^{\ge 0}(K)$. However, the same construction goes through in $\cdga(K)$.} Thus, $A$ has only one augmentation and 
	\[
	\Aug(A)=
	\Hom_{\opnm{ho} \cdga} (A, \one)
	\simeq
	\Hom_{\opnm{ho} \cdga^{\ge 0}} (A, \one)
	\simeq
	\{*\}.
	\]
	This establishes the first statement. Returning to a cofibrant model of $A$ in $\cdga(K)$, we may replace the pushout $B$ by the homotopy pushout
	\[
	B = \one \coprod_{A}^{\m{ho}} \one
	\]
	in $\cdga(K)$, where both augmentations are given by the unique augmentation (up to homotopy). According to Corollary 7.6 of Olsson \cite{OlssonBar}, $B$ is cohomologically nonnegative. Consequently, $B$ has the structure of a Hopf algebra in $h\Dd^{\ge 0}(K)$
	\footnote{See for instance \cite[\S2]{dancohen2021rational} for the Hopf algebra structure.}
	and in particular, $H^0(B)$ carries a counit map
	\[
	H^0(B) \to \one
	\]
	which factors the identity map of $\one$. This shows that $H^0(B) \neq 0$, and completes the proof of the proposition.
\end{proof}

\spar{30608B}
We now return to the setting of a general neutral Tannakian category~$T$ over a field~$K$. By a \emph{connected set-based prounipotent $T$-groupoid} $\Gg$ we mean the following. 
\begin{itemize}
	\item
	A set $\Gg = \Oo b(\Gg)$ of \emph{objects} or \emph{base-points}. 
	\item
	For each $a,b \in \Gg$, an affine $T$-scheme ${_bP_a} = \Spec {_b A_a}$ (by which we mean a commutative algebra object ${_bA_a} \in \opnm{Ind} T$ regarded as an object of $(\opnm{Ind} T)^\op$).
	\item
	Unit maps $\one \to {_a P_a}$ and composition maps
	\[
	{_cP_b} \times_K {_b P_a} \to {_c P_a}.
	\]
	These are required to be unital and associative, to make each ${_aP_a}$ into a prounipotent $T$-group, and to make ${_bP_a}$ into a left torsor under $_bP_b$ and a right torsor under $_aP_a$.
\end{itemize}
See Deligne \cite{Deligne89} for detailed definitions of prounipotent groups and their torsors in a Tannakian category. 

\spar{30611A}
Let $T$ be a neutral Tannakian category over $K$. Fix an algebra $A \in \CAlg \Dd (\Ind T)$ cohomologically connected and let $\Aug(A)$ denote its set of augmentations. We construct an associated set-based prounipotent $T$-groupoid. As object-set we take the set $\Aug(A)$ of augmentations. Given augmentations $a,b: A \rightrightarrows \one$, we set
\[
{_bB_a} := \one 
\underset{b, \, A, \, a} \otimes \one.
\]
The constructions of \cite[\S2]{dancohen2021rational} endow the objects ${_b B_a} \in h \Dd (\Ind T)$ with unit, counit, composition and cocomposition maps.\footnote{By \cite{DCHorev, brantner2021pd}, each object $_bB_a$ is further equipped with a structure of comodule for $_aB_a$ and for $_bB_b$ up to coherent higher homotopy. This more refined structure is not needed here.} By Proposition \ref{300609B}, ${_b B_a}$ is cohomologically nonnegative, so we may apply the symmetric monoidal functor 
\[
H^0: \Dd^{\ge 0}_T \to \opnm{Ind} T
\]
to endow the objects $H^0({_b B_a}) \in \opnm{Ind} T$ with similar structures. In particular, $H^0({_b B_a})$ has the structure of a commutative ring object, and we define the affine $T$-scheme associated to the pair of objects $a,b$ by
\[
{_bP_a} = \Spec H^0({_b B_a}).
\]

\begin{proposition}
	\label{30611B}
	Let $T$ be a neutral Tannakian category over $K$ and let $A$ be a cohomologically connected object of $\CAlg \Dd(\opnm{Ind} T)$. The data
	\[
	\Aug(A), \quad \{{_bP_a}\}_{a,b \in \Aug(A)},
	\]
	along with the unit and composition maps indicated above, determine a set-based prounipotent $T$-group. 
\end{proposition}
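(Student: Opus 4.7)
The plan is to verify each of the bullets in the definition of a connected set-based prounipotent $T$-groupoid (\ref{30608B}), transporting structures from the derived $\infty$-category $\Dd(\Ind T)$ to strict structures in $\Ind T$ via the symmetric monoidal functor $H^0\colon \Dd^{\ge 0}(\Ind T)\to \Ind T$. First, since Proposition \ref{300609B} applies verbatim in the Tannakian setting (the proof only used the cdga model structure and Olsson's connectivity result, both of which work over the ground field underlying $T$ after choosing a fiber functor, or more intrinsically after Hinich--style rectification inside $\Ind T$), each of the objects ${_bB_a} = \one\otimes_{b,A,a}\one$ is cohomologically nonnegative. This is the key input that allows us to pass from $\infty$-categorical structures to strict ones.

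Next I would extract the strict algebra/coalgebra structures. The unit, counit, composition and cocomposition maps on the family $\{{_bB_a}\}$ constructed in \cite[\S 2]{dancohen2021rational} are morphisms in $h\Dd(\Ind T)$; because the source and target are all cohomologically nonnegative, $H^0$ (being symmetric monoidal and t-exact) sends them to morphisms between the objects ${_bA_a} = H^0({_bB_a})$ in $\Ind T$, and preserves the unitality, associativity, counitality and coassociativity axioms, since those are expressed by commutativity of finitely many diagrams. In particular each ${_aA_a}$ becomes a commutative Hopf algebra in $\Ind T$, and each ${_bA_a}$ becomes a commutative algebra endowed with commuting left coaction of ${_bA_b}$ and right coaction of ${_aA_a}$.

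The two substantive conditions remaining are the torsor property and prounipotence of ${_aP_a}$. For the torsor property, the cocomposition map lifts at the derived level to an equivalence
\[
{_bB_a} \otimes_\one {_bB_a} \xrightarrow{\sim} {_bB_b} \otimes_\one {_bB_a}
\]
(coming from the canonical equivalence $\one\otimes_A\one\otimes_A\one \simeq (\one\otimes_A\one)\otimes(\one\otimes_A\one)$ in $\Dd(\Ind T)$, an instance of the Segal-type identity for the bar construction). Both sides are cohomologically nonnegative by Proposition \ref{300609B}, so applying $H^0$ gives an isomorphism in $\Ind T$, which is the torsor axiom translated to affine schemes. The same argument on the left yields the ${_bP_b}$-torsor structure. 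For prounipotence of ${_aP_a}$, I would use the canonical augmentation ideal filtration on ${_aA_a}$: the counit map ${_aA_a}\to\one$ has kernel $I_a$, and the $I_a$-adic filtration is exhaustive and separated because ${_aB_a}$ is obtained as a bar construction on a cohomologically connected augmented algebra, whose associated graded is cofree cocommutative on $H^1(A)^\vee$ (or similar); its $H^0$ is therefore the cofree cocommutative coalgebra on the corresponding object of $T$, which is a prounipotent Hopf algebra by the standard characterisation \cite{Deligne89}.

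The main obstacle I expect is this last prounipotence verification: identifying the associated graded of $H^0({_aB_a})$ with respect to the augmentation filtration and checking it has the structure of a cofree cocommutative coalgebra. In practice I would factor this through the observation that $A$, being cohomologically connected with an augmentation, admits a minimal model as a cdga in $\Ind T$, and that the bar construction on a minimal augmented algebra produces, after applying $H^0$, precisely the completed tensor coalgebra on the augmentation ideal; this directly exhibits ${_aP_a}$ as prounipotent. With this in hand, the verification of the remaining groupoid axioms is purely formal, as all of them are statements about the commutativity of diagrams which were established up to coherent homotopy at the derived level and survive the symmetric monoidal functor $H^0$.
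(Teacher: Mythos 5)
Your reduction of the unit/counit/(co)composition axioms to applying the t-exact symmetric monoidal functor $H^0$ is fine (and is essentially part of the construction in \S\ref{30611A}, not of the proof), but the two substantive points — the torsor property and prounipotence — are exactly where your argument has gaps.

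For the torsor property, the ``canonical equivalence'' you invoke does not exist in the form you state. The identity $(\one\otimes_A\one)\otimes(\one\otimes_A\one)\simeq \one\otimes_A\one\otimes_A\one$ is only formal when the middle copy of $\one$ carries matching left and right $A$-module structures coming from a \emph{single} augmentation: it identifies ${}_bB_b\otimes{}_bB_a$ with the threefold bar construction whose middle bimodule is $\one$ via $(b,b)$, and ${}_bB_a\otimes{}_aB_a$ with the one whose middle bimodule is $\one$ via $(a,a)$. By contrast, ${}_bB_a\otimes{}_bB_a$ is the threefold bar construction whose middle bimodule is $\one$ with the \emph{mixed} structure $(a,b)$, and comparing it with the $(b,b)$ version is precisely the torsor statement, not a Segal-type formality — if it were formal, every ${}_bP_a$ would be canonically trivial, contradicting the fact that these torsors are the whole point of the construction (e.g.\ they realise nontrivial Galois and Hodge path torsors). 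The shear map (coaction followed by multiplication) is a specific morphism whose invertibility has to be proved. The paper's route is to fix a $K$-rational fiber functor $\om$ (this is where neutrality of $T$ enters, and it is absent from your argument): since $\om$ is exact, monoidal and conservative, it suffices to check the shear map after $\om$; and over the field $K$, Proposition \ref{300609B} — whose singleton statement is \emph{false} internally to $T$, so it does not ``apply verbatim in the Tannakian setting'' — supplies a homotopy $\om(a)\simeq\om(b)$, hence an $\om({}_aP_a)$-equivariant isomorphism $\om({}_aP_a)\simeq\om({}_bP_a)$, exhibiting $\om({}_bP_a)$ as a trivial torsor.

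For prounipotence of ${}_aP_a$, your proposed minimal-model argument inside $\Ind T$ is unsubstantiated: minimal models are a tool of rational homotopy theory over a field, and the paper only uses them (in the proof of Proposition \ref{300609B}) after passing to $\cdga(K)$. Identifying $H^0({}_aB_a)$ with a completed tensor coalgebra in $\Ind T$ would require redoing that theory internally. The paper again sidesteps this by checking prounipotence after applying $\om$, where it reduces to the known statement for cdga's over a field (citing Wojtkowiak). So the overall shape of a correct proof is: reduce everything to the fiber functor and exploit the collapse of the augmentation set over $K$; your proposal never uses $\om$ and therefore misses the mechanism that makes both key verifications go through.
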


\begin{proof}
	We fix a $K$-rational fiber functor $\om$. That each ${_aP_a}$ is a prounipotent $T$-group may be checked after applying $\om$, where this follows from the analogous statement concerning cdga's; see, for instance, \cite{Wojtkowiak}. For $b \neq a$, we find that ${_bP_a}$ has a left action of the prounipotent $T$-group ${_aP_a}$. By Proposition \ref{300609B}, there exists a homotopy between the two augmentations
	\[
	\om(a), \om(b): \om(A) \rightrightarrows K.
	\]
	Any homotopy $\om(a) \to \om(b)$ gives rise to an $\om({_aP_a})$-equivariant isomorphism
	\[
	\om({_aP_a}) \simeq \om({_bP_a}). 
	\]
	Thus, $\om({_bP_a})$ is a (trivial) torsor, and it follows that ${_bP_a}$ is a torsor as well.
\end{proof}

\spar{QF15}
We can apply this construction to the two realisations of interest: \'etale and Hodge. In the \'etale setting, we obtain for every prime~$l$ a connected set-based prounipotent groupoid in the category of $\bQ_l$-linear Galois representations, which we denote by ${}_?P_?^l$. If a base point~$x\in X(k)$ is fixed, then we denote by~$U^l$ the group~${}_xP_x^l$. The non-abelian Galois cohomology set~$H^1(G_k,U^l(\bQ_l))$ parametrises right $U^l$-torsors with a compatible action of~$G_k$; we write
\[
j_l\colon X(k) \to H^1(G_k,U^l(\bQ_l))
\]
for the function sending~$y\in X(k)$ to the class of~${}_yP_x^l$.

In the Hodge setting, we obtain a connected set-based prounipotent groupoid in the category of $\bR$-mixed Hodge structures, which we denote by ${}_?P_?^\infty$. If a base point~$x\in X(k)$ is fixed, then we denote by~$U^\infty$ the group~${}_xP_x^\infty$. If~$G_{\MHS,\bR}$ denotes the Tannaka group of the category of $\bR$-mixed Hodge structures, then the non-abelian Galois cohomology set~$H^1(G_{\MHS,\bR},U^\infty)$ parametrises right $U^\infty$-torsors with a compatible mixed Hodge structure; we write
\[
j_\infty\colon X(k) \to H^1(G_{\MHS,\bR},U^\infty)
\]
for the function sending~$y\in X(k)$ to the class of~${}_yP_x^\infty$.

\spar{QF16}
The groupoid ${}_?P^l_?$ we have attached to any geometrically connected $X\in\Sm_k$ turns out to be none other than the $\bQ_l$-pro-unipotent \'etale fundamental groupoid of~$X_\kbar$. Recall that this is defined using the Tannakian formalism: if~$\Loc_{\bQ_l}^\un(X_\kbar)$ denotes the Tannakian category of unipotent $\bQ_l$-local systems on~$X_\kbar$ in the \'etale topology, then any $k$-rational points~$x,y\in X(k)$ determine fibre functors
\[
\omega_x^l,\omega_y^l\colon\Loc_{\bQ_l}^\un(X_\kbar) \to \Vect_{\bQ_l} \,,
\]
and the $\bQ_l$-pro-unipotent \'etale path torsor is the affine $\bQ_l$-scheme representing the functor of tensor-natural isomorphisms $\Iso^\otimes(\omega_x^l,\omega_y^l)$. Because everything in sight is defined over~$k$, the absolute Galois group~$G_k$ acts in a natural way on~$\Iso^\otimes(\omega_x^l,\omega_y^l)$. Thus, the $\bQ_l$-pro-unipotent \'etale fundamental groupoid $\Iso^\otimes(\omega_?^l,\omega_?^l)$ is a connected set-based pro-unipotent groupoid in the category of representations\footnote{Implicit here is the assertion that the action of~$G_k$ on the affine ring of~$\Iso^{\otimes}(\omega_?^l,\omega_?^l)$ is continuous; this is because $\Iso^{\otimes}(\omega_?^l,\omega_?^l)$ is the topological $\bQ_\ell$-Mal\u cev completion of the profinite \'etale path torsor $\pi_1^\et(X_\kbar;?,?)$ (see e.g.~the discussion in~\cite[\S2.4]{BettsThesis}), and the action on the latter is continuous. Alternatively, this continuity is a consequence of Lemma~\ref{lem:realisation_vs_tannakian_etale}.} of~$G_k$.

By Theorem~6.5 of \cite{dancohen2021rational} as corrected by \cite{DCSchlankCorr}, the groupoid ${}_?P_?^l$ we have defined agrees with the Tannakian definition.

\begin{lemma}\label{lem:realisation_vs_tannakian_etale}
	Let~$k$ be a field and let~$X/k$ be a smooth geometrically connected variety. Then there is a $G_k$-equivariant isomorphism
	\[
	{}_yP_x^l \cong \Iso^\otimes(\omega_x^l,\omega_y^l)
	\]
	for all~$x,y\in X(k)$, compatible with composition on either side.
\end{lemma}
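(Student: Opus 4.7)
The plan is to reduce the groupoid-level comparison to the single-basepoint group-level comparison, which is Theorem~6.5 of \cite{dancohen2021rational} (corrected as in \cite{DCSchlankCorr}). Applied with a single basepoint $x$, that result gives a $G_k$-equivariant isomorphism of pro-unipotent groups ${}_xP_x^l\cong U_x^l=\Iso^\otimes(\omega_x^l,\omega_x^l)$, obtained by identifying the bar construction $\one\otimes_x C^*_\et(X)\otimes_x\one$ with the Hopf algebra of $U_x^l$ under the Koszul duality between augmented commutative algebras in $\Dd_l$ and coaugmented cocommutative coalgebras (equivalently, pro-unipotent affine group schemes).

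To handle a pair of distinct basepoints $x,y\in X(k)$, I would exploit the bi-torsor structure present on both sides. On the motivic side, the two-sided bar construction ${}_yB_x^l=\one\otimes_y C^*_\et(X)\otimes_x\one$ carries natural comodule structures over the Hopf algebras ${}_yB_y^l$ and ${}_xB_x^l$, induced by functoriality of the bar construction applied to the diagonal $X\to X\times X$ together with the two augmentations. Taking $H^0$ (which is symmetric monoidal on cohomologically non-negative objects) and dualising, ${}_yP_x^l=\Spec H^0({}_yB_x^l)$ becomes a $({}_yP_y^l,{}_xP_x^l)$-bi-torsor. On the Tannakian side, $\Iso^\otimes(\omega_x^l,\omega_y^l)$ is tautologically a $(U_y^l,U_x^l)$-bi-torsor, with the left and right actions given by post- and pre-composition of tensor-natural isomorphisms. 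I would then use the symmetric monoidal enhancement of the Koszul duality equivalence: the same mechanism underlying the single-basepoint case matches the relative tensor product ${}_yB_x^l$ with the bi-torsor Hopf algebra of $\Iso^\otimes(\omega_x^l,\omega_y^l)$, yielding a bi-torsor isomorphism which specialises to the group-level isomorphism when $x=y$.

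Galois equivariance is built into the set-up: the realisation functor $R_l$ lands in $\Dd_l$, whose objects carry a continuous $G_k$-action, and both ${}_yP_x^l$ and $\Iso^\otimes(\omega_x^l,\omega_y^l)$ inherit their Galois actions from this naturality. Compatibility with composition follows from functoriality of the bar construction with respect to the canonical maps ${}_zB_y^l\otimes_{{}_yB_y^l}{}_yB_x^l\to{}_zB_x^l$, which after taking $H^0$ and dualising recover the composition pairing ${}_zP_y^l\times{}_yP_x^l\to{}_zP_x^l$, matching the composition of tensor-natural isomorphisms on the Tannakian side. The main obstacle is the one already flagged in the earlier remark on realisationwise augmentations: a rigorous symmetric monoidal enhancement of Positselski's Koszul duality is, to our knowledge, not fully written up in the literature, and we therefore rely on the technical framework assembled in the corrigendum \cite{DCSchlankCorr} to secure the coherent bi-comodule comparison used above.
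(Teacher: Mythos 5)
Your proposal is correct in substance, but it takes a more laborious route than the paper, which disposes of this lemma by a bare citation: the groupoid-level comparison (for an arbitrary pair of basepoints $x,y$, $G_k$-equivariantly and compatibly with composition) is exactly what Theorem~6.5 of \cite{dancohen2021rational}, as corrected by \cite{DCSchlankCorr}, is invoked for — note that the same reference is also used later for the two-sided bar construction ${}_yB_x$ in the Betti/Hodge setting (Theorem~6.4.5 of loc.\ cit.), so the cited result already treats path spaces between distinct basepoints, not just the fundamental group. You instead use only the single-basepoint instance and then bootstrap to the torsor ${}_yP_x^l$ via the bi-comodule structure on ${}_yB_x^l$ and a symmetric monoidal Koszul duality applied to the two-sided bar construction. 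That bootstrap is reasonable, but be aware of what it actually amounts to: matching the acting groups on the two sides does not by itself identify the bi-torsors, so the substantive step is the construction of a comparison map identifying $H^0({}_yB_x^l)$ with the affine ring of $\Iso^\otimes(\omega_x^l,\omega_y^l)$ — and producing that map by "the same mechanism" as the one-point case is precisely re-proving the two-point case of the cited theorem rather than deducing it. In other words, your plan is essentially a reconstruction of the external input the paper quotes wholesale, and it correctly flags the same foundational caveat the paper acknowledges (the symmetric monoidal enhancement of Positselski's Koszul duality is folklore rather than documented). What your route buys is an explicit articulation of where the bi-torsor and composition structures come from (functoriality of the bar construction and the maps ${}_zB_y^l\otimes_{{}_yB_y^l}{}_yB_x^l\to{}_zB_x^l$), which the paper leaves implicit; what it costs is redundancy, since the two-basepoint statement is already available in the cited source.
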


\begin{corollary}
	If a base point~$x\in X(k)$ is fixed, then the group~$U^l$ is $G_k$-equivariantly isomorphic to the $\bQ_l$-pro-unipotent \'etale fundamental group of~$(X_\kbar,x)$, and the function $j_l\colon X(k)\to H^1(G_k,U^l(\bQ_l))$ is the non-abelian Kummer map (see~\cite{kimii} for the definition).
\end{corollary}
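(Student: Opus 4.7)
The plan is to appeal to a Koszul-duality-style identification between augmented cohomologically connected $\EE_\infty$-algebras in $G_k$-representations on the one hand, and prounipotent groupoids on $G_k$-equivariant unipotent local systems on the other. Concretely, by construction we have ${}_yP_x^l = \Spec H^0({}_yB^l_x)$ with ${}_yB^l_x = \one \otimes_{C^*_l(X), b, a} \one$, where $a,b$ are the augmentations attached to $x,y$ by the naturality of $\cAug_T(?/k)$ in \S\ref{QF11}. So it suffices to exhibit a $G_k$-equivariant, tensor-natural identification of $H^0({}_yB^l_x)$ with the coordinate ring of $\Iso^\otimes(\omega_x^l,\omega_y^l)$, compatible with the unit and composition maps on each side.

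I would carry this out in three steps. First, using the formalism developed around Proposition~\ref{30611B} and the symmetric monoidal enhancement of the bar construction, identify $\mathrm{LMod}_{C^*_l(X)}$ (in the \oo-category of $G_k$-representations) with comodules over the coalgebra ${}_xB^l_x$, via the functor $M \mapsto \one \otimes_{C^*_l(X), x} M$ and with Koszul-dual inverse. Passing to the cohomologically bounded-below subcategory and truncating via $H^0$ gives an equivalence with comodules over the Hopf algebra $H^0({}_xB^l_x)$, which by Tannakian reconstruction is the coordinate algebra of the group $_xP_x^l$. Second, one identifies this same subcategory with the Tannakian category $\mathrm{Loc}^\un_{\QQ_l}(X_{\bar k})$ of unipotent $l$-adic local systems on $X_{\bar k}$ (with its $G_k$-action), via the functor associating to a local system~$\Ll$ the complex $C^*_l(X, \Ll)$ viewed as a $C^*_l(X)$-module. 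Under this equivalence, the fibre functor $\omega_y^l$ corresponds to the augmentation functor $N \mapsto \one \otimes_{C^*_l(X), y} N$. Third, comparing the resulting presentations of $\Iso^\otimes(\omega_x^l, \omega_y^l)$, one obtains it as $\Spec$ of the $H^0$ of the two-sided bar construction, namely ${}_yP_x^l$, and the composition on $\Iso^\otimes$ matches the coalgebra cocomposition on the $B$'s that underlies the groupoid structure of \S\ref{30611A}.

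Steps one and three are essentially formal Tannakian manipulations: the comparison of $\Iso^\otimes(\omega_x^l,\omega_y^l)$ with the bar construction is the $G_k$-equivariant and $l$-adic incarnation of the classical Sullivan/Chen story, and the compatibility with composition follows by construction from the coalgebra structure on the bar complex. The essential content --- and the main obstacle --- is step two: the equivalence of the relevant subcategory of $\mathrm{LMod}_{C^*_l(X)}$ with $\mathrm{Loc}^\un_{\QQ_l}(X_{\bar k})$, as well as the $G_k$-equivariance and symmetric monoidality of this equivalence. This is the content of Theorem~6.5 of \cite{dancohen2021rational}, and the careful construction of the required symmetric monoidal enhancement of the underlying Koszul duality equivalence of \cite{Positselski} is exactly the point that the remark following Conjecture~\ref{conj:lg_augmentation_conjecture} flags as an unresolved foundational issue; for present purposes we may invoke it via \cite{dancohen2021rational, DCSchlankCorr}. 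Granting this input, the corollary concerning the identification of~$U^l$ with the $\QQ_l$-pro-unipotent \'etale fundamental group of $(X_{\bar k},x)$ and of $j_l$ with the pro-unipotent Kummer map is then immediate from unwinding the definitions.
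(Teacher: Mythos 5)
Your proposal is correct and takes essentially the same route as the paper: the paper deduces this corollary immediately from Lemma~\ref{lem:realisation_vs_tannakian_etale}, which it obtains by citing Theorem~6.5 of \cite{dancohen2021rational} as corrected by \cite{DCSchlankCorr} --- precisely the input you invoke, with the same caveat about the symmetric monoidal enhancement of Positselski's Koszul duality. The Koszul-duality/bar-construction steps you sketch are just the content of that cited theorem spelled out, so your argument does not diverge from the paper's.
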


\spar{QF17}
We now want to prove the analogous compatibility for the Hodge-theoretic realisation. The corresponding Tannakian object will be the $\bQ$-pro-unipotent Betti fundamental groupoid of a connected $X\in\Sm_\bC$, endowed with the mixed Hodge structure constructed by Hain and Zucker. Let~$\Loc_\bQ^\un(X^\an)$ denote the Tannakian category of unipotent $\bQ$-local systems on~$X^\an$, and for points $x,y\in X(\bC)$, let
\[
\omega_x^\infty,\omega_y^\infty\colon \Loc_\bQ^\un(X^\an)\to\Vect_\bQ
\]
denote the corresponding fibre functors. The $\bQ$-pro-unipotent Betti path torsor is, by definition, the affine $\bQ$-scheme representing the functor of tensor-natural isomorphisms $\Iso^\otimes(\omega_x^\infty,\omega_y^\infty)$.

The path torsor $\Iso^\otimes(\omega_x^\infty,\omega_y^\infty)$ comes with a mixed Hodge structure on its affine ring, constructed by Hain and Zucker \cite{hain-zucker:unipotent_variations}. This requires a little explanation to connect the Tannakian definition of path torsors with the more explicit language in \cite{hain-zucker:unipotent_variations}. Let~$\pi_1(X^\an;?,?)$ denote the usual fundamental groupoid of the topological space~$X^\an$. If~$x,y$ are any points, then the free vector space $\bQ[\pi_1(X^\an;x,y)]$ comes with a canonical cocommutative coalgebra structure, whose comultiplication sends any element~$\gamma\in\pi_1(X^\an;x,y)$ to~$\gamma\otimes\gamma$. It also comes with a canonical right module structure under the group ring~$\bQ[\pi_1(X^\an;x)]$. If we let~$I$ denote the augmentation ideal of~$\bQ[\pi_1(X^\an;x)]$ and let~$\Hom_\bQ(-,\bQ)$ denote the $\bQ$-linear dual, then the vector space
\[
{}_yH_x = \varinjlim_n\Hom_\bQ(\bQ[\pi_1(X^\an;x,y)]/I^{n+1},\bQ)
\]
is a commutative $\bQ$-algebra (whose multiplication is dual to the comultiplication on~$\bQ[\pi_1(X^\an;x,y)]$), and is a right comodule under ${}_xH_x$. Hain and Zucker construct an ind-mixed Hodge structure on~${}_yH_x$ for all~$x,y$ \cite[Proposition~3.21(ii)]{hain-zucker:unipotent_variations}, which we can view as an ind-mixed Hodge structure on the affine ring of~$\Iso^\otimes(\omega_x^\infty,\omega_y^\infty)$ via the following lemma.

\begin{lemma}\label{lem:H_is_affine_ring_of_tannakian_paths}
	There is a canonical isomorphism
	\[
	{}_yH_x \cong \cO(\Iso^\otimes(\omega_x^\infty,\omega_y^\infty))
	\]
	of $\bQ$-algebras for all~$x,y\in X(\bC)$, compatible with composition.
\end{lemma}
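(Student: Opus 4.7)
The plan is to prove this via Tannakian reconstruction for pro-unipotent groupoids, reducing to the standard identification of the coordinate ring of the $\bQ$-pro-unipotent Mal'cev completion of a discrete group with the continuous dual of its $I$-adically completed group algebra.

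First I would invoke the topological Riemann--Hilbert correspondence identifying $\Loc_\bQ^\un(X^\an)$ with the category $\mathrm{Rep}_\bQ^\un(\pi_1(X^\an;x))$ of finite-dimensional unipotent $\bQ$-representations of the topological fundamental group. Under this equivalence, $\omega_x^\infty$ is carried to the forgetful functor, while $\omega_y^\infty$ is carried to the functor that, after choosing any homotopy class of path $\gamma \in \pi_1(X^\an;x,y)$, agrees with the forgetful functor via parallel transport along $\gamma$. Since every finite-dimensional unipotent representation of $\pi_1(X^\an;x)$ factors canonically through its $\bQ$-pro-unipotent Mal'cev completion $\pi_1(X^\an;x)^{\mathrm{uni}}$, we obtain a further identification of $\Loc_\bQ^\un(X^\an)$ with the category of finite-dimensional representations of the pro-unipotent affine group $\pi_1(X^\an;x)^{\mathrm{uni}}$, with $\omega_x^\infty$ corresponding to the canonical fibre functor.

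Next I would appeal to the standard Tannakian reconstruction for a pro-unipotent affine group $U$ over $\bQ$: the scheme $\mathrm{Aut}^\otimes$ of tensor-automorphisms of the canonical fibre functor on $\mathrm{Rep}_\bQ^{\mathrm{fd}}(U)$ is canonically isomorphic to $U$, and for any second fibre functor $\omega'$, the scheme $\Iso^\otimes(\omega,\omega')$ is a right $U$-torsor. Applied to $U = \pi_1(X^\an;x)^{\mathrm{uni}}$ and $\omega' = \omega_y^\infty$, this exhibits $\Iso^\otimes(\omega_x^\infty,\omega_y^\infty)$ as the $\bQ$-pro-unipotent completion of the $\pi_1(X^\an;x)$-set of paths from $x$ to $y$. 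For the final step, I would use that the coordinate ring of the Mal'cev completion of any group $G$ is classically given by $\varinjlim_n \Hom_\bQ(\bQ[G]/I^{n+1},\bQ)$, with an entirely analogous description for its left torsors: transporting along $\gamma$ yields an isomorphism with $\varinjlim_n \Hom_\bQ(\bQ[\pi_1(X^\an;x,y)]/I^{n+1},\bQ) = {}_yH_x$. The algebra structure matches because both sides are dual to the cocommutative coalgebra structure on $\bQ[\pi_1(X^\an;x,y)]$ in which every path is grouplike, and compatibility with composition $({}_zH_y) \otimes ({}_yH_x) \to {}_zH_x$ follows from the fact that the concatenation-of-paths structure on the inverse system of group algebras induces, on duals, the Tannakian composition of path torsors. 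Independence of the construction from the choice of $\gamma$ is automatic from the torsor structure.

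The main obstacle is carrying out the Tannakian reconstruction at the correct level of generality: $\pi_1(X^\an;x)^{\mathrm{uni}}$ is genuinely a pro-unipotent object (an inverse limit of finite-dimensional unipotent groups), so $\mathrm{Rep}_\bQ^{\mathrm{fd}}$ means representations that factor through some finite-dimensional quotient, and one must verify that the matrix-coefficient coend formula $\cO(\Iso^\otimes(\omega_x^\infty,\omega_y^\infty)) = \int^V \Hom_\bQ(\omega_x^\infty(V),\omega_y^\infty(V))$ specialises, on the cofinal system of universal finite-length unipotent objects corresponding to $\bQ[\pi_1(X^\an;x)]/I^{n+1}$, to precisely the filtered colimit defining ${}_yH_x$. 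Once this matching of cofinal systems is in place, all remaining verifications are formal.
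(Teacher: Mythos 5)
Your proposal is correct and follows essentially the same route as the paper: both identify the Tannaka group of unipotent local systems at~$x$ with $\Spec({}_xH_x)$ (i.e., the $\bQ$-pro-unipotent Mal'cev completion of $\pi_1(X^\an;x)$, phrased in the paper via ${}_xH_x$-comodules) and then reduce the case of a general pair $(x,y)$ to the base-point case by transporting along a chosen path class $\gamma\in\pi_1(X^\an;x,y)$. The only substantive difference is that the paper makes canonicity manifest by first defining the comparison map as pullback along the path-lifting map $\alpha_{x,y}\colon\pi_1(X^\an;x,y)\to\Iso^\otimes(\omega_x^\infty,\omega_y^\infty)(\bQ)$ and using the chosen $\gamma$ only to verify that this canonical map is an isomorphism onto ${}_yH_x$, whereas you construct the isomorphism from $\gamma$ and then argue independence afterwards --- a harmless reordering of the same argument.
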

\begin{proof}
	This is surely well known; we nevertheless include a proof for the reader's convenience. Let us first handle the case that~$x=y$. In this case, the category of unipotent local systems on~$X^\an$ is, via the fibre functor at~$x$, equivalent to the category of unipotent representations of~$\pi_1(X^\an;x)$, i.e.~to the category of finite-dimensional $\bQ[\pi_1(X^\an;x)]$-modules on which some power of~$I$ acts trivially. This category is in turn equivalent to the category of finite-dimensional ${}_xH_x$-comodules, and then to the category of $\Spec({}_xH_x)$-representations. So by Tannakian duality, there is a canonical isomorphism $\Aut^\otimes(\omega_x^\infty)\cong\Spec({}_xH_x)$.
	
	This isomorphism can be described explicitly. Any element~$\gamma\in\pi_1(X^\an;x)$ determines an element of $\Spec({}_xH_x)(\bQ)$, namely the functional ${}_xH_x\to\bQ$ given by evaluation at~$\gamma$. Under the isomorphism $\Aut^\otimes(\omega_x^\infty)\cong\Spec({}_xH_x)$, this element corresponds to the natural isomorphism of~$\omega_x^\infty$ given by path-lifting along~$\gamma$. Since these elements of~$\Spec({}_xH_x)$ are Zariski-dense, this determines the isomorphism uniquely.
	
	Thus, for general~$x,y$, consider the function
	\[
	\alpha_{x,y}\colon \pi_1(X^\an;x,y) \to \Iso^\otimes(\omega_x^\infty,\omega_y^\infty)(\bQ)
	\]
	given by path-lifting. For varying~$x,y$, these functions are compatible with composition of paths/isomorphisms. Pullback along~$\alpha_{x,y}$ then defines a morphism of $\bQ$-algebras
	\[
	\alpha_{x,y}^*\colon\cO(\Iso^\otimes(\omega_x^\infty,\omega_y^\infty)) \to \Hom_\bQ\bigl(\bQ[\pi_1(X^\an;x,y)],\bQ\bigr) \,.
	\]
	We have shown that when~$y=x$, then~$\alpha_{x,x}^*$ is an isomorphism onto~${}_xH_x\subseteq\Hom_\bQ(\bQ[\pi_1(X^\an;x)])$. To show the same in general, let us choose any path~$\gamma\in\pi_1(X^\an;x,y)$. Composition with~$\alpha_{x,y}(\gamma)$ defines an isomorphism $\Aut^\otimes(\omega_x^\infty)\xrightarrow\sim\Iso^\otimes(\omega_x^\infty,\omega_y^\infty)$, while composition with~$\gamma$ defines an isomorphism $\bQ[\pi_1(X^\an;x)]\xrightarrow\sim\bQ[\pi_1(X^\an;x,y)]$. These isomorphisms fit together into a commuting square
	\begin{center}
	\begin{tikzcd}
		\cO(\Aut^\otimes(\omega_x^\infty)) \arrow[r,"\alpha_{x,x}^*"]\arrow[d,equal,"\wr"] & \Hom_\bQ(\bQ[\pi_1(X^\an;x)],\bQ) \arrow[d,equal,"\wr"] \\
		\cO(\Iso^\otimes(\omega_x^\infty,\omega_y^\infty)) \arrow[r,"\alpha_{x,y}^*"] & \Hom_\bQ(\bQ[\pi_1(X^\an;x,y)],\bQ) \,.
	\end{tikzcd}
	\end{center}
	The top arrow is injective with image~${}_xH_x$; this implies that the bottom arrow is also injective with image~${}_yH_x$, so we have constructed the desired isomorphism.
\end{proof}

The compatibility between the groupoid ${}_?P_?^\infty$ we have constructed and the Tannakian construction then reads as follows.

\begin{proposition}\label{lem:realisation_vs_tannakian_Hodge}
\label{prop:realisation_vs_tannakian_Hodge}
	Let~$X/\bC$ be a smooth connected variety. Then there is an isomorphism
	\[
	{}_yP_x^\infty \cong \Iso^\otimes(\omega_x^\infty,\omega_y^\infty)
	\]
	for all~$x,y\in X(\bC)$, compatible with ind-mixed Hodge structures on affine rings, and with the right action of~${}_xP_x^\infty\cong\Iso^\otimes(\omega_x^\infty,\omega_y^\infty)$ on either side.
\end{proposition}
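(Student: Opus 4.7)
The plan is to proceed in two stages in close analogy with Lemma~\ref{lem:realisation_vs_tannakian_etale}: first identify the underlying affine $\bR$-schemes, then show that the identification is compatible with the ind-mixed Hodge structures on affine rings.

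For the first step, we forget down to $\bR$-vector spaces via the fibre functor on $\MHS_\bR$. Under this forgetting, the Hodge realisation $C^*_H(X)$ becomes naturally equivalent, as a commutative algebra in $\Dd(\bR)$, to the usual $\bR$-valued singular cochain algebra $C^*(X^{\an};\bR)$ (or, by de Rham, to Sullivan's polynomial de Rham algebra). Applying the construction of Paragraph~\ref{30611A} to this cdga recovers, by the Sullivan--Chen bar construction, the affine $\bR$-pro-unipotent fundamental groupoid of $X^{\an}$. This mirrors exactly the $l$-adic story: the key input is that $H^0$ of the double-sided bar construction $\one\otimes_A\one$ on a cohomologically connected commutative cochain algebra computes the coordinate ring of the Mal\u cev completion of the fundamental groupoid, an argument which runs identically in the Hodge and \'etale settings once the Hinich--Positselski Koszul duality (cf.\ the corrigendum \cite{DCSchlankCorr} to~\cite{dancohen2021rational}) is in hand. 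Combined with the Tannakian interpretation of the $\bR$-pro-unipotent completion via unipotent $\bR$-local systems, this identifies $\omega({}_yP_x^\infty)$ with $\omega(\Iso^\otimes(\omega_x^\infty,\omega_y^\infty))$ canonically, compatibly with composition and with the right torsor structures.

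For the second step, we must show that this canonical underlying isomorphism lifts to an isomorphism in $\mathrm{Ind}\,\MHS_\bR$, i.e.\ that the ind-MHS on $H^0({}_yB_x)$ coming through $C^*_H$ agrees with the Hain--Zucker ind-MHS on ${}_yH_x$ constructed in~\cite{hain-zucker:unipotent_variations}. The natural way to bridge the two is via Navarro Aznar's Hodge--de Rham functor, which is known to provide a lax symmetric monoidal enhancement of the real de Rham cdga of $X^{\an}$ to a mixed Hodge cdga; applying the bar construction internally to mixed Hodge cdga's yields precisely Hain--Zucker's MHS. It therefore suffices to exhibit a symmetric monoidal equivalence between the Hodge realisation~$C^*_H$ and Navarro Aznar's construction at the level of commutative algebras in $\Dd_\infty$. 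Once this is available, functoriality of the bar construction in symmetric monoidal $\infty$-functors, together with the uniqueness of a MHS inducing a prescribed weight filtration on a connected pro-unipotent group (cf.~\cite[\S3]{hain-zucker:unipotent_variations}), forces the two MHS to coincide; compatibility with composition and with the action of ${}_xP_x^\infty$ is then automatic by naturality on both sides.

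The main obstacle is the second step: identifying, in a symmetric monoidal way, the Hodge realisation of a smooth complex variety (as an object of $\CAlg\,\Dd_\infty$) with Navarro Aznar's mixed Hodge cdga. The underlying quasi-isomorphism of cdga's is classical, but promoting it to a coherent equivalence of $E_\infty$-algebras in mixed Hodge complexes appears to require either a Godement-style resolution argument carried out in the $\infty$-categorical framework of Drew~\cite{drew2018motivic}, or a direct appeal to the rigidity of lax symmetric monoidal lifts of the real de Rham functor through the forgetful functor $\MHS_\bR\to\Vect_\bR$. This is closely analogous to the foundational gap flagged in the remark preceding Section~\ref{sec:ReviewRatMot}, and resolving it cleanly is the crux of the proof.
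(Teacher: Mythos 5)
Your first step is sound and is essentially what the paper also uses (our Lemma~\ref{lem:MCG_equivariant_iso} rests on Theorem~6.4.5 of \cite{dancohen2021rational}): after forgetting to $\bR$-vector spaces, $H^0$ of the two-sided bar construction on the cochain algebra is the affine ring of the Mal\u cev/Tannakian path torsor. But your second step contains a genuine gap, and you identify it yourself: the whole comparison with the Hain--Zucker mixed Hodge structure is made to rest on a symmetric monoidal identification of the Hodge realisation $C^*_H$ (as an object of $\CAlg\,\Dd_\infty$) with Navarro Aznar's mixed Hodge cdga, which you do not prove and only describe as ``the crux''. As written, the proposal therefore does not establish the proposition. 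Moreover, even granting that identification, the rigidity statement you appeal to --- ``uniqueness of a MHS inducing a prescribed weight filtration on a connected pro-unipotent group'' --- is not sufficient as stated: two mixed Hodge structures with the same weight filtration can differ in their Hodge filtrations, and to apply strictness one needs an actual morphism of (ind-)mixed Hodge structures between the two candidate structures, which your argument never produces. Finally, the claim that compatibility with counits, composition and the $\Iso^\otimes(\omega_x^\infty,\omega_x^\infty)$-action is ``automatic by naturality'' glosses over a real subtlety: in the paper's proof the identification of local systems need not respect the augmentation a priori, and one has to twist by a group element (acting on the right) before any uniqueness statement can be invoked.

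The paper takes a route that avoids the Navarro Aznar comparison altogether. It works in Tubach's $\infty$-categorical six-functor formalism for mixed Hodge modules, forms the \emph{relative} bar construction ${}_\Delta\tilde B^H_{\tilde x}$ over $X$ for the second projection $X\times X\to X$ with its two sections $\Delta$ and $\tilde x$, and shows that $\Hh^0_\ctr$ of this object is an admissible ind-variation of mixed Hodge structure whose fibre at $y$ is $H^0({}_yB^H_x)$ (Proposition~\ref{HB32}, via cellular/compactness arguments). The underlying ind-local system is then identified with the universal unipotent ind-local system $\bV$ (whose fibres are the ${}_yH_x$) by a mapping-class-group/point-pushing argument (Lemmas~\ref{lem:MCG_action_is_monodromy_action} and~\ref{lem:MCG_action_is_left_regular}), and the Hodge structure is pinned down by a rigidity statement specific to this situation: by Hain--Zucker's classification of admissible unipotent variations, $\bV$ carries a \emph{unique} admissible ind-variation for which evaluation at $1\in\pi_1(X^\an;x)$ is a morphism of ind-MHS (Lemma~\ref{lem:unique_MHS}), and a unique compatible algebra/comodule structure (Lemma~\ref{lem:unique_comodule_algebra}). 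Since both the Hain--Zucker structure and the one coming from the Hodge realisation satisfy this after the twisting mentioned above, they coincide. If you want to salvage your approach, you would need to actually construct the coherent symmetric monoidal comparison with Navarro Aznar's functor (a nontrivial foundational task of the kind flagged in the introduction), or else replace your step two by a family/variation argument together with a precise rigidity statement of the Hain--Zucker type rather than the weight-filtration uniqueness you cite.
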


The proof of Proposition \ref{lem:realisation_vs_tannakian_Hodge} is in paragraph \ref{IA33}. This will take rather more effort to prove than its \'etale counterpart, largely because our construction of~${}_yP_x^\infty$ is very abstract while Hain and Zucker's construction of the mixed Hodge structure on~${}_yH_x$ is very explicit and concrete. The key to bridging the two definitions is an abstract characterisation of the Hain--Zucker mixed Hodge structure. For this, fix a point~$x\in X(\bC)$ and let~$\bV$ denote the ind-local system on~$X^\an$ corresponding to~${}_xH_x$ with the left-regular action of~$\pi_1(X^\an;x)$. The fibre of~$\bV$ at any point~$y\in X(\bC)$ is canonically identified with~${}_yH_x$, and the Hain--Zucker mixed Hodge structure on~${}_yH_x$ arises from an admissible variation of ind-mixed Hodge structure on~$\bV$ \cite[Definition~4.21(ii)]{hain-zucker:unipotent_variations}. It turns out that this ind-mixed Hodge structure is unique.

\begin{lemma}\label{lem:unique_MHS}
	There is a unique admissible variation of ind-mixed Hodge structure on the ind-local system~$\bV$ for which evaluation at $1\in\pi_1(X^\an;x)$ is a morphism of ind-mixed Hodge structures ${}_xH_x\to\bQ(0)$.
\end{lemma}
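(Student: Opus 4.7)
The approach is a rigidity argument that reduces the uniqueness of the ind-VMHS on $\bV$ to uniqueness of the induced MHS on the single fibre ${}_xH_x$. Given two admissible ind-VMHS structures $\Hh_1$ and $\Hh_2$ on $\bV$ satisfying the normalisation condition, the internal Hom $\uHom_{\mathrm{ind}\text{-}\mathrm{Loc}}(\Hh_1, \Hh_2)$ carries a canonical admissible ind-VMHS structure with underlying ind-local system $\uHom_{\mathrm{ind}\text{-}\mathrm{Loc}}(\bV, \bV)$ (admissibility is stable under internal Homs at each finite stage of the ind-system). The identity of $\bV$ is a horizontal global section $s$, and an isomorphism $\Hh_1 \xrightarrow{\sim} \Hh_2$ of ind-VMHS is precisely the assertion that $s$ is a morphism of ind-VMHS. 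By the rigidity of horizontal sections of an admissible VMHS over a connected smooth base (the theorem of the fixed part), $s$ is a morphism of ind-VMHS if and only if its value $s_x$ at $x$ is a morphism of MHS. Thus it suffices to show that $\Hh_1$ and $\Hh_2$ induce the same MHS on ${}_xH_x$.

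For uniqueness of the MHS on the fibre, I would proceed by induction on the unipotency depth, filtering $\bV$ by the sub-local systems $\bV_n = \Hom(\bQ[\pi_1(X^{\an}, x)]/I^{n+1}, \bQ)$ with $\bV = \varinjlim \bV_n$. For $n = 0$, one has $\bV_0 = \bQ_X$, and the normalisation condition that $\mathrm{eval}_1 \colon \bV_{0,x} \to \bQ$ be a morphism of MHS into $\bQ(0)$ forces the VMHS structure on $\bV_0$ to be $\bQ_X(0)$. For the inductive step, the Hopf algebra structure on $\bQ[\pi_1(X^{\an}, x)]$ dualises to natural structure maps between the $\bV_n$ (multiplication and comultiplication) which are morphisms of ind-local systems and hence, by functoriality, morphisms of ind-VMHS with respect to either $\Hh_i$. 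Combined with the identification $\bV_1/\bV_0 \cong H^1(X, \bQ)^\vee \otimes \bQ_X$ carrying its canonical MHS (pinned down by comparison of $\Ext^1_{\mathrm{VMHS}(X)}(\bQ_X, \bQ_X)$ with the classical MHS on $H^1(X, \bQ)$), these Hopf-algebra compatibilities rigidify the extension data of $\bV_n$ at each stage.

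The main obstacle is the verification that the Hopf-algebra compatibility of the structure maps really does rigidify the extension classes uniquely at each inductive stage. A more conceptual alternative, which is arguably the cleanest route and essentially the intended proof, is via the Tannakian reformulation implicit in Hain--Zucker: the category $\mathrm{VMHS}^{\mathrm{un},\mathrm{adm}}(X)$ is equivalent via the fibre functor $\omega_x^\infty$ to the category of ind-MHS modules over a pro-unipotent Hopf algebra $\cO(\pi_1^{\mathrm{MHS}}(X, x))$ in ind-MHS. Under this equivalence, $\bV$ with a compatible ind-VMHS structure corresponds to the left-regular representation of $\pi_1^{\mathrm{MHS}}(X, x)$, which as a Hopf algebra object is determined uniquely up to unique isomorphism preserving its counit; the normalisation condition is precisely the assertion that this counit agrees with the given evaluation at $1$, eliminating the remaining automorphism ambiguity.
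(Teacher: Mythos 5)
Your main line of argument (the reduction to the fibre followed by induction on unipotency depth) has a genuine gap, and it is the one you yourself flag. The claim that the multiplication and comultiplication maps between the $\bV_n$ are ``by functoriality'' morphisms of ind-VMHS with respect to an arbitrary candidate structure $\Hh_i$ is unjustified: a map of underlying ind-local systems is not automatically a morphism of variations, and this compatibility is exactly the kind of statement that has to be proved --- in the paper it is only established \emph{afterwards}, in Lemma~\ref{lem:unique_comodule_algebra}, as a consequence of the uniqueness statement, not as an input to it. Without it the inductive rigidification of the extension data of the $\bV_n$ does not get off the ground; note that the only hypothesis in the lemma is that the counit $\varepsilon\colon{}_xH_x\to\bQ(0)$ is a morphism of MHS, and that condition alone places no constraint on the higher stages. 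Your paragraph-one rigidity reduction (theorem of the fixed part for admissible VMHS) is legitimate, but it only reduces the problem to the fibre; it does not supply the fibrewise uniqueness.

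Your closing Tannakian paragraph points at the right tool --- Hain--Zucker's Theorem~1.6, which is precisely what the paper uses --- but as written it asserts the conclusion rather than proving it: saying that $\bV$ with the unknown structure ``corresponds to the left-regular representation'' \emph{is} the statement that its fibre MHS equals the Hain--Zucker one, and the rigidity of $\cO(\pi_1^{\mathrm{MHS}}(X,x))$ as a Hopf-algebra object does not address this, since the unknown structure is not assumed to be a Hopf-algebra object in ind-MHS (only its underlying local system carries that structure). The missing step, which is essentially the paper's entire proof, is short: by the Hain--Zucker equivalence the coaction $\Delta\colon V\to{}_xH_x\otimes V$ attached to any admissible structure is a morphism of ind-MHS (Hain--Zucker structure on the left-hand factor), hence $(1\otimes\varepsilon)\circ\Delta=\mathrm{id}_{{}_xH_x}$ is a morphism of ind-MHS from the unknown structure to the Hain--Zucker one, and a bijective morphism of mixed Hodge structures is an isomorphism by strictness. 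Once Theorem~1.6 is invoked in this way, the fixed-part reduction of your first paragraph becomes unnecessary, since the equivalence already reduces everything to the fibre at~$x$.
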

\begin{proof}
	We saw in the proof of Lemma~\ref{lem:H_is_affine_ring_of_tannakian_paths}, that the category of unipotent local systems on~$X^\an$ is equivalent to the category of ${}_xH_x$-comodules which are finite-dimensional as $\bQ$-vector spaces. By \cite[Theorem~1.6]{hain-zucker:unipotent_variations}, the category of admissible unipotent\footnote{We follow here the convention of \cite{hain-zucker:unipotent_variations} that the adjective ``unipotent'' applied to variations of mixed Hodge structure means that the underlying local system is unipotent.} variations of $\bQ$-mixed Hodge structure is equivalent to the category of mixed Hodge structures~$V$ endowed with a ${}_xH_x$-comodule structure for which the comultiplication $V\to {}_xH_x\otimes V$ is a morphism of ind-mixed Hodge structures. (The mixed Hodge structure on~${}_xH_x$ is the Hain--Zucker one.)
	
	The ind-local system~$\bV$ corresponds to the ${}_xH_x$-comodule $V={}_xH_x$ with its left-regular coaction. We need to show that there is a unique ind-mixed Hodge structure on~$V$ for which both the comultiplication $\Delta\colon V\to {}_xH_x\otimes V$ and the counit $\varepsilon\colon V\to\bQ(0)$ are morphisms of mixed Hodge structures. On the one hand, the Hain--Zucker ind-mixed Hodge structure on~$V={}_xH_x$ has this property. On the other, given another such ind-mixed Hodge structure on~$V$, the composition
	\[
	V \xrightarrow{\Delta} {}_xH_x\otimes V \xrightarrow{1\otimes\varepsilon} {}_xH_x
	\]
	is a morphism of ind-mixed Hodge structures. Forgetting mixed Hodge structures, this composition is equal to the identity on~${}_xH_x$, and so we have shown that the identity map~$V\to{}_xH_x$ is a morphism of ind-mixed Hodge structures. It is therefore an isomorphism of ind-mixed Hodge structures \cite[Corollary~3.7]{PetersSteenbrink}, i.e.\ the ind-mixed Hodge structure on~$V$ agrees with the Hain--Zucker mixed Hodge structure on~${}_xH_x$ as desired.
\end{proof}

The ind-local system~$\bV$, moreover, comes with additional algebraic and coalgebraic structures.

\begin{lemma}\label{lem:unique_comodule_algebra}
	There is a unique commutative $\bQ$-algebra structure on~$\bV$ for which the map $\varepsilon\colon\bV_x\to\bQ$ is a homomorphism of $\bQ$-algebras. There is also a unique morphism
	\[
	\Delta\colon \bV \to \bV\otimes\bV_x
	\]
	of ind-local systems such that the fibre of~$\Delta$ at~$x$ makes~$\bV_x$ into a coalgebra with counit~$\varepsilon$. Moreover, the map~$\Delta$ is a morphism of algebras and makes~$\bV$ into a right $\bV_x$-comodule. Finally, both the algebra and comodule structures on~$\bV$ are compatible with the unique variation of mixed Hodge structure afforded by Lemma~\ref{lem:unique_MHS}.
\end{lemma}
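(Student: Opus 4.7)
The approach is to identify $\bV_x = {}_xH_x$ with the affine ring $\mathcal{O}(U^\infty)$ via Lemma~\ref{lem:H_is_affine_ring_of_tannakian_paths}, giving it the canonical commutative Hopf algebra structure of a pro-unipotent algebraic group. The left-regular action of $\pi_1(X^\an;x)$ on $\bV_x$ is by Hopf algebra automorphisms, since it factors through a Zariski-dense subgroup of $U^\infty(\bQ)$, so the Hopf multiplication $m\colon\mathcal{O}(U^\infty)^{\otimes 2}\to\mathcal{O}(U^\infty)$ and the comultiplication $\Delta_0(f)(u,v) = f(uv)$ are $\pi_1$-equivariant (with the target of $\Delta_0$ carrying the left-regular action on its first factor and the trivial action on its second), and descend to morphisms $m\colon\bV\otimes\bV\to\bV$ and $\Delta\colon\bV\to\bV\otimes\bV_x$ of ind-local systems. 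The assertions that $\varepsilon$ is an algebra homomorphism and that $\Delta_0$ has counit $\varepsilon$ are then Hopf algebra axioms.

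For uniqueness, the key observation is the identity $f(u) = \varepsilon(u^{-1}\cdot f)$, which together with $\pi_1$-equivariance pins down each candidate fibre map. If $m'$ is another commutative $\bQ$-algebra structure on $\bV$ for which $\varepsilon$ is a homomorphism, then for $f,g\in\mathcal{O}(U^\infty)$ and $u\in U^\infty(\bQ)$ in the Zariski-dense image of $\pi_1(X^\an;x)$,
\[
m'_x(f\otimes g)(u) = \varepsilon\bigl(u^{-1}\cdot m'_x(f\otimes g)\bigr) = \varepsilon\bigl(m'_x(u^{-1}\cdot f\otimes u^{-1}\cdot g)\bigr) = f(u)g(u),
\]
so $m'_x$ is pointwise multiplication and hence $m'=m$. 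Analogously, if $\Delta'\colon\bV\to\bV\otimes\bV_x$ has a counital coalgebra structure with counit $\varepsilon$ on $\bV_x$ as its fibre, then $\pi_1$-equivariance combined with the single counit axiom $(\varepsilon\otimes\mathrm{id})\circ\Delta'_x = \mathrm{id}$ yields $\Delta'_x(f)(u,v) = \Delta'_x(u^{-1}\cdot f)(e,v) = (u^{-1}\cdot f)(v) = f(uv)$, forcing $\Delta'_x=\Delta_0$ and hence $\Delta' = \Delta$. The commutativity of $m$, the algebra-homomorphism property of $\Delta$, and the right $\bV_x$-comodule axiom all reduce to identities that hold on the fibre $\bV_x$ by virtue of $\mathcal{O}(U^\infty)$ being a Hopf algebra, and propagate to $\bV$ via the uniqueness of such equivariant morphisms.

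For compatibility with the admissible unipotent variation of mixed Hodge structure afforded by Lemma~\ref{lem:unique_MHS}, I will invoke the equivalence of~\cite[Theorem~1.6]{hain-zucker:unipotent_variations} between admissible unipotent ind-variations of mixed Hodge structure on $X^\an$ and MHS-compatible ${}_xH_x$-comodules in mixed Hodge structures. The Hain--Zucker ind-mixed Hodge structure on ${}_xH_x = \mathcal{O}(U^\infty)$ is constructed so that both its multiplication and its comultiplication are morphisms of ind-MHS, so $m_x$ and $\Delta_x$ lift to morphisms in the Tannakian category of admissible variations, and this lift agrees with $m$ and $\Delta$ by the uniqueness arguments above. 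The main obstacle I anticipate is verifying carefully that the uniqueness argument for $\Delta$ indeed uses only the one counit axiom as stated (without needing coassociativity), and that the appeal to Hain--Zucker genuinely supplies MHS compatibility for the comultiplication $\Delta$, not merely the algebra structure $m$.
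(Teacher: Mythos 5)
Your proof is correct and is essentially the paper's argument in dual form: the paper pins the maps down by working with the truncated group rings $\bQ[\pi]/I^{n+1}$ and the fact that $1$ (resp.\ the tensors $1\otimes x$) generates them as $\pi$-representations, which is precisely your evaluation identity $f(u)=\varepsilon(u^{-1}\cdot f)$; the existence step via the Hopf structure, the fibrewise verification of commutativity, the algebra/comodule axioms, and the appeal to Hain--Zucker Theorem~1.6 for compatibility with the mixed Hodge structure are all the same, and your worry about the counit axiom is unfounded since the paper's uniqueness also uses only the single (left) counit condition. One small caveat: your opening claim that the left-regular $\pi_1$-action on $\bV_x$ is by \emph{Hopf} algebra automorphisms is not literally true (left translation preserves the product but not the coproduct), but this is harmless because what you actually use is the correctly stated equivariance of the multiplication for the diagonal action and of $\Delta_0$ for the action on the first tensor factor only, with trivial action on the second.
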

\begin{proof}
	Although this can be phrased as a general fact about Tannakian categories, we just prove it in the case of unipotent local systems for concreteness. Throughout the proof, we will conflate local systems with representations of the fundamental group~$\pi\coloneqq\pi_1(X^\an;x)$ as usual.
	
	For the algebra structure, let~$\bV_n$ be the dual of $\bQ[\pi]/I^{n+1}$, so that~$\bV=\varinjlim_n\bV_n$. Let~$\varepsilon_n\colon\bV_n\to\bQ$ be evaluation at~$1\in\pi$. We claim that for each $i+j\leq n$, there exists a unique $\pi$-equivariant map $\mu_{i,j,n}\bV_i\otimes\bV_j\to\bV_n$ such that the square
	\begin{center}
	\begin{tikzcd}
		\bV_i\otimes\bV_j \arrow[r,"\mu_{i,j,n}"]\arrow[d,"\varepsilon_i\otimes\varepsilon_j"] & \bV_n \arrow[d,"\varepsilon_n"] \\
		\bQ\otimes\bQ \arrow[r,equal] & \bQ
	\end{tikzcd}
	\end{center}
	commutes. Dually, there exists a unique $\pi$-equivariant map
	\[
	\mu_{i,j,n}^*\colon \bQ[\pi]/I^{n+1} \to \bigl(\bQ[\pi]/I^{i+1}\bigr)\otimes\bigl(\bQ[\pi]/I^{j+1}\bigr)
	\]
	such that $\mu_{i,j,n}^*(1)=1\otimes1$. Uniqueness of such a map~$\mu_{i,j,n}^*$ is clear because~$1$ generates $\bQ[\pi]/I^{n+1}$ as a $\pi$-representation. For existence, the comultiplication on the Hopf algebra $\bQ[\pi]$ carries the ideal~$I^{n+1}$ into $(I^{i+1}\otimes\bQ[\pi])\oplus(\bQ[\pi]\otimes I^{j+1})$, and so factors through a $\pi$-equivariant map~$\mu_{i,j,n}^*$ as above. This proves the claim.
	
	Taking a colimit first in~$n$, and then in~$i,j$, we find that there is a unique $\pi$-equivariant map $\mu\colon\bV\otimes\bV\to\bV$ such that the square
	\begin{equation}\label{eq:counit_and_multiplication}\tag{$\ast$}
	\begin{tikzcd}
		\bV\otimes\bV \arrow[r,"\mu"]\arrow[d,"\varepsilon\otimes\varepsilon"] & \bV_n \arrow[d,"\varepsilon"] \\
		\bQ\otimes\bQ \arrow[r,equal] & \bQ
	\end{tikzcd}
	\end{equation}
	commutes. This map~$\mu$ is the multiplication for a commutative algebra structure on~$\bV$, because it is induced by the comultiplication on the cocommutative Hopf algebra~$\bQ[\pi]$. Moreover, $\varepsilon$ is a homomorphism of commutative algebras because it is dual to the coalgebra homomorphism $\bQ\to\bQ[\pi]$ sending~$1\in\bQ$ to~$1\in\pi$. This implies that~$\mu$ is the unique $\pi$-equivariant algebra structure on~$\bV$ for which~$\varepsilon$ is a homomorphism, because it is even the unique $\pi$-equivariant map making~\eqref{eq:counit_and_multiplication} commute.
	
	For the comodule structure, we claim that for each $m\leq n$, there exists a unique $\pi$-equivariant map $\Delta_{m,n}\colon \bV_m\to\bV_n\otimes\bV_{n,x}$ such that
	\begin{center}
	\begin{tikzcd}
		\bV_{m,x} \arrow[r,"\Delta_{m,n,x}"]\arrow[rd,hook] & \bV_{n,x}\otimes\bV_{n,x} \arrow[d,"\varepsilon_n\otimes1_{\bV_{n,x}}"] \\
		 & \bV_{n,x}
	\end{tikzcd}
	\end{center}
	commutes. (Note that $\bV_{m,x}$ is the underlying vector space of~$\bV_m$, thought of as a trivial $\pi$-representation.) Dually, there exists a unique $\pi$-equivariant map
	\[
	\Delta_{m,n}^*\colon \bigl(\bQ[\pi]/I^{n+1}\bigr)\otimes\bigl(\bQ[\pi]/I^{n+1}\bigr)^{\sim}\to\bQ[\pi]/I^{m+1}
	\]
	such that $\Delta_{m,n}^*(1\otimes x)=x$ mod~$I^{m+1}$ for all~$x\in\bQ[\pi]/I^{n+1}$, where the superscript~$\sim$ denotes trivial $\pi$-action. The uniqueness of~$\Delta_{m,n}^*$ is clear, since the tensors of the form $1\otimes x$ generate $\bigl(\bQ[\pi]/I^{n+1}\bigr)\otimes\bigl(\bQ[\pi]/I^{n+1}\bigr)^{\sim}$. For existence, multiplication in the Hopf algebra~$\bQ[\pi]$ induces a $\pi$-equivariant map with the desired property.
	
	Taking a colimit first in~$n$, and then in~$m$, we find that there is a unique $\pi$-equivariant map $\Delta\colon\bV\to\bV\otimes\bV_x$ such that
	\begin{equation}\label{eq:counit_and_comultiplication}\tag{$\ast\ast$}
	\begin{tikzcd}
		\bV_x \arrow[r,"\Delta_x"]\arrow[rd,equal] & \bV_x\otimes\bV_x \arrow[d,"\varepsilon\otimes1_{\bV_x}"] \\
		& \bV_x
	\end{tikzcd}
	\end{equation}
	commutes. The fibre~$\Delta_x$ induces a coalgebra structure on~$\bV_x$ with counit~$\varepsilon$ because it is induced from the multiplication on the Hopf algebra $\bQ[\pi]$. This implies that~$\Delta$ is the unique $\pi$-equivariant map inducing such a coalgebra structure on~$\bV_x$, because it is even the unique $\pi$-equivariant map making~\eqref{eq:counit_and_comultiplication} commute.
	
	Finally, the fact that~$\Delta$ is a morphism of algebras and that it makes~$\bV$ into a right $\bV_x$-comodule follow from the dual properties of the Hopf algebra~$\bQ[\pi]$. The fact that both~$\mu$ and~$\Delta$ are morphisms of admissible variations of ind-mixed Hodge structures is a consequence of the fact that the Hain--Zucker ind-mixed Hodge structure on~$\bV_x$ is compatible with the Hopf algebra operations and \cite[Theorem~1.6]{hain-zucker:unipotent_variations}.
\end{proof}

Our strategy for proving Lemma~\ref{lem:realisation_vs_tannakian_Hodge}, then, is to show that the affine rings of the torsors ${}_yP_x^\infty$ we have constructed vary in an admissible ind-variation of mixed Hodge structure on~$X$, and that the underlying ind-local system on~$X^\an$ is the universal one~$\bV$ above. This we will do in the coming section.

\section{Variation of base point in the Hodge realisation}\label{s:VMHS}

\spar{HB21}
	We continue working over $\CC$. For $g\colon Y \to \CC$ a finite type $\CC$-scheme, we let $\MHM(Y)$ denote the category of mixed Hodge modules on $Y_\m{red}$, for which our main references are Saito \cite{saito1990mixed} and Peters--Steenbrink \cite{PetersSteenbrink}. We abbreviate $\MHM(\Spec(\CC))$ to $\MHM$ for short; this is the category of ``absolute mixed Hodge modules'', better known as graded polarizable mixed Hodge structures. For our present purposes we have no need of any schemes that are not smooth over $\CC$.
	
\label{HB22}
	Tubach \cite{tubach2023nori} constructs an \oo-categorically enhanced six functor formalism 
	\[
	Y \mapsto \Dd^b(\MHM(Y))
	\]
	on the category of finite type $\CC$-schemes. In particular, $\Dd^b(\MHM(Y))$ is a symmetric monoidal stable \oo-category whose underlying stable \oo-category is the bounded derived \oo-category of $\MHM(Y)$. Pullback functors are monoidal and have right adjoint lax-monoidal pushforward functors.

\spar{HB23}
	Suppose $g\colon Y\to \Spec(\CC)$ is smooth of dimension $d$. Then for $L \in \MHM$, $g^*L$ belongs to the shifted heart $\MHM(Y)[-d]$. We define an object $M \in \MHM(Y)[-d]$ to be \emph{unipotent} if it admits a filtration by subobjects in the abelian category $\MHM(Y)[-d]$ such that $\gr M \simeq g^*L$ for some $L \in \MHM$.\footnote{Such objects are more often called \textit{relatively unipotent}; we choose to drop the adjective ``relatively'', which would otherwise repeat itself incessantly below.} 
	
	If $M \in \MHM(Y)[-d]$ is unipotent, then the underlying perverse analytic sheaf $\rat(M[d])$ of $M[d]$ admits a filtration by perverse subsheaves such that $\gr \rat M[d] \simeq \QQ_X^{\oplus n}[d]$ is a constant sheaf shifted by $d$. It follows that $\rat M[-d]$ is a locally constant sheaf shifted by $-d$. Hence there's a polarizable variation of Hodge structures $V=(\VV, \Ff^\bullet)$ such that 
	\[
	M[-d] = V^\m{Hdg}  
	\]
	is the associated Hodge module as in Definition 14.53 of Peters--Steenbrink \cite{PetersSteenbrink}. Hence, for every $\CC$-point $y\colon \Spec(\CC) \to X$, $y^*M$ is concentrated in degree $0$.
	
	We denote the full subcategory of $\MHM(Y)[-d]$ spanned by unipotent objects by $\MHM_\un(Y)$.

\spar{HB25}
	We define
	\[
	\Dd^{b, \ct, \le 0} \MHM(Y)
	\subset
	\Dd^b \MHM(Y)
	\]
	to be the full \oo-subcategory spanned by those objects $M$ such that for every $\CC$-point $x\colon \Spec(\CC) \to Y$, $x^*M \in \Dd^{b,\le 0} \MHM$, and we define $\Dd^{b, \ct, \ge 0} \MHM(Y)$ similarly. According to Section 4.6, Remark 2, pp.~328-329 of Saito \cite{saito1990mixed}, this defines a second t-structure, the \emph{constructible t-structure}. We denote its heart by
	\[
	\MHM_\ctr(Y).
	\]
	We denote the associated truncation functors by $\tau^{\le 0}_\ctr$, $\tau^{\ge 0}_\ctr$, and $\Hh^0_\ctr = \tau^{\le 0}_\ctr \circ \tau^{\ge 0}_\ctr \simeq \tau^{\ge 0}_\ctr \circ \tau^{\le 0}_\ctr$. Corollary 2.19 of Tubach \cite{tubach2023nori} furnishes an equivalence
	\[
	\Dd^b(\MHM_\ctr(Y)) \simeq \Dd^b(\MHM(Y)).
	\]
\label{zzz}
	Referring back to paragraph \ref{HB23}, we find that for $Y$ smooth,
	\[
	\MHM_\un(Y) \subset \MHM_\ctr(Y) \,.
	\]

\spar{HB27}
	For $Y$ a finite type $\CC$-scheme, the ind-completion 
	\[
	\Ind \Dd^b \MHM(Y)
	\]
	is (in a natural way) symmetric monoidal \cite[Corollary 4.8.1.14]{LurieAlg} and stable \cite[Proposition 1.1.3.6]{LurieAlg}. The t-structures (perverse and constructible) extend to $\Ind \Dd^b \MHM(Y)$ by defining $\Ind \Dd^b \MHM(Y)^{\ge 0}$ to be the essential image of the fully faithful functor
	\[
	\Ind \Dd^{b, \ge 0} \MHM(Y) 
	\to 
	\Ind \Dd^b \MHM(Y),
	\]
	and similarly for $\Ind \Dd^b \MHM(Y)^{\le 0}$ \cite[Lemma C.2.4.3]{lurie2018spectral}.
	Given $g: Y \to Y'$, there's an induced lax-monoidal pushforward functor
	\[
	Rg_*\colon \Ind \Dd^b\MHM(Y) \to \Ind \Dd^b \MHM(Y').
	\]
	In particular, if $\one_Y \in \MHM(Y) \subset \Ind \Dd^b \MHM(Y)$ denotes the unit object endowed with its essentially canonical algebra structure, then $Rg_* \one_Y$ has an induced structure of commutative algebra in
	\[
	\Ind \Dd^b \MHM(Y').
	\]
	The commutative algebra $Rg_*\one_Y$ is covariantly functorial in $Y \in \Sm_\CC$. There's also a monoidal pullback functor
	\[
	\Ind \Dd^b \MHM(Y) \xfrom{g^*} \Ind \Dd^b \MHM(Y')
	\]
	which is right adjoint to $Rg_*$.

\label{HB28}
When $Y' = \Spec(\CC)$ and $g\colon Y \to \Spec(\CC)$ is the structure morphism of $Y$, there's an equivalence
	\[
	Rg_* \one_Y \simeq C^*_H(Y, \QQ)
	\] 
	in $\CAlg \Ind \Dd^b (\MHM)$, a $\QQ$-form of the object considered in \S\ref{30524A}.

\spar{HB29}
	Let $f\colon X \to \Spec(\CC)$ be a smooth complex algebraic variety and $x \in X(\CC)$ a point. Let
	\[
	\tilde f\colon \tilde X \coloneqq X\times_\CC X \to X
	\]
	denote the second projection. The point $x$ gives rise to a section $\tilde x $ of $\tilde f$. The new structure morphism $\tilde f$ also admits a canonical section $\Delta\colon X \to X \times_\CC X$. Applying the construction of paragraph \ref{HB27} with $g = \tilde f$, we obtain a commutative algebra 
	\[
	R\tilde f_* \one_{\tilde X} \in 
	\CAlg \Ind \Dd^b \MHM(X).
	\]
	We further apply the covariant functoriality of $Rg_*\one_Y$ in $Y$ to obtain augmentations
	\[
	\tilde x^*, \Delta^*\colon R\tilde f_* \one_{\tilde X}
	\rightrightarrows
	\one_X.
	\]
Using these augmentations, we define
	\[
	_\Delta {\tilde B}_\Delta^H = 
	\one_X 
	\underset{\Delta^*, R\tilde f_* \one_{\tilde X}, \Delta^*}
	\otimes
	\one_X, 
	\quad
	_\Delta {\tilde B}_{\tilde x}^H = 
	\one_X 
	\underset{\Delta^*, R\tilde f_* \one_{\tilde X}, {\tilde x}*}
	\otimes
	\one_X.
	\]
	\footnote{The Koszul duality for modules over associative algebras \cite{DCHorev, brantner2021pd} endows this pair of commutative algebras with the additional structure of a left comodule up to coherent higher homotopy. This extra structure, however, will not intervene in the sequel.}

\spar{HB30}
	If $y\colon\Spec(\CC) \to X$ is a point, then pullback along $y$ is monoidal and preserves colimits. We also have equivalences of commutative algebras $y^*\one_X \simeq \one_{\Spec(\CC)}$ and $y^* R\tilde f_* \one_{\tilde X} \simeq Rf_*\one_X$. Together, these give rise to an equivalence
	\[
	y^*\left(
	{_\Delta {\tilde B}_{\tilde x}^H}
	\right)
	\simeq 
	{_y B^H_x}
	= \one_{\Spec(\CC)} \otimes_{y^*,Rf_*\one_X,x^*} \one_{\Spec(\CC)}
	\]
	in $\CAlg \Ind \MHM$. The latter is known to be concentrated in non-negative cohomological degrees. Indeed, this may be verified after passing to de Rham realisation, where it follows, for instance, from Olsson \cite[Corollary 7.6]{OlssonBar}.
	
	The pullback along $y$ is also exact for the constructible t-structure, so
	\[
	y^* \Hh^i_\ctr
	\left(
	{_\Delta {\tilde B}_{\tilde x}^H}
	\right)
	\simeq
	\Hh^i
	\left( {_y B^H_x} \right)
	\]
	for every $y \in X$ and the latter vanishes for $i <0$. It follows that 
	\[
	\Hh^i_\ctr
	\left(
	{_\Delta {\tilde B}_{\tilde x}^H}
	\right)
	= 0
	\]
	for $i <0$. Thus, ${_\Delta {\tilde B}_{\tilde x}^H}$ is concentrated in non-negative cohomological degrees for the constructible t-structure.

\label{HB31}
	Applying truncation into the constructible heart, we obtain an object
	\[
	\Hh_\ctr^0(_\Delta {\tilde B}_{\tilde x}^H)
	=
	\tau_\ctr^{\le 0}(_\Delta {\tilde B}_{\tilde x}^H)
	\in
	\CAlg \Ind \MHM_\ctr(X)
	\]
	such that for $y \in X$, 
	\[
	y^*\Hh_\ctr^0(_\Delta {\tilde B}_{\tilde x}^H)
	\simeq
	\Hh^0({_y B_x^H}).
	\]

\begin{proposition}
	\label{HB32}
	The Ind-constructible mixed Hodge module $\Hh_\ctr^0(_\Delta {\tilde B}_{\tilde x}^H)$ is Ind-unipotent. 
\end{proposition}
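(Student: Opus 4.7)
The plan is to construct an ascending filtration of $\Hh^0_{ctr}({}_\Delta \tilde B^H_{\tilde x})$ in $\Ind \MHM_{ctr}(X)$ whose associated graded pieces are of the form $f^* L$ for $L \in \MHM$, which will exhibit ind-unipotency. This filtration will arise from the two-sided bar resolution of the relative tensor product: with $A \coloneqq R\tilde f_* \one_{\tilde X}$ a commutative algebra in $\Ind \Dd^b \MHM(X)$, we have ${}_\Delta \tilde B^H_{\tilde x} \simeq \one_X \otimes_A \one_X \simeq |B_\bullet|$, where the simplicial object $B_\bullet$ satisfies $B_n \simeq A^{\otimes n}$ with face maps given by the augmentations $\Delta^*$ and $\tilde x^*$ at the boundary and algebra multiplication internally. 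The skeletal filtration of $|B_\bullet|$ yields a filtration $\{F_n\}_{n \geq 0}$ of ${}_\Delta \tilde B^H_{\tilde x}$ in $\Ind \Dd^b \MHM(X)$ whose associated graded pieces are expressible as tensor combinations of augmentation fibers of $A$.

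To identify these graded pieces as pullbacks from $\Spec \CC$, I would apply smooth base change in Tubach's six-functor formalism to the Cartesian square formed by the projections $X \times_\CC X \rightrightarrows X \xrightarrow{f} \Spec \CC$. This yields a natural equivalence $A \simeq f^* C^*_H(X, \bQ)$ of commutative algebras in $\Ind \Dd^b \MHM(X)$ under which the augmentation $\tilde x^*$ corresponds to $f^*(x^*)$; consequently the augmentation fiber $\tilde I \coloneqq \opnm{fib}(A \xrightarrow{\tilde x^*} \one_X)$ is a pullback $f^* I_0$, where $I_0 \coloneqq \opnm{fib}(C^*_H(X, \bQ) \xrightarrow{x^*} \one)$, and its tensor powers are pullbacks as well (smooth pullback being symmetric monoidal). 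Granting the analysis of the skeletal filtration, one then passes to $\tau^{\leq 0}_{ctr}$: since $A$ is cohomologically connective for the constructible $t$-structure (verified fiberwise via paragraph \ref{HB30}), this produces a filtration of $\Hh^0_{ctr}({}_\Delta \tilde B^H_{\tilde x})$ in $\Ind \MHM_{ctr}(X)$, and the $t$-exactness of smooth pullback for the constructible $t$-structure (immediate from its pointwise characterisation in paragraph \ref{HB25}) ensures the graded pieces remain of the form $f^* L_n$ with $L_n \in \MHM$.

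The main obstacle is analyzing the skeletal filtration of $|B_\bullet|$ in the presence of the two distinct augmentations: whereas $\tilde x^*$ is a pullback from $\Spec \CC$, the ``tautological'' augmentation $\Delta^*$ (whose fiber at $y \in X$ is the evaluation-at-$y$ map $C^*_H(X, \bQ) \to \one$) is manifestly not pulled back. A naive computation of the graded pieces of $\{F_n\}$ therefore expresses them in terms of augmentation fibers for \emph{both} augmentations, and one must verify carefully that the result nonetheless depends only on pullback data. The cleanest resolution I see is to exploit the commutativity of $A$ to rewrite $\one_X \otimes_A \one_X$ as a one-sided bar construction over $A \otimes A$ with the single augmentation $\Delta^* \otimes \tilde x^*$, after which a Koszul-duality-type manipulation reduces the graded pieces to tensor powers of the pulled-back object $\tilde I = f^* I_0$. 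An alternative route that bypasses these bar-construction subtleties entirely is to identify $\Hh^0_{ctr}({}_\Delta \tilde B^H_{\tilde x})$ with the Hain--Zucker ind-variation of mixed Hodge structure on the affine rings of the prounipotent path torsors via Lemma \ref{lem:unique_MHS}, and to read off ind-unipotency from the explicit augmentation-ideal filtration in \cite{hain-zucker:unipotent_variations} whose graded pieces are tensor powers of $H_1(X, \bQ)$ with their canonical mixed Hodge structures.
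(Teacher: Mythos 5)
Your proposal runs on the same engine as the paper's proof — the base-change identification $R\tilde f_*\one_{\tilde X}\simeq f^*Rf_*\one_X$, writing ${}_\Delta\tilde B^H_{\tilde x}$ as a colimit of pulled-back objects, and the behaviour of the constructible t-structure — but the way you organise it leaves a genuine gap at the final step. A preliminary point: the obstacle you single out is not actually there. The associated graded of the skeletal filtration of the two-sided bar construction is $\Sigma^n\bar A^{\otimes n}$ with $\bar A=\mathrm{cofib}(\one_X\to A)$ the cofiber of the \emph{unit}; it involves neither augmentation, and since $f^*$ is monoidal the unit is pulled back, so $\bar A\simeq f^*\,\mathrm{cofib}(\one\to Rf_*\one_X)$ with no Koszul-duality manipulation needed — the fact that you propose one suggests the skeletal analysis you are ``granting'' has not been carried out. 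The serious problem appears when you truncate: the graded pieces $\Sigma^n\bar A^{\otimes n}$ are not concentrated in constructible degree $0$, so the long exact sequence only shows that $\Hh^0_\ctr(F_n)/\Hh^0_\ctr(F_{n-1})$ is a \emph{subobject} of the pullback $\Hh^0_\ctr(\Sigma^n\bar A^{\otimes n})$; the connecting map into $\Hh^1_\ctr(F_{n-1})$ has no reason to vanish (and in the topological analogue it does not: the graded pieces of the length filtration on $\cO(\pi_1^{\mathrm{un}})$ are proper subspaces of $(H^1)^{\otimes n}$ cut out by cup-product relations). A subobject in $\MHM_\ctr(X)$ of a pullback need not be unipotent, nor even have locally constant underlying sheaf (think of $j_!\QQ_U\hookrightarrow\QQ_X$), so ``the graded pieces remain of the form $f^*L_n$'' does not follow as stated; you would need to identify the connecting maps or invoke a nontrivial rigidity/constancy argument for admissible variations.

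The paper's argument is structured precisely to avoid this: it never filters $\Hh^0_\ctr$ directly. It introduces the cellular subcategory $\Dd_\cell$ (closure of $f^*\Dd^b\MHM$ under colimits), observes that $\tilde B$ is cellular and $\ctr$-connective, proves that compact cellular objects are exactly the finite cellular ones and that these coincide with bounded complexes all of whose constructible cohomologies are unipotent (Lemmas \ref{HB36} and \ref{HB38}, the latter by descending induction on rank), and then writes $\tilde B$ as a filtered colimit of such complexes; since $\tau^{\le 0}_\ctr$ is a left adjoint it commutes with this colimit, so $\Hh^0_\ctr(\tilde B)$ is a filtered colimit of unipotent objects. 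Unipotency is thus imposed on whole bounded complexes, and the delicate subquotient question never arises. Finally, your fallback route via Lemma \ref{lem:unique_MHS} and Hain--Zucker is circular in the logic of the paper: to apply that uniqueness statement you must already know that $\Hh^0_\ctr({}_\Delta\tilde B^H_{\tilde x})$ is an admissible (ind-unipotent) variation with underlying local system $\bV$, which is exactly what Proposition \ref{HB32}, together with \ref{HC24} and \ref{HC25}, is being used to establish.
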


	The proof of Proposition \ref{HB32} will require some preparation. For the remainder of this section we use the abbreviations
	\[
	\Dd \coloneqq \Ind \Dd^b \MHM(X)
	\qandq
	\tilde B := {_\Delta {\tilde B}_{\tilde x}^H}.
	\]

\spar{HB34}
	Let $f^*(-) = f^* \Dd^b \MHM \subset \Dd$ denote the full subcategory spanned by objects of the form $f^*L$ with $L \in \Dd^b \MHM$. We define the category of \emph{cellular mixed Hodge modules}
	\[
	\Dd_\m{cell} \subset \Dd
	\]
	as the closure of $f^*(-)$ under colimits. In view of the equivalence $R\tilde f_* \one_{\tilde X} = f^* Rf_* \one_X$, the colimit 
	\[
	{\tilde B}= 
	\one_X 
	\underset{\Delta^*, R\tilde f_* \one_{\tilde X}, {\tilde x}*}
	\otimes
	\one_X
	= 
	\colim_{[n] \in \Delta^\m{op}}
	(R\tilde f_* \one_{\tilde X})^{\otimes n}
	\]
	in $\Dd$ is cellular.

\spar{HB35}
	Let 
	$
	\Dd^\m{c}_\m{cell} \subset \Dd_\m{cell}
	$
	denote the full subcategory spanned by compact objects and let $\Dd^\m{f}_\m{cell}$ denote the closure of $f^*(-)$ in $\Ind \Dd^b \MHM(X)$ under finite colimits.

\begin{lemma}
	\label{HB36}
	We have
	$
	\Dd^\m{c}_\m{cell}
	=
	\Dd^\m{f}_\m{cell}.
	$
\end{lemma}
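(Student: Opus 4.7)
My plan is to follow the standard template for such identifications in compactly generated $\infty$-categories; there are really two steps, one trivial and one where all the content lies.

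For the inclusion $\Dd^\m{f}_\m{cell} \subset \Dd^\m{c}_\m{cell}$, I would first check that every object of $f^*(-)$ is compact in $\Dd = \Ind \Dd^b \MHM(X)$. Indeed, for $L \in \Dd^b \MHM$, paragraph \ref{HB23} gives $f^*L \in \Dd^b\MHM(X)[-d] \subset \Dd^b \MHM(X)$, which sits inside $\Ind \Dd^b \MHM(X)$ as the full subcategory of compact objects. Since compact objects are closed under finite colimits in any stable $\infty$-category, and finite colimits of objects in $f^*(-)$ visibly also lie in $\Dd_\m{cell}$ by definition of the latter, this yields $\Dd^\m{f}_\m{cell} \subset \Dd^\m{c}_\m{cell}$ directly.

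For the reverse inclusion, I would appeal to the standard characterisation of compactly generated presentable stable $\infty$-categories (assembled from HTT Propositions 5.3.5.11 and 5.5.7.8): if $\mathcal{E}$ is a presentable stable $\infty$-category generated under colimits by a small collection $\mathcal{G}$ of compact objects, then $\mathcal{E}$ is compactly generated, and its subcategory $\mathcal{E}^\m{c}$ of compact objects is the idempotent completion of the smallest full subcategory of $\mathcal{E}$ containing $\mathcal{G}$ and closed under finite colimits. Since $\Dd_\m{cell}$ is by construction the closure of the small compact collection $f^*(-)$ under colimits inside the presentable $\infty$-category $\Dd$, this gives $\Dd^\m{c}_\m{cell} = \widetilde{\Dd^\m{f}_\m{cell}}$, the idempotent completion of $\Dd^\m{f}_\m{cell}$ inside $\Dd$.

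The main obstacle is therefore to verify that $\Dd^\m{f}_\m{cell}$ is already idempotent complete in $\Dd$, so that the tilde can be dropped. I would try to argue this using the idempotent completeness of $\Dd^b \MHM$ itself (valid because it is the bounded derived $\infty$-category of an abelian category): given a compact object $X$ of $\Dd_\m{cell}$ realised as a retract of $Y \in \Dd^\m{f}_\m{cell}$, the splitting idempotent of $X$ on $Y$ should admit a representative at the level of a finite diagram of objects of $f^*(-)$ presenting $Y$, and splitting the idempotent there (using idempotent completeness of $\Dd^b \MHM$, transported through $f^*$) exhibits $X$ itself as a finite colimit of objects of $f^*(-)$. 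If this direct route turns out to be clumsy, the statement is in any case the standard one interpreted with $\Dd^\m{f}_\m{cell}$ taken to mean the thick subcategory generated by $f^*(-)$ (the closure under finite colimits \emph{and} retracts), which is the usual convention in stable homotopy theory; under that reading the lemma is immediate from the previous paragraph.
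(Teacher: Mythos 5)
Your overall architecture matches the paper's: for the easy inclusion the paper likewise notes that objects of $\Dd^b\MHM(X)$ are compact in the Ind-category and that compact objects are closed under finite colimits; for the reverse inclusion the paper writes a compact $M\in\Dd_\cell$ as a filtered colimit of objects $M_i\in\Dd^\mf_\cell$, uses compactness to factor $\mathrm{id}_M$ through some $M_i$, and then concludes from idempotent completeness of $\Dd^\mf_\cell$. So you have correctly isolated the same crux (whether the finite-colimit closure of $f^*(-)$ is closed under the retracts that the general theory produces). The gap is in how you propose to settle that crux. An idempotent on an object $Y\in\Dd^\mf_\cell$ presented as a finite colimit of a diagram valued in $f^*(-)$ does not lift to a compatible idempotent endomorphism of the presenting diagram---endomorphisms of a colimit are not computed from endomorphisms of the diagram---and morphisms $f^*L\to f^*L'$ need not be of the form $f^*$ of a morphism in $\Dd^b\MHM$, so idempotent completeness of $\Dd^b\MHM$ cannot be ``transported through $f^*$'' in the way you suggest. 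Your fallback, reinterpreting $\Dd^\mf_\cell$ as the thick (retract-closed) hull of $f^*(-)$, proves a statement different from the one defined in \ref{HB35} and asserted in Lemma \ref{HB36}, and it is the finite-colimit closure that is used downstream (Lemma \ref{HB38}, Corollary \ref{HB39}).

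What actually closes the gap in this specific setting is a closure-under-retracts argument internal to mixed Hodge modules, not a formal one: the retract furnished by your (or the paper's) argument splits in the ambient $\Dd$, and its constructible cohomology objects are direct summands of those of $M_i$, hence unipotent and almost all zero; therefore the retract lies in $\Dd_\un$, which the independent argument of Lemma \ref{HB38} identifies with $\Dd^\mf_\cell$. (The paper's own phrase ``closed under finite colimits, and hence is idempotent complete'' is itself terse here---closure under finite colimits alone does not imply idempotent completeness in general---but the repair goes through $\Dd_\un$ as above, whereas the diagram-level splitting you propose would not repair it.)
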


\begin{proof}
	According to Proposition 5.3.5.5 of \cite{LurieTopos}, every object in any \oo-category is compact when regarded as an object of the associated Ind-category. For any finite type $\CC$-scheme $Y$, $\Dd^b \MHM(Y)$ is closed under finite direct sums and cofibers, hence under all finite colimits. A finite colimit of compact objects is compact. Together, these facts imply (in more than one way) that $\Dd^\m{f}_\m{cell} \subset \Dd^\m{c}_\m{cell}$. 
	
	We turn to the reverse inclusion. The closure of $\Dd^\mf_\cell$ in $\Dd$ under filtered colimits is equal to $\Dd_\cell$, so an arbitrary $M \in \Dd_\cell$ may be written as a filtered colimit 
	\[
	M = \colim_i M_i
	\]
	of objects $M_i \in \Dd^\mf_\cell$. If, moreover, $M$ is compact, then we have an equivalence of homotopy types
	\[
	\End_\Dd(M) 
	=
	\colim_i \Hom(M, M_i).
	\]
	As a general fact about colimits in the \oo-category of homotopy types, it follows that every $f \in \pi_0\End_\Dd(M)$ is in the image of $\pi_0\Hom(M, M_i)$ for some $i$. In particular, the identity of $M$ factors through some $M_i$ up to homotopy. Since $\Dd^\mf_\cell$ is closed under finite colimits, and hence is idempotent complete, it follows that $M$ is equivalent to an object of $\Dd^\mf_\cell$. 
\end{proof}

\spar{HB37}
	Let $\Dd_\un \subset \Dd$ be the full subcategory spanned by objects $M$ such that $\Hh^i_\ctr(M)$ is unipotent for all $i$ and zero for all but finitely many $i$.

\begin{lemma}
	\label{HB38}
	We have 
	$
	\Dd_\un = \Dd^\mf_\cell.
	$
\end{lemma}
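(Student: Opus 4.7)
The plan is to prove the two inclusions separately. For $\Dd^\mf_\cell \subseteq \Dd_\un$, it suffices, by closure under finite colimits (established below), to verify that the generators $f^*L$ with $L \in \Dd^b\MHM$ lie in $\Dd_\un$. This will follow from the observation that $f^*$ is t-exact with respect to the standard t-structure on $\Dd^b \MHM$ and the constructible t-structure on $\Dd^b \MHM(X)$: by the pointwise description of the latter recalled in paragraph \ref{HB25}, $f^*L$ lies in $\Dd^{b,\ct,\le 0}\MHM(X)$ if and only if $x^*f^*L = L$ lies in $\Dd^{b,\le 0}\MHM$ for every $x \in X(\CC)$, and similarly for the other half of the t-structure. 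Hence $\Hh^i_\ctr(f^*L) \simeq f^*\Hh^i(L)$, which is trivially unipotent (filtered by itself), and only finitely many $i$ contribute, so $f^*L \in \Dd_\un$.

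To show $\Dd_\un$ is closed under finite colimits, I would use the long exact sequence in constructible cohomology attached to a cofiber sequence $M \to N \to P$ to reduce to the claim that the subcategory of unipotent Ind-MHMs is a Serre subcategory of $\Ind \MHM_\ctr(X)$. Closure under extensions is immediate by concatenating filtrations. Closure under subobjects and quotients reduces, via the induced filtration, to the corresponding statement for objects of the form $f^*L$ with $L \in \MHM$: namely, that every sub-MHM of $f^*L$ is of the form $f^*L'$ for some sub-MHS $L' \subseteq L$. This in turn holds because the underlying perverse sheaf of $f^*L$ is a shift of the constant local system $\underline{L}$, whose sub-local systems on each connected component of $X$ are exactly the constant ones, while the Hodge and weight filtrations on $f^*L$ are pulled back from $L$ and so restrict correctly to such subobjects.

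For the reverse containment $\Dd_\un \subseteq \Dd^\mf_\cell$, I would proceed by induction on the cohomological amplitude with respect to the constructible t-structure. Given $M \in \Dd_\un$ with $\Hh^i_\ctr(M)$ supported in $[a,b]$, the canonical triangle
\[
\tau^{\le b-1}_\ctr M \to M \to \Hh^b_\ctr(M)[-b]
\]
reduces the induction step to the single-degree case, which up to shift is the claim $\MHM_\un(X) \subseteq \Dd^\mf_\cell$. For $M \in \MHM_\un(X)$, I would unravel a finite filtration $0 = M_0 \subset \cdots \subset M_n = M$ with successive quotients $M_j/M_{j-1} \simeq f^*L_j$; each $M_j$ is then obtained as the cofiber of the extension class $f^*L_j[-1] \to M_{j-1}$, so induction on $n$ together with $f^*L_j \in \Dd^\mf_\cell$ gives $M \in \Dd^\mf_\cell$.

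The main obstacle is the Serre subcategory property for unipotent Ind-mixed Hodge modules, and in particular closure under subobjects, since it is this step that demands a concrete description of sub-MHMs of $f^*L$; every other step is a formal manipulation of the t-structure and of finite-colimit closures.
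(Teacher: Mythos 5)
Your proposal is correct, and the first inclusion coincides with the paper's argument ($f^*$ is t-exact for the constructible t-structure by its pointwise definition, and $\Dd_\un$ is stable under finite colimits); your extra work on the Serre-subcategory property of unipotent objects fills in a step the paper only asserts, and your sketch of why a sub-mixed-Hodge-module of $f^*L$ is again of the form $f^*L'$ is right in substance — just note that the phrase ``restrict correctly'' is really carried by the strictness of morphisms of mixed Hodge modules with respect to both $F$ and $W$, which is what forces the induced filtrations on a constant sub-local system to be the constant ones. For the converse inclusion $\Dd_\un \subseteq \Dd^\mf_\cell$ you take a genuinely different dévissage from the paper: the paper defines the rank of an object of $\Dd_\un$ via the Betti realisation of its constructible cohomology sheaves and inducts on rank, producing at each step a morphism $\phi\colon f^*N \to M$ into the top cohomological degree whose cofiber has strictly smaller rank; you instead induct on the amplitude of the constructible cohomology, peeling off $\Hh^b_\ctr(M)[-b]$ via the canonical truncation triangle, and then handle the single-degree case by a second induction on the length of the unipotency filtration, realising each filtration step as the cofiber of its extension class $f^*L_j[-1]\to M_{j-1}$. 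What your route buys is that every triangle you use is canonical — you never have to produce the map $\phi$, whose existence (a lifting of a constant subobject of the top cohomology through the truncation) the paper leaves to the reader — at the cost of the slightly longer two-step induction and of invoking closure of $\Dd^\mf_\cell$ under negative shifts, which holds because $f^*\Dd^b\MHM$ is shift-stable and shifts, being autoequivalences, commute with finite colimits; the paper's rank induction is more compact and needs no discussion of subquotients beyond what the first inclusion already requires.
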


\begin{proof}
	Since $f^*$ is t-exact, $\Dd_\un \supset f^*(-)$. Since the former is closed under finite direct sums and cofibers, hence under all finite colimits, we have $\Dd_\un \supset \Dd^\mf_\cell$. 
	
	For the converse, recall that the Betti realisation of a unipotent mixed Hodge module is a locally constant sheaf, which thus endows unipotent mixed Hodge modules with a notion of \emph{rank}. We define the rank of an object of $\Dd_\un$ to be the sum of the ranks of its constructible cohomology sheaves. Now consider $M \in \Dd_\un$. If $t$ is the greatest integer such that $\Hh^t_\ctr(M) \neq 0$, then there exists a morphism
	\[
	\phi:f^*N \to M
	\]
	for some $N \in \MHM[-t]$ such that the cofiber $L$ of $\phi$ has strictly lower rank than that of $M$. By an induction on the rank with evident base case, we may assume $L$ belongs to $\Dd^\mf_\cell$ and it follows that $M$ belongs to $\Dd^\mf_\cell$. 
\end{proof}

\begin{corollary}
	\label{HB39}
	For any full subcategory $\Cc \subset \Dd$, we let $\Cc^{\ctr, \ge 0}$ denote the full subcategory $\Cc^{\ctr, \ge 0} = \Cc \cap \Dd^{\ctr, \ge 0}$.
	With this notation, we have an equivalence 
	\[
	\Ind \Dd_\un^{\ctr, \ge 0} 
	\simeq
	\Dd_\cell^{\ctr, \ge 0}.
	\]
\end{corollary}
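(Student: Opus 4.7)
The plan is to realise both sides as Ind-completions of small categories of compact objects. By Lemmas \ref{HB36} and \ref{HB38}, the category $\Dd_\un = \Dd^\mf_\cell = \Dd^\mc_\cell$ is precisely the category of compact objects of $\Dd_\cell$. Since $\Dd_\cell$ is by definition the closure of the small category $f^*(-)$ under colimits, and since every colimit can be written as a filtered colimit of finite colimits of $f^*(-)$-objects (which all lie in $\Dd^\mf_\cell = \Dd_\un$), the category $\Dd_\cell$ is compactly generated, giving an equivalence $\Dd_\cell \simeq \Ind(\Dd_\un)$.

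I would then verify that restriction to the constructibly coconnective parts yields the desired equivalence. The key input is that the constructible t-structure on $\Dd$ was defined in paragraph \ref{HB27} precisely so that filtered colimits preserve $\Dd^{\ctr, \ge 0}$; equivalently, $\tau^{\ge 0}_\ctr$ commutes with filtered colimits in $\Dd$. Together with the fact that $\tau^{\ge 0}_\ctr M \in \Dd_\un^{\ctr, \ge 0}$ whenever $M \in \Dd_\un$ (which is immediate from the description of $\Dd_\un$ in terms of constructible cohomology), this lets us argue as follows: for any $M \in \Dd_\cell^{\ctr, \ge 0}$, write $M = \colim_i M_i$ as a filtered colimit with $M_i \in \Dd_\un$; then $M = \tau^{\ge 0}_\ctr M = \colim_i \tau^{\ge 0}_\ctr M_i$ exhibits $M$ as a filtered colimit of objects of $\Dd_\un^{\ctr, \ge 0}$.

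Conversely, objects of $\Dd_\un^{\ctr, \ge 0}$ remain compact when viewed in $\Dd_\cell^{\ctr, \ge 0}$, since filtered colimits in the latter coincide with those in $\Dd_\cell$ and hence with those in $\Dd$. Thus $\Dd_\un^{\ctr, \ge 0}$ is the category of compact objects of $\Dd_\cell^{\ctr, \ge 0}$ and generates it under filtered colimits, yielding the equivalence $\Dd_\cell^{\ctr, \ge 0} \simeq \Ind(\Dd_\un^{\ctr, \ge 0})$.

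The main obstacle I anticipate is ensuring that the t-structure interacts correctly with the ambient Ind-structure at the $\infty$-categorical level --- in particular, that $\tau^{\ge 0}_\ctr$ preserves filtered colimits in $\Dd$, and that $\Dd_\un^{\ctr, \ge 0}$ really is closed under finite colimits in $\Dd$ so that its Ind-completion makes sense. Both points, however, are built into the definition of the t-structure on $\Dd = \Ind \Dd^b \MHM(X)$ in paragraph \ref{HB27} via \cite[Lemma C.2.4.3]{lurie2018spectral}; once invoked, the remaining arguments reduce to formal manipulations of Ind-completions of presentable $\infty$-categories.
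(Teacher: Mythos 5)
Your proposal is correct, and it supplies exactly the argument the paper leaves implicit (the corollary is stated there without proof as a consequence of Lemmas \ref{HB36} and \ref{HB38}): identify $\Dd_\cell\simeq\Ind(\Dd_\un)$ via compact generation by $\Dd_\un=\Dd^\mf_\cell=\Dd^\mc_\cell$, then pass to the constructibly coconnective parts using that the Ind-extended t-structure of \ref{HB27} has both aisles closed under filtered colimits (so $\tau^{\ge0}_\ctr$ commutes with them) and that $\tau^{\ge0}_\ctr$ preserves $\Dd_\un$. This matches the intended route, so no further comment is needed.
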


\begin{proof}[Proof of Proposition \ref{HB32}]
	By Corollary \ref{HB39}, $\tilde B = \colim \tilde B_i$ is a filtered colimit with $\tilde B_i \in \Dd_\un^{\ctr, \ge 0} $. Since $\tau_\ctr^{\le 0}$ is a left adjoint, hence preserves colimits, we have
	\[
	\Hh^0_\ctr(\tilde B)
	\simeq
	\tau_\ctr^{\le 0} (\tilde B)
	\simeq
	\colim_i \tau_\ctr^{\le 0} (\tilde B_i)
	\simeq
	\colim_i \Hh^0_\ctr(\tilde B_i).
	\qedhere
	\]
\end{proof}

\spar{HC22}
For any finite type $\CC$-scheme $Y$, let $Y^\an$ denote the associated complex analytic space and let $\Vect^\m{c} \QQ_{Y^\an}$ denote the symmetric monoidal abelian category of sheaves of finite dimensional $\QQ$-\textit{vector spaces}. By its construction, the \oo-categorically enhanced mixed Hodge six functor formalism of Tubach (\ref{HB22}) comes together with functors
\[
\rat_Y: \Ind \Dd^b\big(\MHM(Y) \big)
\to
\Ind \Dd^b \big(\Vect^\m{c} \QQ_{Y^\an} \big)
\]
compatible with the six operations. Moreover, $\rat_Y$ is $t$-exact for the constructible t-structure on $\Ind \Dd^b\big(\MHM(Y) \big)$  and admits a right adjoint. The induced symmetric monoidal functor 
\[
\Ind \MHM_\ctr(Y) 
\to
\Ind \Dd^b \big(\Vect^\m{c} \QQ_{Y^\an} \big)
\]
maps any Ind-\textit{varation of mixed Hodge structure} to its underlying Ind-\textit{local system}. 

\spar{HC23}
Returning to the situation and the notation of paragraph \ref{HB29}, we obtain a commutative algebra
\[
R\tilde f_* \QQ_{\tilde X^\an}
\in
\CAlg \Ind \Dd^b \big( \Vect^\m{c} \QQ_{X^\an} \big)
\]
and augmentations
\[
\tilde x^*, \Delta^*\colon R\tilde f_* \QQ_{\tilde X^\an}
\rightrightarrows
\one_X.
\]
We define
\[
_\Delta {\tilde B}_{\tilde x} = 
{_\Delta} {\tilde B}_{\tilde x}^{\an} = 
\QQ_{X^\an} 
\underset{\Delta^*, R\tilde f_* \QQ_{\tilde X^\an}, {\tilde x}^*}
\otimes
\QQ_{X^\an} \,.
\]
As in paragraph \ref{HB30},  $_\Delta {\tilde B}_{\tilde x}^{\an}$ is concentrated in non-negative cohomological degrees. Applying truncation into the heart, we obtain an object 
\[
\Hh^0(_\Delta {\tilde B}_{\tilde x}^\an)
=
\tau^{\le 0}(_\Delta {\tilde B}_{\tilde x}^\an)
\in
\CAlg \Ind \Vect^c \QQ_{X^\an}.
\]

\begin{proposition}
	\label{HC24}
	In the situation and the notation of paragraph \ref{HC23}, there's an isomorphism of commutative algebras in Ind-\textit{local systems}
	\[
	\rat_X \big(
	\Hh_\ctr^0(_\Delta {\tilde B}_{\tilde x}^H)
	\big)
	\simeq
	\Hh^0(_\Delta {\tilde B}_{\tilde x}^\an).
	\]
	Hence, $\Hh^0(_\Delta {\tilde B}_{\tilde x}^\an)$ is the underlying Ind-\emph{local system} of the Ind-variation $\Hh_\ctr^0(_\Delta {\tilde B}_{\tilde x}^H)$.
\end{proposition}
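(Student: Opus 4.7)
The plan is to transfer the definition of ${_\Delta \tilde B^H_{\tilde x}}$ across the realisation functor $\rat_X$, using the compatibilities recorded in paragraph \ref{HC22}: $\rat_X$ is symmetric monoidal, commutes with pushforward, admits a right adjoint (hence preserves all colimits), and is $t$-exact for the constructible $t$-structure. The first step is to observe that these compatibilities produce a canonical equivalence of commutative algebras
\[
\rat_X(R\tilde f_* \one_{\tilde X}) \simeq R\tilde f_* \rat_{\tilde X}(\one_{\tilde X}) \simeq R\tilde f_* \QQ_{\tilde X^\an}
\]
in $\Ind \Dd^b(\Vect^\m{c} \QQ_{X^\an})$, intertwining the augmentations $\Delta^*$ and $\tilde x^*$ on each side by naturality.

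Next, because $\rat_X$ is a symmetric monoidal left adjoint, it commutes with the bar-style relative tensor product defining ${_\Delta \tilde B^H_{\tilde x}}$, yielding an equivalence
\[
\rat_X\bigl({_\Delta \tilde B^H_{\tilde x}}\bigr) \simeq {_\Delta \tilde B^\an_{\tilde x}}
\]
as commutative algebras. Then the $t$-exactness of $\rat_X$ for the constructible $t$-structure — extended to Ind-completions via the description of \ref{HB27} and the fact that $\rat_X$ preserves filtered colimits — allows $\rat_X$ to commute with the truncation $\tau^{\le 0}_\ctr = \Hh^0_\ctr$ on the source and $\tau^{\le 0} = \Hh^0$ on the target, giving the first asserted equivalence.

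For the second statement, Proposition \ref{HB32} places $\Hh^0_\ctr({_\Delta \tilde B^H_{\tilde x}})$ in $\Ind \MHM_\un(X)$, which by the discussion of paragraph \ref{HB23} is precisely the category of Ind-variations of polarizable mixed Hodge structure on $X$; by construction, the underlying Ind-local system of such a variation is its image under $\rat_X$ to $\Ind \Vect^\m{c} \QQ_{X^\an}$, which is exactly what the first part computes. The whole argument is essentially formal once \ref{HB22} and \ref{HC22} are in hand; the main obstacle, such as it is, is bookkeeping — confirming that the commutative algebra structures, both augmentations, and the $t$-exactness really do propagate correctly to the Ind-completions, and that the symmetric monoidal comparison of pushforwards is the expected one.
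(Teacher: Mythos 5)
Your proof is correct and is essentially the paper's own argument: the paper disposes of this proposition in one line as a "direct consequence of the compatibilities noted in paragraph \ref{HC22}," and your proposal simply unpacks exactly those compatibilities (monoidality, commutation with pushforward, colimit preservation via the right adjoint, constructible $t$-exactness, and the identification of $\rat_X$ on hearts with the underlying-local-system functor). No substantive difference in route.
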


\begin{proof}
	Direct consequence of the compatibilities noted in paragraph \ref{HC22}.
\end{proof}

\begin{proposition}
	\label{HC25}
	Under Tannaka duality, the Ind-\textit{local system}  $\Hh^0(_\Delta {\tilde B}_{\tilde x}^\an)$ corresponds to the left regular representation of the Tannakian fundamental group $\pi_1^\un(X, x)_B$ of the Tannakian category of unipotent local systems. 
\end{proposition}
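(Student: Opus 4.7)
The strategy is to invoke Tannaka duality for the neutral Tannakian category $\Loc^\un_\QQ(X^\an)$ of unipotent $\QQ$-local systems on $X^\an$, neutralised by the fibre functor $\omega_x^\infty$. The induced equivalence
\[
\Ind\Loc^\un_\QQ(X^\an) \simeq \mathrm{Rep}_\QQ\bigl(\pi_1^\un(X,x)_B\bigr)
\]
sends the Ind-local system $\bV = {}_?H_x$ appearing just before Lemma \ref{lem:unique_MHS} to the left regular representation $\cO(\pi_1^\un(X,x)_B) = {}_xH_x$, as follows from Lemma \ref{lem:H_is_affine_ring_of_tannakian_paths}. Moreover, under this equivalence an Ind-unipotent local system $\mathcal W$ is recovered from its fibre $\mathcal W_x$ together with the monodromy action of $\pi_1(X^\an,x)$, which being unipotent extends automatically to an action of $\pi_1^\un(X,x)_B$. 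Thus the problem reduces to (i) identifying $x^*\Hh^0({}_\Delta\tilde B^\an_{\tilde x})$ with ${}_xH_x$ as a commutative $\QQ$-algebra, and (ii) checking that the resulting monodromy action on this fibre is the left regular action of $\pi_1(X^\an,x)$ on ${}_xH_x$.

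For (i), the pullback $x^*$ is monoidal, colimit-preserving, and $t$-exact for the standard $t$-structure, and base change along the pullback square $\tilde f\circ\tilde x = \mathrm{id}$ yields $x^*R\tilde f_*\QQ_{\tilde X^\an}\simeq Rf_*\QQ_{X^\an}$. Consequently
\[
x^*\Hh^0({}_\Delta\tilde B^\an_{\tilde x}) \simeq \Hh^0\bigl(\QQ\otimes^L_{Rf_*\QQ_{X^\an}}\QQ\bigr) = \Hh^0({}_xB_x^\an),
\]
where both augmentations are now evaluation at $x$. This is the $0$th cohomology of the two-sided bar construction on the cochain algebra $Rf_*\QQ_{X^\an}$, and a classical theorem of Chen on iterated integrals (in the form recalled for example around \cite[Theorem~3.2.3]{hain-zucker:unipotent_variations}) identifies it canonically with ${}_xH_x$ as a commutative Hopf algebra. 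For (ii), the monodromy action of $\gamma\in\pi_1(X^\an,x)$ on the fibre is induced by functoriality of ${}_\Delta\tilde B^\an_{\tilde x}$ in the first slot of $X\times_\CC X$: parallel transport of $x$ along $\gamma$ alters the augmentation $\Delta^*$ by precomposition with $\gamma$, which via the Chen identification corresponds to left multiplication by $\gamma$ in the dual of the completed group ring. This is the left regular action of $\pi_1(X^\an,x)$ on ${}_xH_x$, which extends along $\pi_1(X^\an,x)\to\pi_1^\un(X,x)_B(\QQ)$ to the left regular action of $\pi_1^\un(X,x)_B$.

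The main obstacle is the Chen identification in (i). Although classical in the de Rham and singular-cochain settings, transplanting it into the derived six-functor framework of paragraph \ref{HB22} requires some care to check that the $E_\infty$-algebra structure on $\Hh^0({}_xB_x^\an)$ inherited from the diagonal of $X$ agrees, on $0$th cohomology, with the Hopf-algebra structure on ${}_xH_x$ coming from the Tannakian construction. A more economical variant is to first establish only the vector-space version of (i) together with (ii), thereby obtaining an abstract isomorphism of Ind-local systems $\Hh^0({}_\Delta\tilde B^\an_{\tilde x})\simeq \bV$, and then to invoke the uniqueness clause of Lemma \ref{lem:unique_comodule_algebra} to match the algebra and coalgebra structures; in this version Chen's theorem is only needed to identify the fibre at $x$ as a vector space, which is substantially easier.
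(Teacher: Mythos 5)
Your overall strategy has the same shape as the paper's: identify the fibre of $\Hh^0({}_\Delta\tilde B^\an_{\tilde x})$ with the coordinate ring of the unipotent path torsor, identify the monodromy with the left regular action, and then conclude by Tannaka duality; your ``economical variant'' of transferring the algebra/coalgebra structures via the uniqueness clauses of Lemma~\ref{lem:unique_comodule_algebra} is in fact exactly how the paper concludes in \ref{IA33}. Moreover, the obstacle you single out as the main one -- multiplicative compatibility of the Chen-type identification -- is the lesser issue: the fibre comparison, as an isomorphism of (Hopf) algebras, is supplied by Theorem~6.4.5 of \cite{dancohen2021rational} (Lemma~\ref{lem:MCG_equivariant_iso}), and any residual structure-matching is absorbed by the uniqueness lemmas.

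The genuine gap is your step (ii). You assert that the monodromy of the ind-local system along $\gamma\in\pi_1(X^\an,x)$ is ``precomposition of the augmentation $\Delta^*$ with $\gamma$'', hence left multiplication under the fibre identification; but the monodromy is not defined by any such formula, and there is no evident way to ``parallel transport an augmentation along a path'' at the level of the commutative-algebra pushouts. To compute it one must pull the family back along the loop, trivialise the pulled-back ind-local system, and identify the resulting automorphism of the fibre \emph{compatibly} with the fibre identification. This is precisely what the paper's mapping-class-group argument does: naturality of the comparison isomorphism makes it equivariant for homeomorphisms fixing the two marked points (Lemma~\ref{lem:MCG_equivariant_iso}); a loop based at $y$ and avoiding $x$ is realised, via an ambient isotopy fixing $x$ (point-pushing, Birman exact sequence), by such a homeomorphism, and the isotopy furnishes the trivialisation showing that the monodromy equals the action of that homeomorphism (Lemma~\ref{lem:MCG_action_is_monodromy_action}); an explicit homotopy then shows the same homeomorphism acts on $\pi_1^\un(X^\an;x,y)$ by left multiplication (Lemma~\ref{lem:MCG_action_is_left_regular}). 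Note that this mechanism cannot be run at the base point itself -- the pushing isotopy must fix $x$, which is why the paper works at an auxiliary point $y\neq x$, obtains the isomorphism of ind-local systems only over $X^\an\smallsetminus\{x\}$, and then extends it across $x$; your plan to compute the monodromy directly on the fibre at $x$ has no substitute for this step. Without an argument playing the role of Lemmas~\ref{lem:MCG_action_is_monodromy_action} and~\ref{lem:MCG_action_is_left_regular} (or a precise Chen-style parallel-transport theorem together with its compatibility with the bar-construction identification), the identification of the monodromy with the left regular action, and hence the proposition, remains unproven.
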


We will prove Proposition~\ref{HC25} by considering mapping class group actions. Fix a point~$y\in X^\an\smallsetminus\{x\}$, and let $\Homeo^+(X^\an;x,y)$ denote the group of orientation-preserving self-homeomorphisms of~$X^\an$ which fix both~$x$ and~$y$. Every element $\phi\in\Homeo^+(X^\an;x,y)$ induces an automorphism of~$Rf_*\bQ_{X^\an}$ preserving the two augmentations~$x^*$ and~$y^*$ up to homotopy, and thus induces an automorphism of the algebra $H^0({}_yB_x)$, where ${}_yB_x$ is the pushout of~$Rf_*\bQ_{X^\an}$ along~$x^*$ and~$y^*$ in $\CAlg(\Dd(\bQ))$. This defines a right action of~$\Homeo^+(X^\an;x,y)$ on~$H^0({}_yB_x)$, which we turn into a left action in the usual way.

\begin{lemma}\label{lem:MCG_equivariant_iso}
	There is a $\Homeo^+(X^\an;x,y)$-equivariant isomorphism
	\[
	\tag{*}
	H^0({}_yB_x)\cong \cO(\pi_1^\un(X^\an;x,y))
	\]
	of~$\bQ$-algebras.
\end{lemma}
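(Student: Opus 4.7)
The plan is to derive the isomorphism (*) from a standard rational-homotopy-theoretic identification of bar constructions with Malcev completions, and then to read off the $\Homeo^+(X^\an;x,y)$-equivariance from the naturality of that identification.

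First, I would set up both sides of (*) as functors of the input datum $(X^\an,x,y)$, i.e.\ a space with two chosen points, where morphisms are continuous maps sending the first point to the first and the second to the second. On the left, $Rf_*\bQ_{X^\an}\in\CAlg(\Dd(\bQ))$ is functorial in $X^\an$, the augmentations $x^*$ and $y^*$ are functorial in the choice of pointings, and hence so are the pushout ${}_yB_x = \bQ\otimes^L_{Rf_*\bQ_{X^\an}}\bQ$ and its zeroth cohomology. On the right, the Tannakian path torsor $\pi_1^\un(X^\an;x,y) = \Iso^\otimes(\omega_x^\infty,\omega_y^\infty)$ is tautologically functorial, and so is its affine ring. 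The two given $\Homeo^+(X^\an;x,y)$-actions are precisely the respective restrictions of these two functorialities to self-homeomorphisms fixing both points.

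Second, I would identify the two sides. Via Hinich's rectification \cite{HinichRectification}, the augmented $E_\infty$-algebra $Rf_*\bQ_{X^\an}$ with the two augmentations corresponds to a cdga model (e.g.\ singular $\bQ$-cochains), and the pushout ${}_yB_x$ is then represented by the two-sided reduced bar complex of Chen. The classical Chen--Sullivan theorem (whose $E_\infty$-enhanced Betti $\bQ$-linear form is the analog of Theorem~6.5 of \cite{MRMHT}) identifies $H^0$ of this bar complex with the $\bQ$-algebra of functions on the pro-unipotent Malcev completion of the groupoid $\pi_1(X^\an;?,?)$ restricted to $\{x,y\}$, namely with the algebra ${}_yH_x$ of paragraph~\ref{QF17}. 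Combining with Lemma~\ref{lem:H_is_affine_ring_of_tannakian_paths} gives the desired $\bQ$-algebra isomorphism (*).

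Third, I would verify that the isomorphism just constructed is natural in the triple $(X^\an,x,y)$. Every ingredient in the construction is: $Rf_*\bQ_{X^\an}$ is natural in $X^\an$, the augmentations are natural in the pointings, the two-sided bar construction is a natural operation on augmented $E_\infty$-algebras over $\bQ$, and the Chen--Sullivan/Malcev identification intertwines this construction with the Tannakian formation of $\Iso^\otimes(\omega_?^\infty,\omega_?^\infty)$ by design. Equivariance under $\Homeo^+(X^\an;x,y)$ then follows, since its action is just the restriction of this functoriality to the self-maps considered.

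The main obstacle is making the Chen--Sullivan identification precise and natural at the $E_\infty$-level, as the remark following Conjecture~\ref{conj:lg_augmentation_conjecture} warns: a fully detailed $\infty$-categorical account of the symmetric monoidal Koszul duality equivalence underlying the $\bQ$-linear analog of Theorem~6.5 of \cite{MRMHT} is not cleanly in the literature. Granted the foundational background that the paper assumes, however, the above three-step plan delivers a natural, hence $\Homeo^+(X^\an;x,y)$-equivariant, isomorphism (*).
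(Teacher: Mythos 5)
Your proposal is correct and follows essentially the same route as the paper: the isomorphism (*) is obtained from the identification of $H^0$ of the two-sided bar/pushout construction with the coordinate ring of the unipotent Betti path torsor (the paper simply cites Theorem~6.4.5 of \cite{dancohen2021rational}, where you reassemble it from Chen--Sullivan together with Lemma~\ref{lem:H_is_affine_ring_of_tannakian_paths}), and the $\Homeo^+(X^\an;x,y)$-equivariance is deduced from naturality of this identification in $(X,x,y)$. Like the paper, you leave the higher-coherence details of that naturality unverified, so your level of rigor matches the printed proof.
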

\begin{proof}
	The isomorphism (*) is a special case of Theorem 6.4.5 of \cite{dancohen2021rational}. The equivariance is a special case of the fact that this isomorphism is natural in $(X,x,y)$. Obtaining this naturality is a matter of adding various compatibilities to the constructions used in loc. cit. Despite the \oo-categorical setting, these compatibilities require only tried and true techniques. We omit the details.
\end{proof}

The \textit{mapping class group} $\MCG(X^\an;x,y)$ is the quotient of the group $\Homeo^+(X^\an;x,y)$ by the subgroup of self-homeomorphisms which are isotopic to the identity via an isotopy fixing both~$x$ and~$y$. This group sits in the Birman exact sequence (see \cite[\S4.2.1]{farb-margalit:primer})
\[
1 \to \pi_1(X^\an\smallsetminus\{x\};y) \to \MCG(X^\an;x,y) \to \MCG(X^\an;x) \to 1 \,.
\]

\begin{lemma}\label{lem:MCG_action_is_monodromy_action}
	The action of $\Homeo^+(X^\an;x,y)$ on~$H^0({}_yB_x)$ factors through an action of the mapping class group $\MCG(X^\an;x,y)$. Moreover, the restriction of this action to~$\pi_1(X^\an\smallsetminus\{x\};y)$ agrees with the monodromy action coming from the local system $\Hh^0({}_\Delta\tilde B_{\tilde x})$.
\end{lemma}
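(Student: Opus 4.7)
The plan is to prove the two assertions separately, in both cases translating topological constructions into compatibility statements on $Rf_*\bQ_{X^\an}$ and its augmentations.

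For the first assertion, I would note that an isotopy $H\colon X^\an \times [0,1] \to X^\an$ from the identity to $\phi \in \Homeo^+(X^\an;x,y)$ fixing both $x$ and $y$ throughout may be regarded as a map of topological pairs $(X^\an, \{x,y\}) \times [0,1] \to (X^\an,\{x,y\})$. Functoriality of the construction $Rf_*\bQ_{X^\an}$ together with its two augmentations $x^*,y^*$ then produces a homotopy in the space of pairs $(\psi,h)$ consisting of an endomorphism $\psi$ of $Rf_*\bQ_{X^\an}$ together with homotopies $h$ witnessing compatibility with both augmentations. Passing to the pushout ${}_yB_x$ in $\CAlg(\Dd(\bQ))$ yields a homotopy between the $\phi$-action and the identity on ${}_yB_x$, and hence equality on $H^0({}_yB_x)$.

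For the second assertion, fix $\gamma \in \pi_1(X^\an \smallsetminus \{x\};y)$ and choose a representative $P_\gamma$ of its point-pushing mapping class together with an isotopy $H\colon X^\an \times [0,1] \to X^\an$ satisfying $H_0 = \m{id}$, $H_1 = P_\gamma$, $H_t(x)=x$ throughout, and $H_t(y)=\gamma(t)$; such an $H$ exists by the construction of the point-pushing map~\cite[\S4.2.1]{farb-margalit:primer}. I would regard $H$ as a map of pairs $(X^\an,\{x\})\times[0,1]\to(X^\an,\{x\})$ together with a path of basepoints $\gamma$, obtaining a homotopy $\m{id}^*\simeq P_\gamma^*$ compatible with $x^*$ but only with the varying family of augmentations $\gamma(t)^*\colon Rf_*\bQ_{X^\an}\to\bQ$ at the second basepoint. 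Pushing out the resulting family yields a $[0,1]$-parametrised family of $\bQ$-algebras whose fibre at $t$ is ${}_{\gamma(t)}B_x$, and whose fibrewise $H^0$ is, by Proposition~\ref{HC24} and paragraph~\ref{HB30}, canonically the pulled-back local system $\gamma^*\Hh^0({}_\Delta\tilde B_{\tilde x})$. The automorphism of $H^0({}_yB_x)$ induced by $P_\gamma$ is, by construction, the holonomy of this family between its two endpoint fibres, which is precisely the monodromy of $\gamma$ on $\Hh^0({}_\Delta\tilde B_{\tilde x})$.

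The main obstacle is to make rigorous the continuous family $t\mapsto\gamma(t)^*$ of augmentations and the resulting parametrised pushout in the $\infty$-categorical setting. I expect this to be handled by replacing the basepoint $y\colon\Spec\bQ \to X^\an$ by the path $\gamma\colon[0,1]\to X^\an$ throughout the construction of ${}_yB_x$, producing a diagram over $[0,1]$ whose fibrewise $H^0$ yields exactly $\gamma^*\Hh^0({}_\Delta\tilde B_{\tilde x})$; compatibility of its endpoint-comparison with the $P_\gamma$-action then reduces to tracking the isotopy $H$ through this enhanced construction.
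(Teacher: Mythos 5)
Your proposal is correct and takes essentially the same route as the paper: the paper also proves the statement by extending the point-pushing loop $\gamma$ to an isotopy fixing $x$, fibring everything over $[0,1]$ (the projection $X^\an\times[0,1]\to[0,1]$ with sections $\hat x$, $\hat y$, $\hat\gamma$ plays the role of your ``parametrised pushout''), using base change to identify the fibrewise $H^0$ of the $\hat\gamma$-pushout with $\gamma^*\Hh^0({}_\Delta\tilde B_{\tilde x})$, and reading off the monodromy from the resulting trivialisation; your first assertion is then the constant-path special case rather than a separate argument. The only point to track more carefully than your sketch does is the handedness bookkeeping: the holonomy computed this way is $(\phi^*)^{-1}$, which matches the \emph{left} action of the point-pushing homeomorphism $\phi$ obtained by inverting the natural right action by pullback.
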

\begin{proof}
	The homomorphism $\pi_1(X^\an\smallsetminus\{x\};y)\to\MCG(X^\an;x,y)$ in the Birman exact sequence is the point-pushing map, defined as follows. If~$\gamma\colon[0,1]\to X^\an\smallsetminus\{x\}$ is a continuous loop based at~$y$, then we can extend~$\gamma$ to an isotopy
	\[
	\Phi\colon X^\an\times[0,1] \to X^\an
	\]
	fixing~$x\in X^\an$ and restricting to the identity on~$X^\an\times\{0\}$. (Explicitly: $\Phi(y,t)=\gamma(t)$ for all~$t$, $\Phi(x,t)=x$ for all~$t$, and~$\Phi(z,0)=z$ for all~$z\in X^\an$.) This implies that~$\phi=\Phi|_{X^\an\times\{1\}}$ is an orientation-preserving self-homeomorphism of~$X^\an$ fixing both~$x$ and~$y$. The point-pushing map in the Birman exact sequence takes the homotopy class of~$\gamma$ to the isotopy class of~$\phi$\footnote{Because our convention is that the group law on $\pi_1(X^\an\smallsetminus\{x\};y)$ is the functorial one, opposite to the usual convention in topology, one would expect us to need to take the inverse of the point-pushing map described in \cite{farb-margalit:primer}. However, it seems that the point-pushing map as described in \cite{farb-margalit:primer} is actually an anti-homomorphism rather than a homomorphism, so this is not an issue.}.
	
	Let~$\gamma$, $\Phi$ and~$\phi$ be as above. We are going to prove that the monodromy action of~$\gamma$ agrees with the action of~$\phi$. For this, let $g\colon X^\an\times[0,1]\to[0,1]$ be the projection on the second factor, and let~$\hat x$, $\hat y$ and $\hat\gamma$ be the sections of~$g$ corresponding to the points~$x,y\in X^\an$ and the map~$\gamma$. Let
	\[
	Rg_*\bQ_{X^\an\times[0,1]}\in\CAlg\Ind\Vect\bQ_{[0,1]}
	\]
	denote the higher direct image of the constant local system on~$[0,1]$, and let~$\hat x^*$, $\hat y^*$ and $\hat\gamma^*$ denote the augmentations coming from~$\hat x$, $\hat y$ and~$\hat\gamma$. We form the pushouts
	\begin{align*}
		{}_{\hat y}\hat B_{\hat x} &= \QQ_{[0,1]}	\underset{\hat y^*, Rg_* \QQ_{X^\an\times[0,1]}, {\hat x}*} \otimes \QQ_{[0,1]} \,, \\
		{}_{\hat \gamma}\hat B_{\hat x} &= \QQ_{[0,1]}	\underset{\hat \gamma^*, Rg_* \QQ_{X^\an\times[0,1]}, {\hat x}*} \otimes \QQ_{[0,1]} \,.
	\end{align*}
	
	The map $\hat\Phi\colon X^\an\times[0,1]\to X^\an\times[0,1]$ given by~$\hat\Phi(z,t)=(\Phi(z,t),t)$ is an automorphism of topological spaces over~$[0,1]$ fixing the section~$\hat x$ and taking the section~$\hat y$ to~$\hat\gamma$. This automorphism induces an automorphism $\hat\Phi^*$ of~$Rg_*\bQ_{X^\an\times[0,1]}$ preserving the augmentation~$\hat x^*$ and taking~$\hat\gamma^*$ to~$\hat y^*$. In particular, it induces an isomorphism
	\[
	\hat\Phi^*\colon \Hh^0({}_{\hat \gamma}\hat B_{\hat x}) \xrightarrow\sim \Hh^0({}_{\hat y}\hat B_{\hat x})
	\]
	of commutative algebras in ind-local systems on~$[0,1]$.
	
	Now the second projection $g\colon X^\an\times[0,1]\to[0,1]$ is the pullback of~$\tilde f\colon\tilde X^\an\to X^\an$ along the map $\gamma\colon[0,1]\to X^\an$, with the sections $\tilde x$ and~$\Delta$ of~$\tilde f$ pulling back to the sections~$\hat x$ and~$\hat\gamma$, respectively. Because the base change map $\gamma^*R\tilde f_*\bQ_{\tilde X^\an}\to Rg_*\bQ_{X^\an\times[0,1]}$ is an isomorphism, it follows that $\Hh^0({}_{\hat\gamma}\hat B_{\hat x})$ is isomorphic to~$\gamma^*\Hh^0({}_\Delta\tilde B_{\tilde x})$. Applying a similar argument to the constant map $p\colon[0,1]\to\{*\}$, we find that~$\Hh^0({}_{\hat y}\hat B_{\hat x})$ is isomorphic to~$p^*H^0({}_yB_x)$. So we can view~$\hat\Phi^*$ as an isomorphism
	\begin{equation}\label{eq:point-pushing}
		\hat\Phi^*\colon \gamma^*\Hh^0({}_\Delta\tilde B_{\tilde x}) \xrightarrow\sim p^*H^0({}_yB_x) \,.
	\end{equation}
	
	On the one hand, the fibre of~$\hat\Phi$ at~$1\in[0,1]$ is equal to the self-homeomorphism~$\phi$, and so the fibre of~\eqref{eq:point-pushing} at~$1$ is equal to the automorphism $\phi^*\colon H^0({}_yB_x)\xrightarrow\sim H^0({}_yB_x)$. On the other hand, \eqref{eq:point-pushing} is a trivialisation of the pulled-back local system~$\gamma^*\Hh^0({}_\Delta\tilde B_{\tilde x})$ on~$[0,1]$, and so the monodromy action of~$\gamma$ is equal to the composition
	\[
	\Hh^0({}_\Delta\tilde B_{\tilde x})_y = \bigl(\gamma^*\Hh^0({}_\Delta\tilde B_{\tilde x})\bigr)_0 \xrightarrow[\sim]{\hat\Phi^*_0} H^0({}_yB_x) \xrightarrow[\sim]{(\hat\Phi^*_1)^{-1}} \bigl(\gamma^*\Hh^0({}_\Delta\tilde B_{\tilde x})\bigr)_1 = \Hh^0({}_\Delta\tilde B_{\tilde x})_y \,.
	\]
	We've seen that the fibre~$\hat\Phi^*_1$ at~$1$ is equal to~$\phi^*$, while the fibre~$\hat\Phi^*_0$ at~$0$ is equal to the identity (because the fibre of~$\hat\Phi$ at~$0$ is the identity). Putting this all together, we find that the monodromy action of~$\gamma$ on~$H^0({}_yB_x)$ is equal to $(\phi^*)^{-1}$, i.e.~that it agrees with the left action of~$\phi$.
	
	To conclude the proof, specialising to the case when~$\gamma$ is the constant path with value~$y$ shows that the action of any~$\phi\in\Homeo^+(X^\an;x,y)$ isotopic to the identity is trivial, and hence the action factors through the mapping class group. The remaining claim in the lemma follows by considering general~$\gamma$.
\end{proof}

\begin{lemma}\label{lem:MCG_action_is_left_regular}
	The action of $\Homeo^+(X^\an;x,y)$ on~$\cO(\pi_1^\un(X^\an;x,y))$ factors through an action of the mapping class group~$\MCG(X^\an;x,y)$. Moreover, the restriction of this action to $\pi_1(X^\an\smallsetminus\{x\};y)$ agrees with the left regular action.
\end{lemma}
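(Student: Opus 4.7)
The plan is to work on the Tannakian side throughout. Writing $\cO(\pi_1^\un(X^\an;x,y))$ as the affine ring of $\Iso^\otimes(\omega_x^\infty,\omega_y^\infty)$, any $\phi\in\Homeo^+(X^\an;x,y)$ induces a symmetric monoidal auto-equivalence $\phi^*$ of $\Loc_\bQ^\un(X^\an)$; because $\phi(x)=x$ and $\phi(y)=y$, the identities $\omega_x^\infty\circ\phi^*=\omega_x^\infty$ and $\omega_y^\infty\circ\phi^*=\omega_y^\infty$ hold on the nose. Accordingly, $\phi$ acts on $\Iso^\otimes(\omega_x^\infty,\omega_y^\infty)$ by the rule $\eta\mapsto\eta$ reindexed along $\phi^*$, giving the action on the affine ring. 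All subsequent arguments will take place in this Tannakian picture.

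To show the action factors through $\MCG(X^\an;x,y)$, I would take an isotopy $\Phi\colon X^\an\times[0,1]\to X^\an$ fixing both $x$ and $y$ with $\Phi_0=\mathrm{id}$ and $\Phi_1=\phi$. For any unipotent local system $V$ on $X^\an$, the pullback $\Phi^*V$ on $X^\an\times[0,1]$ restricts to $V$ at time $0$ and to $\phi^*V$ at time $1$; parallel transport along the contractible fibres $\{\mathrm{pt}\}\times[0,1]$ furnishes a tensor-natural isomorphism $V\xrightarrow\sim\phi^*V$, which is compatible with $\omega_x^\infty$ and $\omega_y^\infty$ on the nose because $\Phi$ fixes $x$ and $y$. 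Thus $\phi^*$ is isomorphic to $\mathrm{id}$ as a fibre-functor-preserving symmetric monoidal auto-equivalence, so its induced action is trivial.

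For the restriction to $\pi_1(X^\an\smallsetminus\{x\};y)$, I would choose an isotopy realising the point-pushing construction: given $\gamma\in\pi_1(X^\an\smallsetminus\{x\};y)$, pick $\Phi$ with $\Phi_t(x)=x$, $\Phi_t(y)=\gamma(t)$, $\Phi_0=\mathrm{id}$, and $\Phi_1=\phi$. For any path $p$ from $x$ to $y$, the formula $(s,t)\mapsto\Phi_s(p(t))$ defines a square-shaped homotopy whose four edges are $p$, $\phi\circ p$, the constant path at $x$, and $\gamma$; a standard reparametrisation of this square yields the identity $[\phi\circ p]=[p]\cdot[\gamma]$ in the fundamental groupoid $\pi_1(X^\an;x,y)$, where $\gamma$ is viewed in $\pi_1(X^\an;y)$ via the forgetful map. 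Hence $\phi$ acts on the set $\pi_1(X^\an;x,y)$ by right multiplication by the image of $\gamma$.

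To finish, I would identify $\cO(\pi_1^\un(X^\an;x,y))$ with ${}_yH_x$ via Lemma~\ref{lem:H_is_affine_ring_of_tannakian_paths}, noting that this identification is $\Homeo^+(X^\an;x,y)$-equivariant by the naturality of its construction in $(X,x,y)$. Under this identification, the action of $\phi$ on ${}_yH_x$ is dual to its action on the quotients $\bQ[\pi_1(X^\an;x,y)]/I^{n+1}$, which by the previous paragraph is right multiplication by the image of $\gamma$ in $\pi_1(X^\an;y)$. This is precisely the left regular action on the affine ring of the right $\pi_1^\un(X^\an;y)$-torsor $\pi_1^\un(X^\an;x,y)$. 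The main technical subtlety will be verifying the naturality of the isomorphism of Lemma~\ref{lem:H_is_affine_ring_of_tannakian_paths} in $(X,x,y)$, which is routine given the explicit description used there but must be tracked carefully through the self-homeomorphism $\phi$.
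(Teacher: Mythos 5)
Your proposal is correct and follows essentially the same route as the paper: the heart in both cases is the square homotopy $(s,t)\mapsto\Phi_s(p(t))$ attached to a point-pushing isotopy, showing that $\phi\circ p$ is homotopic rel endpoints to the path $p$ followed by the loop $\gamma$, with the factorisation through $\MCG(X^\an;x,y)$ obtained from an isotopy fixing $x$ and $y$ (the paper gets this by specialising $\gamma$ to the constant path, you by the equivalent parallel-transport trivialisation of $\phi^*$). One caveat on bookkeeping: since the paper composes paths in the functorial order, the operation you compute (appending $\gamma$ at the $y$-endpoint) is there written as left multiplication by $\gamma$ on $\pi_1(X^\an;x,y)$, so your phrase ``right multiplication by $\gamma$ on the right $\pi_1^\un(X^\an;y)$-torsor'' describes the same map in the opposite convention and does agree with the left regular action asserted in the statement — but you should say this explicitly to avoid the appearance of having proved the wrong-sided claim.
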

\begin{proof}
	It suffices to prove the same for the action of $\Homeo^+(X^\an;x,y)$ on the set $\pi_1(X^\an;x,y)$. Let~$\gamma\colon[0,1]\to X^\an\smallsetminus\{x\}$ be a loop based at~$y$, extend~$\gamma$ to an isotopy $\Phi\colon X^\an\times[0,1]\to X^\an$ fixing~$x$ such that~$\Phi|_{X^\an\times\{0\}}$ is the identity, and let~$\phi=\Phi|_{X^\an\times\{1\}}$. As in the previous lemma, it suffices to prove that the action of~$\phi\in\Homeo^+(X^\an;x,y)$ on~$\pi_1(X^\an;x,y)$ agrees with left-multiplication by~$\gamma$.
	
	For this, let~$\gamma'\colon[0,1]\to X^\an$ be a path from~$x$ to~$y$. We will write down an explicit homotopy from~$\phi(\gamma')$ to the composition $\gamma\cdot\gamma'$. Let $H\colon[0,1]^2\to X^\an$ be the continuous function given by
	\[
	H(t,s) = 
	\begin{cases}
		\Phi(\gamma'(2t),1-s) & \text{if~$t\leq\frac12$,} \\
		\gamma(1+2s(t-1)) & \text{if~$t\geq\frac12$.}
	\end{cases}
	\]
	This function is well-defined because~$\Phi(\gamma'(1),1-s)=\Phi(y,1-s)=\gamma(1-s)$. By construction, the fibre of~$H$ at~$s=0$ is the composition of~$\phi(\gamma')$ with the constant path at~$y$, while the fibre of~$H$ at~$s=1$ is the composition of~$\gamma'$ with~$\gamma$. Since~$H$ is a homotopy relative to the endpoints~$t=0,1$, it follows that $\phi(\gamma')=\gamma\cdot\gamma'$ as elements of~$\pi_1(X^\an;x,y)$, which is what we wanted to prove.
\end{proof}

\spar{IA33}
We're ready to prove Proposition \ref{lem:realisation_vs_tannakian_Hodge}.
Combining Lemmas~\ref{lem:MCG_equivariant_iso}, \ref{lem:MCG_action_is_monodromy_action} and~\ref{lem:MCG_action_is_left_regular}, we find that there exists an isomorphism
\begin{equation}\label{eq:iso_of_local_systems_1}
\Hh^0({}_\Delta\tilde B_{\tilde x})|_{X^\an\smallsetminus\{x\}} \cong \bV|_{X^\an\smallsetminus\{x\}}
\end{equation}
of ind-local systems on~$X^\an\smallsetminus\{x\}$, where~$\bV$ is the local system on~$X^\an$ whose fibre at~$x$ is isomorphic to~$\cO(\pi_1^\un(X^\an;x,y))$ with its left-regular action. (The fibre of~$\bV$ at~$y\neq x$ is isomorphic to $\cO(\pi_1^\un(X^\an;x))$ with its left-regular action of~$\pi_1(X^\an;y)$.) Because both sides of~\eqref{eq:iso_of_local_systems_1} are restrictions of local systems on~$X^\an$, it follows that~\eqref{eq:iso_of_local_systems_1} extends uniquely to an isomorphism of ind-local systems
\begin{equation}\label{eq:iso_of_local_systems_2}
	\Hh^0({}_\Delta\tilde B_{\tilde x}) \cong \bV
\end{equation}
on~$X^\an$.

The isomorphism~\eqref{eq:iso_of_local_systems_2} is an isomorphism of algebras by Lemma~\ref{lem:MCG_equivariant_iso}, and hence so too is its fibre
\begin{equation}\label{eq:iso_of_local_systems_3}
	H^0({}_x\tilde B_x) \cong {}_xH_x = \cO(\pi_1^\un(X^\an;x))
\end{equation}
at~$x$. In particular, the counit $\varepsilon\colon H^0({}_xB_x)\to\bQ$ corresponds to a $\bQ$-algebra homomorphism ${}_xH_x\to\bQ$, i.e.~to a group element
\[
\gamma\in\pi_1^\un(X^\an;x)(\bQ) \,.
\]
Now the right-regular action of~$\pi_1^\un(X^\an;x)$ on~$\cO(\pi_1^\un(X^\an;x))$ commutes with the left-regular action, and so comes from a right action on the local system~$\bV$. Composing the isomorphism~\eqref{eq:iso_of_local_systems_2} with the right action of~$\gamma^{-1}$, we may assume without loss of generality that~$\gamma=1$, i.e.~that~\eqref{eq:iso_of_local_systems_3} is an isomorphism of augmented algebras. Because the counit $H^0({}_xB_x)\to\bQ(0)$ is a morphism of mixed Hodge structures, it follows from Lemma~\ref{lem:unique_MHS} that the isomorphism~\eqref{eq:iso_of_local_systems_2} is in fact an isomorphism of admissible variations of ind-mixed Hodge structures, where the variation of ind-mixed Hodge structure on~$\bV$ is the one constructed by Hain and Zucker. It also follows from Lemma~\ref{lem:unique_comodule_algebra} that the isomorphism~\eqref{eq:iso_of_local_systems_2} is an isomorphism of comodules, and its fibre at~$x$ is an isomorphism of Hopf algebras. Taking the fibre of~\eqref{eq:iso_of_local_systems_2} at a point~$y$ and taking $\Spec$ of the result then provides an isomorphism
\[
{}_yP_x^\infty \cong \pi_1^\un(X^\an;x,y)
\]
of~$\bQ$-schemes, compatible with both the ind-mixed Hodge structures on affine rings and the right action of ${}_xP_x^\infty\cong\pi_1^\un(X^\an;x)$ on either side. So we have proven Lemma~\ref{lem:realisation_vs_tannakian_Hodge}. \qed

\begin{remark}
	In the above argument, it is presumably the case that~$\gamma=1$ straight away, without having to modify the isomorphism~\eqref{eq:iso_of_local_systems_2}. However, it seemed easier to write the argument in a way which does not rely on this fact.
\end{remark}

\section{Realisationwise locally geometric augmentations}\label{s:realisationwise_augs}

With the preceding material set up, we are now in a position to state our slightly more general version of the Weil height machine. This is based on a weaker notion of local geometricity. Let~$k$ be a number field\footnote{Our results here should also be true for global function fields. One result we will cite is only stated for number fields, so we restrict to number fields for safety.}, $X\in\Sm_k$ a smooth geometrically connected variety, and~$x\in X(k)$ a $k$-rational base point. For every place~$v$ of~$k$ (finite or infinite), we define a \emph{localisation map} on the motivic augmentation set~$\Aug(X/k)$ as follows.

If~$v$ is a finite place, then we make an auxiliary choice of prime number~$l$, which may or may not be the residue characteristic of~$v$. Let~$\loc_v^l$ denote the composition
\[
\Aug(X/k) \to \Aug(X_{k_v}/k_v) \to \Aug_l(X_{k_v}/k_v) \to H^1(G_v,U^l(\bQ_l)) \,,
\]
where the first map is base-change, the second is $l$-adic \'etale realisation, and the third map is the construction of a $U^l$-torsor associated to any augmentation of~$C^*(X_{k_v},\bQ_l)$ in the previous section.

If instead~$v$ is an infinite place, then we let~$\loc_v^\infty$ denote the composition
\[
\Aug(X/k) \to \Aug(X_{k_v}/k_v) \to \Aug_\infty(X_{k_v}/k_v) \to H^1(G_{\MHS,\bR},U^\infty) \,,
\]
where the first map is base-change, the second is $\bR$-linear Hodge realisation, and the third map is the construction of a $U^\infty$-torsor associated to any augmentation of~$C^*(X_{k_v},\bR)$.

\begin{definition}
	Let~$X$ be a smooth variety over a number field~$k$, and let~$b\in X(k)$ be a $k$-rational base point. We say that an augmentation~$\alpha\in\Aug(X/k)$ is \emph{realisationwise locally geometric} if:
	\begin{itemize}
		\item for all finite places~$v$ of~$k$ and all primes~$l$, $\loc_v^l(\alpha)$ lies in the image of~$j_l\colon X(k_v)\to H^1(G_v,U^l(\bQ_l))$; and
		\item for all infinite places~$v$ of~$k$, $\loc_v^\infty(\alpha)$ lies in the image of~$j_\infty\colon X(k_v)\to H^1(G_{\MHS,\bR},U^\infty)$.
	\end{itemize}
\end{definition}

\begin{remark}
	It is clear from the definition that every locally geometric augmentation is realisationwise locally geometric.
\end{remark}

The aim of this section is to show that realisationwise locally geometric augmentations for curves also admit a well-behaved Weil height theory. For the remainder of this section, we deviate from our earlier notation, and let~$\Aug(X/k)^\lgeom$ denote the set of realisationwise locally geometric augmentations, for~$X$ a smooth variety over a number field~$k$. We also set
\begin{align*}
	\Aug(X_\kbar/\kbar) &\coloneqq \varinjlim_{k'/k}\Aug(X_{k'}/k') \,, \\
	\Aug(X_\kbar/\kbar)^\lgeom &\coloneqq \varinjlim_{k'/k}\Aug(X_{k'}/k')^\lgeom \,,
\end{align*}
the colimits being taken over finite extensions~$k'/k$ inside~$\kbar$. The main result we are going to prove is:

\begin{theorem}[Rational motivic Weil height machine II]\label{thm:weil_heights_ii}
	Let~$k$ be a number field and $X/\kbar$ a smooth projective curve of genus~$\geq1$. Then there is a function
	\[
	h\colon \Pic(X) \to \frac{\{\text{functions $\Aug(X/\kbar)^\lgeom\to\bR$}\}}{\{\text{bounded functions}\}}
	\]
	satisfying the following properties:
	\begin{enumerate}
		\item for line bundles $L_1,L_2\in \Pic(X)$ we have $h_{L_1\otimes L_2} = h_{L_1}+h_{L_2}+O(1)$;
		\item for a morphism $f\colon X\to Y$ of smooth projective curves of genus~$\geq1$ and a line bundle~$L$ on~$Y$ we have $h_{f^*L}(\alpha)=h_L(f(\alpha))+O(1)$; and
		\item for a morphism $f\colon X\to A$ from~$X$ to an abelian variety and a line bundle~$M$ on~$A$ we have $h_{f^*M}(\alpha)=\hat h_M(f(\alpha))+O(1)$.
	\end{enumerate}
	Here, $O(1)$ is shorthand for a bounded function on $\Aug(X/\kbar)^\lgeom$.
\end{theorem}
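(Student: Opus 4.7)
The plan is to replay the construction of Theorem \ref{thm:weil_heights} essentially verbatim. By Lemma \ref{lem:presentations_of_line_bundles}, any line bundle $L$ on $X$ admits a presentation $L^{\otimes m}\simeq f^*M$ for some morphism $f\colon X\to A$ to an abelian variety and some line bundle $M$ on $A$; I would set
\[
h_L(\alpha) \coloneqq \tfrac{1}{m}\hat h_M(f_*\alpha),
\]
noting that $f_*\alpha\in\Aug(A_\kbar/\kbar)=A(\kbar)\otimes\bQ$ lies in the domain of the (extended) N\'eron--Tate height. The derivations of properties (1)--(3) and of the independence of the presentation $(A,f,M,m)$ up to bounded functions are purely formal given the single bound below.

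The only step in the proof of Theorem \ref{thm:weil_heights} which genuinely uses locally geometric augmentations (as opposed to arbitrary elements of $\Aug(X/\kbar)$) is Lemma \ref{lem:trivial_pullback_implies_bounded_height}. The main obstacle is therefore to prove the realisationwise analog: if $f\colon X\to A$ is a morphism to an abelian variety and $M$ is a line bundle on $A$ with $f^*M\simeq\cO_X$, then $\alpha\mapsto\hat h_M(f_*\alpha)$ is bounded on $\Aug(X/\kbar)^\lgeom$ in the new sense. The key tool here is the result of \cite{alex:motivic_anabelian} alluded to in the introduction, asserting that for a rigidified line bundle $M$ on an abelian variety $A/k_v$, the local N\'eron--Tate height $\lambda_{M,v}\colon M^\times(\kbar_v)\to\bR$ factors through the pro-unipotent Kummer map into $\rH^1(Rep(G_{k_v}),\pi_1^{\bQ_l}(M^\times_{\kbar_v};\tilde 0))$ for non-archimedean $v$, and analogously through the Hodge non-abelian $\rH^1$ for archimedean $v$. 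From this, I would manufacture a realisation-level local height
\[
\lambda^R_{M,v}\colon \rH^1\bigl(Rep(G_{k_v}),\pi_1^{\bQ_l}(M^\times_{\kbar_v};\tilde 0)\bigr)\to\bR
\]
and, by naturality of the realisation functors combined with Proposition \ref{prop:local_decomposition}, establish a global decomposition
\[
\hat h_M(f_*\alpha) = \int_{v\in\Mm(\kbar)}\lambda^R_{M,v}\bigl(\loc_v\tilde f_*\tilde\alpha\bigr)
\]
for any lift~$\tilde\alpha$ of $f_*\alpha$ to~$\Aug(M^\times/\kbar)$. The ambiguity in choosing~$\tilde\alpha$ lies in $\kbar^\times\otimes\bQ$ and integrates to zero by the product formula.

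When $\alpha\in\Aug(X/\kbar)^\lgeom$ is realisationwise locally geometric, $\loc_v(\tilde f_*\tilde\alpha)$ matches (up to this $\kbar_v^\times\otimes\bQ$-ambiguity) the class~$j_v(\tilde f(x_v))$ for some $x_v\in X(\kbar_v)$; the boundedness argument of Lemma \ref{lem:trivial_pullback_implies_bounded_height}, which rested only on $\tilde f(X(\kbar_v))$ being bounded in $M^\times(\kbar_v)$ and being contained in~$\cM^\times(\Obar_v)$ for almost all $v$, then integrates to the desired global bound on $\hat h_M(f_*\alpha)$. I expect the main technical difficulty to be in upgrading the constructions of \cite{alex:motivic_anabelian} to fit cleanly with our definition of the localisation maps $\loc_v^l$ and $\loc_v^\infty$ from the motivic augmentation set; in particular, one must verify the symmetric monoidal refinement of Koszul duality flagged as a gap in the remark at the end of \S\ref{sec:HeightMachine}. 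Once the bound is secured, the Northcott theorems \ref{thm:northcott_abelian} and \ref{thm:northcott_finite_subscheme} and the motivic Manin--Dem'janenko Theorem \ref{thm:manin-demjanenko} transfer to the realisationwise setting without further modification, as their proofs depend only on the formal properties of the height machine and on the existence of local $\kbar_v$-points in realisations.
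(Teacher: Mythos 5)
Your proposal follows essentially the same route as the paper: the height is defined by the same formula via Lemma~\ref{lem:presentations_of_line_bundles}, everything reduces to the realisationwise analogue of Lemma~\ref{lem:trivial_pullback_implies_bounded_height} (the paper's Lemma~\ref{lem:trivial_pullback_implies_bounded_height_ii}), which the paper likewise proves by using the results of \cite{alex:motivic_anabelian} (its Lemmas~\ref{lem:local_height_non-archimedean} and~\ref{lem:local_height_archimedean}) to read off local N\'eron--Tate heights from the realisation-level classes $\loc_v^l$ and $\loc_v^\infty$, bounding place by place via realisationwise local geometricity, and integrating via Proposition~\ref{prop:local_decomposition}. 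The transfer of the Northcott and Manin--Dem'janenko statements, and the caveat about the symmetric monoidal Koszul duality enhancement, also match the paper's treatment.
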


The construction is the same as before. For any line bundle~$L\in\Pic(X)$, there is a morphism $f\colon X\to A$ to an abelian variety and a line bundle~$M$ on~$A$ such that~$f^*M\simeq L^{\otimes m}$ for some~$m\in\bN$. We then define the height associated to~$L$ by
\[
h_L(\alpha) \coloneqq \frac1m\hat h_M(f(\alpha)) + O(1) \,.
\]
Again, checking that this function is independent of the tuple $(A,f,M,m)$ up to $O(1)$ amounts to checking the following analogue of Lemma~\ref{lem:trivial_pullback_implies_bounded_height}.

\begin{lemma}\label{lem:trivial_pullback_implies_bounded_height_ii}
	Let~$f\colon X \to A$ be a morphism from a smooth projective curve~$X/\kbar$ of genus~$\geq1$ to an abelian variety~$A/\kbar$. Suppose that~$M$ is a line bundle on~$A$ such that $f^*M\simeq\cO_X$ is the trivial line bundle on~$X$. Then the function
	\[
	\alpha \mapsto \hat h_M(f(\alpha))
	\]
	is bounded on $\Aug(X/\kbar)^\lgeom$.
\end{lemma}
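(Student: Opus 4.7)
The plan is to mimic the proof of Lemma~\ref{lem:trivial_pullback_implies_bounded_height} verbatim, once we have in hand a realisation-theoretic version of the local N\'eron--Tate height. By the main results of \cite{alex:motivic_anabelian}, for each non-archimedean place~$v$ of~$\kbar$ of residue characteristic~$p$ and each auxiliary prime~$l$, the classical local height $\lambda_{M,v}\colon M^\times(\kbar_v)\to\bR$ factors through a function
\[
\tilde\lambda_{M,v}^l\colon H^1(G_v,\pi_1^{\bQ_l}(M^\times_{\kbar_v};\tilde 0)) \to \bR
\]
on the set of pro-unipotent torsors, and analogously in the archimedean case using the Hodge realisation. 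Composing with the realisation maps~$\loc_v^?$ of section~\ref{s:realisations}, we obtain functions
\[
\lambda_{M,v}^\m{mot}\colon \Aug(M^\times_{\kbar_v}/\kbar_v) \to \bR
\]
whose composition with the augmentation map $M^\times(\kbar_v)\to\Aug(M^\times_{\kbar_v}/\kbar_v)$ recovers~$\lambda_{M,v}$.

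Granting this, the proof proceeds exactly as in Lemma~\ref{lem:trivial_pullback_implies_bounded_height}. The assumption that $f^*M\simeq\cO_X$ yields a lift $\tilde f\colon X\to M^\times$; we spread this out to a morphism $\tilde f\colon\cX\to\cM^\times$ over~$\cO_{k',S}$ for a suitable finite extension~$k'/k$ and finite set~$S$. The constants $B_v \coloneqq \sup_{x\in X(\kbar_v)}|\lambda_{M,v}(\tilde f(x))|$ are finite (by boundedness of $\tilde f(X(\kbar_v))$ and local boundedness of the metric), depend only on the place of~$k'$ below~$v$, and vanish for~$v$ lying over a place outside~$S$ (by Lemma~\ref{lem:good_local_metrics}). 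Hence $\int_{v\in\Mm(\kbar)}B_v<\infty$.

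Now fix a realisationwise locally geometric augmentation $\alpha\in\Aug(X_\kbar/\kbar)^\lgeom$. By Corollary~\ref{w24d}, $\tilde f_*(\alpha)$ lifts~$f(\alpha)$ to an augmentation in $\Aug(M^\times_{\kbar_v}/\kbar_v)$, well-defined up to the action of~$\kbar_v^\times\otimes\bQ$. By Lemmas~\ref{lem:realisation_vs_tannakian_etale} and~\ref{lem:realisation_vs_tannakian_Hodge}, the realisation $\loc_v^?(\tilde f_*(\alpha))$ agrees (in the appropriate Tannakian description) with the torsor attached to some point $\tilde f(x_v)\in M^\times(\kbar_v)$ with~$x_v\in X(\kbar_v)$. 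Hence
\[
|\lambda_{M,v}^\m{mot}(\tilde f_*(\alpha))| = |\tilde\lambda_{M,v}^?(j_?(\tilde f(x_v)))| = |\lambda_{M,v}(\tilde f(x_v))| \leq B_v
\]
for all places~$v$ of~$\kbar$.

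Finally, we need a realisation-theoretic upgrade of Proposition~\ref{prop:local_decomposition}, namely that $\hat h_M(\tilde\beta)=\int_{v\in\Mm(\kbar)}\lambda_{M,v}^\m{mot}(\tilde\beta)$ for every $\tilde\beta\in\Aug(M^\times/\kbar)$. This follows from Proposition~\ref{prop:local_decomposition} itself together with the surjectivity of $M^\times(\kbar)\to\Aug(M^\times/\kbar)$ afforded by Corollary~\ref{w24d}, since both sides extend the classical identity on points. Integrating the estimate above then yields $|\hat h_M(f(\alpha))| \leq \int_{v\in\Mm(\kbar)}B_v<\infty$, as required. The main obstacle in this argument is setting up the realisation-theoretic local heights $\lambda_{M,v}^\m{mot}$ and verifying the integrated decomposition; both are largely book-keeping around the results of \cite{alex:motivic_anabelian} and the Tannakian comparison results of section~\ref{s:realisations}, but they are the points at which the deeper foundational inputs enter.
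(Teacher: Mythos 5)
Your proposal is correct and follows essentially the same route as the paper: it defines local height functions on augmentation spaces via the realisation maps and the results of \cite{alex:motivic_anabelian} (Lemmas~\ref{lem:local_height_non-archimedean} and~\ref{lem:local_height_archimedean}), uses realisationwise local geometricity to bound $\lambda_{M,v}$ of $\tilde f(\alpha)$ by the same constants $B_v$ obtained from spreading out and Lemma~\ref{lem:good_local_metrics}, and integrates using the local decomposition of the N\'eron--Tate height. The only cosmetic difference is that the paper defines the local functions $\lambda_{M,v,l}$, $\lambda_{M,v,\infty}$ directly on the global augmentation set $\Aug(M^\times_\kbar/\kbar)$ and then simply quotes the argument of Lemma~\ref{lem:trivial_pullback_implies_bounded_height}, whereas you spell out the decomposition step explicitly; the mathematical content is the same.
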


The key ingredient in the proof is a description of local N\'eron--Tate heights as functions on the local cohomology sets $H^1(G_v,U^l(\bQ_l))$ as well as the classifying sets for mixed Hodge structures $H^1(G_{\MHS,\bR},U^\infty)$. These are from the first author's thesis.

\begin{lemma}[{\cite[Theorem~1.6]{alex:motivic_anabelian}}]\label{lem:local_height_non-archimedean}
	Let~$A/k$ be an abelian variety, and $M=(M,\tilde0)$ a rigidified line bundle on~$A$, with associated $\bG_m$-torsor~$M^\times$. Let~$v$ be a finite place of~$k$, let~$l$ be a prime number not divisible by~$v$, and let~$U^l$ be as in the previous section. Then~$U^l$ is a central extension of~$V_lA$ by $\bQ_l(1)$, the inclusion $\bQ_l(1)\hookrightarrow U^l$ induces a bijection
	\[
	H^1(G_v,\bQ_l(1)) \xrightarrow\sim H^1(G_v,U^l(\bQ_l)) \,,
	\]
	and the composition
	\[
	M^\times(k_v) \xrightarrow{j_l} H^1(G_v,U^l(\bQ_l)) \cong H^1(G_v,\bQ_l(1)) = \bQ_l
	\]
	takes values in~$\bQ\subseteq\bQ_l$, and is equal to
	\[
	-\frac{\log||\cdot||_{L,v}}{\log l} \,.
	\]
\end{lemma}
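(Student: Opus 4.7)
The plan is to analyze $U^l$ via the $\Gm$-bundle structure $M^\times \to A$ pointed at $\tilde 0$ and $0$. The $\bQ_l$-\'etale realization of Theorem \ref{30428Aa} presents $C^*_\et(M^\times_{\kbar_v}, \bQ_l)$ as obtained from $C^*_\et(A_{\kbar_v}, \bQ_l)$ by homotopy-coherently killing the first Chern class $c_1(M) \in H^2(A_{\kbar_v}, \bQ_l(1))$. Passing via Tannakian duality on unipotent $\bQ_l$-local systems, this translates into a central extension of $\bQ_l$-pro-unipotent groups
\[
1 \to \bQ_l(1) \to U^l \to V_l A \to 1
\]
with extension class $c_1(M)$. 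Centrality can alternatively be seen directly: any $\pi_1(A)$-action by conjugation on the fibre $\pi_1(\Gm) = \bQ_l(1)$ factors through $\mathrm{Out}(\Gm) = 1$.

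For the second clause, I would plug this central extension into the exact sequence of pointed sets in continuous Galois cohomology:
\[
(V_l A)^{G_v} \to H^1(G_v, \bQ_l(1)) \to H^1(G_v, U^l(\bQ_l)) \to H^1(G_v, V_l A).
\]
Bijectivity follows from the vanishing of the two outer terms. $(V_l A)^{G_v} = 0$ because $A(k_v)[l^\infty]$ is finite. For $H^1(G_v, V_l A)$, since $l$ is coprime to the residue characteristic of $v$, the Euler--Poincar\'e formula for $G_v$ with $\bQ_l$-coefficients gives $\dim H^0 - \dim H^1 + \dim H^2 = 0$, and Tate local duality combined with the Weil pairing $(V_l A)^\vee(1) \simeq V_l \hat A$ identifies $H^2(G_v, V_l A)$ with the dual of $H^0(G_v, V_l \hat A)$, which vanishes by the same torsion-finiteness argument applied to the dual abelian variety $\hat A$.

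For the third, most concrete clause, I would pull back the fibration $M^\times \to A$ along $x \in A(k_v)$ corresponding to the image of $\tilde x \in M^\times(k_v)$, obtaining a $\Gm$-torsor $M^\times_x$ of which $\tilde x$ is a section. Functoriality of the unipotent Kummer map identifies $j_l(\tilde x)$ with the image, under $H^1(G_v, \bQ_l(1)) \to H^1(G_v, U^l(\bQ_l))$, of the Kummer class of $\tilde x$ in the fibre, computed via classical Kummer theory for $\Gm$. Under the identification $H^1(G_v, \bQ_l(1)) \cong \bQ_l$, this class corresponds to the normalized $l$-adic valuation of $\tilde x$ in the chosen trivialization of $M^\times_x$; converting to $-\log||\tilde x||_{M,v}/\log l$ uses the explicit description of the canonical metric as a model metric at places of good reduction (Lemma \ref{lem:good_local_metrics}), then extends to arbitrary reduction by the $[2]^*$-compatibility characterization in Theorem \ref{thm:local_metrics}. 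The anticipated main obstacle is not the Galois-cohomological bijection---which follows cleanly from Tate duality---but rather this final normalization step: aligning Kummer theory, the rigidification $\tilde 0$, the factor $\log l$, and the normalization of $|\cdot|_v$ so that the identification holds on the nose.
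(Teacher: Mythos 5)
First, note that the paper does not actually prove this lemma: it is imported verbatim from the first author's thesis \cite[Theorem~1.6]{alex:motivic_anabelian}, so your proposal is being measured against that proof rather than anything in the present text. Your first two clauses are essentially right. The central extension $1\to\bQ_l(1)\to U^l\to V_lA\to 1$ does come out of the \'etale realisation of the $\Gm$-bundle structure, though your parenthetical justification of centrality via ``$\mathrm{Out}(\Gm)=1$'' is false as stated ($\mathrm{Aut}(\bQ_l(1))\cong\bQ_l^\times$ and every automorphism is outer); the correct one-line argument is that a pro-unipotent group admits no nontrivial homomorphism to $\mathrm{Aut}(\bQ_l(1))\cong\Gm$, so the conjugation action on the one-dimensional normal subgroup is trivial. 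The bijection $H^1(G_v,\bQ_l(1))\to H^1(G_v,U^l(\bQ_l))$ via the central-extension sequence, $(V_lA)^{G_v}=0$, and $H^1(G_v,V_lA)=0$ (local Euler characteristic plus Tate duality, using $l\neq p$) is correct, with the standard remark that centrality of $\bQ_l(1)$ and vanishing of the connecting map upgrade ``trivial kernel'' to genuine injectivity of the map of pointed sets.

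The genuine gap is in the third clause. The step ``functoriality of the unipotent Kummer map identifies $j_l(\tilde x)$ with the image of the Kummer class of $\tilde x$ in the fibre'' does not work: $j_l$ is based at $\tilde 0$, which lies in the fibre of $M^\times\to A$ over $0$, while $\tilde x$ lies in the fibre over $x\neq 0$, so the inclusion $M^\times_x\hookrightarrow M^\times$ is not a pointed map and functoriality gives nothing; moreover ``the Kummer class of $\tilde x$ in the fibre'' is not even defined without choosing a base point or an integral trivialisation of that fibre. What the cohomological vanishing actually buys is this: since $H^1(G_v,V_lA)=0$ and $(V_lA)^{G_v}=0$, the $V_lA$-torsor of paths from $0$ to $x$ has a \emph{unique} Galois-invariant point, and the class in $H^1(G_v,\bQ_l(1))$ lifting $j_l(\tilde x)$ is the class of the fibre of ${}_{\tilde x}P_{\tilde 0}$ over that canonical path. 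Identifying this class with $-\log\|\tilde x\|_{M,v}/\log l$ --- i.e.\ with the \emph{canonical} (N\'eron) local height rather than a naive fibrewise valuation --- is precisely the content of the cited theorem, and it is where all the work lies: one needs $\Gm$-equivariance of the resulting function (via composition of path torsors along orbit maps), vanishing on integral points of a good model (the path torsor is unramified there and $H^1_{\mathrm{ur}}(G_v,\bQ_l(1))=0$ for $l\neq p$), compatibility with tensor products and with pullback by $[2]$, and local boundedness, so that the uniqueness clause of Theorem~\ref{thm:local_metrics} can be invoked. Your outline gestures at the last step (Lemma~\ref{lem:good_local_metrics} plus $[2]^*$-compatibility), but the hinge --- replacing restriction-to-the-fibre Kummer theory by an argument that actually controls the canonical path on the base --- is missing, and it is the heart of the proof in the thesis.
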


\begin{lemma}[{\cite[Theorem~1.8]{alex:motivic_anabelian}}]\label{lem:local_height_archimedean}
	Let~$A/\bC$ be an abelian variety, and $M=(M,\tilde0)$ a rigidified line bundle on~$A$, with associated $\bG_m$-torsor~$M^\times$. Let~$U^\infty$ be as in the previous section. Then~$U^\infty$ is a central extension of~$H_1(A,\bR)$ by $\bR(1)$, the inclusion $\bR(1)\hookrightarrow U^\infty$ induces a bijection
	\[
	H^1(G_{\MHS,\bR},\bR(1)) \xrightarrow\sim H^1(G_{\MHS,\bR},U^\infty) \,,
	\]
	and the composition
	\[
	M^\times(\bC) \xrightarrow{j_\infty} H^1(G_{\MHS,\bR},U^\infty) \cong H^1(G_{\MHS,\bR},\bR(1)) = \bR
	\]
	is equal to $-\log||\cdot||_{L,\infty}$.
\end{lemma}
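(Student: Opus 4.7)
The plan divides into the three claims: the structural description of $U^\infty$, the cohomological bijection, and the identification with the local height.

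For the structural description, apply the long exact sequence of topological homotopy groups to the analytic $\bG_m$-bundle $M^{\times,\m{an}}\to A^{\m{an}}$. Since $A^{\m{an}}$ is a complex torus, $\pi_2(A^{\m{an}})=0$, and we obtain a short exact sequence
\[
1 \to \bZ \to \pi_1(M^{\times,\m{an}},\tilde 0) \to \pi_1(A^{\m{an}},0) \to 1,
\]
where the central $\bZ$ arises from the fibre $\bC^\times$. Centrality holds because the monodromy of $\pi_1(A^{\m{an}})$ on $\pi_1(\bC^\times) = \bZ$ factors through the action of the connected structure group $\bC^\times$ and is therefore trivial. Applying $\bR$-pro-unipotent completion, identifying the result with $U^\infty$ via Lemma~\ref{lem:realisation_vs_tannakian_Hodge}, and observing that the natural Hodge structure on $\pi_1(\bC^\times)\otimes\bR$ is $\bR(1)$, yields the claimed central extension of pro-unipotent groups in $\MHS_\bR$.

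For the cohomological bijection, the low-degree exact sequence associated with a central extension identifies the kernel and cokernel of $H^1(G_{\MHS,\bR},\bR(1)) \to H^1(G_{\MHS,\bR}, U^\infty)$ in terms of $\Hom_{\MHS_\bR}(\bR(0), H_1(A,\bR))$ and $\Ext^1_{\MHS_\bR}(\bR(0), H_1(A,\bR))$. Both vanish: the $\Hom$ by weight reasons, and the $\Ext^1$ because for a pure real Hodge structure $V$ of odd negative weight, $V_\bR \cap F^0V_\bC = 0$ and $V_\bR + F^0V_\bC = V_\bC$, so the Deligne extension group vanishes.

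For the identification with $-\log\|\cdot\|_{M,\infty}$, the strategy is to apply the uniqueness clause of Theorem~\ref{thm:local_metrics}. Define a candidate metric $\|\cdot\|'_{M,\infty}$ on every rigidified line bundle $(M,\tilde 0)$ on an abelian variety over $\bC$ by setting $-\log\|\tilde x\|'_{M,\infty}$ equal to the image of $j_\infty(\tilde x)=[{}_{\tilde x}P^\infty_{\tilde 0}]$ under the bijection from the previous step. The axioms of Theorem~\ref{thm:local_metrics} are then checked: dependence only on the isomorphism class of rigidified bundle and compatibility with pullback (in particular along the doubling map $[2]$ on $A$) follow formally from the functoriality of $U^\infty$ and of the unipotent Kummer class; local boundedness reduces to continuity of $j_\infty$ together with compactness of bounded subsets of $M^\times(\bC)$ in the analytic topology; and compatibility with tensor products $\|\cdot\|'_{M_1\otimes M_2,\infty} = \|\cdot\|'_{M_1,\infty}\otimes \|\cdot\|'_{M_2,\infty}$ requires a Baer-sum identification
\[
U^\infty\bigl(M_1^\times\underset{A}{\overset{\bG_m}{\times}} M_2^\times\bigr) \simeq U^\infty(M_1^\times) \underset{H_1(A,\bR)}{\overset{\bR(1)}{\times}} U^\infty(M_2^\times)
\]
together with naturality of $j_\infty$ under the torsor operation of Lemma~\ref{lem:pushout_product_of_torsors}. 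Theorem~\ref{thm:local_metrics} then forces $\|\cdot\|'_{M,\infty}=\|\cdot\|_{M,\infty}$, concluding the argument. The main obstacle is precisely this tensor product compatibility: one must identify the pro-unipotent group attached to the pushout-product of $\bG_m$-torsors with the Baer sum of the individual central extensions in $\MHS_\bR$, and track the compatibility of Kummer classes under this identification; once this is in place, the rest is a formal application of the uniqueness of the N\'eron--Tate metric.
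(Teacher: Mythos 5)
The paper itself does not prove this lemma: it is quoted from the first author's thesis \cite[Theorem~1.8]{alex:motivic_anabelian}, so there is no in-paper argument to measure yours against. Taken on its own terms, your treatment of the first two claims is sound: the homotopy exact sequence of the $\bC^\times$-bundle together with centrality of the fibre class, exactness of Malcev completion for finitely generated torsion-free nilpotent groups, and the comparison of Lemma~\ref{lem:realisation_vs_tannakian_Hodge} do produce the central extension $1\to\bR(1)\to U^\infty\to H_1(A,\bR)\to 1$ in $\bR$-mixed Hodge structures, and bijectivity of $H^1(G_{\MHS,\bR},\bR(1))\to H^1(G_{\MHS,\bR},U^\infty)$ does follow from the vanishing of $\Hom_{\MHS_\bR}(\bR(0),H_1(A,\bR))$ and $\Ext^1_{\MHS_\bR}(\bR(0),H_1(A,\bR))$. (One small slip: your reason for the $\Ext^1$ vanishing is asserted for all odd negative weights, but the identity $V_\bR+F^0V_\bC=V_\bC$ is special to weight $-1$; fortunately weight $-1$ is the only case you use.)

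The genuine gap is in the third step. Before the uniqueness clause of Theorem~\ref{thm:local_metrics} can be invoked, your candidate $\|\cdot\|'_{M,\infty}$ must be a metric at all, i.e.\ fibrewise homogeneous: $\|\kappa\tilde x\|'=|\kappa|\,\|\tilde x\|'$, equivalently $j_\infty(\kappa\cdot\tilde x)=j_\infty(\tilde x)+\log|\kappa|$ under your identification $H^1(G_{\MHS,\bR},\bR(1))\cong\bR$. This is nowhere verified, and it is exactly where the archimedean analytic content enters: it amounts to computing the unipotent Kummer map of $\Gm$ in $\bR$-MHS and pinning down the normalisation of the isomorphism $H^1(G_{\MHS,\bR},\bR(1))\cong\bR$. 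Without that computation, the argument you outline can at best identify the composite with the canonical local height up to an undetermined nonzero scalar (and sign), not with $-\log\|\cdot\|_{M,\infty}$ on the nose. By contrast, the Baer-sum compatibility you single out as the main obstacle is comparatively formal once functoriality of $j_\infty$ and the fibre-product description of $\pi_1$ of $M_1^\times\times_A M_2^\times$ are in place. Two further points to make explicit if you pursue this route: local boundedness needs more than ``continuity of $j_\infty$'' as a slogan --- you should appeal to the fact that the torsors ${}_{\tilde x}P^\infty_{\tilde 0}$ assemble into an admissible variation (this is what \S\ref{s:VMHS} provides), so that the resulting $\bR$-valued function is real-analytic, hence bounded on the relatively compact bounded subsets of $M^\times(\bC)$; and axiom (e) for $[2]$ requires checking that the map $U^\infty_{[2]^*M}\to U^\infty_M$ restricts to the identity on the central $\bR(1)$, which follows from compatibility with the rigidifications.
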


\begin{proof}[Proof of Lemma~\ref{lem:trivial_pullback_implies_bounded_height_ii}]
	We are free to assume that~$X$, $x$, $A$, $M$ and~$f$ are defined over a number field~$k$; we switch to using the same letters for their $k$-forms.
	
	Let~$\tilde f\colon X\to M^\times$ be a lift of~$f$ (which exists by assumption that~$f^*M$ is trivial), and let~$\tilde 0\coloneqq \tilde f(x)$ be the image of the base point~$x\in X(k)$. Because the zero element of~$A$ plays no role in the problem, we are free to assume that~$\tilde0$ lies in the fibre over~$0\in A(k)$. Let~$U^l_M$ and~$U^\infty_M$ denote the pro-unipotent groups attached to~$M^\times$.
	
	For each finite place~$v$ of~$\kbar$, let~$l$ be a prime number not divisible by~$v$, and let~$\lambda_{M,v,l}$ denote the composition
	\[
	\Aug(M^\times_\kbar/\kbar) \to \varinjlim_{k'/k}H^1(G_{v'},U^l_M(\bQ_l)) \cong \bQ_l \,,
	\]
	where the colimit runs over all finite extensions~$k'/k$ inside~$\kbar$ and~$v'$ denotes the restriction of~$v$ to~$k'$. The bijection on the right is afforded by Lemma~\ref{lem:local_height_non-archimedean}. The function~$\lambda_{M,v,l}$ thus has the property that its composition with the map $\kappa\colon M^\times(\kbar)\to\Aug(M^\times_\kbar/\kbar)$ is equal to the local N\'eron--Tate height~$\lambda_{M,v}$. For each infinite place~$v$, let $\lambda_{M,v,\infty}$ denote the similar composition
	\[
	\Aug(M^\times_\kbar/\kbar) \to H^1(G_{\MHS,\bR},U^\infty_M) \cong \bR \,.
	\]
	
	Now let~$\alpha\in\Aug(X_\kbar/\kbar)^\lgeom$ be a realisationwise locally geometric augmentation, and consider its image $\tilde f(\alpha)\in\Aug(M^\times_\kbar/\kbar)^\lgeom$. By realisationwise local geometricity, for every non-archimedean place~$v$, $\lambda_{M,v,l}(\tilde f(\alpha))$ lies in the image of the composition
	\[
	X(\kbar_v) \xrightarrow{\tilde f} M^\times(\kbar_v) \xrightarrow{j_v} \varinjlim_{k'/k}H^1(G_{v'},U^l_M(\bQ_l)) \cong \bQ_l \,,
	\]
	i.e.~in the image of the composition
	\[
	X(\kbar_v) \xrightarrow{\tilde f} M^\times(\kbar_v) \xrightarrow{\lambda_{M,v}} \bQ_l \,.
	\]
	In particular, $\lambda_{M,v,l}(\tilde f(\alpha))\in\bQ$, and it lies inside the bounded set $\lambda_{M,v}(\tilde f(X(\kbar_v)))\subseteq\bQ$. So there is a bound~$B_v$, independent of~$\alpha$, such that
	\[
	|\lambda_{M,v,l}(\tilde f(\alpha))|\leq B_v
	\]
	for every~$\alpha\in\Aug(X_\kbar/\kbar)^\lgeom$ (where~$|\cdot|$ is the archimedean norm on~$\bQ$). As before, the bounds~$B_v$ depend only on the restriction of~$v$ to~$k$.
	
	At archimedean places~$v$, something similar happens. The element $\lambda_{M,v,\infty}(\tilde f(\alpha))$ lies in the bounded subset $\lambda_{M,v}(\tilde f(X(\kbar_v)))\subseteq\bR$, so there is again some~$B_v$, depending only on the place of~$k$ below~$v$, such that $|\lambda_{M,v,\infty}(\tilde f(\alpha))|\leq B_v$ for all~$\alpha\in\Aug(X_\kbar/\kbar)^\lgeom$. From here on, the proof proceeds exactly as in the proof of Lemma~\ref{lem:trivial_pullback_implies_bounded_height}.
\end{proof}

The other properties of the motivic Weil height hold verbatim for realisationwise locally geometric augmentations. The analogue of Lemma~\ref{lem:degree_bound} is proved by exactly the same argument. The only modification required to the proof of Theorem~\ref{thm:northcott_abelian} (Northcott property) comes in the proof of Lemma~\ref{lem:contained_in_lattice}, where one needs to replace the local augmentation sets $\Aug(A_{k'_v}/k'_v)$ for~$v$ finite by the Galois cohomology sets $H^1(G_{v'},U^p(\bQ_p))$, where~$p$ is the rational prime below~$v$, and use the fact that the kernel of the Kummer map $A(k'_v) \to H^1(G_{v'},U^p(\bQ_p))$ is $A(k'_v)_\tors$. The proof of the Manin--Dem'janenko Theorem~\ref{thm:manin-demjanenko} requires no modifications.

\bibliography{AH_refs,other_refs}
\bibliographystyle{plain}

\vfill

\Small\textsc{
AB: Department of Mathematics, Cornell University.
}

\texttt{alex.betts@cornell.edu}

\medskip

\Small\textsc{
ID: Department of Mathematics, Ben-Gurion University of the Negev.
}

\texttt{ishaida@bgu.ac.il}

\end{document}